\documentclass[11pt,reqno]{amsart}
\usepackage[margin=1.18in]{geometry}
\usepackage[OT1]{fontenc}

\usepackage{amsmath,amssymb,amsfonts,mathtools,mathrsfs,bm}
\usepackage{enumitem}
\usepackage{microtype}
\usepackage[dvipsnames]{xcolor}
\usepackage{tikz}
\definecolor{inkblue}{RGB}{25,53,89}
\usepackage[colorlinks=true,citecolor=MidnightBlue,linkcolor=BrickRed,
 urlcolor=RoyalBlue]{hyperref}
\hypersetup{pdftitle={Large deviations of the spectral radius of iid subgaussian random matrices},
 pdfauthor={Yi Han},
 pdfsubject={Upper tails at speed n and quadratic exponential lower-tail bounds}}
\usepackage{bookmark}
\usepackage{needspace}
\usepackage{etoolbox}
\usepackage{longtable,booktabs,array}
\numberwithin{equation}{section}
\newtheorem{theorem}{Theorem}[section]
\newtheorem{proposition}[theorem]{Proposition}
\newtheorem{lemma}[theorem]{Lemma}
\newtheorem{fact}[theorem]{Fact}
\newtheorem{corollary}[theorem]{Corollary}
\theoremstyle{definition}
\newtheorem{definition}[theorem]{Definition}
\newtheorem{algorithms}[theorem]{Algorithm}
\newtheorem{example}[theorem]{Example}
\theoremstyle{remark}
\newtheorem{remark}[theorem]{Remark}
\providecommand{\tightlist}{\setlength{\itemsep}{0pt}\setlength{\parskip}{0pt}}
\allowdisplaybreaks[2]
\makeatletter
\renewcommand{\paragraph}{\@startsection{paragraph}{4}\z@
 {.5\linespacing\@plus.5\linespacing}{.25\linespacing}
 {\normalfont\bfseries}}
\makeatother
\title[Large deviations of the spectral radius]{Large deviations of the spectral radius of iid subgaussian random matrices}
\author{Yi Han}
\thanks{Department of Mathematics, Massachusetts Institute of Technology,
Cambridge, MA.\newline Email: \texttt{hanyi16@mit.edu}.\newline A Lean formalization of the whole paper is available, see Section \ref{lean}.}
\date{}
\newcommand{\E}{\mathbb E}\newcommand{\Prob}{\mathbb P}
\newcommand{\R}{\mathbb R}\newcommand{\C}{\mathbb C}
\newcommand{\F}{\mathbb F}\newcommand{\norm}[1]{\lVert#1\rVert}
\newcommand{\HS}{\mathrm{HS}}
\newcommand{\op}{\mathrm{op}}\newcommand{\dd}{\,\mathrm d}
\newcommand{\tr}{\operatorname{Tr}}

\begin{document}
\begin{abstract}
We study large deviations of the spectral radius of
$X_n=n^{-1/2}(\xi_{ij})_{i,j=1}^n$, where the entries are iid, centered,
variance-one, and subgaussian, with zero pseudovariance
($\E\xi_{ij}^2=0$) in the complex case. We write $\rho(X_n)$ for its
spectral radius, i.e. the largest modulus among its eigenvalues.
For every fixed $r>1$, we prove
\[
 \liminf_{n\to\infty}\frac1n\log\Prob\{\rho(X_n)>r\}
 \ge -\frac\beta2\bigl(r^2-1-2\log r\bigr),
\]
where $\beta=1$ for real entries and $\beta=2$ for complex entries.
We prove a matching upper bound for real symmetric laws with
Gaussian-dominated even moments and for complex laws satisfying the
sharp Gaussian Laplace-transform bound. These classes include discrete
distributions. In the complex sharp class, we also identify the
exponential rate for an eigenvalue to enter a fixed disk outside the
unit disk. We then turn to lower deviations and establish a
quadratic-speed bound. Under the additional assumption that the entry
law $\mu$ has a bounded density, we prove
\[
 \Prob\{\rho(X_n)\le r\}\le e^{-c_{\mu,r}n^2}
\]
for every fixed $0<r<1$ and all sufficiently large $n$, where $c_{\mu,r}>0$.
For the matching upper-tail classes with a bounded density, these bounds
give a full speed-$n$ large deviation principle, with the displayed
upper-tail rate for $r\ge1$ and infinite rate for $r<1$.
The upper-tail proof uses a change of measure that preserves the entry
support and creates an outlying eigenvalue. The lower-tail proof develops
a weighted comparison for adaptive orthonormal observations and applies
it to Arnoldi residuals.
\end{abstract}
\maketitle
\tableofcontents

\section{Introduction and main results}\label{sec:introduction}\label{sec:main-results}\label{u:sec:statements}
Let $X_n=n^{-1/2}(\xi_{ij})_{i,j=1}^n$ be a random matrix with iid
centered, variance-one real or complex entries. Its spectral radius
$\rho(X_n)$, the largest modulus among its eigenvalues, governs the asymptotic
stability of discrete-time linear systems \cite[Section~1]{Geman1986}.
Write $\mu$ for the common entry law and $\xi\sim\mu$ for a generic entry.
The study of Gaussian
non-Hermitian spectra dates back to Ginibre's work in 1965
\cite{Ginibre1965}. For matrices with independent non-Gaussian entries,
classical spectral-radius estimates were obtained in 1986 by Geman
\cite{Geman1986} and Bai--Yin \cite{BaiYin}, who proved
$\limsup_{n\to\infty}\rho(X_n)\le1$ almost surely under suitable
moment assumptions.

The circular law identifies the limiting empirical eigenvalue distribution
as uniform on the unit disk; its finite-variance form was proved by
Tao--Vu, with an appendix by Krishnapur \cite{TaoVuCircular2010}. In particular,
$\liminf_{n\to\infty}\rho(X_n)\ge1$ almost surely.
Further work on spectral-radius upper bounds has addressed weaker moment
assumptions and inhomogeneous variance profiles. In the iid setting,
Bordenave--Chafa\"i--Garc\'ia-Zelada \cite{BCG2022} proved
$\rho(X_n)\to1$ in probability under only the second-moment assumptions.
Alt--Erd\H{o}s--Kr\"uger \cite{AEK2021} established quantitative
high-probability bounds on the spectral radius under stronger hypotheses,
also allowing
a nonconstant variance profile. Hachem--Louvaris \cite{HachemLouvaris2026}
treat general variance profiles under weak moment assumptions.
Han \cite{Han} gives spectral-radius concentration bounds under general
tail assumptions.

At the finer fluctuation scale, Rider \cite{Rider2003} and
Rider--Sinclair \cite{RiderSinclair2014} established Gumbel laws for the
complex and real Ginibre spectral radii, respectively.
Cipolloni--Erd\H{o}s--Xu obtained precise spectral-radius asymptotics
\cite{CEYPrecise} and proved Gumbel universality and a Poisson limit for
the extremal eigenvalues of proper complex iid matrices under moment
assumptions \cite{CEYUniversality}. These results describe shrinking
neighborhoods of the unit circle. For fixed deviations, Xu--Zeng
\cite[Theorem~1.2]{XuZeng} established speed-$n$ large deviation principles
for the spectral radius and the rightmost eigenvalue of both real and
complex Ginibre ensembles. We study fixed-radius deviations beyond
Gaussian entries: the events $\rho(X_n)>r$ for $r>1$ and
$\rho(X_n)\le r$ for $0\le r<1$. Throughout this paper the entries are
subgaussian; in the complex case we also assume zero pseudovariance,
$\E\xi^2=0$.

The two deviations have different mechanisms. An upper deviation can be
created by a single outlying eigenvalue, with a logarithmic probability
cost of order $n$. A lower deviation confines every eigenvalue to a
smaller disk, and our probability bound has a quadratic exponent.
The hypotheses needed for the conclusions also differ.
For upper tails, we prove a universal lower bound under subgaussianity
and identify the exact Gaussian rate under the stronger moment or Laplace
comparison assumptions of Theorem~\ref{u:thm:matching}:
\[
 \lim_{n\to\infty}\frac1n\log\Prob\{\rho(X_n)>r\}=-I_\beta(r),
 \qquad I_\beta(r)=\frac\beta2\bigl(r^2-1-2\log r\bigr),\quad r>1,
\]
where $\beta=1$ for real entries and $\beta=2$ for complex entries.
These upper-tail results allow atomic laws. For lower tails, the common
subgaussian assumptions together with a bounded density give
\[
 \Prob\{\rho(X_n)\le r\}\le e^{-c_{\mu,r}n^2},
 \qquad 0\le r<1,\quad n\ge n_0(\mu,r),
\]
in both the real and complex cases, with $c_{\mu,r}>0$.
We also obtain local
complex eigenvalue deviations outside the unit disk.
Under the combined upper- and lower-tail hypotheses, we obtain a full
speed-$n$ large deviation principle for the spectral radius
(Theorem~\ref{thm:full-ldp}); its rate is infinite below one.
The statements below collect these results with precise hypotheses,
followed by examples and a discussion of their scope.
A proof overview in Subsection~\ref{sec:proof-overview} then explains
how the two probability scales arise. The detailed proofs follow in two parts.

\subsection{Relation to earlier work}

\paragraph{Largest-eigenvalue and operator-norm large deviations for Wigner matrices.}
For Wigner matrices, Guionnet and Husson \cite{GuionnetHusson} obtained
the Gaussian speed-$n$ upper-tail rate for the largest eigenvalue,
and hence for the operator norm, under sharp-subgaussian hypotheses
and suitable regularity assumptions.
Augeri--Guionnet--Husson \cite{AugeriGuionnetHusson} and
Cook--Ducatez--Guionnet \cite{CookDucatezGuionnet} developed nonuniversal
largest-eigenvalue rates for more general subgaussian laws, using
spherical integrals.
For nonconstant variance profiles, Husson \cite{HussonVariance2022} and,
under more general profile assumptions, Ducatez--Guionnet--Husson
\cite{DucatezGuionnetHusson2024} proved speed-$n$ largest-eigenvalue LDPs
for sharp-subgaussian entries, with rates depending on the profile.
For tails heavier than Gaussian, Augeri \cite{AugeriHeavy2016} proved
a largest-eigenvalue LDP at speed $n^{\alpha/2}$ when the entry tails
are stretched exponential with exponent $\alpha\in(0,2)$, under suitable
tail and angular assumptions. In the sparse setting,
Ganguly--Hiesmayr--Nam \cite{GangulyHiesmayrNam2024} studied the largest
eigenvalue of weighted Erd\H{o}s--R\'enyi graphs with constant average
degree, including weights with tails heavier than Gaussian.

\paragraph{Large deviations of empirical spectral measures for Hermitian matrices.}
For Hermitian random matrices, the empirical spectral distribution records
the collective behavior of the eigenvalues and typically has a different deviation scale from an
individual extreme eigenvalue. Ben Arous and Guionnet \cite{BAG1997}
established its speed-$n^2$ LDP for Gaussian Wigner matrices, with a
logarithmic-energy rate function minimized by the semicircle law.
Hiai and Petz proved related quadratic-speed LDPs for Wishart matrices
\cite{HiaiPetzWishart} and, in the unitary setting, for empirical eigenvalue
distributions on the unit circle \cite{HiaiPetzUnitary}.
For general subgaussian Wigner entries, the concentration estimates of
Guionnet and Zeitouni \cite{GZ2000}, combined with truncation, give
quadratic exponential upper bounds for fixed weak deviations from the
semicircle law; such bounds alone do not identify a full LDP or its rate
function.

Beyond this setting, both the speed and the mechanism can change.
Bordenave and Caputo \cite{BordenaveCaputo2014} proved an empirical-measure
LDP at speed $n^{1+\alpha/2}$ for Wigner matrices with
stretched-exponential entry tails of exponent $\alpha\in(0,2)$,
under their tail and angular hypotheses. Augeri \cite{AugeriSparse2025}
obtained a speed-$n^2p_n$ LDP for supercritical sparse Wigner matrices
with bounded entry weights and sparsity satisfying
$\log n\ll np_n\ll n$.
For power-law tails, Bordenave--Guionnet--Male \cite{BordenaveGuionnetMale2024}
proved speed-$n$ empirical-measure LDPs for L\'evy matrices under their
natural normalization, as well as for weighted sparse matrices with
constant average degree.
These results show how collective spectral
deviations depend on the entry tails and sparsity. For a broader account
of random-matrix large deviations, see Guionnet \cite{GuionnetICM2023}.

\paragraph{Outliers of non-Hermitian deformations.}
Our proof of the lower bound for upper-tail probabilities uses an
exponential tilt to make an eigenvalue outside the unit disk typical.
To locate this outlier under the tilted law, we use the theory of
non-Hermitian deformations; see Tao \cite{Tao} for bounded-rank perturbations of iid matrices,
O'Rourke--Renfrew \cite{OR2014} for elliptic matrices, and
Bordenave--Capitaine \cite{BC2016} for additive iid deformations.
Han \cite{HanBand} extends spectral confinement and bounded-rank outlier
results to inhomogeneous band and elliptic models under quantitative
assumptions. In our argument, the required statement also accommodates
a deterministic right deformation bounded in Hilbert--Schmidt norm
and a correlated error vanishing in operator norm; we prove this form
in Lemma~\ref{u:lem:outlier}. The rate calculation additionally requires
the likelihood cost of producing that deformation.
Characteristic-polynomial methods provide another approach to controlling
eigenvalue locations; see \cite{BCG2022,MR2025,HanFiniteRank2025}.

\paragraph{Further large-deviation results for Gaussian and integrable ensembles.}
Complementing the Ginibre results of Xu--Zeng \cite{XuZeng} discussed above,
Byun--Lee--Oh \cite{ByunLeeOh} study upper deviations for elliptic
Ginibre ensembles; their complex Ginibre specialization gives the
Gaussian case of our local eigenvalue-deviation theorem.

For inward deviations, the speed-$n^2$ empirical-measure LDPs of
Ben Arous--Zeitouni \cite{BAZ1998} and Petz--Hiai \cite{PetzHiai1998}
for real and complex Ginibre matrices already imply quadratic
exponential bounds for disk confinement. More precise lower-tail
asymptotics are available in Gaussian and radial integrable models
\cite{CMV2016,Noda2026}. For iid matrices with bounded-density
subgaussian entries, \hyperref[part:lower]{Part~\ref*{part:lower}} proves the spectral-radius
lower-tail estimate $\Prob\{\rho(X_n)\le r\}\le e^{-c_{\mu,r}n^2}$
for every fixed $0<r<1$ and all sufficiently large $n$. A speed-$n^2$ LDP
for the empirical spectral measure remains open in this generality.
\subsection{Common setting}
Let $\F=\R$ or $\C$, and let $\beta=\dim_\R\F\in\{1,2\}$.
We distinguish the entry matrix from its normalization:
\begin{equation}\label{eq:common-normalization}
 \mathsf X_n=(\xi_{ij})_{i,j=1}^n,
 \qquad X_n=\frac{\mathsf X_n}{\sqrt n},
 \qquad \rho(X_n)=\max_{\lambda\in\operatorname{spec}(X_n)}|\lambda|.
\end{equation}
Here $\operatorname{spec}(M)$ denotes the spectrum of a square matrix
$M$, taken in $\C$ even when its entries are real.
The entries are iid with a law $\mu$ independent of $n$;
$\xi\sim\mu$ denotes a generic entry and $\mathbb P_n$ the law of $X_n$.
Our common assumptions are
\begin{equation}\label{u:eq:basic}
 \E\xi=0,\qquad \E|\xi|^2=1,\qquad
 \E e^{c_0|\xi|^2}<\infty\quad\text{for some }c_0>0.
\end{equation}
In the complex case we additionally require zero pseudovariance:
\begin{equation}\label{u:eq:properness}
 \E\xi^2=0.
\end{equation}
Throughout the paper, all complex entry laws satisfy \eqref{u:eq:properness}.
For upper deviations, define
\begin{equation}\label{eq:common-rate}
 I_\beta(r)=\frac\beta2\bigl(r^2-1-2\log r\bigr),\qquad r>1.
\end{equation}
We set $I_\beta(1)=0$ by continuity.

Unmarked vector norms are Euclidean and unmarked matrix norms are operator
norms; $\|v\|_\infty=\max_j|v_j|$ and
$\|M\|_{\HS}^2=\sum_{i,j}|M_{ij}|^2$.
We write $^T$ and $^*$ for transpose and conjugate transpose,
$I$ for the identity of the indicated size, and $e_j$ for a coordinate
unit vector. The real inner product is
$\langle u,w\rangle=\operatorname{Re}(u^*w)$.
For Hermitian matrices, $A\preceq B$ means that $B-A$ is positive
semidefinite, and $A>0$ means positive definite.
The notation $\mathcal L(Z)$ denotes the law of $Z$, and $\mathbf1_E$ the
indicator of an event $E$. For a probability law $\nu$,
$\E_\nu f=\int f\dd\nu$. Integrals in real coordinates use Lebesgue
measure unless specified otherwise. Constants $C,c$ may change from line
to line; subscripts indicate dependence.
A standard complex Gaussian has independent $N(0,1/2)$ real and imaginary
parts, so its second absolute moment is one.

\subsection{Upper tails}

We begin with upper deviations: under the common assumptions above,
their probabilities admit a universal lower bound at speed $n$
with the Gaussian rate $I_\beta$.

\begin{theorem}[Universal lower bound for upper deviations]\label{u:thm:lower}
Under \eqref{u:eq:basic} and, in the complex case, \eqref{u:eq:properness},
\[
 \liminf_{n\to\infty}\frac1n\log\Prob\{\rho(X_n)>r\}\ge-I_\beta(r),
 \qquad r>1.
\]
In the real case the same lower bound holds for the event that $X_n$
has a positive real eigenvalue larger than $r$.
\end{theorem}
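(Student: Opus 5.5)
We will prove the bound by a change of measure that makes an outlier near $r$ typical, together with an entropy (relative-entropy) estimate for the cost of the tilt; see the abstract. Fix $r>1$ and a unit vector $u\in\F^n$ that is delocalized, say $u=n^{-1/2}(1,\dots,1)$. Under the tilted law we want $X_n\approx Y_n+\theta\,uu^*+E_n$, where $Y_n$ is an iid-type matrix, $E_n$ is an error with $\|E_n\|\to0$ in probability, and $\theta$ is chosen so that a bounded-rank deformation of a circular-law matrix produces an outlier at $r$. For the rank-one additive perturbation $\theta uu^*$ the outlier sits at $\theta$ when $|\theta|>1$ (Tao \cite{Tao}). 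The naive cost of this shift, however, is $\frac\beta2 n\theta^2\|uu^*\|_{\HS}^2/n\cdot n$, i.e.\ of order $\frac\beta2 r^2 n$. This does not match $I_\beta(r)$.

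To obtain the sharp constant $\frac\beta2(r^2-1-2\log r)$, we optimize over the deformation. The natural structure, suggested by the Ginibre computation, is a right deformation $X_n(I+D)$ with $D$ bounded in Hilbert--Schmidt norm: a rank-one multiplicative perturbation $I+(a-1)vv^*$ together with an additive rank-one part, matching the statement of Lemma~\ref{u:lem:outlier}. The quantity $-\log r$ should come from the determinant (covariance) change. Concretely, I would tilt the entries in a fixed low-dimensional set of directions by a density of the form $\propto\exp(\langle \Theta, \mathsf X_n\rangle)$ restricted to entries, with a rank-one or rank-two parameter. The first step is to compute the mean shift and covariance change of each entry to second order, using the subgaussian Laplace transform and centering $\E\xi=0$ (and, in the complex case, properness \eqref{u:eq:properness}, so that the tilted second moments stay isotropic). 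In the real case the perturbation is real, so the outlier is a positive real eigenvalue near $r$, which gives the second claim.

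The next step is to verify, via Lemma~\ref{u:lem:outlier}, that under the tilted measure $\Q_n$ we have $\Q_n\{\rho(X_n)>r-\varepsilon\}\to1$. Then apply the standard entropy inequality
\[
 \log\Prob(A)\ge \log\Q_n(A)-\frac{H(\Q_n\mid\Prob)+e^{-1}}{\Q_n(A)},
\]
and compute $H(\Q_n\mid\Prob)/n\to I_\beta(r)+o_\varepsilon(1)$. Because the tilt parameters are $O(n^{-1/2})$ per entry when spread over delocalized vectors, each entry's entropy is quadratic to leading order, $\frac{\beta}{2}|\text{shift}|^2$, and the subgaussian bound \eqref{u:eq:basic} controls the remainders uniformly. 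Finally, let $\varepsilon\to0$ using continuity of $I_\beta$.

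\textbf{Main obstacle.} The hardest step is matching the constant $I_\beta(r)$ rather than a cruder bound. A pure mean-shift tilt gives only $\frac\beta2 r^2$ minus something. The $-1-2\log r$ term requires the outlier to be produced by combining a shift with a variance-type reweighting along one row or column direction, while the tilt preserves the entry support and so cannot literally rescale entries. I would first try tilting a single column direction: replace the column $\mathsf X_n v$ by its law conditioned to correlate with $u$. I would then optimize the resulting two-parameter entropy against the outlier location given by the rank-one master equation, and check that the minimum equals $\frac\beta2(r^2-1-2\log r)$, which is the Ginibre value from Xu--Zeng \cite{XuZeng}.
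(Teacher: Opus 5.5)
There is a genuine gap: the tilts you propose cannot reach $I_\beta(r)$, and the remedy in your last paragraph does not change this. Your change-of-measure framework, including the entropy inequality, is fine. The problem is the choice of tilt.

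\textbf{Fixed-direction tilts cost at least $\frac\beta2 r^2$.} For the linear tilt $\exp\langle\Theta,\mathsf X_n\rangle$, the tilted matrix is, to leading order, $Y_n+M$ with $M$ deterministic of bounded rank. An outlier at $r$ requires $M$ to have an eigenvalue near $r$, so $\norm{M}_{\HS}\ge r-o(1)$ and the cost is at least $\frac\beta2 r^2n(1-o(1))$.

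Reweighting a fixed column $\zeta=\mathsf X_nv$ by a shift plus a variance change does no better. For Gaussian entries, $\mathsf X_n(I-vv^*)$ keeps its law and stays independent of $\zeta$. The outlier equation then reduces to $z\approx v^*\zeta/\sqrt n$, and data processing applied to the scalar $v^*\zeta$ already forces relative entropy at least $(1-o(1))\frac\beta2 r^2n$. The variance change along $v$ costs an extra $\frac\beta2 n(s-1-\log s)\ge0$, so it never saves anything.

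A right deformation cannot help either, since it does not move the outlier: by Lemma~\ref{u:lem:outlier}, $Y_n(I+D_n)+svv^*$ has its outlier at $s$. So there is no ``master equation'' trade-off to optimize, and your two-parameter minimum is $\frac\beta2 r^2>I_\beta(r)$.

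\textbf{The missing idea is to \emph{average} the tilt over the direction.} The paper uses the density $Z(X_n)/\E Z(X_n)$, where
$Z(X)=\int e^{-\beta n\norm{(X-bI)v}^2/(2\eta^2)}\,d\nu_{n,L}(v)$
and $\nu_{n,L}$ is Haar measure restricted to flat unit vectors. Set $w=b/(1+\eta^2)$ and $s=\eta^2/(1+\eta^2)$. For Gaussian entries, each fixed-$v$ component has relative entropy exactly $\frac\beta2 n(w^2+s-1-\log s)$, which is never below $\frac\beta2 nw^2$. The mixture is cheaper by the information that $X_n$ carries about $v$, its approximate eigenvector.

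Quantitatively, the paper argues in three steps.
\begin{enumerate}
\item The identity $\int(v^*Av)^{-\beta n/2}\,d\sigma_n(v)=(\det A)^{-\beta/2}$ bounds $\frac1n\log Z(X_n)$ by
$a_L+\frac\beta2(\log\eta^2+\eta^2-1)-\frac{\beta}{2n}\log\det\bigl((X_n-bI)^*(X_n-bI)+\eta^4I\bigr)$.
\item The regularized log-determinant is at least $2\log b-\eta^2$ off an event of probability $Ce^{-cn^2}$. This exception stays negligible after a tilt of density $e^{O(n)}$.
\item Subtracting the annealed normalizer $\frac\beta2\bigl(\log\frac{\eta^2}{1+\eta^2}-\frac{b^2}{1+\eta^2}\bigr)$ leaves $I_\beta(b)+a_L+O(\eta^2)$.
\end{enumerate}
Thus the $-2\log r$ is the exterior log-determinant of the whole matrix, and the $-1$ comes from normalizing the spherical integral. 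Neither is a per-entry covariance effect, which is why an entrywise second-order expansion cannot produce them.

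\textbf{The tilted matrix also needs a coupling.} Under this mixed tilt, entries within a row are dependent given $v$, so computing entrywise means and covariances does not describe the tilted matrix. You need the sequential coupling of Lemma~\ref{u:lem:coupling}:
$X_n=Y_nT+\frac{b}{1+\eta^2}vv^*+E_n$, with $\norm{T-I}_{\HS}^2\le(2\eta^4)^{-1}$ and $\norm{E_n}\to0$ uniformly in flat $v$.
Only then does Lemma~\ref{u:lem:outlier} yield the outlier at $b/(1+\eta^2)>r$. Flatness is what makes the Gaussian replacement errors vanish, at the cost $a_L\to0$.
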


In the real moment condition below,
$(2m-1)!!=1\cdot3\cdots(2m-1)$ is the odd double factorial.

\Needspace{5\baselineskip}
\begin{theorem}[Matching upper-tail rates]\label{u:thm:matching}
Suppose that one of the following sets of assumptions holds:
\begin{enumerate}
\item $\F=\R$, $\xi\overset{d}=-\xi$, $\E\xi^2=1$, and
      $\E\xi^{2m}\le(2m-1)!!$ for every integer $m\ge1$;
\item $\F=\C$, $\E\xi=0$, $\E|\xi|^2=1$, condition \eqref{u:eq:properness}, and
\begin{equation}\label{u:eq:sharp}
 \E e^{2\operatorname{Re}(\bar u\xi)}\le e^{|u|^2},\qquad u\in\C.
\end{equation}
\end{enumerate}
Then the strict and non-strict right tails both satisfy
\begin{equation}\label{u:eq:matching}
 \lim_{n\to\infty}\frac1n\log\Prob\{\rho(X_n)>r\}
 =\lim_{n\to\infty}\frac1n\log\Prob\{\rho(X_n)\ge r\}
 =-I_\beta(r),\qquad r>1.
\end{equation}

Neither case requires a density. In the complex case the real and
imaginary components may be dependent.
\end{theorem}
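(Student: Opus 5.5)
Since $\{\rho(X_n)>r\}\subset\{\rho(X_n)\ge r\}\subset\{\rho(X_n)>r'\}$ for every $1<r'<r$, and $I_\beta$ is continuous, Theorem~\ref{u:thm:lower} already gives the lower bound for both tails. Indeed, the strict tail at $r$ is bounded below by $-I_\beta(r)$. It therefore remains to prove
\[
\limsup_{n\to\infty}\frac1n\log\Prob\{\rho(X_n)\ge r\}\le -I_\beta(r),\qquad r>1 .
\]

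The plan is to reduce to an eigenvector event and use a Gaussian-dominated moment or Laplace bound. If $\rho(X_n)\ge r$, there are a unit vector $v\in\C^n$ and a $\lambda$ with $|\lambda|\ge r$ and $X_nv=\lambda v$. We cannot afford a union bound over a net of $v$ at full strength, since an $\varepsilon$-net in the sphere has $e^{Cn\log(1/\varepsilon)}$ points. Instead I would use a trace or resolvent-type argument that is insensitive to the eigenvector.

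Concretely, for an integer $k$ growing slowly, say $k=k_n\to\infty$ with $k\ll n$, we have $\rho(X_n)^{2k}\le$ the largest eigenvalue of $(X_n^k)^*X_n^k$, which is at most $\tr\bigl((X_n^k)^*X_n^k\bigr)$. That is too lossy at speed $n$, so instead I would use the Gaussian comparison. In case (1), symmetry plus $\E\xi^{2m}\le(2m-1)!!$ implies that every mixed moment $\E\prod_{ij}\xi_{ij}^{a_{ij}}$ is nonnegative and bounded by the corresponding Ginibre moment. Hence for any polynomial $P$ with nonnegative coefficients in the entries, $\E P(\mathsf X_n)\le\E P(G_n)$. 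In case (2), \eqref{u:eq:sharp} gives $\E e^{\operatorname{Re}\langle U,\mathsf X_n\rangle}\le\E e^{\operatorname{Re}\langle U,G_n\rangle}$ for every deterministic matrix $U$. Both comparisons let us dominate exponential moments of quadratic-type functionals by their Ginibre values. The upper bound is then obtained by an exponential Chebyshev inequality applied to a functional whose Ginibre large deviations are known (Xu--Zeng), or computed directly.

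For the functional, the cleanest choice seems to be the following. On $\{X_nv=\lambda v\}$ with $|\lambda|\ge r$, fix the direction $v$ and integrate it out against a spherical measure. This gives a spherical-integral identity of the form
\[
\Prob\{\rho\ge r\}\lesssim e^{o(n)}\sup_{\theta}\frac{\E\int e^{\theta n\operatorname{Re}\langle v, X_n v\rangle\text{-type terms}}\,\dd v}{\text{volume of tube}} .
\]
I would tilt by $\exp\bigl(\theta\sqrt n\operatorname{Re}(\bar w^{T}\mathsf X_n v)\bigr)$ with $w=\lambda v$ approximately. By the moment or Laplace domination, the annealed Laplace transform over $v$ uniform on the sphere is at most the Gaussian one, namely $e^{\beta\theta^2 n/4}$ up to the appropriate normalization. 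The quenched side must then be bounded below on the event: there the tilt is at least $e^{\theta n r^2}$ up to errors controlled by $\|X_n\|\le C$. That bound holds outside an event of probability $e^{-Cn}$ with arbitrary $C$, by subgaussian concentration. The logarithmic term $-2\log r$ enters from the volume of the set of $v$ that are near-eigenvectors, i.e.\ from the smallest singular values of $X_n-\lambda$ on the range. This volume must be controlled by a determinant $|\det(X_n-\lambda)|^{-1}$ comparison, and the circular law typically gives $\frac1n\log|\det(X_n-\lambda)|\to\log|\lambda|$.

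The main obstacle is this last step: extracting exactly $\beta\log r$ from the volume of near-eigenvectors with only speed-$n$ errors. The logarithmic potential must hold with superexponentially small failure probability $e^{-\omega(n)}$, and the comparison assumptions alone do not give lower-tail control of $\log|\det|$. I would first try to avoid needing that probability. To do so, I would move the determinant into the integrand: bound $\E\,|\det(X_n-\lambda)|^{\beta}\cdot(\text{tilt})$ using the same moment domination. For real $\beta=1$ this requires even powers, so I would instead square and use $\beta=2$ tricks, or use Cauchy--Schwarz. Optimizing over $\theta$ and over $\lambda$ in a finite net of $\{|\lambda|\ge r\}\cap\{|\lambda|\le C\}$ then yields $-I_\beta(r)$.
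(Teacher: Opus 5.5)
Your reduction of the lower bound to Theorem~\ref{u:thm:lower} is fine. The upper-bound mechanism, however, does not work.

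\textbf{The linear tilt cannot see eigenvalues.} A weight linear in $X$, such as $e^{\theta n\operatorname{Re}(\bar\lambda v^*X_nv)}$ averaged over the sphere, is a spherical integral of the Hermitian matrix $\operatorname{Re}(\bar\lambda X_n)$. It therefore detects only the numerical range, not the spectrum. At an eigenvector $v_0$ of a nonnormal $X_n$ this quadratic form is not stationary, so localizing near $v_0$ costs a nonvanishing speed-$n$ volume. No factor $\det(X_n-\lambda)$ ever appears. Worse, the Hermitian part of a Ginibre matrix has spectral edge $\sqrt2$, so any bound driven by the numerical radius is vacuous for $1<r<\sqrt2$.

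\textbf{What the complex case needs.} The weight must penalize the eigenvector residual, which is quadratic in $X$: $Z_z(X)=\int e^{-n\norm{(X-zI)v}^2/\eta^2}\dd\sigma_n(v)$.
\begin{itemize}
\item Its quenched value is bounded below by an explicit determinant, \eqref{u:eq:witness}. Here $\lambda_{\min}(H)=s_{\min}(X_n-zI)^2\le n^{-2}$ at a point $z$ of an $O(n^2)$-point grid lying near any eigenvalue in the annulus.
\item Its annealed value is at most $e^{n\mathcal A_\eta(|z|)+o(n)}$. This uses $\E e^{-|\zeta-t|^2/a}\le e^{-|t|^2/(a+s)}$ and the sharp Laplace bound, after separating the large coordinates of $v$ and replacing only the small ones by a Gaussian (Lemma~\ref{u:lem:full-sphere}).
\end{itemize}
The obstacle you flag is then not a lower-tail problem. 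One only needs the \emph{upper} bound $\frac1n\log\det(H+\tau^2I)\le2\log|z|+M_K^2\tau^2+\delta$ on the annulus, outside an event of probability $e^{-cn^2}$. The regularization removes small singular values, and Talagrand's inequality applied to the convex Lipschitz pieces of the regularized log-determinant gives this under plain subgaussianity. The exterior log-determinant limit supplies its mean (Lemma~\ref{u:lem:bulk}). Your fallback of moving $|\det|^\beta$ into the integrand is not an argument.

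\textbf{Case~(1): you discarded the route that works.} The bound $\rho(X_n)^{2k}\le\norm{X_n^k}_{\HS}^2$ loses only a factor polynomial in $n$; what is lossy is your choice $k\ll n$. Take $k=\lfloor\alpha n\rfloor$.
\begin{itemize}
\item $\norm{X_n^k}_{\HS}^2$ is a polynomial in the entries with nonnegative coefficients, so your own moment comparison gives $\E\norm{X_n^k}_{\HS}^2\le\E\norm{G_n^k}_{\HS}^2$.
\item Markov's inequality combined with $\lim_n\frac1n\log\E\norm{G_n^{k}}_{\HS}^2=F_1(\alpha)=\sup_{t\ge1}\{2\alpha\log t-I_1(t)\}$, optimized at $\alpha=(r^2-1)/2$, yields $-I_1(r)$.
\end{itemize}
The substantive input is this Gaussian power-moment rate at $k\propto n$. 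It requires controlling the nonnormal growth of $G_n^k$, via the conditional real Schur bound with subexponential factor $e^{Ck^{2/3}}$ (Lemma~\ref{u:lem:schur}), together with the real Ginibre right tail of Xu--Zeng.

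A spherical-integral argument is also ill-suited to case~(1). An eigenvalue of modulus at least $r$ of a real matrix may be nonreal, with a complex eigenvector. For real entries the row normalizers over complex $v$ then depend on $v^Tv$ as well as $\norm v$. The bookkeeping must therefore track this pseudovariance and the conjugate eigenvalue, which your outline does not address.
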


Both sets of assumptions imply \eqref{u:eq:basic} and allow discrete
entry laws; examples and their applicable conclusions are collected in
Subsection~\ref{sec:examples}.

\subsection{Lower tails}
Unlike the preceding upper-tail theorems, which allow atomic entry laws,
the following lower-tail bound requires a bounded density on $\R$ or $\C$.

\begin{theorem}[Quadratic lower-tail bound over $\R$ and $\C$]\label{l:thm:MT}
Assume \eqref{u:eq:basic} and, when $\F=\C$, \eqref{u:eq:properness}.
Suppose in addition that $\mu$ has a bounded density with respect to
Lebesgue measure on $\F$ (planar Lebesgue measure when $\F=\C$).
For every fixed $0\le r<1$, there are constants $c_{\mu,r}>0$ and
$n_0(\mu,r)$ such that
\begin{equation}\label{l:eq:M.2}
 \Prob\{\rho(X_n)\le r\}\le e^{-c_{\mu,r}n^2},
 \qquad n\ge n_0(\mu,r).
\end{equation}
\end{theorem}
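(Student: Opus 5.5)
The plan is to convert spectral confinement into an atypically small value of a log-determinant at some point outside the confinement disk, and then show that this costs $e^{-cn^2}$. The second step is the main obstacle.

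\textbf{Step 1: deterministic reduction via Jensen's formula.} Fix $s\in(r,1)$. If $\rho(X_n)\le r$, every eigenvalue $\lambda$ satisfies $|\lambda|<s$. Jensen's formula gives $\frac1{2\pi}\int_0^{2\pi}\log|\lambda-se^{i\theta}|\dd\theta=\log s$ for each such $\lambda$. Averaging over the spectrum,
\[
 \frac1{2\pi}\int_0^{2\pi}\frac1n\log\bigl|\det(X_n-se^{i\theta})\bigr|\dd\theta=\log s .
\]
By the circular law, the typical value of $\frac1n\log|\det(X_n-z)|$ for $|z|<1$ is the logarithmic potential $\frac{|z|^2-1}2$. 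The gap $\delta_s:=\frac{s^2-1}2-\log s$ is strictly positive for $s<1$. So on $\{\rho(X_n)\le r\}$ some point $z$ on the circle $|z|=s$ has
\[
 \frac1n\log|\det(X_n-z)|\le \frac{s^2-1}2-\delta_s .
\]
To discretize, use that on the event the eigenvalues stay at distance $\ge s-r$ from this circle. Hence $z\mapsto\frac1n\log|\det(X_n-z)|$ is $(s-r)^{-1}$-Lipschitz there. A net of $O(1)$ points on the circle, with a margin of $\delta_s/2$, then suffices. This reduces Theorem~\ref{l:thm:MT} to the fixed-$z$ estimate
\[
 \Prob\Bigl\{\tfrac1n\log|\det(X_n-z)|\le\tfrac{|z|^2-1}2-\tfrac{\delta}{2}\Bigr\}\le e^{-cn^2},\qquad |z|<1 .
\]

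\textbf{Step 2: bulk of the singular values.} Write $\log|\det(X_n-z)|=\sum_i\log\sigma_i(X_n-z)$. Split $\log=\log_{\varepsilon}+(\log-\log_{\varepsilon})$, where $\log_\varepsilon(x)=\log\max(x,\varepsilon)$ capped at a large level. First truncate the entries using the subgaussian bound \eqref{u:eq:basic}. Large entries affect only an $O(\eta n^2)$-sized set of positions, at cost $e^{-cn^2}$. The Hermitization $\begin{pmatrix}0&X_n-z\\(X_n-z)^*&0\end{pmatrix}$ then falls under Guionnet--Zeitouni concentration for Lipschitz traces. This gives that $\frac1n\sum_i\log_\varepsilon\sigma_i$ lies within $\delta/8$ of its mean with probability $1-e^{-cn^2}$. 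The mean is within $\delta/8$ of $\frac{|z|^2-1}{2}$ for $\varepsilon$ small, by the known singular-value limit in the circular-law proof.

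\textbf{Step 3 (main obstacle): small singular values at quadratic speed.} It remains to show
\[
 \Prob\Bigl\{\sum_{\sigma_i<\varepsilon}\log\tfrac{\varepsilon}{\sigma_i}\ge \tfrac{\delta}{8}n\Bigr\}\le e^{-cn^2},
\]
which is where the bounded density enters. First I would use a Krylov/Arnoldi or Gram--Schmidt decomposition. Then $|\det(X_n-z)|$, restricted to suitable subspaces, factors as a product of residual norms. Each residual is the projection of a fresh random vector onto an adaptively chosen subspace of dimension $m$. Conditionally on the past, bounded density gives the small-ball bound $\Prob\{\text{residual}\le t\sqrt{m/n}\}\le (Ct)^m$.

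A single row only costs $e^{-m}$, so the key point is to exploit \emph{many} such observations simultaneously. The event forces roughly $\ge\kappa n$ singular values below $\varepsilon'$, for some small $\kappa,\varepsilon'$. For this I would prove a bound of the form $\Prob\{\sigma_{n-k}(X_n-z)\le c k/\sqrt n\}\le e^{-ck^2}$ with $k=\kappa n$. The restricted-invertibility idea is to pick $k$ rows nearly in the span of the others. The key input is a comparison showing that the adaptively chosen orthonormal observations still have jointly bounded density, with a weight absorbing the adaptivity. I expect this weighted comparison to be the hardest step. The logarithmic extremes, where a few $\sigma_i$ are exponentially small, are handled separately by $\Prob\{\sigma_n\le e^{-n}\}\le e^{-cn}$ at each of the $O(n)$ ranks. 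This is harmless once the count of small singular values is controlled at speed $n^2$. Combining Steps 1–3 with a union bound over the net gives \eqref{l:eq:M.2}.
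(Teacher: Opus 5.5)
Your Step~1 is a valid deterministic implication, but the fixed-$z$ estimate it reduces to is false, so Steps~2--3 cannot establish it. Take real Ginibre entries, which have a bounded density and are subgaussian, at $z=0$. Row Gram--Schmidt gives $|\det X_n|\overset{d}{=}n^{-n/2}\prod_{j=1}^n\chi_j$, with independent $\chi_j$ of $j$ degrees of freedom. Hence, for every fixed $D$ and large $n$,
\[
 \Prob\Bigl\{\tfrac1n\log|\det X_n|\le-D\Bigr\}
 \ge\Prob\bigl\{\chi_1\le e^{-(D+1)n}\bigr\}\,
    \Prob\Bigl\{n^{-n/2}\textstyle\prod_{j=2}^n\chi_j\le e^{n}\Bigr\}
 \ge e^{-(D+2)n},
\]
and $-D<-\tfrac12-\tfrac\delta2$ once $D$ is a large constant.

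The same mechanism works at every fixed $z$ in the disk, including your net points on $|z|=s$, and for bounded-density laws such as the uniform law. Conditionally on the other rows, $\det(X_n-z)$ is affine in one row. Making it exponentially small, with one exponentially small singular value and all others typical, costs only $e^{-O(n)}$. So the lower tail of $\frac1n\log|\det(X_n-z)|$ at any fixed point, or at $O(1)$ points, has speed at most $n$, and your union bound can give at best $e^{-cn}$.

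For the same reason, the Step~3 target $\Prob\{\sum_{\sigma_i<\varepsilon}\log(\varepsilon/\sigma_i)\ge\frac\delta8 n\}\le e^{-cn^2}$ is false. The ``logarithmic extremes'' are not harmless: the bound $\Prob\{\sigma_n\le e^{-n}\}\le e^{-cn}$ is the dominant term. Controlling the \emph{number} of small singular values at speed $n^2$ cannot help, because a single tiny singular value already produces the whole deficit.

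The missing idea is to use the confinement event at $\Theta(n)$ evaluation points simultaneously, rather than discarding it in favor of single-point events. The paper separates the $2n-1$ border entries from the $(n-1)\times(n-1)$ core and puts the core in Arnoldi (lower Hessenberg) form. It then evaluates the characteristic polynomial at the $m\asymp n$ roots $z_h$ of $z^m=w$, with $w\in[R^m/2,R^m]$. On $\{\rho(T)\le r\}$ this gives $\prod_h|P_T(z_h)|=\prod_j|w-\lambda_j^m|\le(2w)^n$.

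The negative half-moment of this product over a Gaussian-compared border is computed by two Schur-complement integrations. It is controlled by evaluation determinants which, by the Hessenberg cofactor factorization, equal $m^mw^{m-1}W_{2m}|Q_B(w)|$. Here $W_{2m}=\prod_j(R_j/\sqrt n)^{\min(j,2m-j)}$ carries total weight $m^2$. The result is a bound $\exp\{Cn+\frac{m^2}{4}\log(R/a)\}$ on cores with $W_{2m}\ge a^{m^2}$.

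The remaining $n^2$-scale input is that $W_{2m}$ is not this small outside an event of probability $e^{-cn^2}$ under the original law. This is where the weighted comparison for adaptive orthonormal observations and coordinate deletion enter. The comparison tool you anticipate in Step~3 is indeed the paper's key device, but the paper applies it to the link product, not to small singular values at a fixed $z$. Your Step~2 concentration is essentially fine, but it only handles the bulk, which is not where the difficulty lies.
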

No symmetry of the entry law, positive lower bound on its density, or
smoothness of the density is required.
This is an exponential upper bound for a lower-tail event; it does not
assert a matching lower bound or an exact lower-tail rate function.
The atomic examples below do not satisfy this density hypothesis.
The real uniform law on $[-\sqrt3,\sqrt3]$ and the complex uniform law
on the disk of radius $\sqrt2$ satisfy the hypotheses for both tails.
The proof is given in \hyperref[part:lower]{Part~\ref*{part:lower}}.

\begin{remark}[Zero pseudovariance in the lower-tail bound]\label{rem:lower-pseudovariance}
We expect the complex lower-tail bound in Theorem~\ref{l:thm:MT} to remain
valid without \eqref{u:eq:properness}, retaining centering, unit variance,
subgaussianity, and a bounded planar density. Zero pseudovariance makes
the real covariance a scalar multiple of the identity and substantially
simplifies the Gaussian comparison. Removing it would require a more
involved treatment of general real covariance in the comparison and
Gaussian estimates. We do not prove this extension here.
\end{remark}

\begin{remark}[Bernoulli lower tails]\label{rem:bernoulli}
Symmetric Bernoulli entries satisfy the upper-tail hypotheses, but our
lower-tail argument encounters two obstructions in this case.
First, the comparison for arbitrary observation events in
Theorem~\ref{l:thm:AC} cannot hold for Bernoulli laws: a nonzero deterministic
linear observation has finite support under Bernoulli entries and a
continuous law under Gaussian entries. Taking the event to be that
finite support gives probabilities one and zero, respectively. This
obstruction persists for a flat observation such as
$h^{-1/2}\sum_{j=1}^h\xi_j$.
Second, the border estimate in Proposition~\ref{l:thm:MB} fixes the
$(n-1)\times(n-1)$ core $A$ and averages over the remaining $2n-1$
entries. For independent symmetric Bernoulli entries, every border
event $E$ therefore satisfies
\[
 \Prob(E\mid A)=0
 \quad\text{or}\quad
 \Prob(E\mid A)\ge 2^{-(2n-1)}.
\]
Consequently, a conditional bound of order $e^{-cn^2}$ would require
that no Bernoulli border completion satisfy the event, rather than
merely that its continuous volume be small. An extension thus needs
additional control of the arithmetic feasibility of the constraints
as the core varies.

We also explored a multipoint arithmetic approach, motivated by the
two-point quantitative invertibility estimates of
\cite[Theorems 1.5 and 1.9]{HanTwoPoint}. For real Bernoulli matrices,
this approach would require upper bounds of the form $e^{-c_m n}$ for
simultaneous small-singular-value events at $m$ nonreal shifts, with
$c_m\to\infty$ as $m\to\infty$. We did not obtain such estimates.
An additive error $e^{-cn}$ with $c$ independent of $m$ would prevent
this improvement. These limitations of the available arguments do
not constitute counterexamples to a Bernoulli lower-tail bound;
no such bound is proved here.
\end{remark}

\subsection{A full large deviation principle}\label{sec:full-ldp}
Combining the exact upper-tail rate with the quadratic lower-tail bound
gives a full LDP on the entire state space of the spectral radius.

\begin{theorem}[Full spectral-radius large deviation principle]\label{thm:full-ldp}
Suppose that $\mu$ satisfies either set of hypotheses in
Theorem~\ref{u:thm:matching} and has a bounded density with respect to
Lebesgue measure on $\F$. Then $(\rho(X_n))_{n\ge1}$ satisfies a large
deviation principle on $[0,\infty)$ with speed $n$ and good rate function
\begin{equation}\label{eq:full-ldp-rate}
 J_\beta(r)=
 \begin{cases}
  +\infty, & 0\le r<1,\\[2pt]
  \dfrac\beta2\bigl(r^2-1-2\log r\bigr), & r\ge1.
 \end{cases}
\end{equation}
Precisely, for every Borel set $A\subseteq[0,\infty)$,
\begin{equation}\label{eq:full-ldp-borel}
\begin{split}
 -\inf_{r\in A^\circ}J_\beta(r)
 &\le \liminf_{n\to\infty}\frac1n\log\Prob\{\rho(X_n)\in A\}\\
 &\le \limsup_{n\to\infty}\frac1n\log\Prob\{\rho(X_n)\in A\}
 \le -\inf_{r\in\overline A}J_\beta(r).
\end{split}
\end{equation}
Here interior and closure are relative to $[0,\infty)$, and
$\inf\varnothing=+\infty$.
\end{theorem}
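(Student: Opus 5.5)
The plan is to derive the full LDP from three inputs: Theorem~\ref{u:thm:lower} (via Theorem~\ref{u:thm:matching}), Theorem~\ref{u:thm:matching}, and Theorem~\ref{l:thm:MT}, together with the typical behaviour $\rho(X_n)\to1$ in probability. First I check that $J_\beta$ is a good rate function. It is lower semicontinuous: $J_\beta$ is $+\infty$ on $[0,1)$, continuous on $[1,\infty)$ with $J_\beta(1)=0$, and $\{J_\beta\le a\}$ is closed. The level sets are compact, since $r^2-1-2\log r\to\infty$ as $r\to\infty$, so $\{J_\beta\le a\}\subseteq[1,R_a]$. On $[1,\infty)$, $J_\beta$ is strictly increasing, because its derivative $\beta(r-1/r)$ is positive for $r>1$.

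\textbf{Upper bound for closed sets.} Let $F=\overline A$ and $a=\inf_F J_\beta$. If $1\in F$ then $a=0$ and there is nothing to prove. Otherwise, set $s=\inf(F\cap[1,\infty))$, possibly $+\infty$. Since $F$ is closed and $1\notin F$, there is some $r_0<1$ such that $F\subseteq[0,r_0]\cup[s,\infty)$ when $s>1$. More precisely, $F\cap(r_0,s)=\varnothing$ for $r_0$ close to $1$, because $F$ is closed and avoids $1$. Then
\[
\Prob\{\rho(X_n)\in F\}\le\Prob\{\rho(X_n)\le r_0\}+\Prob\{\rho(X_n)\ge s\}.
\]
Theorem~\ref{l:thm:MT} bounds the first term by $e^{-c n^2}$, which is negligible at speed $n$. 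The case $r_0=0$ is covered because $0\le r<1$ is allowed there. The non-strict tail in Theorem~\ref{u:thm:matching} bounds the second term at rate $I_\beta(s)=a$, using that $J_\beta$ is increasing on $[1,\infty)$. If $s=\infty$, only the first term remains and $a=+\infty$, consistent with a superlinear exponent. If $F\subseteq[0,1)$ is closed in $[0,\infty)$ and avoids $1$, it lies in $[0,r_0]$.

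\textbf{Lower bound for open sets.} Let $G=A^\circ$. It suffices to show, for each $r\in G$ with $r\ge1$, that
\[
\liminf\tfrac1n\log\Prob\{\rho\in G\}\ge -J_\beta(r).
\]
Points in $[0,1)$ contribute $+\infty$ and need nothing. Choose $\delta>0$ with $(r-\delta,r+\delta)\cap[0,\infty)\subseteq G$, and write
\[
\Prob\{\rho\in(r-\delta,r+\delta)\}\ge\Prob\{\rho>r'\}-\Prob\{\rho\ge r+\delta\}
\]
with $r'=\max(r-\delta/2,1)$. When $r'>1$, Theorem~\ref{u:thm:matching} gives exact rates $I_\beta(r')<I_\beta(r+\delta)$, so the difference has rate $I_\beta(r')\le J_\beta(r)+o_\delta(1)$. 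Letting $\delta\to0$ and using continuity of $I_\beta$ finishes this case.

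When $r=1$, I use $r'$ slightly above $1$ and the same subtraction, which gives rate $I_\beta(r')\to0$. Alternatively, one can use $\rho(X_n)\to1$ in probability (Bordenave--Chafa\"i--Garc\'ia-Zelada) to get probability tending to $1$.

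The main obstacle is the subtraction step. It requires strict monotonicity of $I_\beta$ so that the subtracted term is exponentially smaller, and it requires the matching upper bound for the non-strict tail, both of which are available. A second point needing care is the closed-set case with $1\notin F$ but $F$ accumulating near $1$ from below. Since $F$ is closed, this cannot happen without $1\in F$, and this is exactly where the $n^2$-speed bound of Theorem~\ref{l:thm:MT} at a fixed $r_0<1$ is used.
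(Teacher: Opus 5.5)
Your proposal is correct and follows essentially the same route as the paper. The closed-set bound splits $F\subseteq[0,a]\cup[b,\infty)$ and combines the $e^{-cn^2}$ lower-tail bound with the non-strict matching upper tail, and the open-set bound uses $\Prob\{\rho>a\}-\Prob\{\rho\ge b\}$ together with strict monotonicity of $I_\beta$. The only difference is at $r=1$: the paper derives $\rho(X_n)\to1$ in probability from its own two tail estimates rather than citing \cite{BCG2022}, and your alternative of letting $r'\downarrow1$ in the subtraction works equally well.
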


Thus the complex case requires the sharp Laplace bound, zero
pseudovariance, and a bounded planar density, together with centering
and variance normalization. The real case uses symmetry and
Gaussian-dominated even moments with a bounded density; a real sharp
Laplace bound alone is not asserted to suffice. The value $J_\beta(r)=+\infty$
for $r<1$ records superexponential decay at speed $n$. It does not
identify the finer lower-tail rate at speed $n^2$.

\begin{proof}
Write $R_n=\rho(X_n)$. The hypotheses of Theorem~\ref{u:thm:matching}
imply the common subgaussian assumptions, so Theorem~\ref{l:thm:MT}
also applies. For each fixed $a\in[0,1)$, it gives
\begin{equation}\label{eq:ldp-superexponential}
 \limsup_{n\to\infty}\frac1n\log\Prob\{R_n\le a\}=-\infty.
\end{equation}
Together with the upper-tail estimate \eqref{u:eq:matching}, this implies
$R_n\to1$ in probability. Also, $I_\beta$ is continuous and strictly
increasing on $(1,\infty)$, tends to zero at $1$, and tends to infinity
at infinity. Consequently $J_\beta$ is lower semicontinuous and has
compact sublevel sets.

For the open-set lower bound, let $O\subseteq[0,\infty)$ be open.
If $1\in O$, convergence in probability gives $\Prob\{R_n\in O\}\to1$.
For any $r\in O\cap(1,\infty)$, choose $1<a<r<b$ with $(a,b)\subseteq O$.
By \eqref{u:eq:matching},
\[
 \Prob\{a<R_n<b\}
 =\Prob\{R_n>a\}-\Prob\{R_n\ge b\}
 =\exp\{-nI_\beta(a)+o(n)\},
\]
since $I_\beta(b)>I_\beta(a)$ makes the second term exponentially
smaller than the first. Letting $a\uparrow r$ yields the lower bound
$-J_\beta(r)$; taking the supremum over such $r$, and including $r=1$
when present, gives $-\inf_O J_\beta$. If $O\cap[1,\infty)=\varnothing$,
the required lower bound is $-\infty$ and is automatic.

For the closed-set upper bound, let $F\subseteq[0,\infty)$ be closed.
The bound is automatic if $1\in F$ or $F=\varnothing$. Otherwise,
closedness separates $F$ from $1$, so choose $a\in[0,1)$ with
$F\cap[0,1)\subseteq[0,a]$. If $F\cap(1,\infty)\ne\varnothing$,
its minimum $b$ exists and is strictly larger than $1$.
Then $F\subseteq[0,a]\cup[b,\infty)$, and the union bound gives
\[
 \Prob\{R_n\in F\}
 \le\Prob\{R_n\le a\}+\Prob\{R_n\ge b\}.
\]
Equations~\eqref{eq:ldp-superexponential} and~\eqref{u:eq:matching}
give the upper bound $-I_\beta(b)=-\inf_F J_\beta$.
If $F\cap(1,\infty)=\varnothing$, omit the second term and obtain
$-\infty=-\inf_F J_\beta$ from \eqref{eq:ldp-superexponential}.
This treats unbounded closed sets as well. Applying these two bounds
to $A^\circ\subseteq A\subseteq\overline A$ proves
\eqref{eq:full-ldp-borel}.
\end{proof}

\subsection{Local complex eigenvalue deviations}
The next theorem gives the local form of the complex upper-tail estimate.
For complex Ginibre matrices, the corresponding disk rate is a special
case of \cite[Theorem 2.2, ellipticity parameter $\tau=0$]{ByunLeeOh}.
Here the entry law may be non-Gaussian and atomic.

\begin{theorem}[Local eigenvalue deviations in the complex exterior]\label{u:thm:complex-local}
Suppose $\F=\C$ and \eqref{u:eq:basic}--\eqref{u:eq:properness} hold.
For $z\in\C$ and $\varepsilon>0$, write
$B(z,\varepsilon)=\{w\in\C:|w-z|<\varepsilon\}$ and let
\[
 \mathcal E_n(z,\varepsilon)
 =\{\operatorname{spec}(X_n)\cap B(z,\varepsilon)\ne\varnothing\}.
\]
For every $|z|>1$ and $0<\varepsilon<|z|-1$,
\begin{equation}\label{u:eq:complex-local-lower}
 \liminf_{n\to\infty}\frac1n\log\Prob(\mathcal E_n(z,\varepsilon))
 \ge-I_2(|z|-\varepsilon).
\end{equation}
If, in addition, \eqref{u:eq:sharp} holds, then
\begin{equation}\label{u:eq:complex-local}
 \lim_{n\to\infty}\frac1n\log\Prob(\mathcal E_n(z,\varepsilon))
 =-I_2(|z|-\varepsilon).
\end{equation}
In particular, under the sharp bound,
\[
 \lim_{\varepsilon\downarrow0}\lim_{n\to\infty}
 \frac1n\log\Prob(\mathcal E_n(z,\varepsilon))=-I_2(|z|).
\]
\end{theorem}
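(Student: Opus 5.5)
The plan is to prove the two bounds separately: a lower bound from the same tilt as Theorem~\ref{u:thm:lower}, and a matching upper bound by reducing to the spectral-radius tail of Theorem~\ref{u:thm:matching}.

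\textbf{Upper bound (under \eqref{u:eq:sharp}).} Let $w\in B(z,\varepsilon)$ be an eigenvalue. Then $|w|>|z|-\varepsilon$, so
\[
 \mathcal E_n(z,\varepsilon)\subseteq\{\rho(X_n)>|z|-\varepsilon\}.
\]
Since $|z|-\varepsilon>1$, case (2) of Theorem~\ref{u:thm:matching} applies and gives $\limsup_n\frac1n\log\Prob(\mathcal E_n)\le -I_2(|z|-\varepsilon)$. Together with \eqref{u:eq:complex-local-lower} this yields \eqref{u:eq:complex-local}. The final display follows because $I_2$ is continuous at $|z|$.

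\textbf{Lower bound.} I would use the change of measure behind Theorem~\ref{u:thm:lower}, aimed at a target point inside the disk. Fix $\delta>0$ small and set
\[
 w=\frac{z}{|z|}\,(|z|-\varepsilon+\delta)\in B(z,\varepsilon).
\]
Then $|w|>1$ and $B(w,\delta/2)\subseteq B(z,\varepsilon)$. The goal is
\[
 \liminf_n\frac1n\log\Prob\{\operatorname{spec}(X_n)\cap B(w,\delta/2)\ne\varnothing\}\ge -I_2(|w|).
\]
Letting $\delta\downarrow0$ then gives \eqref{u:eq:complex-local-lower} by continuity of $I_2$.

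By rotation invariance of the relevant structure, I would build a rank-one complex outlier at $w$ rather than at a positive real point. Choose a deterministic deformation $D$ with nonzero outlier $w\notin\overline{\mathbb D}$, bounded in Hilbert--Schmidt norm. For instance, take the geometric structure $D=\sum_k \theta\, e_{k+1}e_k^{*}$ truncated with weights $a_k = w^{-k}$. Tilt each entry $\xi_{ij}$ by the exponential factor $e^{2\operatorname{Re}(\bar u_{ij}\xi_{ij})}$, normalized, with $u_{ij}$ chosen so that the tilted mean of $X_n$ is $D+o(1)$ in the sense required by Lemma~\ref{u:lem:outlier}. Lemma~\ref{u:lem:outlier} then says that under the tilted law, with probability tending to one, $X_n$ has an eigenvalue within $\delta/2$ of $w$. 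The tilted matrix is the centered tilted fluctuation plus $D$ plus a correlated error that vanishes in operator norm. The zero pseudovariance \eqref{u:eq:properness} ensures the tilted fluctuations remain asymptotically proper with near-identity covariance, which is the setting of that lemma.

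Restricting the relative entropy to the typical set gives
\[
 \Prob(E)\ge \exp\bigl(-\mathrm{KL}(\text{tilt}\,\|\,\mu^{\otimes})-o(n)\bigr)\,\Prob_{\text{tilt}}(E)\bigl(1-o(1)\bigr).
\]
The KL cost per entry is $|u_{ij}|^2(1+o(1))$ by a second-order expansion of $\log\E e^{2\operatorname{Re}(\bar u\xi)}$, valid since $u_{ij}=O(n^{-1/2})$ and $\xi$ is subgaussian. The total is therefore about $n\|D\|_{\HS}^2$. Optimizing over deformations whose outlier is $w$, I would minimize $\|D\|_{\HS}^2$ subject to $w\in\operatorname{spec}(X+D)$ asymptotically. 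For a rank-one $D=ab^*$ this is the condition $b^*(w-X)^{-1}a\approx1$. Using $\E\,b^*(w-X)^{-1}a\approx b^*a/w$ together with the Neumann-series variance structure, the optimum should reproduce $I_2(|w|)=|w|^2-1-2\log|w|$, exactly as in the real computation but with $\beta=2$.

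\textbf{Main obstacle.} The main obstacle is this last optimization and the verification that the tilted model fits Lemma~\ref{u:lem:outlier} at a nonreal $w$. I would try first to reuse verbatim the deformation from the proof of Theorem~\ref{u:thm:lower} in the complex case, conjugated by the phase $w/|w|$. Multiplying $D$ by a unimodular scalar rotates its outlier and leaves $\|D\|_{\HS}$ unchanged. The complex case of Theorem~\ref{u:thm:lower} then transfers directly to every direction $w/|w|$, giving the bound at $w$.
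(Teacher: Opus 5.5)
Your upper bound (via $\mathcal E_n(z,\varepsilon)\subseteq\{\rho(X_n)>|z|-\varepsilon\}$ and Theorem~\ref{u:thm:matching}) and your reduction from the disk to a point $w$ on the ray through $z$ are exactly the paper's. The gap is in the lower bound at $w$: the entrywise exponential tilt you describe cannot reach the rate $I_2(|w|)$.

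With $u_{ij}=O(n^{-1/2})$, that tilt produces an essentially iid matrix plus a deterministic mean $D$ with $\norm{D}_{\HS}=O(1)$. It costs $n\norm{D}_{\HS}^2(1+o(1))$, and exactly $n\norm{D}_{\HS}^2$ for Ginibre entries. For rank-one $D=ab^*$, your own outlier condition $b^*a\approx w$ already gives $\norm{D}_{\HS}^2=\norm a^2\norm b^2\ge|b^*a|^2=|w|^2$. The variance structure only affects fluctuations, not this first-order location.

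More generally, adapt the Woodbury/determinant argument in the proof of Lemma~\ref{u:lem:outlier} to an additive deformation. Write $D=\mathsf U\mathsf V+E^{(m)}$ with $\norm{\mathsf U}\le\norm{D}_{\op}$ and $\norm{\mathsf V}\le1$, and use $\mathsf V(zI-Y_n)^{-1}\mathsf U\to\mathsf V\mathsf U/z$. This shows that such a matrix has, with probability tending to one, no eigenvalues in $\{|\lambda|\ge\max(1,\norm{D}_{\op})+\epsilon\}$; its outliers sit near the nonzero eigenvalues of $D$. Your truncated weighted shift is nilpotent, so it creates no outlier at all.

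Hence this route gives at best $-|w|^2$. That is strictly weaker than $-I_2(|w|)=-(|w|^2-1-2\log|w|)$, even for Gaussian entries.

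The missing saving $1+2\log|w|$ is not a Hilbert--Schmidt quantity. In the paper it comes from the soft eigenvector tilt $e^{-n\norm{(X-bI)v}^2/\eta^2}$ averaged over flat directions $v$. Under that tilt the matrix is $Y_nT+wvv^*+E_n$, so given $v$ it has the same rank-one spike as yours. But the law is a non-product mixture over $v$ with a right deformation $T$, and its likelihood ratio is bounded via \eqref{u:eq:spherical-upper} and the bulk estimate $F_{\eta^2,n}(b)\ge2\log|b|-\eta^2$ of Lemma~\ref{u:lem:bulk}. The $-2\log|w|$ is the log-determinant $n^{-1}\log|\det(X_n-bI)|^2\approx2\log|b|$, which a fixed deterministic mean shift cannot exploit.

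Your fallback, reusing the proof of Theorem~\ref{u:thm:lower} with a phase, is the right idea and is what the paper does, but not as you justify it. There is no deterministic $D$ priced by $\norm{D}_{\HS}^2$ in that proof to rotate. Also, the statement of Theorem~\ref{u:thm:lower} controls only $\rho(X_n)$, not the argument of the outlier, so nothing transfers ``directly'' from it. What works is to rerun the tilt with the complex target $b=(1+\eta^2)w$:
\begin{itemize}
\item Lemma~\ref{u:lem:flat-annealed} depends only on $|b|$.
\item Lemma~\ref{u:lem:coupling} allows complex row targets $t_i=\sqrt n\,bv_i$.
\item Lemma~\ref{u:lem:outlier} with complex spike $s=w$ puts an eigenvalue in $B(w,\delta)$ with tilted probability tending to one.
\item The cost bound is $a_L+C_\eta(w)$ with $C_\eta(w)=I_2(|w|)+\eta^2|w|^2-\log(1+\eta^2)+2\eta^2$, which tends to $I_2(|w|)$ as $L\to\infty$ and then $\eta\downarrow0$.
\end{itemize}
Equivalently, apply the real-target argument to the rotated entry law $e^{-i\phi}\mu$, which still satisfies \eqref{u:eq:basic}--\eqref{u:eq:properness}, using $Z_{e^{i\phi}b}(e^{i\phi}X)=Z_b(X)$. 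No rotation invariance of $\mu$ is needed, nor is it available.
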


The event concerns an actual eigenvalue near the prescribed complex point,
not merely a small singular value of $X_n-zI$.
The result also applies to the non-rotationally-invariant complex Bernoulli
laws in Subsection~\ref{sec:examples}. Its proof is given in Subsection~\ref{u:sec:complex-local}.

\subsection{Examples and scope}\label{sec:examples}
The examples below distinguish the hypotheses needed for the two tails.
We first give laws satisfying the matching upper-tail assumptions of
Theorem~\ref{u:thm:matching}; a bounded density additionally gives
Theorems~\ref{l:thm:MT} and~\ref{thm:full-ldp}.
Theorem~\ref{u:thm:complex-local} also applies to the complex examples.
The final remark explains why ordinary subgaussianity alone does not
give the Gaussian upper-tail rate.

\paragraph{Bounded-density examples: both tails and the full LDP.}
\textbf{Real uniform law.}
The uniform law on $[-\sqrt3,\sqrt3]$ satisfies the real moment assumptions:
more generally, every
symmetric variance-one law supported on that interval satisfies
$\E\xi^{2m}\le3^{m-1}\le(2m-1)!!$ for $m\ge1$.
The uniform law has a bounded density, as does the following example.

\smallskip\noindent\textbf{A real continuous density.}
A non-Gaussian example is
\[
 f(x)=\frac{3}{4\sqrt5}\max\{1-x^2/5,0\},\qquad
 \E\xi^{2m}=\frac{3\cdot5^m}{(2m+1)(2m+3)}\le(2m-1)!!.
\]
Indeed, the density is symmetric and integrates to one, and the ratio of
successive even moments is $5(2m+1)/(2m+5)\le2m+1$, with $\E\xi^2=1$.

\smallskip\noindent\textbf{Complex uniform disk law.}
The uniform law on the disk of radius $\sqrt2$ has bounded planar density
$1/(2\pi)$. Write $\xi=Re^{i\Theta}$, where $\Theta$ is uniform on
$[0,2\pi)$ and independent of $R$. Then $\E R^2=1$ and
$\E R^{2m}=2^m/(m+1)\le m!$ for $m\ge1$. Angular averaging gives
centering, \eqref{u:eq:properness}, and
\[
 \E e^{2\operatorname{Re}(\bar u\xi)}
 =\sum_{m\ge0}\frac{\E R^{2m}|u|^{2m}}{(m!)^2}\le e^{|u|^2}.
\]
The same calculation applies to any independent radial variable $R\ge0$
with $\E R^2=1$ and $\E R^{2m}\le m!$.

\paragraph{Discrete and singular examples: upper-tail results.}
The Bernoulli laws, Steinhaus law, and radial mixture below have no
Lebesgue density on $\F$. Thus the density assumption of our lower-tail
and full-LDP theorems is not met.

\smallskip\noindent\textbf{Real Bernoulli signs.}
For $\Prob(\xi=1)=\Prob(\xi=-1)=1/2$, symmetry and
$\E\xi^{2m}=1\le(2m-1)!!$ verify the real assumptions.

\smallskip\noindent\textbf{Complex Bernoulli laws: no rotational invariance.}
Let $\varepsilon_1,\varepsilon_2$ be independent
symmetric signs. The complex Bernoulli law
$\xi=(\varepsilon_1+i\varepsilon_2)/\sqrt2$ and its rotation uniform on
$\{1,-1,i,-i\}$ are centered, have variance one, and satisfy
\eqref{u:eq:properness}, although neither law is rotationally invariant.
Writing $u=a+ib$, we have
\[
 \E e^{2\operatorname{Re}(\bar u\xi)}
 =\cosh(\sqrt2a)\cosh(\sqrt2b)\le e^{a^2+b^2},
\]
which proves \eqref{u:eq:sharp}; deterministic rotations preserve all these
conditions. The same factorization works for $(U+iV)/\sqrt2$ whenever
$U,V$ are independent centered variance-one real variables satisfying
$\E e^{tU},\E e^{tV}\le e^{t^2/2}$ for every $t\in\R$.

\smallskip\noindent\textbf{Singular rotationally invariant complex laws.}
The radial calculation above covers the \emph{Steinhaus law}
(uniform on the unit circle, $R=1$) and the \emph{radial mixture}
$\Prob(R=0)=\Prob(R=\sqrt2)=1/2$, for which
$\E R^{2m}=2^{m-1}\le m!$. Both laws are singular with respect to
planar Lebesgue measure.

\paragraph{Sums of independent signs.}
Both Bernoulli examples are special cases of the normalized sign sum
\[
 \xi=\sum_{j\ge1}c_j\varepsilon_j,\qquad
 c_j\in\R^\beta,\qquad
 \sum_{j\ge1}c_jc_j^{\mathsf T}=\beta^{-1}\mathrm{Id}_\beta,
\]
where $(\varepsilon_j)_{j\ge1}$ are independent symmetric signs:
$\Prob(\varepsilon_j=1)=\Prob(\varepsilon_j=-1)=1/2$.
We identify $\F$ with $\R^\beta$ and interpret the sum in $L^2$.
Real Bernoulli uses $c_1=1$; complex Bernoulli uses
$c_1=(1,0)/\sqrt2$ and $c_2=(0,1)/\sqrt2$, with all other coefficients zero.
For a finite sum, fix $a\in\R^\beta$ and an integer $m\ge1$.
Expand the $2m$-th moment of the real projection $\langle a,\xi\rangle$:
only even multiplicities survive, and replacing signs by independent
standard real Gaussians increases every surviving term. Thus
\[
 \E\langle a,\xi\rangle^{2m}
 \le(2m-1)!!\left(\frac{\norm a^2}{\beta}\right)^m,
 \qquad
 \E e^{\langle a,\xi\rangle}
 =\E\cosh\langle a,\xi\rangle
 \le e^{\norm a^2/(2\beta)}.
\]
The last bound follows by expanding $\cosh$.
For infinite sums, Fatou's lemma along an almost surely convergent subsequence
of the $L^2$ partial sums gives the same moment bounds; symmetry and
covariance pass to the limit. These bounds give the real moment
condition and, with $a=2(\operatorname{Re}u,\operatorname{Im}u)$,
the complex condition \eqref{u:eq:sharp}; covariance normalization gives
variance one and \eqref{u:eq:properness} in the complex case.
Finite sign sums are atomic. Infinite sign sums need not be singular;
if their law has a bounded density, the lower-tail bound and full LDP
also apply.

\begin{remark}[Why ordinary subgaussianity does not determine the upper-tail rate]\label{u:rem:ordinary-subgaussian}
The sharp assumptions in Theorem~\ref{u:thm:matching} cannot in general
be replaced by ordinary subgaussianity. A single large diagonal entry
can create an outlier at a cost smaller than the Gaussian rate $I_\beta$.
The following example makes this explicit.

Let $g$ be a standard real or proper complex Gaussian with $\E|g|^2=1$,
and set $\xi=\sqrt V\,g$, where $V$ is independent of $g$, with
$\Prob\{V=4/7\}=7/8$ and $\Prob\{V=4\}=1/8$. Since $\E V=1$, this entry law is centered and has variance one. It has
a bounded smooth density, is subgaussian, and has zero pseudovariance
in the complex case. The variance-$4$ component makes large entries
less costly than under the standard Gaussian law: for every fixed $b>2$,
\[
 \Prob\{|\xi_{11}/\sqrt n-b|\le n^{-1/4}\}
 \ge\exp\bigl[-\beta b^2n/8+o(n)\bigr].
\]
On this event, the normalized matrix has a diagonal entry close to $b$.
Replace this entry by an independent copy to obtain an iid matrix $Y_n$.
Then $X_n=Y_n+b e_1e_1^*+\mathcal E_n$, where $e_1$ is the first coordinate
vector and $\|\mathcal E_n\|_{\op}\to0$ in conditional probability.
Lemma~\ref{u:lem:outlier} gives an eigenvalue near $b$, again with
conditional probability tending to one. Since $b>2$, this gives a lower
bound for $\Prob\{\rho(X_n)\ge2\}$. Letting $b\downarrow2$ yields
\[
 \liminf_{n\to\infty}\frac1n\log\Prob\{\rho(X_n)\ge2\}
 \ge-\frac\beta2
 >-\beta\left(\frac32-\log2\right)=-I_\beta(2).
\]
Thus the upper tail can be strictly larger, on the exponential scale,
than the Gaussian prediction. The universal lower bound in
Theorem~\ref{u:thm:lower} still applies, but need not be sharp.
For Wigner matrices, Cook--Ducatez--Guionnet \cite{CookDucatezGuionnet}
established full speed-$n$ LDPs for the largest eigenvalue beyond sharp
subgaussianity, when the log-Laplace transform of the entry law has bounded
second derivative. We expect an analogous full speed-$n$ LDP for the
spectral radius in the iid non-Hermitian setting beyond the sharp class.
We do not yet have a conjectural formula for the law-dependent rate in
this generality.
\end{remark}

\subsection{The two proof mechanisms}\label{sec:proof-overview}
Theorems~\ref{u:thm:lower} and~\ref{l:thm:MT} involve different scales:
an upper deviation can be produced by one outlying eigenvalue, whereas
a lower deviation confines the whole spectrum. We now explain the main
constructions behind these two bounds, before developing their proofs
in \hyperref[part:upper]{Part~\ref*{part:upper}} and \hyperref[part:lower]{Part~\ref*{part:lower}}.

\textbf{Creating an outlier by a soft eigenvector tilt.}
To prove Theorem~\ref{u:thm:lower}, we seek a change of measure under
which an eigenvalue outside the disk of radius $r>1$ becomes likely,
at a logarithmic cost of order $nI_\beta(r)$. Fix a target $b>r$ and a
unit vector $v$. The eigenvector equation would be $(X_n-bI)v=0$;
we favor matrices with a small residual by weighting them with
\[
 \exp\left\{-\frac{\beta n}{2\eta^2}
                   \|(X_n-bI)v\|^2\right\},\qquad \eta>0,
\]
and normalizing to obtain a probability law. This is the soft eigenvector
tilt: the parameter $\eta$ controls the penalty for failing the eigenvector
equation. We average over suitably flat directions, whose coordinates
are individually small, so that each observation combines many independent
entries. Since the weight is positive, this change of measure preserves
the entry support and applies also to discrete laws.
For related work on non-Hermitian spherical integrals, see
Bousseyroux--Potters \cite{BousseyrouxPottersR,BousseyrouxPottersPhase}.

A small residual alone does not ensure a nearby eigenvalue for a
nonnormal matrix. To obtain an actual outlier, sequential coupling
expresses the tilted matrix as an iid matrix with a deterministic right
deformation, a rank-one signal, and an error small in operator norm.
Smooth replacement follows the Lindeberg principle \cite{Chatterjee2006},
and matrix Freedman \cite{Tropp} controls the coupling error.
Stability of the exterior resolvent then turns this representation into
an eigenvalue near $b$; the required deformation estimate is
Lemma~\ref{u:lem:outlier}. Finally, the normalization and likelihood cost
of the tilt give the original-law probability bound. Sending the smoothing
parameter to zero after $n\to\infty$, and then $b\downarrow r$, yields
$I_\beta(r)$.
For matching upper bounds, the complex argument controls the annealed
soft spherical integral, while the real argument compares moments of
matrix powers with the Gaussian case. These are distinct uses of the
two hypotheses in Theorem~\ref{u:thm:matching}.

\Needspace{11\baselineskip}
\textbf{Accumulating precision through Arnoldi links.}
For the lower tail, decompose the unnormalized matrix $\mathsf X_n=\sqrt n X_n$ into
an $(n-1)\times(n-1)$ core and an independent bordering row and column.
The classical Arnoldi iteration \cite{Arnoldi1951} is run on the core
with an additional independent starter; its Gaussian Hessenberg
structure is related to \cite{KV2014}.
Writing $R_j$ for the Arnoldi residual lengths, the relevant statistic is
\[
 W_k=\prod_{j=1}^{k-1}\left(\frac{R_j}{\sqrt n}\right)^{\min\{j,k-j\}},
 \qquad \sum_{j=1}^{k-1}\min\{j,k-j\}=\frac{k^2}{4},
 \quad k\ \text{even}.
\]
We take $k$ proportional to $n$, so the total weight has order $n^2$.
A characteristic-polynomial evaluation determinant converts a sufficiently
large link product into
a conditional confinement bound of the form $\exp(-ck^2+Cn)$.
Only the $2n-1$ border entries are Gaussianized in this step.
To show that the core has the required links outside an event of
probability $e^{-cn^2}$, we compare adaptive orthonormal observations
under the original and Gaussian laws. A coordinate deletion argument
controls the concentration of the query frame; tilted fiber-density
bounds and a sequential transport coupling yield the weighted comparison.
The reduction is given in Subsection~\ref{l:sec:real_main};
Section~\ref{sec:comparison-links} proves the original-law link estimate.

Both proofs couple variables as they are revealed sequentially and control a
change of measure without an additive error larger than the probability
of interest. Their couplings serve different purposes: \hyperref[part:upper]{Part~\ref*{part:upper}}
constructs a tilted matrix with an outlier, whereas
\hyperref[part:lower]{Part~\ref*{part:lower}} transfers rare observation events between entry laws.

\Needspace{12\baselineskip}
\subsection{Organization}
\hyperref[part:upper]{Part~\ref*{part:upper}}
(Sections~\ref{sec:upper-proofs}--\ref{u:sec:technical-proofs}) proves
the upper-tail results. In \hyperref[part:lower]{Part~\ref*{part:lower}},
Sections~\ref{sec:lower-reduction}--\ref{sec:comparison-links} prove the
real lower-tail bound, and Section~\ref{l:sec:complex} gives the complex
extension.

\subsection{Generative AI usage and Lean formalization.}\label{lean}
The author used ChatGPT Codex to assist with some technical arguments and
manuscript preparation. Several constructions and observations in the
lower-tail proof were suggested by Codex. The author has checked the
arguments and takes full responsibility for their correctness.
The Lean formalizations are complete.\footnote{Upper tails:
\url{https://github.com/hanyi162013-Yihan/spectral-radius-upper-tail-lean}.
Lower tails:
\url{https://github.com/hanyi162013-Yihan/spectral-radius-lower-tail-lean}.}

\part{Upper tails}\label{part:upper}
\section{Outlier construction and the universal lower bound}\label{sec:upper-proofs}
The lower bound for an upper-tail probability has two steps: make an
outlier typical under a tilted law, then compute the cost of returning
to the original law. This section carries out these steps and proves
Theorem~\ref{u:thm:lower}. Section~\ref{sec:upper-matching} proves the
matching upper bounds and the local complex result. The supporting
coupling and Schur calculations are collected in
Section~\ref{u:sec:technical-proofs}.

\subsection{External inputs and common estimates}\label{u:sec:inputs}
We retain the setting of Section~\ref{sec:main-results}.
The exterior resolvent estimates below support the outlier construction.
The bulk determinant estimates have failure probability $Ce^{-cn^2}$,
which stays negligible under the later change of measure, whose density
is bounded by $e^{O(n)}$ for fixed parameters.
All finite-dimensional manipulations and new reductions used below are
proved here. The following established results are external inputs.
We use the deterministic bilinear coefficient estimate from
\cite[Section 2 and Lemma 2.3]{Tao}, and
the classical Bai--Yin fixed-power norm upper bound \cite[Theorem 2.1]{BaiYin};
its precise weak form is also recorded in \cite[Section 1.3, (3)]{Xiang}.
The original Bai--Yin proof is written for real entries. For complex entries,
use the conjugate transpose in the Gram trace and bound each mixed scalar
moment by its absolute moment. Singleton cancellation, the directed-entry
graph count, and the truncation argument are unchanged; no independence
between real and imaginary parts is required.
In particular, for compact
$K\subset\{|z|>1\}$ there are deterministic $M_K,M<\infty$ such that
\begin{equation}\label{u:eq:resolvent}
 \begin{gathered}
 \Prob\left\{\sup_{z\in K}\norm{(zI-X_n)^{-1}}\le M_K,
                 \ \norm{X_n}\le M\right\}\longrightarrow1,\\
 \sup_{z\in K}\left|p_n^*(zI-X_n)^{-1}q_n-\frac{p_n^*q_n}{z}\right|
       \xrightarrow{\Prob}0
 \end{gathered}
\end{equation}
for any deterministic unit-vector sequences $p_n,q_n\in\F^n$.
The same inputs give, uniformly on each such compact $K$,
\begin{equation}\label{u:eq:exterior-logdet}
 \frac1n\log|\det(zI-X_n)|\longrightarrow\log|z|
 \quad\hbox{in probability}.
\end{equation}
Here is a common derivation of \eqref{u:eq:resolvent}--\eqref{u:eq:exterior-logdet}.
For each fixed integer $k\ge1$, the cited inputs give
$p_n^*X_n^kq_n\xrightarrow{\Prob}0$ and
$\Prob\{\norm{X_n^k}\le k+2\}\to1$.
Also $\E|n^{-1}\tr X_n^k|^2\le C_k/n^2$: expand into two closed
directed walks. A nonzero term has at most $k$ distinct directed entries,
each occurring at least twice, and at most $k$ vertices since each has
an outgoing edge. There are $O_k(n^k)$ labelings, with bounded moments
and normalization $n^{-k-2}$.

Choose $1<r_1<r:=\inf_K|z|$ and a fixed integer $m$ with $m+2<r_1^m$.
With probability tending to one, $\norm{X_n}\le M$ and
\[
 \norm{X_n^{am+b}}\le r_1^{am}M^b\quad(a\ge0,\ 0\le b<m).
\]
On this event $\rho(X_n)<r$, and both series
\begin{align*}
 (zI-X_n)^{-1}&=\sum_{k\ge0}\frac{X_n^k}{z^{k+1}},\\
 \frac1n\log|\det(zI-X_n)|
 &=\log|z|-\operatorname{Re}\sum_{k\ge1}\frac{\tr X_n^k}{nkz^k}
\end{align*}
have uniformly geometric tails on $K$. The first series bounds the
resolvent norm. In its bilinear forms and in the second series, each
fixed term with $k\ge1$ tends to zero by the preceding estimates.
First take $n\to\infty$ at fixed truncation order, then send that order
to infinity. This proves both claims uniformly on $K$.

We also use Talagrand's product-space convex-distance inequality
\cite[Theorem 4.1.1 and Lemma 4.1.2]{Talagrand}, Tropp's rectangular matrix
Freedman inequality \cite[Corollary 1.3]{Tropp}, and the real Gaussian
right-tail upper bound \cite[Theorem 1.2]{XuZeng}.
The real Schur change of variables
\cite[Theorem 5.1, Lemma 5.2 and Section 6.1]{Edelman}
is derived in Subsection~\ref{u:sec:schur-distribution}.

\begin{lemma}[Convex concentration for independent blocks]\label{u:lem:concentration}
Let $Z_1,\ldots,Z_{n^2}$ be iid vectors in a fixed Euclidean space with
$\E e^{c\norm{Z_1}^2}<\infty$. For fixed $L,\delta>0$, every convex
$L/n$-Lipschitz function $f_n$ satisfies
\[
 \Prob\{|f_n(Z)-\E f_n(Z)|>\delta\}\le C e^{-c_\delta n^2}
\]
for all sufficiently large $n$, uniformly over such functions.
Here $C>0$ is universal. With $L$ and the block law fixed,
$c_\delta>0$ depends only on $\delta$, independently of $n$ and $f_n$;
dependence on these fixed data is suppressed in the notation.
\end{lemma}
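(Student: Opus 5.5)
The plan is to reduce to Talagrand's convex-distance inequality after truncation, with the truncation cost controlled by the subgaussian tail. Fix a truncation level $K>0$, to be chosen depending on $\delta$. Write $Z_i=\bar Z_i+\tilde Z_i$, where $\bar Z_i=Z_i\mathbf1\{\norm{Z_i}\le K\}$. Since $f_n$ is $L/n$-Lipschitz,
\[
 |f_n(Z)-f_n(\bar Z)|\le\frac Ln\Bigl(\sum_i\norm{\tilde Z_i}^2\Bigr)^{1/2}.
\]
The variables $\norm{\tilde Z_i}^2$ are iid and nonnegative. They satisfy $\E e^{c\norm{\tilde Z_1}^2}<\infty$, and $\E\norm{\tilde Z_1}^2\to0$ as $K\to\infty$. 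A Chernoff bound on $\sum_i\norm{\tilde Z_i}^2$ at exponential parameter $c/2$ then gives
\[
 \Prob\Bigl\{\sum_i\norm{\tilde Z_i}^2>\frac{\delta^2n^2}{16L^2}\Bigr\}
 \le\exp\Bigl(-\frac{c\,\delta^2n^2}{32L^2}+n^2\log\E e^{c\norm{\tilde Z_1}^2/2}\Bigr).
\]
Choosing $K$ large makes $\log\E e^{c\norm{\tilde Z_1}^2/2}$ smaller than half the first coefficient. This bound is uniform in $f_n$.

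Next, I apply Talagrand's convex-distance inequality \cite[Theorem 4.1.1 and Lemma 4.1.2]{Talagrand} to the bounded product vector $\bar Z$. Each block lies in a ball of radius $K$, and $f_n$ is convex and $L/n$-Lipschitz. The standard consequence of convex distance is that for a median $m_n$ of $f_n(\bar Z)$,
\[
 \Prob\{|f_n(\bar Z)-m_n|>t\}\le4\exp\Bigl(-\frac{t^2}{C K^2(L/n)^2}\Bigr)=4e^{-t^2n^2/(CK^2L^2)}.
\]
The blocks are vectors rather than scalars, but convex distance works on any product of bounded sets with diameter $2K$ in each coordinate. So I will verify that the Lipschitz/convex argument of Lemma 4.1.2 goes through with the Euclidean norm on each block. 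Taking $t=\delta/4$ gives a bound of the form $Ce^{-c_\delta n^2}$.

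It remains to replace the median by $\E f_n(Z)$. Integrating the tail bound gives $|\E f_n(\bar Z)-m_n|\le CKL/n$. For the truncation term, the displayed Lipschitz estimate and Cauchy--Schwarz give
\[
 |\E f_n(Z)-\E f_n(\bar Z)|\le L\bigl(\E\norm{\tilde Z_1}^2\bigr)^{1/2},
\]
using $n^2$ blocks. This is below $\delta/8$ once $K$ is large. For large $n$, $CKL/n<\delta/8$ as well.

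Combining the three events by the triangle inequality, $|f_n(Z)-\E f_n(Z)|>\delta$ forces one of two things: either the truncation sum is large, or $|f_n(\bar Z)-m_n|>\delta/4$. Both have probability at most $Ce^{-c_\delta n^2}$, uniformly in $f_n$.

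I expect the main obstacle to be the second step: confirming the convex-distance bound for vector-valued blocks with a constant independent of $n$. If Lemma 4.1.2 is stated only for scalar coordinates in $[0,1]$, I would reduce to that case. Rescaling by $2K$ gives coordinates in the unit ball. The inequality $f(x)\ge f(y)-\norm{\nabla}\cdot$(weighted distance) then holds blockwise, because the convex-hull argument only uses that each block difference has norm at most $2K$.
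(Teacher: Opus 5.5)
Your proposal is correct and follows essentially the same route as the paper's proof: truncate each block at a fixed radius, bound $|f_n(Z)-f_n(\bar Z)|$ by the Lipschitz estimate and control it with a Chernoff bound using the square-exponential moment, apply Talagrand's convex-distance inequality with whole blocks as product coordinates, and pass from the median to $\E f_n(Z)$, including the truncation centering error. Your blockwise check of the convex-hull step, namely that each block difference has norm at most $2K$, is exactly what justifies the paper's remark that no independence within a block is needed.
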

\begin{proof}
Set $Z_j^{(R)}=Z_j\mathbf1_{\norm{Z_j}\le R}$. For fixed $R$,
Talagrand's inequality \cite[Theorem 4.1.1 and Lemma 4.1.2]{Talagrand},
with whole blocks as product coordinates, gives the stated bound for
$f_n(Z^{(R)})$ at any fixed deviation after the standard mean--median
comparison. No independence within a block is needed. Moreover,
\[
 |f_n(Z)-f_n(Z^{(R)})|\le
 \Delta_n:=\frac L n\left(\sum_{j=1}^{n^2}\norm{Z_j}^2
                  \mathbf1_{\norm{Z_j}>R}\right)^{1/2}.
\]
For $0<\theta<c$, dominated convergence gives
$\log\E e^{\theta\norm{Z_1}^2\mathbf1_{\norm{Z_1}>R}}\to0$
as $R\to\infty$. Thus a fixed large $R$ ensures
$\E\Delta_n\le\delta/4$ and, by Chernoff's bound,
$\Prob\{\Delta_n>\delta/4\}\le e^{-c_\delta n^2}$, uniformly in $n$.
Apply the bounded-block result at deviation $\delta/2$ and use the
triangle inequality, including the centering error
$|\E f_n(Z)-\E f_n(Z^{(R)})|\le\E\Delta_n$.
\end{proof}

Define, for $\tau>0$,
\[
 F_{\tau,n}(z)=\frac1n\log\det((X_n-zI)^*(X_n-zI)+\tau^2 I).
\]
\begin{lemma}[Regularized bulk events]\label{u:lem:bulk}
Under \eqref{u:eq:basic}, for each $b\in\C$ with $|b|>1$ and fixed $\tau,\delta>0$,
\begin{equation}\label{u:eq:bulk-lower}
 \Prob\{F_{\tau,n}(b)<2\log|b|-\delta\}\le Ce^{-cn^2}.
\end{equation}
For each compact annulus $K\subset\{|z|>1\}$, a constant $M_K$
depending only on $\mu,K$ satisfies, for fixed $\tau,\delta>0$,
\begin{equation}\label{u:eq:bulk-upper}
 \Prob\left\{\sup_{z\in K}[F_{\tau,n}(z)-2\log|z|]
                   >M_K^2\tau^2+\delta\right\}\le Ce^{-cn^2}.
\end{equation}
Here $c,C>0$ are independent of $n$, but may depend on
$\mu,b,\tau,\delta$ in \eqref{u:eq:bulk-lower} and on
$\mu,K,\tau,\delta$ in \eqref{u:eq:bulk-upper}; they need not be the same
in the two bounds.
\end{lemma}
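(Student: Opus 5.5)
The plan is to combine two ingredients: concentration of the regularized log-determinant around its mean at speed $n^2$, via Lemma~\ref{u:lem:concentration}, and a typical-behaviour estimate for the mean coming from \eqref{u:eq:resolvent}--\eqref{u:eq:exterior-logdet}.

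\textbf{Step 1 (concentration).} Write $F_{\tau,n}(z)=\frac1n\sum_{i}\log(s_i(X_n-zI)^2+\tau^2)$, where $s_i$ are singular values. The function $s\mapsto\log(s^2+\tau^2)$ is not convex. I would therefore follow the Guionnet--Zeitouni device and split it as a convex function minus a convex function, each with derivative bounded by $C/\tau$. For instance, take $g_1(s)=\log(s^2+\tau^2)+Cs^2/\tau^2$ on a bounded range, or, more cleanly, a convex function $g_1$ with $g_1-\log(s^2+\tau^2)$ convex and Lipschitz, after restricting to $\norm{X_n}\le M$.

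The norm event $\{\norm{X_n}\le M\}$ has probability $1-e^{-cn^2}$ for large $M$, by Lemma~\ref{u:lem:concentration} applied to the convex $1/n$-Lipschitz map $\mathsf X\mapsto\norm{\mathsf X}/\sqrt n$ together with $\E\norm{X_n}\le C$. On the ball $\norm{Y}\le M+|z|$, I would replace $\log(s^2+\tau^2)$ by a globally Lipschitz extension and decompose it as $g_1-g_2$, with $g_1,g_2$ convex, nondecreasing on $[0,\infty)$ and $C_\tau$-Lipschitz. For such $g$, the map $Y\mapsto\frac1n\sum_i g(s_i(Y))$ is convex, by Lewis/von Neumann for symmetric spectral functions. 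Its Lipschitz constant in Hilbert--Schmidt norm is $C_\tau/\sqrt n$ as a function of $Y=X_n-zI$, hence $C_\tau/n$ in the entries $\mathsf X_n$. Identify each real or complex entry with a block $Z_j$ in $\R^\beta$; the hypothesis $\E e^{c_0|\xi|^2}<\infty$ is exactly what Lemma~\ref{u:lem:concentration} needs. This gives
\[
\Prob\{|F_{\tau,n}(z)-\E\tilde F_{\tau,n}(z)|>\delta/4\}\le Ce^{-cn^2},
\]
where $\tilde F$ denotes the extended version, which agrees with $F_{\tau,n}$ off an $e^{-cn^2}$ event.

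\textbf{Step 2 (the mean at one point).} For \eqref{u:eq:bulk-lower}, note $F_{\tau,n}(b)\ge\frac2n\log|\det(X_n-bI)|$. By \eqref{u:eq:exterior-logdet}, with probability tending to one $F_{\tau,n}(b)\ge2\log|b|-\delta/4$. Since $\tilde F$ concentrates at speed $n^2$ around its mean, its mean must be at least $2\log|b|-\delta/2$ for large $n$; otherwise a probability close to one would sit outside a $\delta/4$-window. Step 1 then yields \eqref{u:eq:bulk-lower}.

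For the upper bound at a fixed $z\in K$, I would use $\log(s^2+\tau^2)\le\log s^2+\tau^2/s^2$. On the good event of \eqref{u:eq:resolvent}, every singular value of $X_n-zI$ is at least $1/M_K$. Hence $F_{\tau,n}(z)\le\frac2n\log|\det(zI-X_n)|+M_K^2\tau^2$, which is at most $2\log|z|+M_K^2\tau^2+\delta/4$ with probability tending to one. Concentration again transfers this to the mean, and then to the $e^{-cn^2}$ bound pointwise.

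\textbf{Step 3 (uniformity on $K$).} On $\norm{X_n}\le M$, the map $z\mapsto F_{\tau,n}(z)-2\log|z|$ is Lipschitz on $K$ with constant $C(M,\tau,K)$, because $\partial_z$ of the log-determinant is bounded by $2(M+|z|)/\tau^2$. I would take a finite $\delta/(4C)$-net of $K$, whose size is independent of $n$, and apply a union bound over the pointwise estimates. The step I expect to need the most care is the convex decomposition in Step 1, namely making the spectral function simultaneously convex and Lipschitz after truncation. The rest is routine.
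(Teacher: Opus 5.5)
Your architecture matches the paper's. You split $s\mapsto\log(s^2+\tau^2)$ into convex pieces, apply Lemma~\ref{u:lem:concentration} at entry-Lipschitz scale $1/(n\tau)$, identify the mean through \eqref{u:eq:exterior-logdet} and the resolvent event, and finish with a finite net on $K$. However, one step you rely on is false: $\Prob\{\norm{X_n}>M\}$ is \emph{not} $O(e^{-cn^2})$.

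\textbf{Why the norm bound fails.} The map $\mathsf X\mapsto\norm{\mathsf X}/\sqrt n$ is $1/\sqrt n$-Lipschitz in the entries (rank-one perturbations attain this), not $1/n$-Lipschitz, so Lemma~\ref{u:lem:concentration} does not apply to it. In fact a single entry with $|\xi_{11}|>M\sqrt n$ already forces $\norm{X_n}>M$. For Gaussian entries this event has probability $e^{-M^2n/2+o(n)}$, so operator-norm deviations occur at speed $n$.

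As written, this breaks two steps:
\begin{enumerate}
\item[(1)] In Step~1, your $\tilde F$ agrees with $F_{\tau,n}$ only off an event of probability $e^{-O(n)}$. The concentration you transfer to $F_{\tau,n}$ therefore holds only at speed $n$.
\item[(2)] In Step~3, the union bound picks up $\Prob\{\norm{X_n}>M\}$.
\end{enumerate}
Either way you would prove only an $e^{-cn}$ bound, not the claimed $e^{-cn^2}$.

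\textbf{The fix: the norm event is never needed.} The function $h(s)=\log(s^2+\tau^2)$ is already globally $1/\tau$-Lipschitz on $[0,\infty)$. Its derivative $h'(s)=2s/(s^2+\tau^2)$ increases on $[0,\tau]$ to its maximum $1/\tau$ and decreases afterwards. Define $h_1'=h'$ on $[0,\tau]$, $h_1'=1/\tau$ on $(\tau,\infty)$, and $h_2'=h_1'-h'$. Both derivatives are nondecreasing with values in $[0,1/\tau]$. This gives $h=2\log\tau+h_1-h_2$ with $h_1,h_2$ convex, nondecreasing and $1/\tau$-Lipschitz on all of $[0,\infty)$. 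This is exactly the paper's decomposition. The step you expected to need the most care is therefore the easy one, and no truncation is required.

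Likewise, in Step~3, Weyl's inequality $|s_j(X_n-zI)-s_j(X_n-wI)|\le|z-w|$ gives the deterministic bound $|F_{\tau,n}(z)-F_{\tau,n}(w)|\le|z-w|/\tau$. The net argument then needs no good event at all. With these two changes your proof goes through.

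\textbf{A valid alternative in Step~2.} You identify the mean by combining speed-$n^2$ concentration with convergence in probability: both events hold simultaneously for large $n$, which pins $\E F_{\tau,n}$ within $\delta/2$ of the typical value. This is a valid, somewhat slicker substitute for the paper's route. The paper instead passes to expectations using the floor $F_{\tau,n}\ge2\log\tau$ and a uniformly square-integrable upper envelope.
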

\begin{proof}
Write $\log(s^2+\tau^2)=2\log\tau+h_1(s)-h_2(s)$, $s\ge0$, with
$h_1(0)=h_2(0)=0$ and
\[
 h_1'(s)=\begin{cases}2s/(s^2+\tau^2),&s\le\tau,\\1/\tau,&s>\tau,
 \end{cases}\qquad h_2'(s)=h_1'(s)-2s/(s^2+\tau^2).
\]
Both functions are convex, nondecreasing and $1/\tau$-Lipschitz.
Write $s_1(A)\ge\cdots\ge s_n(A)\ge0$ for the singular values of $A$.
To see convexity of the spectral sums, use
\[
 \sum_j(s_j(A)-a)_+
 =\max_{0\le k\le n}\left(\sum_{j=1}^k s_j(A)-ka\right),\qquad
 \sum_{j=1}^k s_j(A)=\max_{U^*U=V^*V=I_k}\operatorname{Re}\tr(U^*AV).
\]
Here $(x)_+=\max(x,0)$ and $U,V$ have $k$ orthonormal columns.
Each $h_\ell$ is a nonnegative integral of the functions $(s-a)_+$,
since $h_\ell'(0)=0$ and $h_\ell''\ge0$.
The singular-value inequality
$\sum_j|s_j(A)-s_j(B)|^2\le\norm{A-B}_{\HS}^2$ also shows that
$n^{-1}\sum_jh_\ell(s_j(X_n-zI))$ is $1/(n\tau)$-Lipschitz in the
original entries.
Lemma~\ref{u:lem:concentration} therefore gives concentration of
$F_{\tau,n}(z)$ at fixed deviations with failure $Ce^{-cn^2}$.

By \eqref{u:eq:exterior-logdet}, the deterministic floor
$F_{\tau,n}\ge2\log\tau$ and the inequality
\[
 F_{\tau,n}(b)\ge2n^{-1}\log|\det(bI-X_n)|
\]
yield
$\liminf_n\E F_{\tau,n}(b)\ge2\log|b|$.
On the resolvent event \eqref{u:eq:resolvent},
\[
 F_{\tau,n}(z)\le2n^{-1}\log|\det(zI-X_n)|+M_K^2\tau^2.
\]
Passing to expectations is justified by the common lower floor and the
uniformly square-integrable upper envelope
\[
 F_{\tau,n}(z)\le
 \log\left(2n^{-2}\sum_{i,j=1}^n|\xi_{ij}|^2+2\sup_K|z|^2+\tau^2\right).
\]
This proves the required upper and lower mean estimates. Concentration
gives \eqref{u:eq:bulk-lower}. Finally
$|F_{\tau,n}(z)-F_{\tau,n}(w)|\le|z-w|/\tau$; a fixed finite net
of $K$ gives \eqref{u:eq:bulk-upper}.
\end{proof}

The next estimate is used in the change of measure and the complex upper
bound; its derivative bounds also enter the coupling proof. The proof
is given in Subsection~\ref{u:sec:coupling-details}.
The replacement estimate uses the Lindeberg principle: matching first
and second moments cancel the first two Taylor terms, leaving a cubic
remainder. See Chatterjee \cite{Chatterjee2006} for a general smooth
invariance principle. We give the Gaussian-kernel and score estimates
needed here explicitly.

\begin{lemma}[Smooth Gaussian replacement and score]\label{u:lem:soft-replacement}
Assume \eqref{u:eq:basic} and, in the complex case, \eqref{u:eq:properness}.
Let $g_j$ be independent standard real or proper complex Gaussians
with $\E|g_j|^2=1$,
and let $\xi_j$ be independent copies of the original entry. Take a finite,
possibly empty coefficient family $(v_j)\subset\F$ with $q:=\sum_j|v_j|^2\le1$,
and set $\epsilon_3:=\sum_j|v_j|^3$. For $a>0$, define
\[
 \mathcal Z_\mu(s)=\E e^{-|s-\sum_jv_j\xi_j|^2/a},\qquad
 \mathcal Z_G(s)=\E e^{-|s-\sum_jv_jg_j|^2/a},\qquad d_{\mathrm G}=a+2q/\beta.
\]
All derivatives are with respect to $s\in\F\simeq\R^\beta$.
In particular, $\nabla$ is the \emph{real gradient}, identified with an
element of $\F$; when $s=x+iy$, $\nabla f=\partial_xf+i\partial_yf$.
With $m_3=\E|\xi|^3+\E|g_1|^3$, one has
\begin{equation}\label{u:eq:smooth-replacement}
 \sup_{s\in\F}\bigl(|\mathcal Z_\mu(s)-\mathcal Z_G(s)|
       +|\nabla\mathcal Z_\mu(s)-\nabla\mathcal Z_G(s)|\bigr)
 \le C_a m_3\epsilon_3.
\end{equation}
Moreover, including $q=0$,
\begin{equation}\label{u:eq:gaussian-soft-formula}
 \mathcal Z_G(s)=\left(\frac{a}{d_{\mathrm G}}\right)^{\beta/2}e^{-|s|^2/d_{\mathrm G}},
 \qquad \nabla\log\mathcal Z_G(s)=-2s/d_{\mathrm G}.
\end{equation}
For each fixed $K<\infty$,
\begin{equation}\label{u:eq:compact-score}
 \sup_{|s|\le K}\left|\nabla\log\mathcal Z_\mu(s)+\frac{2s}{d_{\mathrm G}}\right|
 \le C_{a,K}m_3\epsilon_3.
\end{equation}
The constants are independent of the number and choice of coefficients.
\end{lemma}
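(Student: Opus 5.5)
The plan is to start with the Gaussian formula \eqref{u:eq:gaussian-soft-formula}, which is an exact computation. Write $S_G=\sum_jv_jg_j$. Zero pseudovariance and $\E|g_j|^2=1$ make $S_G$ a centered Gaussian on $\F\simeq\R^\beta$ with real covariance $(q/\beta)\mathrm{Id}_\beta$. The Gaussian integral of $e^{-|s-y|^2/a}$ against $N(0,(q/\beta)\mathrm{Id}_\beta)$ is a convolution of two Gaussian kernels. Per real coordinate it equals $(a/(a+2q/\beta))^{1/2}\exp\{-x^2/(a+2q/\beta)\}$, since $e^{-x^2/a}$ is proportional to a Gaussian density of variance $a/2$, and variances add to $a/2+q/\beta=d_{\mathrm G}/2$. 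Taking the product over the $\beta$ coordinates gives $\mathcal Z_G$, and differentiating the logarithm gives $-2s/d_{\mathrm G}$. The case $q=0$ is trivial.

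For \eqref{u:eq:smooth-replacement} I would run a Lindeberg swap. Fix $s$ and let $\phi_s(y)=e^{-|s-y|^2/a}$ and $\psi_s=\nabla_s\phi_s$. Both are smooth functions of $y\in\R^\beta$, and their third derivatives are bounded by $C_a$ uniformly in $s$ (Gaussian times polynomial). Enumerate the coefficients and form hybrid sums $S^{(k)}=\sum_{j<k}v_jg_j+\sum_{j>k}v_j\xi_j$. For the swap at index $k$, Taylor expand $\phi_s(S^{(k)}+v_k\xi_k)$ and $\phi_s(S^{(k)}+v_kg_k)$ to second order around $S^{(k)}$, which is independent of both $\xi_k$ and $g_k$. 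The first-order terms cancel because the means agree. The second-order terms involve $\E[(v_k\xi_k)(v_k\xi_k)^{\mathsf T}]$ as a real $\beta\times\beta$ matrix, and this equals the Gaussian one. In the real case this is just the variance; in the complex case it follows from $\E|\xi|^2=1$ and $\E\xi^2=0$, since together they force real covariance $\tfrac12\mathrm{Id}_2$. The remainders are at most $C_a|v_k|^3(\E|\xi|^3+\E|g|^3)$. Summing over $k$ gives $C_am_3\epsilon_3$, uniformly in $s$ and in the number of terms. The same argument applied to $\psi_s$ handles the gradients, where I interchange $\nabla_s$ and $\E$ by dominated convergence using bounded derivatives. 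Subgaussianity only enters through $m_3<\infty$.

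The score bound \eqref{u:eq:compact-score} should follow by a quotient argument. Write
\[
\nabla\log\mathcal Z_\mu-\nabla\log\mathcal Z_G=\frac{\nabla\mathcal Z_\mu-\nabla\mathcal Z_G}{\mathcal Z_\mu}+\nabla\mathcal Z_G\Bigl(\frac1{\mathcal Z_\mu}-\frac1{\mathcal Z_G}\Bigr).
\]
The numerators are controlled by \eqref{u:eq:smooth-replacement}. Since $q\le1$, we have $d_{\mathrm G}\le a+2$. Hence on $|s|\le K$ the explicit formula gives the lower bound $\mathcal Z_G\ge c_{a,K}>0$ and $|\nabla\mathcal Z_G|\le C_{a,K}$.

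The main obstacle is a uniform lower bound for $\mathcal Z_\mu$ on $|s|\le K$, because $\epsilon_3$ need not be small. The first case is $C_am_3\epsilon_3\le c_{a,K}/2$. Then $\mathcal Z_\mu\ge c_{a,K}/2$, and the quotient identity yields $C_{a,K}m_3\epsilon_3$. In the complementary case the bound $C_{a,K}m_3\epsilon_3$ is bounded below by a positive constant, so it suffices to bound the score trivially. I would use Jensen: $\mathcal Z_\mu(s)\ge\exp\{-\E|s-S_\mu|^2/a\}=e^{-(|s|^2+q)/a}\ge e^{-(K^2+1)/a}$. For the gradient, $|\nabla\mathcal Z_\mu|\le\E[2|s-S_\mu|a^{-1}\phi_s(S_\mu)]\le C_a$, because $te^{-t^2/a}$ is bounded. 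So the score is at most $C_{a,K}$ in this case, which is at most a constant times $m_3\epsilon_3$ after enlarging $C_{a,K}$. Combining the two cases proves the claim with constants independent of the coefficient family.
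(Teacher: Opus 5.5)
Your proposal is correct and follows the paper's proof. Both use a Lindeberg swap on $e^{-|s-y|^2/a}$ and its gradient (matching means and real covariances), per-coordinate Gaussian convolution for \eqref{u:eq:gaussian-soft-formula}, and the quotient identity for the score. The one difference is that your case split is unnecessary: the Jensen bound $\mathcal Z_\mu(s)\ge e^{-(K^2+1)/a}$ you derive in the second case holds for every coefficient family, and the paper applies it, to both $\mathcal Z_\mu$ and $\mathcal Z_G$, directly in the quotient estimate.
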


\subsection{Support-preserving coupling for upper deviations}\label{u:sec:coupling}
Assume \eqref{u:eq:basic} and, in the complex case, \eqref{u:eq:properness}.
The smoothing parameter $\eta$ is kept fixed while $n\to\infty$ and
sent to zero afterwards.
Fix $\eta>0$, $L\ge1$, $T_0\ge0$, and a deterministic unit vector
$v\in\F^n$ with $\max_j|v_j|\le L/\sqrt n$. For $|t|\le T_0$, define
the scalar normalizer
\[
 \mathcal Z^{\mathrm{row}}_v(t):=\int_{\F^n}
 \exp\left[-\frac\beta{2\eta^2}\left|\sum_{j=1}^n v_jx_j-t\right|^2\right]
 \,\mu^{\otimes n}(\dd x)
\]
and the tilted row law on $\F^n$ by
\begin{equation}\label{u:eq:row-tilt}
 \frac{d\mu_{v,t}}{d\mu^{\otimes n}}(x)
 =\mathcal Z^{\mathrm{row}}_v(t)^{-1}\exp\left(-\frac\beta{2\eta^2}
                  \left|\sum_{j=1}^nv_jx_j-t\right|^2\right).
\end{equation}
Its entries need not be independent. Dependence on $\eta,\beta,n$ is
suppressed in $\mu_{v,t}$ and $\mathcal Z^{\mathrm{row}}_v(t)$. Jensen gives
$e^{-\beta(1+T_0^2)/(2\eta^2)}\le \mathcal Z^{\mathrm{row}}_v(t)\le1$,
so the row density is uniformly bounded.
Constants in the coupling estimates may depend on $\mu,\eta,L,T_0$,
but not on $n,j,v,t$.

For the outlier construction we will take $t_i=\sqrt n\,b v_i$,
with $b\in\F$. The product of the row weights is then the soft
eigenvector weight $e^{-\beta n\|(X-bI)v\|^2/(2\eta^2)}$ for
$X=n^{-1/2}(x_{ij})$.
With this choice, the rank-one term in the coupling below is
$bvv^*/(1+\eta^2)$; we choose $|b|/(1+\eta^2)>1$ to place its outlier
outside the unit disk. Flatness of $v$ ensures $|t_i|\le |b|L$.

\begin{lemma}[Triangular coupling]\label{u:lem:coupling}
Let $v\in\F^n$ be a deterministic unit vector satisfying
\[
 \max_{1\le j\le n}|v_j|\le\frac{L}{\sqrt n}.
\]
Let $\bm t=(t_1,\ldots,t_n)^{\mathsf T}\in\F^n$ be deterministic with
$|t_i|\le T_0$, and let
$\bm x_i=(x_{ij})_{j=1}^n\sim\mu_{v,t_i}$ be independent rows as in \eqref{u:eq:row-tilt}.
There is a coupling to an iid array
$(\zeta_{ij})$ of law $\mu$ such that
\begin{equation}\label{u:eq:coupling}
 n^{-1/2}(x_{ij})=Y_nT+\frac{n^{-1/2}\bm t v^*}{1+\eta^2}+E_n,
 \quad Y_n=n^{-1/2}(\zeta_{ij}),\quad\norm{E_n}_{\op}\xrightarrow{\Prob}0
\end{equation}
uniformly over the specified deterministic parameters.
The error $E_n$ may depend on $Y_n$ and need not have independent entries.
Here, with $d_j=\eta^2+\sum_{k=j}^n|v_k|^2$ for $1\le j\le n+1$
(so $d_{n+1}=\eta^2$),
\begin{equation}\label{u:eq:T}
 T_{jj}=1,\quad T_{kj}=-v_k\bar v_j/d_{k+1}\ (k<j),\quad
 T_{kj}=0\ (k>j),\qquad \norm{T-I}_{\HS}^2\le(2\eta^4)^{-1}.
\end{equation}
\end{lemma}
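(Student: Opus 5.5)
The plan is to build the coupling row by row. Within each row we reveal the coordinates $x_{i1},x_{i2},\dots,x_{in}$ in order and read off each one-dimensional conditional law from Lemma~\ref{u:lem:soft-replacement}. Fix a row $i$ and write $s_j:=t_i-\sum_{k<j}v_kx_{ik}$ for the residual target after $j-1$ coordinates. Put $a=2\eta^2/\beta$. Under $\mu_{v,t_i}$, the conditional law of $x_{ij}$ given $x_{i1},\dots,x_{i,j-1}$ has density with respect to $\mu$ proportional to
\[
 x\mapsto \mathcal Z^{(j+1)}_\mu(s_j-v_jx),
\]
where $\mathcal Z^{(j+1)}_\mu$ is the kernel of Lemma~\ref{u:lem:soft-replacement} built from the coefficients $(v_k)_{k>j}$. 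For these coefficients $q=\sum_{k>j}|v_k|^2$, so $d_{\mathrm G}=(2/\beta)d_{j+1}$. Flatness gives $\epsilon_3\le L^3/\sqrt n$.

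\textbf{Step 1 (one-step tilt).} Since $|v_j|\le L/\sqrt n$, we expand $\log\mathcal Z^{(j+1)}_\mu(s_j-v_jx)$ to first order in $v_jx$. Equation \eqref{u:eq:compact-score} says the score is $-2s/d_{\mathrm G}$ up to $O(\epsilon_3)$. So the conditional law is an exponential tilt of $\mu$ by
\[
 \exp\bigl(\beta\operatorname{Re}(\bar s_jv_jx)/d_{j+1}+O(|v_j|\,|x|\,\epsilon_3)+O(|v_j|^2|x|^2)\bigr).
\]
This requires $|s_j|\le K$. That holds on a high-probability event, because the $s_j$ are partial-sum martingales plus bounded drift; off that event we simply use the trivial bound on the density. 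Using subgaussianity, $\E\xi=0$, $\E|\xi|^2=1$ and, in the complex case, \eqref{u:eq:properness}, the tilted law has conditional mean
\[
 m_{ij}=\bar v_js_j/d_{j+1}+O(|v_j|\epsilon_3+|v_j|^2).
\]
Its total-variation or $W_2$ distance to $\mu(\cdot-m_{ij})$ is $O(|v_j|^2)$. We couple $x_{ij}=\zeta_{ij}+m_{ij}+r_{ij}$, where $\zeta_{ij}\sim\mu$ is fresh and independent of the past, and $r_{ij}$ is a remainder with conditional mean $O(|v_j|\epsilon_3+|v_j|^2)$ and conditional second moment $O(|v_j|^2)$. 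The triangular structure keeps the $\zeta_{ij}$ iid, because each is drawn independently of the past.

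\textbf{Step 2 (solving the linear recursion).} Ignoring the $r$'s and the $O(\cdot)$ corrections in $m$, the recursion reads
\[
 x_{ij}=\zeta_{ij}+\frac{\bar v_j}{d_{j+1}}\Bigl(t_i-\sum_{k<j}v_kx_{ik}\Bigr).
\]
We solve it explicitly. By induction on $j$, using $d_k=d_{k+1}+|v_k|^2$, one checks two identities:
\[
 t_i-\sum_{k\le j}v_kx_{ik}=\frac{d_{j+1}}{d_1}t_i-\sum_{k\le j}\frac{d_{j+1}}{d_{k+1}}v_k\zeta_{ik},
 \qquad
 x_{ij}=\zeta_{ij}-\sum_{k<j}\frac{v_k\bar v_j}{d_{k+1}}\zeta_{ik}+\frac{\bar v_jt_i}{d_1}.
\]
Since $d_1=1+\eta^2$, this is exactly $\zeta T+\bm t v^*/(1+\eta^2)$ in rowwise form, which gives the main terms of \eqref{u:eq:coupling}. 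The Hilbert--Schmidt bound follows from
\[
 \sum_{k<j}\frac{|v_k|^2|v_j|^2}{d_{k+1}^2}\le\sum_k\frac{|v_k|^2(d_{k+1}-\eta^2)}{d_{k+1}^2},
\]
comparing the sum with $\int_{\eta^2}^{1+\eta^2}(u-\eta^2)u^{-2}\,du$ or bounding it crudely. I expect this to give $\le(2\eta^4)^{-1}$ after a short computation.

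\textbf{Step 3 (the error $E_n$, the main obstacle).} The error $E_n$ is $n^{-1/2}$ times the matrix obtained by propagating the $r_{ij}$ and the mean corrections through the same triangular recursion, that is, multiplied by $T$-type matrices of bounded norm. We split it into a predictable part and a martingale part. The predictable part collects the conditional means $O(|v_j|\epsilon_3+|v_j|^2)$. Each row then has Euclidean norm $O(\sqrt n\,(\epsilon_3/\sqrt n+1/n))$, and after the $n^{-1/2}$ normalization the entries are structured. The bound has to be in operator norm, not Hilbert--Schmidt, so I would exploit that this part is close to rank one: it is of the form (row function)$\times\bar v_j$ plus a part with HS norm $o(1)$. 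For the centered part, revealing entries in lexicographic order gives a matrix martingale with increments $n^{-1/2}(r_{ij}-\E[r_{ij}\mid\cdot])e_ie_j^*$. We apply Tropp's rectangular matrix Freedman inequality \cite[Corollary 1.3]{Tropp}. The predictable quadratic variations are diagonal with entries $\sum_j|v_j|^2/n=O(1/n)$ and $\sum_i|v_j|^2/n=O(1/n)$. Truncating the increments at $n^{-1/2}\cdot n^{-1/2}\,\mathrm{polylog}$ using subgaussian tails then gives $\|E_n\|\to0$ in probability, uniformly in $v$ and $t$. The delicate points are making the score expansion uniform off the event $\{|s_j|\le K\}$, and showing that propagating errors through the recursion does not amplify them; the latter follows because the propagation operator is $T$-like, with $\|T\|\le1+(\sqrt2\eta^2)^{-1}$.
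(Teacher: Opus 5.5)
Your architecture matches the paper's: reveal each tilted row entry by entry, couple each conditional law to a fresh $\mu$-entry, read the conditional mean off \eqref{u:eq:compact-score}, solve the triangular system, and write $E_n=(\text{martingale}+\text{regression remainder})T$. However, the central coupling estimate of Step~1 is false for the laws the lemma must cover; it is applied to atomic laws such as Rademacher.

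\textbf{The coupling estimate.} The conditional law $K_{j,s}$ of $x_{ij}$ is an exponential \emph{tilt} of $\mu$ with parameter of order $|v_j||s|$, and a tilt is not close to a \emph{shift}.
\begin{itemize}
\item For Rademacher entries, $K_{j,s}$ lives on $\{\pm1\}$ and $\mu(\cdot-m)$ on $\{\pm1+m\}$, so their total-variation distance is $1$. Even for smooth densities it is of first order in $|v_j|$ unless $\mu$ is Gaussian.
\item For Rademacher entries, every coupling of $x\sim K_{j,s}$ with $\zeta\sim\mu$ has $x-\zeta\in\{0,\pm2\}$. Hence $\E|x-\zeta|^2=2\E|x-\zeta|\ge2|m|$ and $\E|x-\zeta-m|^2\ge2|m|-|m|^2$, which is of order $|v_j||s|$; here $m$ is the conditional mean. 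So your conditional second moment $O(|v_j|^2)$ cannot hold.
\item A total-variation coupling with the shifted law would leave an $O(1)$ remainder.
\end{itemize}
What works, and what the paper does, is a maximal coupling of $K_{j,s}$ with the \emph{unshifted} $\mu$, together with the weighted bound $\int(1+|x|^2)\,|dK_{j,s}/d\mu-1|\,d\mu\le C|v_j|(1+|s|)$ when $|v_j|(1+|s|)$ is small. That bound comes from $|\nabla\log\mathcal Z^{(j+1)}_\mu(s)|\le C\eta^{-2}(1+|s|)$. It gives conditional variance $O(|v_j|(1+|s_{ij}|))$, so the predictable quadratic variations are of order $\max_j|v_j|\sqrt{\log n}\lesssim\sqrt{\log n/n}$, not $1/n$. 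Freedman still closes, but only if you truncate the entries at $C_0\sqrt{\log n}$ (increments of norm $O(\sqrt{\log n/n})$). Your level $n^{-1/2}\cdot n^{-1/2}\,\mathrm{polylog}$ is too low, since $x_{ij}-\zeta_{ij}$ is of order one with conditional probability of order $|v_j|$.

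\textbf{Two further repairs.}
\begin{itemize}
\item \emph{Step~2 denominator.} Your identities do not solve your recursion. With coefficient $\bar v_j/d_{j+1}$ the residual obeys $S_{j+1}=(1-|v_j|^2/d_{j+1})S_j-v_j\zeta_{ij}$, which does not telescope. With $\bar v_j/d_j$ one has $1-|v_j|^2/d_j=d_{j+1}/d_j$, and both displayed formulas follow, giving exactly $T$ and the drift $\bm tv^*/(1+\eta^2)$. Switch to $d_j$ and put the difference $|s||v_j|^3/(d_jd_{j+1})$ into the remainder, as the paper does.
\item \emph{The compact cutoff in Step~1.} For fixed $K$, the event $\{\max_{i,j}|s_{ij}|\le K\}$ does not have high probability; the maximum over $n^2$ entries grows like $\sqrt{\log n}$. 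Letting $K$ grow instead destroys the constants in \eqref{u:eq:compact-score}, which carry $e^{CK^2/\eta^2}$. So the off-compact entries must be handled in $L^2$, and this needs a bound on the conditional mean carrying the factor $|v_j|$ uniformly in $s$.

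The bounded row density plus conditional Jensen only gives $\E\,|m_j(S_j)|^2\mathbf1_{\{|S_j|>K\}}\le Ce^{-cK^2}$ per entry. Summed over $n^2$ entries and divided by $n$, this does not vanish.

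The paper instead uses the symmetrized bound $A_j(s)|m_j(s)|^2\le C\eta^{-2}|v_j|^2$, where $A_j(s)=\int\mathcal Z^{(j+1)}_\mu(s-v_jx)\,\mu(dx)$. It combines this with the prefix density ratio $A_j(S_j)/\mathcal Z^{\mathrm{row}}_v(t)$ to get $\E\norm{n^{-1/2}(r_j(s_{ij}))}_{\HS}^2\le C_{\eta,K}n^{-1}+C_\eta e^{-cK^2}$, and sends $K\to\infty$ after $n\to\infty$.
\end{itemize}

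Once these fixes are in place, the whole predictable part has Hilbert--Schmidt norm $o(1)$, so no near-rank-one structure is needed. The bound $\norm{T-I}_{\HS}^2\le(2\eta^4)^{-1}$ follows directly from $d_{k+1}\ge\eta^2$ and $\sum_{k<j}|v_k|^2|v_j|^2\le1/2$.
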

\paragraph{Proof outline.} Reveal each tilted row sequentially and couple its conditional
entries to independent original entries. The conditional means give an
upper triangular regression system, whose inverse is $T$. The centered
coupling errors form a matrix martingale; truncation and matrix Freedman
control their operator norm, while the regression remainder is controlled
in Hilbert--Schmidt mean square.
The complete proof is in Subsection~\ref{u:sec:coupling-details}.

The finite-rank resolvent mechanism is classical in outlier theory
\cite{Tao,OR2014,BC2016}; see also \cite{HanBand} for band and elliptic
models. The coupling controls $T-I$ by a uniform Hilbert--Schmidt
bound. The following version treats such right deformations together
with the possibly correlated small error in \eqref{u:eq:coupling}.

\begin{lemma}[Right deformations and actual outliers]\label{u:lem:outlier}
Let $Y_n$ have the same law as $X_n$.
If $D_n$ is deterministic with $\norm{D_n}_{\HS}\le C$, then
$Y_n(I+D_n)$ has the exterior resolvent bounds and bilinear limits
\eqref{u:eq:resolvent}, with constants also depending on $C$.
For every random matrix sequence $\mathcal E_n^{\rm pert}$ satisfying
$\norm{\mathcal E_n^{\rm pert}}_{\op}\xrightarrow{\Prob}0$, they also
hold for $Y_n(I+D_n)+\mathcal E_n^{\rm pert}$.
For each fixed $s\in\F$ with $|s|>1$, the matrix
\[
 Y_n(I+D_n)+s v_nv_n^*+\mathcal E_n^{\rm pert}
\]
has an eigenvalue converging in probability to $s$,
for deterministic unit vectors $v_n$.
No independence from $Y_n$, or among the entries of
$\mathcal E_n^{\rm pert}$, is required.
\end{lemma}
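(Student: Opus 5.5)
The proof has three steps, in this order: the exterior resolvent bounds for $Y_n(I+D_n)$, their stability under the small perturbation $\mathcal E_n^{\rm pert}$, and the rank-one outlier argument.

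\textbf{Step 1: the deformed matrix $Y_n(I+D_n)$.} The norm bound is immediate, since $\norm{Y_n(I+D_n)}\le\norm{Y_n}(1+C)$.

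For the resolvent I would not attempt a Neumann series. The spectral radius of $Y_nD_n$ need not be small, because $\norm{D_n}_{\op}\le C$ is merely bounded. Instead I would exploit that $D_n$ is Hilbert--Schmidt bounded. Fix $\epsilon>0$ and split $D_n=D_n^{(1)}+D_n^{(2)}$, where $D_n^{(1)}$ has rank at most $k=k(\epsilon,C)$ and $\norm{D_n^{(2)}}_{\op}\le\epsilon$. This is possible by truncating the singular value decomposition: at most $C^2/\epsilon^2$ singular values exceed $\epsilon$.

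Next write
\[
 zI-Y_n(I+D_n^{(2)})=(zI-Y_n)\bigl(I-(zI-Y_n)^{-1}Y_nD_n^{(2)}\bigr).
\]
On the event \eqref{u:eq:resolvent} for $Y_n$, the second factor satisfies $\norm{(zI-Y_n)^{-1}Y_nD_n^{(2)}}\le M_KM\epsilon<1/2$ once $\epsilon$ is small. So this matrix is invertible, with inverse norm at most $2M_K$.

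The finite-rank part $Y_nD_n^{(1)}=Y_nUV^*$ is handled by the Woodbury identity. The matrix to invert is the $k\times k$ matrix
\[
 I_k-V^*(zI-Y_n(I+D_n^{(2)}))^{-1}Y_nU.
\]
Its entries are bilinear forms of $(zI-Y_n)^{-1}Y_n=z(zI-Y_n)^{-1}-I$, perturbed by $O(\epsilon)$. By \eqref{u:eq:resolvent} these bilinear forms converge to $z\cdot V^*U/z-V^*U=0$, up to $O(\epsilon)$ errors. The $k\times k$ matrix is therefore $I_k+O(\epsilon)$, uniformly on $K$ via a finite net and the Lipschitz dependence on $z$. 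This gives the resolvent bound.

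The bilinear limits follow from the same expansions. The key identity is $p^*(zI-Y_n(I+D))^{-1}q\approx p^*q/z$, because every correction term contains a factor $(zI-Y_n)^{-1}Y_n$ sandwiched between deterministic vectors, and such factors vanish in the limit. The corrections coming from $D^{(2)}$ are $O(\epsilon)$. I expect this step, making the $\epsilon$-splitting uniform over $z\in K$ while keeping everything deterministic apart from $Y_n$, to be the main obstacle. A second-resolvent-identity bookkeeping then lets $n\to\infty$ first and $\epsilon\to0$ afterwards.

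\textbf{Step 2: adding $\mathcal E_n^{\rm pert}$.} Write $R(z)$ for the resolvent of $Y_n(I+D_n)$. If $\norm{R}\le M'$ and $\norm{\mathcal E}\le 1/(2M')$, the resolvent identity gives
\[
 \norm{(zI-Y_n(I+D_n)-\mathcal E)^{-1}-R(z)}\le 2M'^2\norm{\mathcal E}\to0
\]
uniformly on $K$. This argument is purely deterministic on the good event, so it needs no independence between $\mathcal E_n^{\rm pert}$ and $Y_n$, and the bilinear limits persist.

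\textbf{Step 3: the outlier near $s$.} Set $A_n=Y_n(I+D_n)+\mathcal E_n^{\rm pert}$ with resolvent $G(z)$. Take the compact annulus $K=\{\,||z|-|s||\le\delta\,\}$, with $\delta<|s|-1$. By Step 2, $A_n$ has no eigenvalues in $K$ with high probability, and there
\[
 \det(zI-A_n-svv^*)=\det(zI-A_n)\bigl(1-s\,v^*G(z)v\bigr).
\]
Define $f_n(z)=1-s\,v^*G(z)v$. It is analytic on a neighborhood of $K$ and converges uniformly in probability to $1-s/z$, which has the single simple zero $z=s$. Hurwitz's theorem, or Rouché on the circle $|z-s|=\delta'$, then yields exactly one zero of $f_n$ within $\delta'$ of $s$ with probability tending to one. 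Such a zero is an eigenvalue of the full matrix. Since $\delta'$ is arbitrary, this eigenvalue converges in probability to $s$.
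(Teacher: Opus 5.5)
Your proposal is correct and is essentially the paper's argument. You split $D_n$ by a truncated singular value decomposition into a bounded-rank part and a part of small operator norm, use Woodbury for the former and Neumann series for the latter and for $\mathcal E_n^{\rm pert}$, and finish with the determinant lemma and Rouch\'e.

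The only difference is the order of the two absorption steps. You absorb the small-norm part first and apply Woodbury to the resolvent $R_2$ of $Y_n(I+D_n^{(2)})$, whereas the paper applies Woodbury to $(zI-Y_n)^{-1}$ and restores the remainder afterwards. Your order also works, provided the $O(\epsilon)$ error in the $k\times k$ Woodbury matrix is bounded in operator norm by $\norm{V}\,\norm{R_2-(zI-Y_n)^{-1}}\,\norm{Y_n}\,\norm{U}$ rather than entrywise, since $k$ grows like $C^2/\epsilon^2$.
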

\begin{proof}
Fix a compact set $K\subset\{|z|>1\}$. We use $m$ for a
\emph{truncation rank}, not for a matrix-power exponent; $n$ remains
the matrix dimension. For each fixed $m\ge1$ and $n>m$, truncate the
singular-value decomposition of $D_n$:
\[
 D_n=D_n^{(m)}+E_n^{(m)},\qquad
 D_n^{(m)}=\mathsf U_{n,m}\mathsf V_{n,m},
\]
where $\mathsf U_{n,m}\in\F^{n\times m}$, $\mathsf V_{n,m}\in\F^{m\times n}$,
$\norm{\mathsf U_{n,m}}\le C$, $\norm{\mathsf V_{n,m}}\le1$, and
$\norm{E_n^{(m)}}\le C/\sqrt{m+1}$.
The last bound follows from
$(m+1)s_{m+1}(D_n)^2\le\norm{D_n}_{\HS}^2\le C^2$.
Thus $D_n^{(m)}$ has fixed rank but need not have small norm, whereas
the remainder $E_n^{(m)}$ has small operator norm for large $m$.
Define the baseline and truncated resolvents by
\[
 R(z)=(zI-Y_n)^{-1},\qquad
 R^{(m)}(z)=(zI-Y_n(I+D_n^{(m)}))^{-1}.
\]
Since $R(z)Y_n=zR(z)-I$, the deterministic bilinear limits in
\eqref{u:eq:resolvent} imply
\[
 \mathsf V_{n,m}R(z)Y_n\mathsf U_{n,m}
 =z\mathsf V_{n,m}R(z)\mathsf U_{n,m}-\mathsf V_{n,m}\mathsf U_{n,m},
\]
and, for each fixed $m$,
\[
 \sup_{z\in K}\norm{\mathsf V_{n,m}R(z)Y_n\mathsf U_{n,m}}_{\op}
 \xrightarrow{\Prob}0.
\]
Indeed, all $m^2$ scalar entries converge uniformly to zero in
probability, and $m$ is fixed. With probability tending to one,
$I_m-\mathsf V_{n,m}R(z)Y_n\mathsf U_{n,m}$ is invertible and its
inverse has norm at most $2$, uniformly on $K$. Woodbury gives
\[
 R^{(m)}(z)=R(z)+R(z)Y_n\mathsf U_{n,m}
       (I_m-\mathsf V_{n,m}R(z)Y_n\mathsf U_{n,m})^{-1}\mathsf V_{n,m}R(z).
\]
Its norm is at most $C_0=M_K+2M_K^2MC$, independent of $m$, with
probability tending to one for every fixed $m$. For deterministic
unit vectors $p_n,q_n$, use $R(z)Y_n=zR(z)-I$, bound the middle
inverse in the Woodbury correction by $2$, and use
$\norm{\mathsf V_{n,m}R(z)q_n}\le
\norm{\mathsf V_{n,m}}\norm{R(z)}\norm{q_n}\le M_K$
to obtain, on the above events,
\[
 \sup_{z\in K}|p_n^*(R^{(m)}(z)-R(z))q_n|
 \le 2M_K\sup_{z\in K}\norm{p_n^*(zR(z)-I)\mathsf U_{n,m}}
 \xrightarrow{\Prob}0.
\]
Indeed, the row vector inside the norm has $m$ coordinates, each tending
uniformly to zero by \eqref{u:eq:resolvent}; the columns of
$\mathsf U_{n,m}$ are deterministic and bounded, and $m$ is fixed.

To restore the remainder, write $R_D(z)=(zI-Y_n(I+D_n))^{-1}$ and
$\varepsilon_m=MC/\sqrt{m+1}$. Choose $m$ with $C_0\varepsilon_m<1/2$.
Since $\norm{Y_nE_n^{(m)}}\le\varepsilon_m$ with probability tending
to one, the Neumann series gives, uniformly on $K$,
\[
 R_D=(I-R^{(m)}Y_nE_n^{(m)})^{-1}R^{(m)},\qquad
 \norm{R_D}\le\frac{C_0}{1-C_0\varepsilon_m}\le2C_0.
\]
The resolvent identity then yields
\[
 \sup_{z\in K}\norm{R_D(z)-R^{(m)}(z)}
 =\sup_{z\in K}\norm{R_D(z)Y_nE_n^{(m)}R^{(m)}(z)}
 \le2C_0^2\varepsilon_m.
\]
Consequently, on these events,
\begin{align*}
 \sup_{z\in K}\left|p_n^*R_D(z)q_n-\frac{p_n^*q_n}{z}\right|
 &\le \sup_{z\in K}|p_n^*(R^{(m)}(z)-R(z))q_n|\\
 &\quad+\sup_{z\in K}\left|p_n^*R(z)q_n-\frac{p_n^*q_n}{z}\right|
       +2C_0^2\varepsilon_m.
\end{align*}
For fixed $m$, the first two terms tend to zero in probability as
$n\to\infty$; then $m\to\infty$ removes the last term. This proves
both the resolvent bound and the bilinear limits. Another Neumann
series absorbs $\mathcal E_n^{\rm pert}$ using
$\norm{\mathcal E_n^{\rm pert}}_{\op}\xrightarrow{\Prob}0$, with no
independence requirement.

For the outlier assertion, use the perturbation from the statement and set
$W_n=Y_n(I+D_n)+\mathcal E_n^{\rm pert}$. We seek an outlier of
$W_n+s v_nv_n^*$. On a small closed disk about
$s$ contained in $\{|z|>1\}$, $W_n$ has no resolvent poles and
$1-sv_n^*(zI-W_n)^{-1}v_n\to1-s/z$ uniformly. The determinant lemma
and Rouch\'e's theorem give exactly one eigenvalue in the disk.
For real data it is real by conjugation symmetry. For fixed $K,C,s$,
these probability bounds are uniform over the permitted deterministic
matrices and unit vectors, provided the perturbation error converges
uniformly over these data. This follows by a subsequence contradiction;
the argument does not require simultaneous control over directions
selected from the data.
\end{proof}

\subsection{Likelihood cost and the upper-tail lower bound}
The coupling makes the outlier typical under a tilted law. To return to
the original law, we need the averaged normalizer of the tilt and an
upper bound on its weight for a typical tilted matrix.
Let $\sigma_n$ be normalized Haar measure on the unit sphere in $\F^n$.

The following lemma collects the deterministic sphere estimates used
in the lower and upper bounds for upper-tail probabilities.
\begin{lemma}[Spherical integral identity and bounds]\label{u:lem:spherical-polar}
Write $p=\beta n/2$. We use the gamma function
$\Gamma(s)=\int_0^\infty x^{s-1}e^{-x}\dd x$ for $s>0$, so
$\Gamma(n)=(n-1)!$, and write $\lambda_{\min}(H)$ for the smallest
eigenvalue of $H$. For every $A=A^*>0$ in $\F^{n\times n}$,
\begin{equation}\label{u:eq:spherical-negative-moment}
 \int(v^*Av)^{-p}\dd\sigma_n(v)=(\det A)^{-\beta/2}.
\end{equation}
For $H\ge0$ and $\eta>0$, set
$\mathcal I_\eta(H)=\int e^{-pv^*Hv/\eta^2}\dd\sigma_n(v)$.
Then
\begin{equation}\label{u:eq:spherical-upper}
 \frac1n\log\mathcal I_\eta(H)
 \le\frac\beta2\left(\log\eta^2+\eta^2-1\right)
       -\frac\beta{2n}\log\det(H+\eta^4I).
\end{equation}
In the complex case, for $n\ge2$, one also has
\begin{equation}\label{u:eq:witness}
 \mathcal I_\eta(H)
 \ge\frac{\Gamma(n)}{n^{n-1}}
       \frac{\eta^{2n}}{\det(H+2\eta^2I)}
       e^{-n\lambda_{\min}(H)/\eta^2}.
\end{equation}
\end{lemma}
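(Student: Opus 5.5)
The plan treats the three claims separately, all via the Gaussian polar decomposition of the sphere. Let $g$ be a standard real ($\beta=1$) or proper complex ($\beta=2$) Gaussian vector in $\F^n$ with $\E|g_j|^2=1$. Then $g=\|g\|v$ with $v\sim\sigma_n$ independent of $\|g\|$, and $\beta\|g\|^2$ has the $\chi^2_{\beta n}$ law, i.e. $\beta\|g\|^2/2\sim\Gamma(p,1)$ with $p=\beta n/2$.

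\emph{Identity \eqref{u:eq:spherical-negative-moment}.} Compute $\E e^{-\beta g^*Ag/2}$ in two ways. Directly, Gaussian integration gives $(\det A)^{-\beta/2}$. Through polar coordinates, condition on $v$ and integrate the radial variable $R=\beta\|g\|^2/2\sim\Gamma(p,1)$. This gives
\[
\int\E e^{-R\,v^*Av}\dd\sigma_n(v)=\int(1+v^*Av)^{-p}\dd\sigma_n(v).
\]
That is not yet the stated form, so I will instead pass through $A-I$ by homogeneity. Replace $A$ by $tA$ and use the Mellin form $\int_0^\infty t^{p-1}e^{-t x}\dd t=\Gamma(p)x^{-p}$. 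Concretely, compute
\[
\int_{\F^n}e^{-\beta x^*Ax/2}\dd x
\]
in polar coordinates $x=\rho v$. The surface measure is $\rho^{\beta n-1}\dd\rho\,\dd\sigma_n$ times the sphere area $|S|$. The radial integral equals $|S|\,\Gamma(p)\,(\beta v^*Av/2)^{-p}/2$, and the same computation with $A=I$ fixes the constant. The ratio of the two integrals is then exactly $\int(v^*Av)^{-p}\dd\sigma_n=(\det A)^{-\beta/2}$.

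\emph{Upper bound \eqref{u:eq:spherical-upper}.} I would use the elementary inequality $e^{-y}\le C_\kappa(\kappa+y)^{-p}$ with $\kappa>0$. Optimizing $\sup_{y\ge0}(\kappa+y)^pe^{-y}$ gives the maximizer $y=p-\kappa$ when $p>\kappa$, so the supremum is $p^pe^{\kappa-p}$. Apply this with $y=pv^*Hv/\eta^2$, so that
\[
e^{-pv^*Hv/\eta^2}\le p^pe^{\kappa-p}\,(p/\eta^2)^{-p}\,\bigl(v^*(H+\kappa\eta^2/p\,I)v\bigr)^{-p}.
\]
Choose $\kappa=p\eta^2$, making the shift $\eta^4I$. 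Integrating with \eqref{u:eq:spherical-negative-moment} gives
\[
\mathcal I_\eta\le \eta^{2p}e^{p\eta^2-p}\det(H+\eta^4I)^{-\beta/2}.
\]
Dividing the logarithm by $n$ yields exactly $\frac\beta2(\log\eta^2+\eta^2-1)-\frac{\beta}{2n}\log\det(H+\eta^4I)$. The case $p\le\kappa$, i.e. $\eta\ge1$, gives supremum $\kappa^p$, which is at most the same bound since $e^{x-1}\ge x$. So this step is routine.

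\emph{Witness lower bound \eqref{u:eq:witness}, $\beta=2$, $p=n$.} This is the main obstacle, since lower bounds on spherical integrals need localization. Let $u$ be a unit eigenvector of $\lambda_{\min}(H)$. I would first try to write
\[
\mathcal I_\eta=\frac{1}{\Gamma(n)}\int_0^\infty t^{n-1}\ldots
\]
via a Gaussian representation $\E e^{-g^*(H/\eta^2)g\cdot(n/\|g\|^2)}$. Instead, a cleaner route is to lower bound $e^{-nv^*Hv/\eta^2}$ on the sphere by comparing with a Gaussian centered near $u$. For a complex Gaussian vector $w$ with density proportional to $e^{-w^*(H+2\eta^2 I)w/\eta^2}$, times a linear factor $|u^*w|^{2(n-1)}$ or $e^{2n\operatorname{Re}(u^*w)}$ to bias toward $u$, project to the sphere. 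Use $v^*Hv\le \lambda_{\min}+(v-u)^*H(v-u)$-type convexity estimates, and Jensen in the radial variable. The constants $\Gamma(n)/n^{n-1}$ and $\eta^{2n}/\det(H+2\eta^2I)$ strongly suggest this scheme.

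Concretely: restrict to the fiber $v=e^{i\theta}(\sqrt{1-|y|^2}\,u+y)$ with $y\perp u$. Bound $v^*Hv\le\lambda_{\min}+2y^*Hy+\ldots$, and more crudely use $v^*Hv\le(1-|y|^2)\lambda_{\min}+|y|^2\cdot(\ldots)$. The surface density in $y$ is proportional to $(1-|y|^2)^{0}$ on the unit ball of $\C^{n-1}$, with normalizing constant $\Gamma(n)/\pi^{n-1}$. Lower bound the resulting ball integral $\int e^{-n y^*(H+2\eta^2)y/\eta^2}\dd y$ by a Gaussian integral restricted to the ball. The mass lost outside the ball is controlled using $n/\eta^2$ scaling. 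This produces $\pi^{n-1}\eta^{2(n-1)}/(n^{n-1}\det(\cdot))$, and an extra factor $\eta^2/(\lambda+2\eta^2)$ absorbs the missing dimension. Getting these exact constants, which require handling the cross term $u^*Hy=0$ and the ball truncation, is where I expect the work to lie.
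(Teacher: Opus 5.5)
Your arguments for \eqref{u:eq:spherical-negative-moment} and \eqref{u:eq:spherical-upper} match the paper's. For the identity, you take the ratio of Lebesgue Gaussian integrals in polar coordinates. For the upper bound, you use the pointwise estimate $e^{-px}\le e^{p(\eta^2-1)}(x+\eta^2)^{-p}$ with $x=v^*Hv/\eta^2$. The paper derives that estimate from $\log u\le u-1$ for all $\eta$ at once, so your case split at $\eta=1$ is correct but unnecessary. Discard your opening computation. For standard $g$ one has $\E e^{-\beta g^*Ag/2}=\det(I+A)^{-\beta/2}$, not $(\det A)^{-\beta/2}$, which is why it produced $(1+v^*Av)^{-p}$.

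For \eqref{u:eq:witness}, you do not need the biased Gaussians or convexity estimates. Your ``concretely'' paragraph is the right argument, and it is the paper's computation in different coordinates. Because $Hu=\lambda_{\min}u$ and $y\perp u$, the cross term vanishes, so $v^*Hv=\lambda_{\min}+y^*(H-\lambda_{\min}I)y$ holds exactly. Take $y$ uniform on the unit ball of $u^\perp\cong\C^{n-1}$, with density $\Gamma(n)/\pi^{n-1}$. The squared moduli of its eigen-coordinates are then uniform on the simplex, which is exactly the paper's Dirichlet representation.

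The step you left open closes by the same Markov argument the paper uses. Set $P=n(H+2\eta^2I)|_{u^\perp}/\eta^2$, so that $P\succeq n(H-\lambda_{\min}I)|_{u^\perp}/\eta^2$. Under the Gaussian density proportional to $e^{-y^*Py}$,
\[
 \E|y|^2=\operatorname{tr}P^{-1}\le\frac{n-1}{2n}<\frac12 .
\]
Hence more than half of $\int e^{-y^*Py}\dd y=\pi^{n-1}/\det P$ lies in the unit ball. Using $\det P=n^{n-1}\eta^{-2(n-1)}\det(H+2\eta^2I)/(\lambda_{\min}+2\eta^2)$, this gives
\[
 \mathcal I_\eta(H)\ge\frac{\Gamma(n)}{n^{n-1}}\frac{\eta^{2n}}{\det(H+2\eta^2I)}e^{-n\lambda_{\min}/\eta^2}\cdot\frac{\lambda_{\min}+2\eta^2}{2\eta^2},
\]
and the last factor is at least one. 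The ``missing dimension'' therefore contributes a gain of at least $2$, which pays for the Markov loss of $1/2$. This is why the shift is $2\eta^2$, corresponding to the paper's condition $\lambda_j\ge2n$.
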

\begin{proof}
\emph{Identity and upper bound.}
Diagonalization and Gaussian integration give
\[
 \frac{\int_{\F^n}e^{-x^*Ax}\dd x}
      {\int_{\F^n}e^{-\norm{x}^2}\dd x}=(\det A)^{-\beta/2}.
\]
In polar coordinates $x=rv$, the radial integral in the numerator is
$\tfrac12\Gamma(p)(v^*Av)^{-p}$. Dividing by the case $A=I$ proves
\eqref{u:eq:spherical-negative-moment}. Finally, $\log u\le u-1$ gives
$e^{-px}\le e^{p(\eta^2-1)}(x+\eta^2)^{-p}$ for $x\ge0$.
Apply this with $x=v^*Hv/\eta^2$ and use
\eqref{u:eq:spherical-negative-moment} for $A=H+\eta^4I$.

\emph{Lower bound.}
Suppose $\F=\C$ and diagonalize $H$ with eigenvalues
$0\le h_1\le\cdots\le h_n$. Normalizing a standard complex Gaussian
vector gives $w_j=|v_j|^2=Y_j/\sum_iY_i$ for independent rate-one
exponentials $Y_j$. The substitution $Y_j=sw_j$ has Jacobian $s^{n-1}$;
integrating over $s>0$ gives density $\Gamma(n)$ in $(w_2,\ldots,w_n)$ on
$\mathcal S=\{w_j\ge0:\sum_{j=2}^n w_j\le1\}$.
Set $\lambda_j=n(h_j+2\eta^2)/\eta^2$ for $j\ge2$. Then
\begin{align*}
 \mathcal I_\eta(H)
 &=\Gamma(n)e^{-nh_1/\eta^2}
     \int_{\mathcal S}e^{-n\sum_{j=2}^n(h_j-h_1)w_j/\eta^2}\dd w\\
 &\ge\Gamma(n)e^{-nh_1/\eta^2}
     \left(\prod_{j=2}^n\lambda_j^{-1}\right)
     \Prob\left\{\sum_{j=2}^n E_j\le1\right\},
\end{align*}
where $E_j$ are independent exponentials of rates $\lambda_j$.
Since $\lambda_j\ge2n$, Markov's inequality bounds the last probability
below by $1-(n-1)/(2n)\ge1/2$. Finally,
\[
 \frac12\prod_{j=2}^n\lambda_j^{-1}
 =\frac{\eta^{2n}}{n^{n-1}\det(H+2\eta^2I)}
       \frac{h_1+2\eta^2}{2\eta^2}
 \ge\frac{\eta^{2n}}{n^{n-1}\det(H+2\eta^2I)},
\]
which proves \eqref{u:eq:witness}.
\end{proof}

For large fixed $L$, set
\[
 \mathcal F_{n,L}=\{v:\norm v=1,\ \sqrt n\norm v_\infty\le L\},
 \quad q_{n,L}=\sigma_n(\mathcal F_{n,L}),\quad
 \nu_{n,L}=q_{n,L}^{-1}\mathbf1_{\mathcal F_{n,L}}\sigma_n.
\]
Let $g\in\F$ have independent $N(0,1/\beta)$ real coordinates.
Normalizing iid copies gives a Haar direction. Conditional on all copies
having modulus at most $L/2$, their squared moduli have mean $>1/4$ for
large $L$. The law of large numbers makes their Euclidean norm at least
$\sqrt n/2$ with probability tending to one, placing the normalized
direction in $\mathcal F_{n,L}$. Hence
\begin{equation}\label{u:eq:flat-mass}
 q_{n,L}\ge e^{-a_Ln+o(n)},\qquad
 a_L=-\log\Prob(|g|\le L/2)\downarrow0.
\end{equation}
Fix $b\in\F$ with $|b|>1$,
$\eta>0$ and sufficiently large $L$, independently of $n$, and define
\[
 Z_{b,\eta,L}(X)
 =\int e^{-\beta n\norm{(X-bI)v}^2/(2\eta^2)}\dd\nu_{n,L}(v).
\]
\begin{lemma}[Annealed flat-sphere asymptotics]\label{u:lem:flat-annealed}
Under \eqref{u:eq:basic} and, in the complex case, \eqref{u:eq:properness},
for fixed $b,\eta,L$ as above one has
\begin{equation}\label{u:eq:annealed-lower}
 \log\E_{\mathbb P_n}Z_{b,\eta,L}(X_n)=n\mathcal A_{\beta,\eta}(b)+O(\sqrt n),\qquad
 \mathcal A_{\beta,\eta}(b)=\frac\beta2
 \left(\log\frac{\eta^2}{1+\eta^2}-\frac{|b|^2}{1+\eta^2}\right).
\end{equation}
\end{lemma}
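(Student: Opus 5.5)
By Fubini,
\[
 \E_{\mathbb P_n}Z_{b,\eta,L}(X_n)=\int\E\,e^{-\beta n\norm{(X_n-bI)v}^2/(2\eta^2)}\dd\nu_{n,L}(v),
\]
so we fix a flat unit vector $v\in\mathcal F_{n,L}$ and compute the inner expectation, uniformly in $v$. Write $\sqrt n\,((X_n-bI)v)_i=\sum_j v_j\xi_{ij}-\sqrt n\,b v_i$. Since the rows are independent, the inner expectation factors as
\[
 \prod_{i=1}^n\mathcal Z^{\mathrm{row}}_v(t_i),\qquad t_i=\sqrt n\,bv_i,
\]
with $|t_i|\le|b|L$. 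Each factor is $\mathcal Z_\mu(t_i)$ from Lemma~\ref{u:lem:soft-replacement}, taken with $a=2\eta^2/\beta$, coefficients $v_j$, and $q=1$. Then $d_{\mathrm G}=a+2/\beta=2(1+\eta^2)/\beta$, and \eqref{u:eq:gaussian-soft-formula} gives
\[
 \mathcal Z_G(t_i)=\Bigl(\frac{\eta^2}{1+\eta^2}\Bigr)^{\beta/2}
 \exp\Bigl(-\frac{\beta|t_i|^2}{2(1+\eta^2)}\Bigr).
\]
Because $\sum_i|t_i|^2=n|b|^2$, the Gaussian product equals $e^{n\mathcal A_{\beta,\eta}(b)}$ exactly.

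It remains to control the ratio $\mathcal Z_\mu(t_i)/\mathcal Z_G(t_i)$ factor by factor. Flatness gives $\epsilon_3=\sum_j|v_j|^3\le L/\sqrt n$. By \eqref{u:eq:smooth-replacement}, $|\mathcal Z_\mu(t_i)-\mathcal Z_G(t_i)|\le C_a m_3L/\sqrt n$. On $|t|\le|b|L$ the Gaussian value is bounded below by a constant $c(b,\eta,L)>0$. Hence $|\log\mathcal Z_\mu(t_i)-\log\mathcal Z_G(t_i)|\le C/\sqrt n$ for all large $n$, uniformly in $i$ and $v$. An alternative route integrates the score bound \eqref{u:eq:compact-score} along the segment from $0$ to $t_i$, with the value at $0$ again controlled by \eqref{u:eq:smooth-replacement}. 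Summing over the $n$ rows gives
\[
 \Bigl|\log\E\,e^{-\beta n\norm{(X_n-bI)v}^2/(2\eta^2)}-n\mathcal A_{\beta,\eta}(b)\Bigr|\le C\sqrt n
\]
uniformly over $v\in\mathcal F_{n,L}$. Averaging against the probability measure $\nu_{n,L}$ preserves two-sided bounds of the form $e^{n\mathcal A\pm C\sqrt n}$, which proves \eqref{u:eq:annealed-lower}.

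The only delicate point is keeping the error of the Lindeberg replacement multiplicative rather than additive. This needs a uniform positive lower bound on $\mathcal Z_G$ over the bounded range of the $t_i$, and that range is bounded precisely because $v$ is flat. This is also why the average is over $\nu_{n,L}$ rather than the full sphere $\sigma_n$. Measurability of the integrand in $v$ and the use of Fubini are routine, since the integrand is nonnegative and continuous.
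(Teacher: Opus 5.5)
Your proof is correct and follows essentially the same route as the paper. Both factor the annealed integrand over rows into $\mathcal Z^{\mathrm{row}}_v(t_i)$, apply Lemma~\ref{u:lem:soft-replacement} with $a=2\eta^2/\beta$ and $q=1$, turn the additive $O(n^{-1/2})$ replacement error into a relative one via the positive lower bound on the Gaussian factor for $|t_i|\le|b|L$, and then sum over the $n$ rows and average over the probability measure $\nu_{n,L}$.
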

\begin{proof}
For $v\in\mathcal F_{n,L}$, set $t_i=\sqrt n\,b v_i$. Then
$\sum_j|v_j|^3\le L/\sqrt n$ and $|t_i|\le |b|L$.
Lemma~\ref{u:lem:soft-replacement}, with $a=2\eta^2/\beta$ and $q=1$,
gives uniformly in $v$ and $i$,
\[
 \mathcal Z^{\mathrm{row}}_v(t_i)=\left(\frac{\eta^2}{1+\eta^2}\right)^{\beta/2}
 e^{-\beta|t_i|^2/(2(1+\eta^2))}\bigl(1+O(n^{-1/2})\bigr).
\]
Its additive error becomes relative because the displayed Gaussian
factor is bounded below by a positive constant for $|t_i|\le |b|L$.
Row independence and $\sum_i|t_i|^2=n|b|^2$ therefore yield
\[
 \E_{\mathbb P_n}Z_{b,\eta,L}(X_n)
 =\int\prod_{i=1}^n \mathcal Z^{\mathrm{row}}_v(t_i)\dd\nu_{n,L}(v)
 =\exp\{n\mathcal A_{\beta,\eta}(b)+O(\sqrt n)\},
\]
since $\nu_{n,L}$ is a probability measure and the error is uniform.
\end{proof}

\begin{proof}[Proof of Theorem~\ref{u:thm:lower}]
Fix $r>1$, $b>r$, $0<\eta^2<b/r-1$, and sufficiently large $L$.
All these parameters remain fixed as $n\to\infty$.
Define the tilted matrix law by
\[
 \frac{d\mathbb Q_n^{b,\eta,L}}{d\mathbb P_n}(X)
 =\frac{Z_{b,\eta,L}(X)}{\E_{\mathbb P_n}Z_{b,\eta,L}(X_n)}.
\]
The joint tilted law of the matrix and the auxiliary direction is
\[
 d\widehat{\mathbb Q}_n^{b,\eta,L}(X,v)
 =\frac{e^{-\beta n\norm{(X-bI)v}^2/(2\eta^2)}}
 {\E_{\mathbb P_n}Z_{b,\eta,L}(X_n)}
 \,d\mathbb P_n(X)\,d\nu_{n,L}(v).
\]
Its matrix marginal is $\mathbb Q_n^{b,\eta,L}$. Under the joint
tilt the direction need not retain the distribution $\nu_{n,L}$;
we first condition on $v$ and then integrate against its tilted marginal.
Conditional on $v$, the unnormalized rows $\sqrt n\,X_{i,\cdot}$ are
independent with laws $\mu_{v,t_i}$ from \eqref{u:eq:row-tilt}, where
$t_i=\sqrt n\,b v_i$. Lemmas~\ref{u:lem:coupling}--\ref{u:lem:outlier} imply
\begin{equation}\label{u:eq:tilted-event}
 \mathbb Q_n^{b,\eta,L}\{\rho(X_n)>r\}\longrightarrow1
 \quad\hbox{for }1<r<b/(1+\eta^2).
\end{equation}
In the real case, the spectral-radius event can be replaced by the
event that a positive real eigenvalue exceeds the same threshold.
The uniformity over deterministic $v$ allows integration against its
tilted marginal.

Since $0<Z_{b,\eta,L}(X)\le1$, Lemma~\ref{u:lem:flat-annealed} transfers any $Ce^{-cn^2}$
exception under $\mathbb P_n$ to an $o(1)$ exception under $\mathbb Q_n^{b,\eta,L}$.
Write $H=(X_n-bI)^*(X_n-bI)$. Restricting the spherical integral and
normalizing by $q_{n,L}$ gives
$Z_{b,\eta,L}(X_n)\le q_{n,L}^{-1}\mathcal I_\eta(H)$.
Apply \eqref{u:eq:bulk-lower} with $\tau=\eta^2$ and deviation $\eta^2$,
so no separate regularization or error parameter is needed.
With $\mathbb Q_n^{b,\eta,L}$-probability tending to one,
$F_{\eta^2,n}(b)\ge2\log b-\eta^2$.
On this event, \eqref{u:eq:flat-mass} and \eqref{u:eq:spherical-upper} yield
\begin{align*}
 \frac1n\log Z_{b,\eta,L}(X_n)
 &\le-\frac1n\log q_{n,L}
   +\frac\beta2\left[\log\eta^2+\eta^2-1-F_{\eta^2,n}(b)\right]\\
 &\le a_L+\frac\beta2\left[\log\eta^2-1-2\log b+2\eta^2\right]
   +o(1).
\end{align*}
Subtracting the normalizing constant from \eqref{u:eq:annealed-lower} now gives
\begin{align}
 \frac1n\log\frac{d\mathbb Q_n^{b,\eta,L}}{d\mathbb P_n}
 &=\frac1n\log Z_{b,\eta,L}(X_n)
   -\frac1n\log\E_{\mathbb P_n}Z_{b,\eta,L}(X_n)\notag\\
 &\le a_L+\frac\beta2\left[\log\eta^2-1-2\log b+2\eta^2\right]
   -\mathcal A_{\beta,\eta}(b)+o(1)\notag\\
 &=\frac\beta2\left[\frac{b^2}{1+\eta^2}+\log(1+\eta^2)
                   -2\log b-1+2\eta^2\right]+a_L+o(1).\label{u:eq:cost}
\end{align}
Since $b^2/(1+\eta^2)\le b^2$ and $\log(1+\eta^2)\le\eta^2$,
the last bound is at most $I_\beta(b)+a_L+3\beta\eta^2/2+o(1)$.
On the intersection of the bulk event with $\{\rho(X_n)>r\}$,
use $d\mathbb P_n=(d\mathbb Q_n^{b,\eta,L}/d\mathbb P_n)^{-1}
d\mathbb Q_n^{b,\eta,L}$. This intersection has tilted probability tending
to one by \eqref{u:eq:tilted-event}, so
\[
 \liminf_{n\to\infty}\frac1n\log\mathbb P_n\{\rho(X_n)>r\}
 \ge-I_\beta(b)-a_L-\frac{3\beta}{2}\eta^2.
\]
Let $L\to\infty$, then $\eta\downarrow0$, and finally $b\downarrow r$.
\end{proof}

\section{Matching upper bounds and local deviations}\label{sec:upper-matching}
We now prove the matching upper bounds in Theorem~\ref{u:thm:matching}.
The complex argument uses a full-sphere integral and also gives the
local eigenvalue deviations of Theorem~\ref{u:thm:complex-local}.
The real argument instead compares moments of large matrix powers
with those of real Ginibre matrices.

\subsection{The complex sharp-subgaussian upper bound}\label{u:sec:complex-upper}
Here $\F=\C$ and \eqref{u:eq:sharp} holds. Fix $\eta>0$, suppress its
dependence in $Z_z$, and use the full sphere:
\[
 Z_z(X)=\int e^{-n\norm{(X-zI)v}^2/\eta^2}\dd\sigma_n(v),\qquad
 \mathcal A_\eta(s)=\log\frac{\eta^2}{1+\eta^2}-\frac{s^2}{1+\eta^2},\quad s\ge0.
\]
The next lemma is the full-sphere counterpart of
Lemma~\ref{u:lem:flat-annealed}, with the same Gaussian rate
$\mathcal A_\eta=\mathcal A_{2,\eta}$.
The flat restriction made every coefficient small and every row target
bounded. Here we must also allow eigenvectors with large coordinates:
large coefficients can prevent the replacement error from vanishing, while targets
$\sqrt n\,zv_i$ may grow with $n$, preventing direct conversion of additive
errors to relative ones. Controlling these contributions requires the
sharp Laplace bound and a further spherical average.
\begin{lemma}[Annealed full-sphere bound]\label{u:lem:full-sphere}
Assume $\F=\C$, $\E\xi=0$, $\E|\xi|^2=1$, \eqref{u:eq:properness}, and
\emph{sharp subgaussianity} \eqref{u:eq:sharp}: for every $u\in\C$,
$\E e^{2\operatorname{Re}(\bar u\xi)}\le e^{|u|^2}
=\E e^{2\operatorname{Re}(\bar u g)}$, where $g$ is a standard proper
complex Gaussian with $\E|g|^2=1$.
For every fixed $\eta>0$ and $R<\infty$,
\[
 \limsup_{n\to\infty}\sup_{|z|\le R}
 \left[\frac1n\log\E Z_z(X_n)-\mathcal A_\eta(|z|)\right]\le0.
\]
\end{lemma}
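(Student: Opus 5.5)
The plan is to bound $\E Z_z(X_n)$ by combining two row-level estimates. For bounded targets and flat coefficients, the Lindeberg replacement of Lemma~\ref{u:lem:soft-replacement} gives the exact Gaussian normalizer, including its prefactor. For the remaining rows and coefficients, a Chernoff-type bound from the sharp Laplace inequality \eqref{u:eq:sharp} recovers the quadratic exponent but loses the prefactor. A final spherical average will show that directions where this loss is large have small $\sigma_n$-mass.

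\textbf{Step 1: the row factorization and the Chernoff inequality.} By Fubini and row independence,
\[
 \E Z_z(X_n)=\int\prod_{i=1}^n \E\exp\Bigl(-\tfrac1{\eta^2}\bigl|S_i-t_i\bigr|^2\Bigr)\dd\sigma_n(v),\qquad S_i=\sum_j v_j\xi_{ij},\quad t_i=\sqrt n\,zv_i .
\]
The elementary inequality $-|w|^2/a\le -2\operatorname{Re}(\bar\lambda w)+a|\lambda|^2$ holds for all $\lambda\in\C$. Combined with \eqref{u:eq:sharp} applied coordinatewise, and after optimizing in $\lambda$, it gives for any coefficient family with $\sum_j|c_j|^2=q$:
\begin{equation*}
 \E e^{-|s-\sum_j c_j\xi_j|^2/a}\le e^{-|s|^2/(a+q)} .
\end{equation*}
This is the Gaussian answer \eqref{u:eq:gaussian-soft-formula} without the factor $a/(a+q)$.

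\textbf{Step 2: the mixed bound for one row.} Fix a large $K$ and call a column $j$ big if $|v_j|>K/\sqrt n$. There are at most $n/K^2$ such columns. Let $q_{\rm big}(v)$ be the squared norm of $v$ on the big columns. For row $i$:
\begin{enumerate}
\item Condition on the big-column entries and apply Lemma~\ref{u:lem:soft-replacement} to the small coefficients, with $a=\eta^2$. The small coefficients have $\epsilon_3\le K/\sqrt n$, so the conditional expectation equals $\frac{a}{a+q_{\rm small}}e^{-|s|^2/(a+q_{\rm small})}$ up to an additive error $O_K(n^{-1/2})$, where $s$ is $t_i$ minus the big-column contribution.
\item Apply Step 1 to the big-column part of this Gaussian kernel. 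The resulting row bound is
\[
 \frac{\eta^2}{\eta^2+q_{\rm small}}\,e^{-|t_i|^2/(\eta^2+1)}+O_K(n^{-1/2}).
\]
\item When $|t_i|\le K$ and $q_{\rm big}\le\delta$, the Gaussian term is bounded below. The additive error is then relative, so it costs only $e^{O(\sqrt n)}$ over all rows.
\item Rows with $|t_i|>K$ are exactly rows $i$ that are big columns, hence at most $n/K^2$ of them. For these rows use the pure Chernoff bound $e^{-|t_i|^2/(\eta^2+1)}$, losing at most $\log\frac{1+\eta^2}{\eta^2}$ each.
\end{enumerate}
Since $\sum_i|t_i|^2=n|z|^2$, on $\{q_{\rm big}\le\delta\}$ the product of row bounds is at most
\[
 \exp\Bigl\{n\mathcal A_\eta(|z|)+n\log\tfrac{1+\eta^2}{1+\eta^2-\delta}+\tfrac{n}{K^2}\log\tfrac{1+\eta^2}{\eta^2}+O_K(\sqrt n)\Bigr\}.
\]

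\textbf{Step 3: the spherical average (main obstacle).} On the complement $\{q_{\rm big}>\delta\}$, the pure Chernoff bound on every row gives at most $e^{-n|z|^2/(1+\eta^2)}$. This exceeds $e^{n\mathcal A_\eta(|z|)}$ by a factor $e^{n\log((1+\eta^2)/\eta^2)}$, which must be absorbed by the $\sigma_n$-mass of this set. The key estimate I expect to be the obstacle is:
\[
 \sigma_n\{\text{the top }n/K^2\text{ coordinates of }v\text{ carry squared mass}>\delta\}\le e^{-c(\delta,K)n}\quad\text{with } c(\delta,K)\to\infty\text{ as }K\to\infty\text{ for fixed }\delta .
\]
I would prove it via the Gaussian representation $v=G/\|G\|$. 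Take a union bound over the $\binom{n}{n/K^2}=e^{O(n K^{-2}\log K)}$ coordinate subsets, and on each subset apply a Chernoff bound to a $\chi^2$ variable with $2n/K^2$ degrees of freedom exceeding $\delta n/2$; this gives $e^{-c\delta n}$ with $c$ large relative to the entropy. Equivalently, one may use the Dirichlet description of $(|v_j|^2)$ from the proof of Lemma~\ref{u:lem:spherical-polar}.

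\textbf{Step 4: conclusion.} Choose $K$ so large that $c(\delta,K)>\log\frac{1+\eta^2}{\eta^2}$ and $K^{-2}\log\frac{1+\eta^2}{\eta^2}<\delta$. Taking $\limsup_n$ gives an error at most $O(\delta)$, uniformly in $|z|\le R$ since every constant depends only on $R,\eta,K,\delta$. Finally send $\delta\downarrow0$ to obtain the stated $\limsup\le0$.
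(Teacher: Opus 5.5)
Your Steps 1 and 2 are sound. They reproduce the paper's row bound $\mathcal Z^{\mathrm{row}}_v(t_i)\le h(q_{\mathrm{small}})e^{-|t_i|^2/(1+\eta^2)}+O_K(n^{-1/2})$, with $h(q)=\eta^2/(\eta^2+q)$. There is one slip: $|t_i|>K$ means $|v_i|>K/(|z|\sqrt n)$, so for $|z|>1$ these rows need not be big columns. Cut at $|t_i|>KR$ instead, which leaves at most $nR^2/K^2$ such rows.

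The proof fails at the key estimate of Step 3, which is false. Once $n>K^2/\delta$, the event $\{|v_1|^2>\delta\}$ is contained in $\{q_{\mathrm{big}}>\delta\}$. Under $\sigma_n$ on the complex sphere, $|v_1|^2$ has the $\mathrm{Beta}(1,n-1)$ law, so
\[
 \sigma_n\{q_{\mathrm{big}}>\delta\}\ge(1-\delta)^{n-1}.
\]
Thus $c(\delta,K)\le\log\frac1{1-\delta}$ for every $K$. Your $\chi^2$ computation indeed gives only a rate of order $\delta$. That beats the $\binom n{n/K^2}$ entropy for large $K$, but not the fixed loss $\log\frac{1+\eta^2}{\eta^2}$ you incur by discarding the prefactor.

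Tuning $\delta$ does not help either. The complement would need $\log\frac1{1-\delta}\ge\log\frac{1+\eta^2}{\eta^2}$, i.e.\ $\delta\ge1/(1+\eta^2)$. Then the loss $n\log\frac{1+\eta^2}{1+\eta^2-\delta}$ on $\{q_{\mathrm{big}}\le\delta\}$ is a fixed positive multiple of $n$. Heuristically, directions with $q_{\mathrm{big}}\approx x$ have sphere mass roughly $(1-x)^n$ and prefactor $h(1-x)^n$. Only their product $\bigl(\eta^2(1-x)/(\eta^2+1-x)\bigr)^n$ is maximized at $x=0$. So dropping the prefactor on any region where $q_{\mathrm{big}}$ is macroscopic costs a fixed exponential factor.

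The missing idea is to keep the direction-dependent prefactor $h(q_{\mathrm{small}}(v))^n$ on the whole sphere and average it exactly, as the paper does. Fix an index set $I$ with $|I|=m$ and set $q_I=\sum_{j\notin I}|v_j|^2$. The identity \eqref{u:eq:spherical-negative-moment} of Lemma~\ref{u:lem:spherical-polar}, applied with $A=\eta^2I_n+P_{I^c}$, gives
\[
 \E_{\sigma_n}h(q_I)^n=\frac{\eta^{2n}}{\det(\eta^2I_n+P_{I^c})}=h(1)^{n-m}.
\]
Since the big set of $v$ is one of the candidate sets, $\E_{\sigma_n}h(q_{\mathrm{small}}(v))^n\le\sum_m\binom nm h(1)^{n-m}$.

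The paper uses the threshold $n^{-1/4}$. Then there are at most $\sqrt n$ big coordinates, the replacement error is $O(n^{-1/4})$, and the bounded-target cutoff is $|t_i|^2\le\frac18\log n$. Every loss is then $o(n)$, with no auxiliary parameters. With your threshold $K/\sqrt n$, the same identity makes the spherical average cost an extra rate of at most $K^{-2}\log(eK^2)+K^{-2}\log(1+\eta^{-2})$. This vanishes as $K\to\infty$ after $n\to\infty$, and $\delta$ is no longer needed.
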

\begin{proof}
Fix $\eta>0$ and $R<\infty$. All errors below are uniform over $|z|\le R$
and unit vectors $v$; the cutoffs are explicit functions of $n$,
so only the limit $n\to\infty$ is needed.

\emph{Row decomposition and sharp Laplace bound.}
For a deterministic unit vector $v\in\C^n$, define
\[
 \mathcal Z^{\mathrm{row}}_v(t)=\E e^{-|\sum_jv_j\xi_j-t|^2/\eta^2},\qquad
 t_i=\sqrt n\,zv_i.
\]
Row independence gives
\begin{equation}\label{u:eq:full-sphere-rows}
 \E Z_z(X_n)=\int\prod_{i=1}^n \mathcal Z^{\mathrm{row}}_v(t_i)\dd\sigma_n(v),
 \qquad \sum_i|t_i|^2=n|z|^2.
\end{equation}
Define
\[
 I(v)=\{j:|v_j|>n^{-1/4}\},\qquad |I(v)|\le\sqrt n,
 \qquad q=q(v)=\sum_{j\notin I(v)}|v_j|^2,
\]
and split the original row sum into two independent parts:
\[
 \begin{aligned}
 S_{\rm large}&=\sum_{j\in I(v)}v_j\xi_j,
 &\E|S_{\rm large}|^2&=1-q,\\
 S_{\rm small}&=\sum_{j\notin I(v)}v_j\xi_j,
 &\E|S_{\rm small}|^2&=q.
 \end{aligned}
\]
Independence and the sharp entry bound \eqref{u:eq:sharp} give, for $u\in\C$,
\begin{equation}\label{u:eq:large-sharp-laplace}
 \begin{aligned}
 \E e^{2\operatorname{Re}(\bar uS_{\rm large})}
 &=\prod_{j\in I(v)}\E e^{2\operatorname{Re}(\bar u v_j\xi_j)}\\
 &\overset{\eqref{u:eq:sharp}}{\le}
 e^{|u|^2\sum_{j\in I(v)}|v_j|^2}=e^{(1-q)|u|^2}.
 \end{aligned}
\end{equation}
Thus its Laplace coefficient is exactly its variance $1-q$, as for a
proper Gaussian of that variance. The same calculation gives coefficient
$1$ for the full row sum.
We use the elementary implication
\begin{equation}\label{u:eq:row-chernoff}
 \E e^{2\operatorname{Re}(\bar u\zeta)}\le e^{s|u|^2}\ (u\in\C)
 \quad\Longrightarrow\quad
 \E e^{-|\zeta-t|^2/a}\le e^{-|t|^2/(a+s)},\qquad a>0,\ s\ge0.
\end{equation}
Indeed, $-|\zeta-t|^2/a\le2\operatorname{Re}(\bar u(\zeta-t))+a|u|^2$;
take expectations and choose $u=t/(a+s)$. For the full row, $s=1$, so
\begin{equation}\label{u:eq:row-all-targets}
 e^{|t|^2/(1+\eta^2)}\mathcal Z^{\mathrm{row}}_v(t)\le1,
 \qquad t\in\C.
\end{equation}
This bound has no additive error and will be used for large targets.
The replacement below improves the prefactor to $h(q)$, at the cost
of an additive error that may dominate for large $|t|$.

\emph{Replace only the small part.}
Conditional on $S_{\rm large}$, replace $S_{\rm small}$ by an independent
proper Gaussian $G_q$ with $\E|G_q|^2=q$. The large part retains its
original law. Covariance matching and \eqref{u:eq:properness} give
$\E G_q^2=\E S_{\rm small}^2
=\sum_{j\notin I(v)}v_j^2\E\xi_j^2=0$ for every $v$.
Thus $G_q\overset d=\sqrt q\,g$ with a standard proper complex Gaussian
$g$ whose law does not depend on $v$; the only dependence is through $q$.
The Gaussian replacement formula \eqref{u:eq:smooth-replacement}
of Lemma~\ref{u:lem:soft-replacement}, proved in Subsection~\ref{u:sec:coupling-details},
uses only moment matching and a finite third moment. Its additive error
is at most $C_{\mu,\eta}n^{-1/4}$, since
$\sum_{j\notin I(v)}|v_j|^3\le n^{-1/4}$.
To integrate out $G_q$, for $q>0$ and $s\in\C$ complete the square:
\[
 \begin{aligned}
 \E e^{-|G_q-s|^2/\eta^2}
 &=\frac1{\pi q}\int_\C
 \exp\left[-\frac{\eta^2+q}{\eta^2q}
 \left|w-\frac{q s}{\eta^2+q}\right|^2
 -\frac{|s|^2}{\eta^2+q}\right]\dd^2w\\
 &=\underbrace{\frac{\eta^2}{\eta^2+q}}_{=:h(q)}
 e^{-|s|^2/(\eta^2+q)}.
 \end{aligned}
\]
Here $\dd^2w$ is planar Lebesgue measure and
$\int_\C e^{-c|w-w_0|^2}\dd^2w=\pi/c$.
For $q=0$, $G_q=0$ and the same formula holds with $h(0)=1$.
This is \eqref{u:eq:gaussian-soft-formula} with $a=\eta^2$ and $\beta=2$.
Set $s=t-S_{\rm large}$ and average over $S_{\rm large}$; then apply
\eqref{u:eq:row-chernoff} with $a=\eta^2+q$ and Laplace coefficient $1-q$
from \eqref{u:eq:large-sharp-laplace}:
\begin{equation}\label{u:eq:row-light-replacement}
 \begin{aligned}
 \mathcal Z^{\mathrm{row}}_v(t)
 &\le h(q)\E e^{-|S_{\rm large}-t|^2/(\eta^2+q)}+C_{\mu,\eta}n^{-1/4}\\
 &\overset{\eqref{u:eq:large-sharp-laplace}}{\le}
 h(q)e^{-|t|^2/(1+\eta^2)}+C_{\mu,\eta}n^{-1/4}.
 \end{aligned}
\end{equation}
Sharpness gives the Gaussian denominator $(\eta^2+q)+(1-q)=1+\eta^2$;
a larger Laplace coefficient would weaken the exponent.

\emph{Multiply the rows and average over the sphere.}
For $|t|^2\le\frac18\log n$, the Gaussian main term in
\eqref{u:eq:row-light-replacement} is at least $h(1)n^{-1/8}$.
Hence its additive error becomes a relative $O_{\mu,\eta}(n^{-1/8})$ error.
By \eqref{u:eq:full-sphere-rows}, at most $8n|z|^2/\log n$ targets violate
this cutoff. Use the sharp bound \eqref{u:eq:row-all-targets} there.
Since $h(q)\ge h(1)$, multiplication gives
\begin{equation}\label{u:eq:row-product}
 \prod_i \mathcal Z^{\mathrm{row}}_v(t_i)\le h(q(v))^n
 \exp\left[-\frac{n|z|^2}{1+\eta^2}
       +O_{\mu,\eta,R}\left(n^{7/8}+\frac n{\log n}\right)\right].
\end{equation}
For a fixed set $I$ of size $m$, write $q_I=\sum_{j\notin I}|v_j|^2$.
Lemma~\ref{u:lem:spherical-polar}, with $A=\eta^2I_n+P_{I^c}$, gives exactly
\begin{equation}\label{u:eq:beta}
 \E_{\sigma_n}h(q_I)^n
 =\frac{\eta^{2n}}{\det(\eta^2I_n+P_{I^c})}
 =h(1)^{n-m}.
\end{equation}
Here $P_{I^c}$ is the coordinate projection onto $I^c$.
Since $|I(v)|\le\sqrt n$, summing over all possible sets gives
\[
 \E_{\sigma_n}h(q(v))^n
 \le\sum_{m=0}^{\lfloor\sqrt n\rfloor}\binom nm h(1)^{n-m}
 =h(1)^n e^{O_\eta(\sqrt n\log n)}.
\]
Combining with \eqref{u:eq:full-sphere-rows} and \eqref{u:eq:row-product},
uniformly for $|z|\le R$,
\[
 \frac1n\log\E Z_z(X_n)-\mathcal A_\eta(|z|)
 \le O_{\mu,\eta,R}\left(n^{-1/8}+\frac1{\log n}
                         +\frac{\log n}{\sqrt n}\right)=o(1).
\]
This proves the lemma.
\end{proof}

\begin{proof}[Complex upper bound in Theorem~\ref{u:thm:matching}]\leavevmode\par
Under \eqref{u:eq:sharp}, independence and Chernoff give
$\Prob\{|u^*X_nv|>s\}\le4e^{-ns^2/2}$ for deterministic unit vectors
$u,v$ (test real and imaginary parts at $s/\sqrt2$).
Two $1/4$-nets, each of size at most $9^{2n}$, bound $\norm{X_n}$ by
twice the largest tested bilinear form; a union bound gives
\begin{equation}\label{u:eq:norm-tail}
 \Prob\{\norm{X_n}>R\}\le4\exp(4n\log9-nR^2/8).
\end{equation}
Any fixed subgaussian constant would suffice for this norm cutoff.
Fix $r>1$, and choose fixed $R$ so that the coefficient of $n$
in the exponent in \eqref{u:eq:norm-tail} is below $-I_2(r)-1$. Let
$K=\{(1+r)/2\le|z|\le R+1\}$ and fix small $\eta$ with
$1+\eta^2<r^2$. Take a deterministic $1/n$-net of $\{r\le|z|\le R\}$,
of cardinality $O_R(n^2)$, contained in $\{r-1/n\le|z|\le R+1/n\}$.
An eigenvalue in the annulus supplies a grid point with
$s_{\min}(X_n-zI)\le1/n$, where $s_{\min}$ denotes the smallest
singular value. Apply \eqref{u:eq:witness} from Lemma~\ref{u:lem:spherical-polar} with
$H=(X_n-zI)^*(X_n-zI)$, whose smallest eigenvalue is
$s_{\min}(X_n-zI)^2$.
On the bulk event \eqref{u:eq:bulk-upper} with $\tau=\sqrt2\eta$,
Stirling and \eqref{u:eq:witness} give at this point
\[
 \frac1n\log Z_z(X_n)\ge
 \log\eta^2-1-2\log|z|-2M_K^2\eta^2-\delta+o(1).
\]
Markov's inequality, Lemma~\ref{u:lem:full-sphere}, and a union bound give
\[
 \limsup_n\frac1n\log\Prob\{\rho(X_n)\ge r,\ \norm{X_n}\le R,
                   \text{bulk event of \eqref{u:eq:bulk-upper}}\}
 \le-I_{2,\eta}(r)+2M_K^2\eta^2+\delta,
\]
where $I_{2,\eta}(b)=b^2/(1+\eta^2)-1-2\log b+\log(1+\eta^2)$ is
increasing for $b>\sqrt{1+\eta^2}$. The grid size is negligible at speed
$n$, and the bulk failure probability decays at speed $n^2$.
Let $\delta,\eta\downarrow0$ after the dimension limit, with $R$ fixed.
Then $I_{2,\eta}(r)\to I_2(r)$ and the desired upper bound follows.
Theorem~\ref{u:thm:lower} supplies the matching lower bound.
\end{proof}

\subsection{Local eigenvalue deviations in the complex exterior}\label{u:sec:complex-local}
We prove Theorem~\ref{u:thm:complex-local}. The lower-bound tilt can place
an actual outlier near any prescribed point outside the unit disk.
All smoothing and flatness parameters remain fixed during the limit
$n\to\infty$.

\begin{proof}[Proof of Theorem~\ref{u:thm:complex-local}]
\emph{A lower bound near a prescribed point.}
We first show that, under \eqref{u:eq:basic}--\eqref{u:eq:properness},
for every $w\in\C$ with $|w|>1$ and every $\delta>0$,
\begin{equation}\label{u:eq:point-local-lower}
 \liminf_{n\to\infty}\frac1n\log\Prob(\mathcal E_n(w,\delta))
 \ge-I_2(|w|).
\end{equation}
Fix $\eta>0$, let $b=(1+\eta^2)w$, and fix sufficiently large $L$.
Use the joint tilt $\widehat{\mathbb Q}_n^{b,\eta,L}$ and its matrix
marginal $\mathbb Q_n^{b,\eta,L}$ from the proof of
Theorem~\ref{u:thm:lower}, now with this complex $b$ and $\beta=2$.
Lemma~\ref{u:lem:flat-annealed} gives
\begin{equation}\label{u:eq:local-normalization}
 \frac1n\log\E_{\mathbb P_n}Z_{b,\eta,L}(X_n)
 =\log\frac{\eta^2}{1+\eta^2}
  -\frac{|b|^2}{1+\eta^2}+o(1).
\end{equation}
Indeed, the row targets $t_i=\sqrt n\,b v_i$ satisfy
$|t_i|\le |b|L$ and $\sum_i|t_i|^2=n|b|^2$ for every
$v\in\mathcal F_{n,L}$; no rotation of the entry law is used.

Conditional on such a deterministic $v$, Lemma~\ref{u:lem:coupling}
represents the tilted matrix as
\[
 X_n=Y_nT+wvv^*+E_n,\qquad
 \norm{T-I}_{\HS}^2\le(2\eta^4)^{-1},\qquad
 \norm{E_n}_{\op}\xrightarrow{\Prob}0.
\]
Lemma~\ref{u:lem:outlier}, with the complex spike $s=w$, gives an
eigenvalue converging to $w$. Its uniformity over deterministic flat
directions allows integration against the tilted marginal of $v$.
Consequently,
\begin{equation}\label{u:eq:local-tilted-event}
 \mathbb Q_n^{b,\eta,L}(\mathcal E_n(w,\delta))\longrightarrow1.
\end{equation}

Let $\mathcal B_n=\{F_{\eta^2,n}(b)\ge2\log|b|-\eta^2\}$.
The complex-point version of \eqref{u:eq:bulk-lower} gives
$\mathbb P_n(\mathcal B_n^c)\le Ce^{-cn^2}$.
Since $Z_{b,\eta,L}\le1$, \eqref{u:eq:local-normalization} implies
$\mathbb Q_n^{b,\eta,L}(\mathcal B_n^c)=o(1)$.
On $\mathcal B_n$, \eqref{u:eq:flat-mass} and
\eqref{u:eq:spherical-upper} yield
\[
 \frac1n\log Z_{b,\eta,L}(X_n)
 \le a_L+\log\eta^2-1-2\log|b|+2\eta^2+o(1).
\]
Subtracting \eqref{u:eq:local-normalization}, we obtain
\begin{equation}\label{u:eq:local-cost}
 \frac1n\log\frac{d\mathbb Q_n^{b,\eta,L}}{d\mathbb P_n}(X_n)
 \le a_L+C_\eta(w)+o(1),
\end{equation}
where
\begin{align*}
 C_\eta(w)&=\frac{|b|^2}{1+\eta^2}+\log(1+\eta^2)
             -2\log|b|-1+2\eta^2\\
 &=I_2(|w|)+\eta^2|w|^2-\log(1+\eta^2)+2\eta^2.
\end{align*}
By \eqref{u:eq:local-tilted-event},
$\mathbb Q_n^{b,\eta,L}(\mathcal E_n(w,\delta)\cap\mathcal B_n)\to1$.
Integrating the reciprocal likelihood ratio on this intersection gives
\[
 \liminf_{n\to\infty}\frac1n\log\mathbb P_n(\mathcal E_n(w,\delta))
 \ge-a_L-C_\eta(w).
\]
Let $L\to\infty$ and then $\eta\downarrow0$ to obtain
\eqref{u:eq:point-local-lower}.

\emph{The rate for a fixed disk.}
Fix $|z|>1$ and $0<\varepsilon<|z|-1$, and set
$a=|z|-\varepsilon>1$. For $0<t<\varepsilon$, set
$w_t=(a+t)z/|z|$. Then
$B(w_t,t/2)\subset B(z,\varepsilon)$, so
\eqref{u:eq:point-local-lower} gives
\[
 \liminf_{n\to\infty}\frac1n\log\Prob(\mathcal E_n(z,\varepsilon))
 \ge-I_2(a+t).
\]
Letting $t\downarrow0$ proves \eqref{u:eq:complex-local-lower}.
Under \eqref{u:eq:sharp}, the reverse bound follows from
\[
 \mathcal E_n(z,\varepsilon)\subset\{\rho(X_n)>a\}
\]
and Theorem~\ref{u:thm:matching}. This proves
\eqref{u:eq:complex-local}; its small-disk limit follows by continuity
of $I_2$.
\end{proof}

\subsection{Real moment domination and the Gaussian power estimate}\label{u:sec:real-upper}
For the real upper bound in Theorem~\ref{u:thm:matching},
$\rho(X_n)^{2k}\le\|X_n^k\|_{\HS}^2$ turns the spectral event into a
moment bound, with $k$ proportional to
$n$. Entrywise moment domination reduces this bound to real Ginibre.
The Schur estimate below controls the additional growth of Gaussian
matrix powers caused by nonnormality: its factor $e^{C_\varepsilon k^{2/3}}$
is negligible at speed $n$. This lets the known Gaussian spectral-radius
rate determine the power-moment bound.

\begin{fact}[Conditional real Schur distribution]\label{u:fact:conditional-schur}
Let $G_n$ have iid real $N(0,1/n)$ entries. Write $G_n=QUQ^\top$
in ordered real Schur form, where $Q$ is orthogonal and
\[
 U=\begin{pmatrix}
 \mathsf S_1&N_{12}&\cdots&N_{1,N_{\rm blk}}\\
 0&\mathsf S_2&\cdots&N_{2,N_{\rm blk}}\\
 \vdots&\ddots&\ddots&\vdots\\
 0&\cdots&0&\mathsf S_{N_{\rm blk}}
 \end{pmatrix}.
\]
Here $N_{\rm blk}$ is the number of diagonal blocks,
$\mathsf S_a\in\R^{d_a\times d_a}$ with $d_a\in\{1,2\}$, and
$N_{ab}\in\R^{d_a\times d_b}$ for $a<b$.
Fix an ordering of the eigenvalues and the corresponding block pattern.
Use the diagonal-block convention in item~1 below, and fix the remaining
sign of each block basis by requiring its first column to have positive
first nonzero coordinate. The following statements hold for almost every
spectrum, in the sense of a regular conditional distribution.
\begin{enumerate}
\item A real eigenvalue $\lambda_a$ gives the fixed scalar block
$\mathsf S_a=[\lambda_a]$. A conjugate pair $x_a\pm i y_a$, $y_a>0$, gives
\[
 \mathsf S_a=\begin{pmatrix}x_a&b_a\\-c_a&x_a\end{pmatrix},\qquad
 b_a\ge c_a>0,\qquad b_ac_a=y_a^2.
\]
Its remaining random coordinate is $\Delta_a=b_a-c_a\ge0$, with
\[
 b_a=\frac{\sqrt{\Delta_a^2+4y_a^2}+\Delta_a}{2},\qquad
 c_a=\frac{\sqrt{\Delta_a^2+4y_a^2}-\Delta_a}{2}.
\]
\item For each $a<b$, the off-diagonal block $N_{ab}$ has density
\[
 B\longmapsto
 \left(\frac n{2\pi}\right)^{d_ad_b/2}
 e^{-n\norm B_{\HS}^2/2},\qquad B\in\R^{d_a\times d_b},
\]
with respect to Lebesgue measure on its $d_ad_b$ scalar entries.
Each scalar entry thus has density
$x\mapsto\sqrt{n/(2\pi)}e^{-nx^2/2}$, $x\in\R$.
\item For each two-dimensional diagonal block $\mathsf S_a$, its
coordinate $\Delta_a=b_a-c_a$ has density
\begin{equation}\label{u:eq:conditional-schur-density}
 p_{n,y_a}(t)=\frac1{Z_{n,y_a}}
 \frac{t}{\sqrt{t^2+4y_a^2}}e^{-nt^2/2}\mathbf1_{t>0},\qquad
 Z_{n,y}=\int_0^\infty\frac{t e^{-nt^2/2}}{\sqrt{t^2+4y^2}}\dd t.
\end{equation}
The numbers $b_a,c_a$ inside $\mathsf S_a$ are functions of $\Delta_a$
as above; they are not entries of an off-diagonal block $N_{ab}$.
\item All scalar entries of all blocks $(N_{ab})_{a<b}$, together with
all coordinates $(\Delta_a)_{d_a=2}$, are mutually independent under
the conditional law given the spectrum.
\item Uniformly in the conditioned spectrum,
\begin{equation}\label{u:eq:conditional-schur-second-moments}
 \E[\Delta_a^2\mid\operatorname{spec}(G_n)]\le\frac2n,
 \qquad
 \E[\norm{N_{ab}}_{\HS}^2\mid\operatorname{spec}(G_n)]
 =\frac{d_ad_b}{n}\le\frac4n.
\end{equation}
\end{enumerate}
\end{fact}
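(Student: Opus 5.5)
We derive the Fact from the real Schur change of variables of Edelman \cite[Theorem 5.1, Lemma 5.2 and Section 6.1]{Edelman}, restricted to a fixed number of real eigenvalues and a fixed ordered block pattern. Off a Lebesgue-null set, $G_n$ has distinct eigenvalues. With the stated ordering, the diagonal-block normalization $\mathsf S_a=\begin{pmatrix}x_a&b_a\\-c_a&x_a\end{pmatrix}$ with $b_a\ge c_a>0$, and the sign convention on $Q$, the map $G\mapsto(Q,U)$ is then a smooth bijection onto its image. The law of $G_n$ has density proportional to $e^{-n\norm G_{\HS}^2/2}=e^{-n\norm U_{\HS}^2/2}$ by orthogonal invariance. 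We expand
\[
 \norm U_{\HS}^2=\sum_{d_a=1}\lambda_a^2+\sum_{d_a=2}\bigl(2x_a^2+b_a^2+c_a^2\bigr)+\sum_{a<b}\norm{N_{ab}}_{\HS}^2 ,
\]
and use $b_a^2+c_a^2=\Delta_a^2+2b_ac_a=\Delta_a^2+2y_a^2$. Thus the Gaussian weight splits as a function of the spectrum, times $\prod_a e^{-n\Delta_a^2/2}$, times $\prod_{a<b}e^{-n\norm{N_{ab}}_{\HS}^2/2}$.

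Next comes the Jacobian. Edelman's formula expresses Lebesgue measure on $G$ as
\begin{itemize}
\item a Haar-type factor in $Q$ (on the appropriate Stiefel/flag quotient),
\item times a product of $|\lambda_i-\lambda_j|$ over eigenvalue pairs in distinct blocks,
\item times, for each $2\times2$ block, a factor $2(b_a-c_a)$ against $\dd x_a\dd b_a\dd c_a$,
\item times Lebesgue measure on the entries of all $N_{ab}$.
\end{itemize}
The key structural point is that no factor depends on the $N_{ab}$, and the $2\times2$ factors depend only on their own block. We then change variables $(b_a,c_a)\mapsto(y_a,\Delta_a)$ with $y_a=\sqrt{b_ac_a}$ and $\Delta_a=b_a-c_a$, and compute $\dd b_a\dd c_a=\frac{2y_a}{\sqrt{\Delta_a^2+4y_a^2}}\dd y_a\dd\Delta_a$. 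The block factor becomes $\frac{4y_a\Delta_a}{\sqrt{\Delta_a^2+4y_a^2}}\dd x_a\dd y_a\dd\Delta_a$. The parts depending only on $(x_a,y_a)$ go into the spectral density.

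Integrating out $Q$ and disintegrating with respect to the spectrum (a regular conditional distribution, defined for almost every spectrum) leaves a conditional density that is a product of two kinds of factors:
\[
 \prod_{a<b}\Bigl(\tfrac n{2\pi}\Bigr)^{d_ad_b/2}e^{-n\norm{N_{ab}}_{\HS}^2/2},
 \qquad
 \prod_{d_a=2}\frac{1}{Z_{n,y_a}}\,\frac{\Delta_a}{\sqrt{\Delta_a^2+4y_a^2}}\,e^{-n\Delta_a^2/2}\mathbf1_{\Delta_a>0}.
\]
This gives items 1--4 at once: the block parametrization, the Gaussian off-diagonal blocks, the density \eqref{u:eq:conditional-schur-density}, and mutual independence. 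Item 1 is simply the normal form. For $N_{ab}$, the exact second moment $d_ad_b/n$ is immediate.

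For the bound on $\Delta_a$ in item 5, write $p_{n,y}(t)\propto w(t)\,te^{-nt^2/2}$ with $w(t)=(t^2+4y^2)^{-1/2}$ decreasing in $t$. Chebyshev's association inequality, applied to the increasing function $t^2$ and the decreasing weight $w$ under the probability density $nte^{-nt^2/2}$ on $(0,\infty)$, gives
\[
 \E[\Delta_a^2\mid\operatorname{spec}]\le\int_0^\infty t^2\,nte^{-nt^2/2}\dd t=\frac2n,
\]
uniformly in $y_a$.

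We expect the main obstacle to be bookkeeping rather than analysis. One must check that Edelman's Jacobian, in his block conventions, transforms into exactly the stated $(x_a,y_a,\Delta_a)$ normalization, with no leftover dependence of the $\Delta_a$-marginal on anything but $y_a$. One must also justify the conditioning on a measure-zero spectrum together with the choice of block pattern and sign conventions, so that the map is injective almost everywhere.
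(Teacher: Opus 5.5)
Your proposal is correct and follows essentially the paper's route. The paper also rests on Edelman's real Schur Jacobian, which it rederives from $Q^\top dG_nQ=dU+\Omega U-U\Omega$ to obtain $\prod_{a>b}|\det\mathcal L_{ab}|\prod_{d_a=2}2\Delta_a$. It then uses the same expansion of $\norm{U}_{\HS}^2$ and the same change of variables $(b_a,c_a)\mapsto(y_a,\Delta_a)$ with Jacobian $2y_a/\sqrt{\Delta_a^2+4y_a^2}$. The only difference is item~5: the paper integrates the derivative of $t^2(t^2+4y^2)^{-1/2}e^{-nt^2/2}$ to get the exact identity $n\E\Delta_a^2=2-\E[\Delta_a^2/(\Delta_a^2+4y_a^2)]$. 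Your Chebyshev association argument (increasing $t^2$ against the decreasing weight $(t^2+4y^2)^{-1/2}$ under $nte^{-nt^2/2}\,dt$) gives the same uniform bound $2/n$ equally validly.
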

The proof is given in Subsection~\ref{u:sec:schur-distribution}.

\begin{lemma}[Conditional Schur power bound]\label{u:lem:schur}
Let $G_n$ be an $n\times n$ real random matrix with independent
$N(0,1/n)$ entries.
For each $\varepsilon>0$ there is $C_\varepsilon<\infty$ such that,
almost surely in the eigenvalues, for all integers $k\ge1$,
\begin{equation}\label{u:eq:schur}
 \E[\norm{G_n^k}_{\HS}^2\mid\operatorname{spec}(G_n)]
 \le C_\varepsilon n\exp(C_\varepsilon k^{2/3})
 (\max(1,\rho(G_n))+\varepsilon)^{2k}.
\end{equation}
\end{lemma}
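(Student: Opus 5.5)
We use Fact~\ref{u:fact:conditional-schur} and write $G_n^k=QU^kQ^\top$, so that $\norm{G_n^k}_{\HS}=\norm{U^k}_{\HS}$. It then suffices to bound $\E[\norm{U^k}_{\HS}^2\mid\operatorname{spec}]$. Set $\rho_+=\max(1,\rho(G_n))$. Decompose $U=D+N$, where $D=\operatorname{diag}(\mathsf S_1,\ldots,\mathsf S_{N_{\rm blk}})$ is block diagonal and $N$ is the strictly block upper triangular part. Expanding $U^k$ gives a sum over words in $D$ and $N$. Since $N$ is strictly block upper triangular, the $(a,b)$ block of $U^k$ is a sum over increasing block paths $a=a_0<a_1<\cdots<a_\ell=b$ with $\ell\le k$. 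Each path contributes a sum of products
\[
 \mathsf S_{a_0}^{m_0}N_{a_0a_1}\mathsf S_{a_1}^{m_1}\cdots N_{a_{\ell-1}a_\ell}\mathsf S_{a_\ell}^{m_\ell},\qquad m_0+\cdots+m_\ell=k-\ell .
\]

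The first step is to bound the diagonal block powers. A $1\times1$ block satisfies $|\lambda_a|^m\le\rho_+^m$. For a $2\times2$ block, $\mathsf S_a$ is a normal rotation-scaling matrix plus a nilpotent-like skew part of size $\Delta_a$. Since $b_ac_a=y_a^2$, we get $\mathsf S_a^m=O((1+\Delta_a/y_a)\,m)\,|\lambda_a|^{m-1}(\ldots)$. The cleaner bound is $\norm{\mathsf S_a^m}\le(|\lambda_a|+\Delta_a)^m$, because $\mathsf S_a$ equals $|\lambda_a|$ times an orthogonal matrix plus a perturbation of norm at most $\Delta_a$, after noting $\norm{\mathsf S_a}\le|\lambda_a|+\Delta_a/2\cdot O(1)$. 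To avoid the $1/y_a$ singularity, I would use only $\norm{\mathsf S_a}\le\sqrt{x_a^2+b_a^2}\le|\lambda_a|+\Delta_a$. The $\varepsilon$-slack then absorbs the blocks with $\Delta_a\le\varepsilon$. For the blocks with $\Delta_a>\varepsilon$, the conditional density \eqref{u:eq:conditional-schur-density} gives Gaussian tails $e^{-nt^2/2}$, which we integrate explicitly. Each such block contributes a factor $\E[(1+\Delta_a/\rho_+)^{2m}]\le\exp(Cm^2/n)$ at worst.

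The second step is to bound each path term. By conditional independence (item~4) and the second moments \eqref{u:eq:conditional-schur-second-moments}, each off-diagonal factor $N_{a_{i}a_{i+1}}$ appears in a fixed path at most once. The squared Hilbert--Schmidt norm of a product therefore has conditional expectation at most
\[
 \prod_i\frac4n\cdot\prod_i\E\norm{\mathsf S_{a_i}^{m_i}}^2\le(4/n)^\ell(\rho_++\varepsilon)^{2(k-\ell)}e^{O(k^2/n)}.
\]
Distinct paths and distinct $m$-compositions give cross terms. Since the $N$ entries are centered and independent, cross terms vanish unless the two paths use the same off-diagonal blocks, so the paths coincide. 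Only the compositions may differ, and Cauchy--Schwarz handles these, costing a factor equal to the number of compositions, $\binom{k}{\ell}$. Summing over $b$ and over paths, there are at most $n\binom{n}{\ell}$ increasing paths of length $\ell$, each with at most $\binom{k}{\ell}^2$ squared weight. The total is bounded by
\[
 n(\rho_++\varepsilon)^{2k}\sum_{\ell\le k}\binom n\ell\binom k\ell^2\Bigl(\frac{4}{n(\rho_++\varepsilon)^2}\Bigr)^\ell .
\]
Using $\binom n\ell n^{-\ell}\le1/\ell!$, the $\ell$-th term is at most $\binom k\ell^2 4^\ell/\ell!\le(Ck^2/\ell^3)^{\ell}$. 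This is maximized at $\ell\asymp k^{2/3}$ with value $e^{Ck^{2/3}}$, which is exactly the claimed factor.

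The main obstacle is the bookkeeping of the cross terms together with the $2\times2$ blocks. The diagonal powers depend on the random $\Delta_a$, and the same $\Delta_a$ appears in several factors of one path. For a fixed path, however, each block $a_i$ is distinct, so the factors are independent and the moments factorize. The dependence enters only through the Cauchy--Schwarz step over compositions. I would first make $\Delta$ harmless deterministically by splitting on the event $\max_a\Delta_a\le\varepsilon$. Its complement is controlled by the conditional Gaussian tails together with the crude bound $\norm{U^k}\le\norm U^k$. This works provided the tails beat $(\norm U/\rho_+)^{2k}$, which needs care for $k$ comparable to $n$. If it fails, I would instead absorb $\Delta_a$ into the Gaussian integration blockwise, as above.
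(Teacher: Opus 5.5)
Your path expansion, the vanishing of cross terms between distinct block paths, the Cauchy--Schwarz factor $\binom{k}{\ell}^2$ over compositions, and the path count all match the paper. The gap is the diagonal-block estimate.

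\textbf{Where the argument fails.} The submultiplicative bound $\norm{\mathsf S_a^m}\le(|\lambda_a|+\Delta_a)^m$ is exponentially lossy in $m$. The factor $e^{O(k^2/n)}$ it produces, which you drop when summing over paths, is not bounded by $C_\varepsilon e^{C_\varepsilon k^{2/3}}$ once $k\gg n^{3/4}$. The lemma is claimed for all $k$, and Corollary~\ref{u:cor:gaussian-power-rate} uses it with $k\asymp n$. A sharper integration over $\Delta_a$ cannot rescue this. Take $\rho_+=1$, $m=n$, $\varepsilon=1/10$. The density \eqref{u:eq:conditional-schur-density} is a half-normal reweighted by the increasing factor $t/\sqrt{t^2+4y_a^2}$, so it stochastically dominates the half-normal. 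Hence $\Prob\{\Delta_a\ge1\}\ge e^{-n/2+o(n)}$, and
\[
 \frac{\E(1+\Delta_a)^{2n}}{(1+\varepsilon)^{2n}}\ge e^{n(2\log(2/1.1)-1/2)+o(n)},
\]
which grows exponentially in $n$. Your fallback split fails for the same reason. $\Prob\{\Delta_a>\varepsilon\}$ is only about $e^{-n\varepsilon^2/2}$, while $(\norm U/\rho_+)^{2k}$ is typically about $4^k$ because $\norm U=\norm{G_n}\approx2$. A smaller point: $\norm{\mathsf S_a}\le\sqrt{x_a^2+b_a^2}$ is backwards, since that quantity is the norm of a column of $\mathsf S_a$. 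The bound $\norm{\mathsf S_a}\le|\lambda_a|+\Delta_a$ is nevertheless true.

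\textbf{The missing idea.} Because $\mathsf S_a$ has spectral radius at most $r_0=\rho_+$, its powers grow only linearly in $\Delta_a$. Compare $\mathsf S_a$ with the normal matrix $\mathsf C_a=\bigl(\begin{smallmatrix}x_a&y_a\\-y_a&x_a\end{smallmatrix}\bigr)$. It has the same eigenvalues $\lambda_\pm=x_a\pm iy_a$, and $\norm{\mathsf S_a-\mathsf C_a}\le\Delta_a$. Both matrices satisfy the same quadratic, so dividing $z^p$ by $(z-\lambda_+)(z-\lambda_-)$ gives
\[
 \mathsf S_a^p-\mathsf C_a^p=\Bigl(\sum_{j=0}^{p-1}\lambda_+^j\lambda_-^{p-1-j}\Bigr)(\mathsf S_a-\mathsf C_a).
\]
The coefficient is at most $pr_0^{p-1}$, with no $1/y_a$ singularity; this removes the worry that made you abandon this route. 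Using $(r_0+\varepsilon)^p\ge p\varepsilon r_0^{p-1}$, you get
\[
 \norm{\mathsf S_a^p}\le r_0^p+p\Delta_ar_0^{p-1}\le(r_0+\varepsilon)^p(1+\Delta_a/\varepsilon).
\]
Moreover $\E(1+\Delta_a/\varepsilon)^2\le2+4/(n\varepsilon^2)$, uniformly in $p$. Each path then costs only an extra factor $C_\varepsilon^{\ell+1}$, which is absorbed into $\sum_\ell(C_\varepsilon k^2)^\ell/(\ell!)^3\le e^{C_\varepsilon k^{2/3}}$. With this replacement your argument goes through as in the paper, and no splitting on $\max_a\Delta_a$ is needed.
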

\begin{proof}
Use the Schur decomposition and block notation of
Fact~\ref{u:fact:conditional-schur}, so
$\norm{G_n^k}_{\HS}=\norm{U^k}_{\HS}$.
The notation $\mathsf S_a^p$ denotes the ordinary matrix power, with
$\mathsf S_a^0=I_{d_a}$.
Condition throughout on $\operatorname{spec}(G_n)$: all expectations
and independence statements below refer to this conditional law.
By that fact, the diagonal blocks are mutually independent, and all scalar entries
of the $N_{ab}$ are independent $N(0,1/n)$ variables, independent of
all diagonal blocks.

\emph{Diagonal block powers.}
Set $r_0=\max(1,\rho(G_n))$, $r_\varepsilon=r_0+\varepsilon$.
For a complex block, write
$\mathsf S=\left(\begin{smallmatrix}x&b\\-c&x\end{smallmatrix}\right)$,
$b\ge c>0$, and set $y=\sqrt{bc}$ and $\Delta=b-c$.
The same fact gives $\E\Delta^2\le2/n$. The normal matrix
$\mathsf C=\left(\begin{smallmatrix}x&y\\-y&x\end{smallmatrix}\right)$ has the
same eigenvalues as $\mathsf S$ and $\norm{\mathsf S-\mathsf C}\le\Delta$.
Write $\lambda_\pm=x\pm iy$. Both matrices satisfy the same quadratic
equation, so for $p\ge1$, polynomial division by
$(z-\lambda_+)(z-\lambda_-)$ gives
\[
 \mathsf S^p-\mathsf C^p
 =\left(\sum_{j=0}^{p-1}\lambda_+^j\lambda_-^{p-1-j}\right)
       (\mathsf S-\mathsf C).
\]
Bounding each coefficient by $r_0^{p-1}$ yields, also for $p=0$,
\[
 \norm{\mathsf S^p}\le r_0^p+p\Delta r_0^{p-1}
 \le r_\varepsilon^p(1+\Delta/\varepsilon),\qquad p\in\mathbb Z_{\ge0}.
\]
The last inequality uses
$(r_0+\varepsilon)^p\ge p\varepsilon r_0^{p-1}$ for $p\ge1$.
Moreover, $\E(1+\Delta/\varepsilon)^2\le2+4/(n\varepsilon^2)
\le C_\varepsilon$, uniformly in the power $p$.
The same bound holds for scalar blocks, with multiplier $1$.

\emph{Path expansion and cancellation.}
Each factor in $U^k$ either stays in the current diagonal block or jumps
to a block with a larger index. The contribution of a strictly increasing
block path $a_0<\cdots<a_\ell$ is therefore the sum over
nonnegative integer exponents satisfying
$p_0+\cdots+p_\ell=k-\ell$ of
\[
 \mathsf S_{a_0}^{p_0}N_{a_0a_1}\mathsf S_{a_1}^{p_1}\cdots
 N_{a_{\ell-1}a_\ell}\mathsf S_{a_\ell}^{p_\ell}.
\]
There are $\binom{k}{\ell}$ ways to allocate the $k-\ell$ diagonal steps
among the $\ell+1$ visited blocks. Distinct paths with the same
endpoints have an edge in their symmetric difference. That independent
centered off-diagonal block occurs linearly in one path only, so the
cross expectation vanishes. Different endpoints occupy different
blocks. Time allocations along a fixed path need not be orthogonal:
the $L^2$ triangle inequality gives the squared factor $\binom{k}{\ell}^2$.
Independence and $\E\norm{N_{ab}}_{\HS}^2\le4/n$ then bound the path's
expected squared Hilbert--Schmidt norm by
$2C_\varepsilon^{\ell+1}(4/n)^\ell\binom{k}{\ell}^2r_\varepsilon^{2(k-\ell)}$.

\emph{Counting paths.}
There are at most $\binom n{\ell+1}$ paths with $\ell$ jumps.
Since $\binom n{\ell+1}\le n^{\ell+1}/(\ell+1)!$ and
$\binom{k}{\ell}\le k^\ell/\ell!$, the path count contributes one factorial to the denominator,
and the squared number of time allocations contributes two. Using $r_\varepsilon\ge1$ gives
\[
 \E\norm{U^k}_{\HS}^2\le C_\varepsilon n r_\varepsilon^{2k}
 \sum_{\ell\ge0}\frac{(C_\varepsilon k^2)^\ell}{(\ell!)^3}
 \le C_\varepsilon n r_\varepsilon^{2k}e^{C_\varepsilon k^{2/3}}.
\]
The last inequality follows by retaining only equal-index terms in
$(\sum_{\ell\ge0}a^\ell/\ell!)^3=e^{3a}$, with
$a=(C_\varepsilon k^2)^{1/3}$.
\end{proof}

\begin{corollary}[Gaussian power-moment asymptotics]\label{u:cor:gaussian-power-rate}
Let $G_n$ be an $n\times n$ real random matrix with independent
$N(0,1/n)$ entries. For every sequence of positive integers $k=k_n$
with $k_n/n\to\alpha>0$,
\begin{equation}\label{u:eq:power-rate}
 \lim_{n\to\infty}\frac1n\log\E\norm{G_n^{k_n}}_{\HS}^2
 =F_1(\alpha):=\frac{(1+2\alpha)\log(1+2\alpha)-2\alpha}{2}.
\end{equation}
\end{corollary}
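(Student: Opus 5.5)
The plan is to prove matching upper and lower bounds at speed $n$ by reducing both to the spectral radius, and then to evaluate a one-dimensional Laplace-type variational problem. First the calculus: for $\alpha>0$ consider
\[
 \Phi_\alpha(r)=2\alpha\log r-I_1(r)=2\alpha\log r-\tfrac12(r^2-1-2\log r),\qquad r\ge1.
\]
Its derivative is $(2\alpha+1)/r-r$, so the maximum is at $r_*=\sqrt{1+2\alpha}>1$. The maximal value is
\[
 \alpha\log(1+2\alpha)-\tfrac12\bigl(2\alpha-\log(1+2\alpha)\bigr)=F_1(\alpha).
\]
Moreover $\Phi_\alpha(1)=0<F_1(\alpha)$.

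\emph{Lower bound.} For any real matrix $A$ we have $\norm{A^k}_{\HS}\ge\norm{A^k}\ge\rho(A)^k$. Hence for any fixed $r>1$,
\[
 \E\norm{G_n^{k}}_{\HS}^2\ge r^{2k}\,\Prob\{\rho(G_n)>r\}.
\]
Real Gaussian entries satisfy \eqref{u:eq:basic}, so Theorem~\ref{u:thm:lower} with $\beta=1$ gives $\liminf\frac1n\log\Prob\{\rho(G_n)>r\}\ge-I_1(r)$. Since $k_n/n\to\alpha$, the factor $r^{2k}$ contributes $2\alpha\log r$. Taking $r=r_*$ gives $\liminf\ge F_1(\alpha)$. (Theorem~\ref{u:thm:matching} is not needed for this direction.)

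\emph{Upper bound.} Take expectations in Lemma~\ref{u:lem:schur}:
\[
 \E\norm{G_n^k}_{\HS}^2\le C_\varepsilon n e^{C_\varepsilon k^{2/3}}\,\E(\max(1,\rho(G_n))+\varepsilon)^{2k}.
\]
The prefactor $C_\varepsilon n e^{C_\varepsilon k^{2/3}}$ is $e^{o(n)}$. It remains to bound the last expectation, and I would split it according to the size of $\rho(G_n)$.

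\begin{itemize}
\item The region $\rho\le R$ for a large fixed $R$. Partition $[1,R]$ by a fine grid $1=r_0<r_1<\dots<r_m=R$. On $\{r_{i-1}<\rho\le r_i\}$ bound the integrand by $(r_i+\varepsilon)^{2k}$ and the probability by $\Prob\{\rho(G_n)\ge r_{i-1}\}$. The real Gaussian right-tail upper bound \cite[Theorem 1.2]{XuZeng} gives $\Prob\{\rho(G_n)\ge r_{i-1}\}\le e^{-nI_1(r_{i-1})+o(n)}$. The piece $\rho\le1$ contributes at most $(1+\varepsilon)^{2k}$.
\item The region $\rho>R$. Use $\rho\le\norm{G_n}$ and a Gaussian norm tail of the form $\Prob\{\norm{G_n}>s\}\le e^{-cns^2}$ for $s\ge C$, obtained by the same net argument as \eqref{u:eq:norm-tail}. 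Integrating $(s+\varepsilon)^{2k}$ against this tail shows the contribution is $\le e^{-n}$ once $R$ is large enough, because $2\alpha\log s-cs^2\to-\infty$.
\end{itemize}

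Combining the finitely many pieces gives
\[
 \limsup_n\frac1n\log\E\norm{G_n^k}_{\HS}^2\le\max\Bigl(2\alpha\log(1+\varepsilon),\ \max_i\bigl[2\alpha\log(r_i+\varepsilon)-I_1(r_{i-1})\bigr]\Bigr).
\]
Refining the mesh and then letting $\varepsilon\downarrow0$ gives $\sup_{r\ge1}\Phi_\alpha(r)=F_1(\alpha)$, using continuity of $I_1$ and $\Phi_\alpha(1)=0<F_1(\alpha)$.

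The main obstacle is bookkeeping rather than a new idea. We must check that the Xu--Zeng bound is an upper bound valid for every fixed threshold $r>1$, with exactly the rate $I_1$. We must also keep the limits in the right order: $n\to\infty$ first, then the grid mesh and $R$, then $\varepsilon$. Finally, $k_n$ is only asymptotically $\alpha n$, so the factor $e^{2k_n\log(r+\varepsilon)}$ should be written as $e^{n(2\alpha\log(r+\varepsilon)+o(1))}$ uniformly for $r\le R$.
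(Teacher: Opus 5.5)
Your proposal is correct and follows essentially the same route as the paper. The lower bound uses $\rho(G_n)^{2k}\le\norm{G_n^k}_{\HS}^2$ together with Theorem~\ref{u:thm:lower}, and the upper bound uses Lemma~\ref{u:lem:schur}, a finite grid on $[1,R]$ with the Xu--Zeng tail at the grid points, and a net-based norm tail beyond $R$, with the same order of limits; the only cosmetic difference is that the paper first writes $\E(\max(1,\rho_n)+\varepsilon)^{2k_n}$ as an exact tail integral and then discretizes it, while you partition the event $\{\rho_n\le R\}$ directly, which is equivalent.
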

\begin{proof}
Write $\rho_n=\rho(G_n)$ and $S_n=\max(1,\rho_n)$.
The Gaussian right-tail bound \cite[Theorem 1.2, $\beta=1$]{XuZeng},
in the same $N(0,1/n)$ normalization, gives
\[
 \limsup_n\frac1n\log\Prob(\rho_n>t)\le-I_1(t),\qquad t\ge1,
\]
where $t=1$ is trivial with $I_1(1)=0$.
For fixed $0<\varepsilon\le1$, Lemma~\ref{u:lem:schur} gives
\begin{equation}\label{u:eq:power-tower}
 \E\norm{G_n^{k_n}}_{\HS}^2
 \le C_\varepsilon n e^{C_\varepsilon k_n^{2/3}}
                \E(S_n+\varepsilon)^{2k_n}.
\end{equation}
The prefactor has logarithm $o(n)$. Tail integration gives the exact identity
\begin{equation}\label{u:eq:power-tail-integral}
 \E(S_n+\varepsilon)^{2k_n}
 =(1+\varepsilon)^{2k_n}
 +\int_1^\infty 2k_n(t+\varepsilon)^{2k_n-1}\Prob(\rho_n>t)\dd t.
\end{equation}

For a partition $1=t_0<\cdots<t_q=M$, the integral over $[1,M]$ is at most
$\sum_{j<q}(t_{j+1}+\varepsilon)^{2k_n}\Prob(\rho_n>t_j)$.
Apply the tail bound at the fixed partition points, then let the mesh
tend to zero. Including the first term in \eqref{u:eq:power-tail-integral}
gives logarithmic limsup at most
\[
 \sup_{1\le t\le M}\{2\alpha\log(t+\varepsilon)-I_1(t)\}.
\]
For $t\ge M$, the same net argument as for \eqref{u:eq:norm-tail}, now
on the real unit sphere, gives
$\Prob(\rho_n>t)\le2e^{n(c_{\mathrm{net}}-t^2/8)}$, $c_{\mathrm{net}}=2\log9$.
Fix $\alpha_+>\alpha$ and choose $M$ so large that
$2\alpha_+\log(t+1)\le t^2/16$ on this range.
Since $k_n\le\alpha_+n$ eventually, the remaining integral is at most
\[
 4\alpha_+n e^{nc_{\mathrm{net}}}\int_M^\infty e^{-nt^2/16}\dd t
 \le\frac{32\alpha_+}{M}e^{n(c_{\mathrm{net}}-M^2/16)}.
\]
The last inequality uses $t/M\ge1$ inside the integral.
Its rate tends to $-\infty$ with $M$, uniformly for $0<\varepsilon\le1$.
Combining the bounds with \eqref{u:eq:power-tower}, then letting
$M\to\infty$ and $\varepsilon\downarrow0$, yields
\[
 \limsup_n\frac1n\log\E\norm{G_n^{k_n}}_{\HS}^2
 \le\sup_{t\ge1}\{2\alpha\log t-I_1(t)\}=F_1(\alpha).
\]
Here the passage $\varepsilon\downarrow0$ uses
$0\le\log(t+\varepsilon)-\log t\le\log(1+\varepsilon)$ for $t\ge1$.
Differentiation places the supremum at $t=\sqrt{1+2\alpha}$.

For the reverse inequality, Theorem~\ref{u:thm:lower} applies to $G_n$.
For each fixed $t>1$, the inequality
$\rho(G_n)^{2k_n}\le\norm{G_n^{k_n}}_{\HS}^2$ gives
\[
 \E\norm{G_n^{k_n}}_{\HS}^2\ge t^{2k_n}\Prob(\rho_n>t),
 \qquad
 \liminf_n\frac1n\log\E\norm{G_n^{k_n}}_{\HS}^2
 \ge2\alpha\log t-I_1(t).
\]
Taking the supremum over $t>1$ proves the matching lower bound.
\end{proof}

\begin{proof}[Real upper bound in Theorem~\ref{u:thm:matching}]
Expand $\E\norm{X_n^k}_{\HS}^2$ as pairs of directed paths with the
same endpoints. All coefficients are positive. Independence and
symmetry make a term vanish if any entry label has odd multiplicity;
otherwise Gaussian even-moment domination bounds its expectation
by the corresponding Gaussian term. Therefore, for every $n,k$,
\[
 \E\norm{X_n^k}_{\HS}^2\le\E\norm{G_n^k}_{\HS}^2.
\]
Since $\rho(X_n)^{2k}\le\norm{X_n^k}_{\HS}^2$, Markov and
\eqref{u:eq:power-rate} yield
\[
 \limsup_n n^{-1}\log\Prob\{\rho(X_n)\ge r\}
 \le\inf_{\alpha>0}\{F_1(\alpha)-2\alpha\log r\}=-I_1(r).
\]
The minimizer is $\alpha=(r^2-1)/2$. Combine with
Theorem~\ref{u:thm:lower}.
\end{proof}

\section{Coupling and Schur estimates}\label{u:sec:technical-proofs}
This section supplies the two technical inputs deferred in the preceding
arguments: the Gaussian replacement and triangular coupling used for
the outlier construction, and the conditional real Schur distribution
used for the Gaussian power estimate.

\subsection{Gaussian replacement and the triangular coupling}\label{u:sec:coupling-details}
This subsection proves the Gaussian replacement estimate of
Lemma~\ref{u:lem:soft-replacement} and then the triangular coupling of
Lemma~\ref{u:lem:coupling}. The latter proof uses the row tilt and parameters
of Subsection~\ref{u:sec:coupling}, with $\eta,L,T_0$ fixed and all estimates
uniform over the permitted row directions and targets.

\begin{proof}[Proof of Lemma~\ref{u:lem:soft-replacement}]
Every derivative of order at most four of $\varphi_a(x)=e^{-|x|^2/a}$
is a polynomial times the same Gaussian, hence is bounded by a constant
depending only on $a$. Replace the summands one at a time, applying
second-order Taylor expansion to $\varphi_a$ and to each component of
$\nabla\varphi_a$. The constant, linear and quadratic terms agree:
the two entry laws have the same mean and real covariance
$\beta^{-1}\mathrm{Id}_\beta$, and multiplication by $v_j$ is a real linear map.
The remainders are bounded by $C_a m_3|v_j|^3$, uniformly in the sum
of the other entries and in $s$. Summation proves
\eqref{u:eq:smooth-replacement}. Bounded derivatives also justify
differentiation under each expectation.

The Gaussian sum has independent real coordinates of variance $q/\beta$.
Completing the square in each coordinate gives
\[
 \E e^{-(s_\ell-Z)^2/a}
 =\sqrt{\frac{a}{a+2q/\beta}}
           e^{-s_\ell^2/(a+2q/\beta)},\qquad Z\sim N(0,q/\beta).
\]
Multiplication over $\beta$ coordinates proves
\eqref{u:eq:gaussian-soft-formula}; for $q=0$ the sum is zero.
Finally Jensen gives $\mathcal Z_\mu(s),\mathcal Z_G(s)\ge
J:=e^{-(K^2+1)/a}$ on $|s|\le K$. Thus
\[
 \left|\frac{\nabla\mathcal Z_\mu}{\mathcal Z_\mu}
              -\frac{\nabla\mathcal Z_G}{\mathcal Z_G}\right|
 \le J^{-1}|\nabla\mathcal Z_\mu-\nabla\mathcal Z_G|
       +J^{-2}|\nabla\mathcal Z_G|\,|\mathcal Z_\mu-\mathcal Z_G|.
\]
The Gaussian derivative is bounded, so \eqref{u:eq:compact-score} follows.
\end{proof}

\subsubsection*{Proof of the triangular coupling}
\begin{proof}[Proof of Lemma~\ref{u:lem:coupling}]
All estimates are uniform over the permitted parameters, with
$\eta,L,T_0$ fixed. Constants may depend on $\mu,\eta,L,T_0$;
we display the $\eta$-dependence in the remainder bounds, suppressing
dependence on the fixed $\mu,L,T_0$. Set $\Delta:=\max_j|v_j|$;
the hypothesis of Lemma~\ref{u:lem:coupling} gives $\Delta\le L/\sqrt n$.
Centeredness and \eqref{u:eq:basic} imply
$\E e^{\langle h,\xi\rangle}\le e^{C|h|^2}$: Taylor expansion gives
$1+O(|h|^2)$ for $|h|\le1$, while Young's inequality absorbs the
square-exponential moment into $e^{C|h|^2}$ for $|h|>1$.
By independence, every original sum
$U=\sum_k a_k\xi_k$ with $\sum_k|a_k|^2\le1$ satisfies
$\E e^{\langle h,U\rangle}\le e^{C|h|^2}$.
Integrating against an independent standard Gaussian $G\in\R^\beta$ gives,
for sufficiently small $c>0$,
\[
 \E e^{c|U|^2}
 =\E_G\E_U e^{\sqrt{2c}\langle G,U\rangle}
 \le\E_G e^{2Cc|G|^2}<\infty.
\]
Since the row density in \eqref{u:eq:row-tilt} is bounded uniformly,
tilted entries and prefix residuals also have uniform square-exponential
moments.

\paragraph{Conditional means.}
Write $\varphi(s)=e^{-\beta|s|^2/(2\eta^2)}$ and, using independent
original entries, define
\[
 U_j=\sum_{k>j}v_k\xi_k,\qquad
 \mathcal Z_j(s)=\E\varphi(s-U_j),\qquad
 A_j(s)=\int\mathcal Z_j(s-v_jx)\dd\mu(x).
\]
For a tilted row with target $t$, set $S_j=t-\sum_{k<j}v_kx_k$.
For a deterministic residual value $s\in\F$, define the kernel and mean
\begin{equation}\label{u:eq:kernel}
 K_{j,s}(\dd x)=\frac{\mathcal Z_j(s-v_jx)}{A_j(s)}\dd\mu(x),
 \qquad m_j(s)=\int x\dd K_{j,s}(x).
\end{equation}
The actual conditional law uses the random residual $s=S_j$:
integrating out the future entries in \eqref{u:eq:row-tilt} gives, under $\mu_{v,t}$,
\[
 \mathcal L(x_j\mid x_1,\ldots,x_{j-1})=K_{j,S_j},\qquad
 \E[x_j\mid x_1,\ldots,x_{j-1}]=m_j(S_j).
\]
For $\xi\sim\mu$ and the real gradient $\gamma=\nabla\mathcal Z_j(s)\in\F$,
the Hessian bound $\sup_s\norm{\nabla^2\mathcal Z_j(s)}_{\op}
\le C\eta^{-2}$ gives
\[
 \mathcal Z_j(s-v_j\xi)=\mathcal Z_j(s)
 -\langle \gamma,v_j\xi\rangle+O(\eta^{-2}|v_j|^2|\xi|^2).
\]
Averaging and centeredness give $A_j(s)=\mathcal Z_j(s)+O(\eta^{-2}|v_j|^2)$.
In the complex case, \eqref{u:eq:properness} and $\E|\xi|^2=1$ give
\[
 \E[\xi\langle \gamma,v_j\xi\rangle]
 =\tfrac12\bigl(\bar \gamma v_j\E\xi^2
                   +\gamma\bar v_j\E|\xi|^2\bigr)
 =\tfrac12\bar v_j \gamma.
\]
In the real case this expectation is $v_j\gamma$. Thus centeredness and
the finite third moment yield, in either field,
\begin{align*}
 A_j(s)m_j(s)&=\E[\xi\mathcal Z_j(s-v_j\xi)]\\
 &=-\E[\xi\langle \gamma,v_j\xi\rangle]
       +O(\eta^{-2}|v_j|^2\E|\xi|^3)\\
 &=-\frac{\bar v_j}{\beta}\nabla\mathcal Z_j(s)+O(\eta^{-2}|v_j|^2).
\end{align*}
Recall the tail denominators from Lemma~\ref{u:lem:coupling}:
\[
 d_j=\eta^2+\sum_{k=j}^n|v_k|^2\quad(1\le j\le n+1),
 \qquad d_{n+1}=\eta^2.
\]
Choose an arbitrary deterministic cutoff radius $K$, independent of $n$:
\[
 K\ge1,\qquad |s|\le K\ \text{(compact part)},\qquad
 |s|>K\ \text{(tail)}.
\]
On the compact part, Jensen gives the explicit lower bound
\[
 \min\{A_j(s),\mathcal Z_j(s)\}
 \ge \exp\!\left[-\frac{\beta(K^2+1)}{2\eta^2}\right],\qquad |s|\le K.
\]
The derivative bounds $\norm{\nabla^\ell\varphi}_\infty\le C_\ell\eta^{-\ell}$
for $1\le\ell\le4$ and this lower bound allow the constants in the
conditional-mean estimates below, including those in $O_{\eta,K}$,
to satisfy the coarse bounds
\[
 C_{\eta,K}\le C(1+\eta^{-8})
      \exp\!\left[\frac{C(1+K^2)}{\eta^2}\right],\qquad
 C_\eta\le C(1+\eta^{-8})e^{C/\eta^2}.
\]
Here $C$ depends only on $\mu,L,T_0$; the exponential factors account
for division by the normalizers.
Dividing and applying \eqref{u:eq:compact-score} with
$a=2\eta^2/\beta$ to the future coefficients $k>j$, whose cubic sum
is at most $\Delta$ and whose variance sum is $d_{j+1}-\eta^2$, yields
$m_j(s)=\bar v_j s/d_{j+1}+O_{\eta,K}(\Delta|v_j|)$.
We use $\bar v_j s/d_j$ as the regression term. Its difference from
the leading term just obtained is controlled explicitly: since
$d_j-d_{j+1}=|v_j|^2$ and both denominators are at least $\eta^2$,
\[
 \left|\frac{\bar v_j s}{d_{j+1}}-\frac{\bar v_j s}{d_j}\right|
 =\frac{|s||v_j|^3}{d_jd_{j+1}}
 \le K\eta^{-4}|v_j|^3,\qquad |s|\le K.
\]
Since $|v_j|\le\Delta\le1$, the additional $K\eta^{-4}|v_j|^3$
error is absorbed into $C_{\eta,K}\Delta|v_j|$. Thus
\begin{equation}\label{u:eq:regression}
 r_j(s):=m_j(s)-\frac{\bar v_j s}{d_j},\qquad
 \sup_{|s|\le K}|r_j(s)|\le C_{\eta,K}\Delta|v_j|.
\end{equation}
To control the complement of this compact set, take independent
$\xi,\xi'\sim\mu$, independent of the same $U_j$, and define
$F_x=\varphi(s-v_jx-U_j)$. Centeredness gives
\[
 A_j(s)m_j(s)=\tfrac12\E[(\xi-\xi')(F_\xi-F_{\xi'})].
\]
The function $\sqrt\varphi$ is $C/\eta$-Lipschitz, so Cauchy--Schwarz and
$\E(\sqrt{F_\xi}+\sqrt{F_{\xi'}})^2\le4A_j(s)$ imply
\begin{equation}\label{u:eq:weighted-mean}
 A_j(s)|m_j(s)|^2\le C\eta^{-2}|v_j|^2\E|\xi-\xi'|^4
 \le C\eta^{-2}|v_j|^2.
\end{equation}
The tilted prefix density relative to the original product prefix law
is $A_j(S_j)/\mathcal Z^{\mathrm{row}}_v(t)$. Thus, using $A_j\le1$ and $d_j\ge\eta^2$,
\[
 \E_{\mu_{v,t}}[|r_j(S_j)|^2\mathbf1_{\{|S_j|>K\}}]
 \le C_\eta|v_j|^2\E_0[(1+|S_j|^2)\mathbf1_{\{|S_j|>K\}}]
 \le C_\eta|v_j|^2e^{-cK^2},
\]
where $\E_0$ evaluates the same residual function on original independent
prefix entries. Summing the tail bound over $j=1,\ldots,n$
and combining it with \eqref{u:eq:regression} yields
\begin{equation}\label{u:eq:finite-row-remainder}
 \E_{\mu_{v,t}}\sum_j|r_j(S_j)|^2
 \le C_{\eta,K}\Delta^2+C_\eta e^{-cK^2}.
\end{equation}
Consequently, for $s_{ij}=t_i-\sum_{k<j}v_kx_{ik}$ and
$\mathcal R_n=n^{-1/2}(r_j(s_{ij}))$,
$\E\norm{\mathcal R_n}_{\HS}^2\to0$, by first letting $n\to\infty$
at fixed $\eta,K$, then $K\to\infty$ with $\eta$ still fixed.

\paragraph{Coupling errors.}
We construct the coupling one entry at a time. For a fixed residual $s$,
write
\[
 f_{j,s}(x)=\frac{dK_{j,s}}{d\mu}(x)
          =\frac{\mathcal Z_j(s-v_jx)}{A_j(s)},\qquad
 \alpha_{j,s}=\int\min\{f_{j,s},1\}\dd\mu.
\]
Define a joint law on $\F\times\F$ by
\[
 \begin{aligned}
 \pi_{j,s}(\dd x,\dd y)
 &=\min\{f_{j,s}(x),1\}\mu(\dd x)\delta_x(\dd y)\\
 &\quad+\frac{(f_{j,s}(x)-1)_+(1-f_{j,s}(y))_+}{1-\alpha_{j,s}}
                   \mu(\dd x)\mu(\dd y).
 \end{aligned}
\]
Here $\delta_x$ is the point mass at $x$, and the second term is omitted
when $\alpha_{j,s}=1$. The first term, of mass $\alpha_{j,s}$, makes
the two entries equal by sampling their common part. With the remaining
probability, the entries are sampled independently from the two residual
laws. Integrating either coordinate shows that the marginals of
$\pi_{j,s}$ are $K_{j,s}$ and $\mu$. All these kernels are Borel in $s$.

Now generate pairs $(x_{ij},\zeta_{ij})$ in row-major order
$\ell=(i-1)n+j$, and let $\mathcal G_\ell$ be the sigma-algebra generated
by the first $\ell$ pairs, with $\mathcal G_0$ trivial. Given the past,
the residual $s_{ij}=t_i-\sum_{k<j}v_kx_{ik}$ is known; sample the next pair
from $\pi_{j,s_{ij}}$. Thus
\[
 \mathcal L(x_{ij}\mid\mathcal G_{\ell-1})=K_{j,s_{ij}},\qquad
 \mathcal L(\zeta_{ij}\mid\mathcal G_{\ell-1})=\mu.
\]
The first identity reproduces the independent tilted rows, while the
second makes the entire comparator array iid with law $\mu$, by iterated
conditioning. The two arrays may depend on one another. Since $\mu$ is
centered, the centered coupling errors satisfy
\begin{equation}\label{u:eq:coupling-error}
 e_{ij}:=x_{ij}-\zeta_{ij}-m_j(s_{ij}),\qquad
 \E(e_{ij}\mid\mathcal G_{\ell-1})=0.
\end{equation}
To bound their conditional variance, observe that the common part of
$\pi_{j,s}$ contributes zero to $|x-y|^2$. On its residual part use
$|x-y|^2\le2|x|^2+2|y|^2$; conditional centering then gives
\[
 \E(|e_{ij}|^2\mid\mathcal G_{\ell-1})
 \le\E(|x_{ij}-\zeta_{ij}|^2\mid\mathcal G_{\ell-1})
 \le2\int|x|^2|f_{j,s_{ij}}(x)-1|\dd\mu(x).
\]

It remains to estimate this weighted difference of densities.
For integrable $W\ge0$ and $c>0$, $\operatorname{Cov}(W,e^{-cW})\le0$:
average $(W-W')(e^{-cW}-e^{-cW'})\le0$ over independent copies.
Apply this with $W=|s-U_j|^2$. Differentiation and Cauchy--Schwarz give
\[
 |\nabla\log\mathcal Z_j(s)|
 \le\frac\beta{\eta^2}
 \left(\frac{\E[|s-U_j|^2\varphi(s-U_j)]}{\mathcal Z_j(s)}\right)^{1/2}
 \le\frac\beta{\eta^2}\sqrt{\E|s-U_j|^2}
 \le C\eta^{-2}(1+|s|).
\]
Hence, with $u=|v_j|(1+|s|)$,
\[
 D(x):=\log\frac{\mathcal Z_j(s-v_jx)}{\mathcal Z_j(s)},\qquad
 |D(x)|\le C\eta^{-2}(u|x|+|v_j|^2|x|^2).
\]
For a sufficiently small $u_0=u_0(\mu,\eta)>0$, the square-exponential moment and
$|e^D-1|\le|D|e^{|D|}$ imply
$\int(1+|x|^2)|e^{D(x)}-1|\dd\mu(x)\le Cu$ whenever $u\le u_0$.
Normalizing by $\int e^D\dd\mu\ge1-Cu\ge1/2$ gives
\begin{equation}\label{u:eq:weighted-tv}
 \int(1+|x|^2)\left|\frac{dK_{j,s}}{d\mu}(x)-1\right|\dd\mu(x)
 \le C|v_j|(1+|s|),\qquad u\le u_0.
\end{equation}
Combining this with the preceding coupling bound gives
\[
 \E(|e_{ij}|^2\mid\mathcal G_{\ell-1})
 \le C|v_j|(1+|s_{ij}|),\qquad |v_j|(1+|s_{ij}|)\le u_0.
\]
Row-density domination, conditional Jensen applied to
$m_j(s_{ij})=\E(x_{ij}\mid\mathcal G_{\ell-1})$, and H\"older give
$\E e^{c|e_{ij}|^2}\le C$ after decreasing $c$.

To pass from these scalar errors to their matrix operator norm, truncate
and apply matrix Freedman. Choose a sufficiently large fixed $C_0$, set
$\Lambda_n=C_0\sqrt{\log n}$, and define
\[
 h_{ij}=e_{ij}\mathbf1_{\{|e_{ij}|\le\Lambda_n\}},
 \qquad \widetilde e_{ij}=h_{ij}
                  -\E(h_{ij}\mid\mathcal G_{\ell-1}).
\]
Conditional centering decreases variance, and
$e_{ij}-\widetilde e_{ij}$ is the conditionally centered tail.
Thus $|\widetilde e_{ij}|\le2\Lambda_n$ and
\[
 \E\norm{n^{-1/2}(e_{ij}-\widetilde e_{ij})}_{\HS}^2
 \le Cn e^{-c\Lambda_n^2}=o(1).
\]
The flatness assumption in Lemma~\ref{u:lem:coupling} and the chosen
truncation level give
\[
 \Delta(1+\Lambda_n)
 \le\frac{L(1+C_0\sqrt{\log n})}{\sqrt n}\longrightarrow0.
\]
Consequently, on $\{\max_{i,j}|s_{ij}|\le\Lambda_n\}$, the condition
$|v_j|(1+|s_{ij}|)\le u_0$ holds for every entry when $n$ is large,
so the preceding conditional variance bound applies throughout.

Now fill the matrix $n^{-1/2}(\widetilde e_{ij})$ in row-major order,
keeping unrevealed entries zero. These partial matrices form a martingale
with respect to $\mathcal G_\ell$. Each increment changes just one entry,
so its operator norm is at most $b_n=2\Lambda_n/\sqrt n$.
The two predictable quadratic variation matrices are diagonal: their
diagonal entries are the row sums and column sums of the conditional
entry variances. Thus, on the same event, their operator norms are bounded by
\[
 \max\left\{
 \max_i\frac1n\sum_j\E(|\widetilde e_{ij}|^2\mid\mathcal G_{\ell-1}),
 \max_j\frac1n\sum_i\E(|\widetilde e_{ij}|^2\mid\mathcal G_{\ell-1})
 \right\}
 \le V_n:=C(1+\Lambda_n)\Delta,
\]
where $\ell=(i-1)n+j$ in each summand; we used $|v_j|\le\Delta$.
In particular $V_n\le C\sqrt{\log n/n}$.
The complementary event has probability at most $Cn^2e^{-c\Lambda_n^2}$.
Matrix Freedman in its joint-event form \cite[Corollary 1.3]{Tropp}
therefore gives, for every fixed $u>0$,
\[
 \Prob\{\norm{n^{-1/2}(\widetilde e_{ij})}_{\op}>u\}
 \le Cn^2e^{-c\Lambda_n^2}
       +2n\exp\left[-\frac{u^2}{2(V_n+b_nu/3)}\right]\longrightarrow0.
\]
Removing truncation proves
$\norm{\mathcal M_n}_{\op}\xrightarrow{\Prob}0$ for
$\mathcal M_n=n^{-1/2}(e_{ij})$.

\paragraph{Triangular system.}
It remains to solve the coordinate regressions and assemble the
decomposition in Lemma~\ref{u:lem:coupling}: an iid array multiplied by $T$,
a rank-one drift, and a vanishing error. Combining the centered-error
definition \eqref{u:eq:coupling-error} with the regression decomposition
\eqref{u:eq:regression}, and substituting
$s_{ij}=t_i-\sum_{k<j}v_kx_{ik}$, gives
\begin{align*}
 x_{ij}&=\zeta_{ij}+m_j(s_{ij})+e_{ij}\\
 &=\zeta_{ij}+\frac{\bar v_j}{d_j}
       \left(t_i-\sum_{k<j}v_kx_{ik}\right)+e_{ij}+r_j(s_{ij}).
\end{align*}
Moving all terms involving $\bm x_i$ to the left gives a triangular system:
the $j$th equation involves only $x_{i1},\ldots,x_{ij}$.
Let $\mathsf L_{jj}=1$, $\mathsf L_{kj}=v_k\bar v_j/d_j$ for $k<j$,
and zero below the diagonal; define the row vector
$\bm a:=(\bar v_1/d_1,\ldots,\bar v_n/d_n)\in\F^{1\times n}$.
The bold symbols $\bm x_i,\bm\zeta_i,\bm e_i,\bm r_i\in\F^{1\times n}$
denote row vectors with components
\[
 (\bm x_i)_j=x_{ij},\quad (\bm\zeta_i)_j=\zeta_{ij},\quad
 (\bm e_i)_j=e_{ij},\quad (\bm r_i)_j=r_{ij}:=r_j(s_{ij}).
\]
Then
\[
 \bm x_i\mathsf L=\bm\zeta_i+t_i\bm a+\bm e_i+\bm r_i.
\]
The matrix $\mathsf L$ is invertible because it is upper triangular with
diagonal entries one. We identify its inverse to recover the specific
$T$ and drift in \eqref{u:eq:coupling}--\eqref{u:eq:T}.
For the candidate $T$ in \eqref{u:eq:T}, the identity
$|v_\ell|^2/(d_\ell d_{\ell+1})=d_{\ell+1}^{-1}-d_\ell^{-1}$ gives
\[
 (\mathsf LT)_{kj}
 =v_k\bar v_j\left(\frac1{d_j}-\frac1{d_{k+1}}
       -\sum_{k<\ell<j}\frac{|v_\ell|^2}{d_\ell d_{\ell+1}}\right)
 =0,\qquad k<j.
\]
The diagonal entries are one and those below the diagonal are zero,
so $\mathsf LT=I$. The same telescoping sum identifies the drift:
\[
 (\bm aT)_j=\bar v_j\left(\frac1{d_j}
             -\sum_{k<j}\frac{|v_k|^2}{d_kd_{k+1}}\right)
 =\frac{\bar v_j}{d_1}=\frac{\bar v_j}{1+\eta^2}.
\]
Thus solving the system gives
\[
 \bm x_i=\bm\zeta_iT+\frac{t_i v^*}{1+\eta^2}+(\bm e_i+\bm r_i)T.
\]
To finish, we check that multiplication by $T$ preserves the smallness
of the two errors. Its explicit entries give
\[
 \norm{T-I}_{\HS}^2
 =\sum_{k<j}\frac{|v_k|^2|v_j|^2}{d_{k+1}^2}
 \le\eta^{-4}\frac{1-\sum_j|v_j|^4}{2}\le(2\eta^4)^{-1},
\]
which is the bound in \eqref{u:eq:T}; hence
$\norm T_{\op}\le1+1/(\sqrt2\eta^2)$ for fixed $\eta$.
Stacking the solved rows and dividing by $\sqrt n$ gives
\eqref{u:eq:coupling} with
\[
 E_n=\frac1{\sqrt n}(e_{ij}+r_{ij})_{i,j=1}^n T
    =(\mathcal M_n+\mathcal R_n)T.
\]
Indeed,
the preceding two parts proved that $\norm{\mathcal M_n}_{\op}$ and
$\norm{\mathcal R_n}_{\HS}$ tend to zero in probability, so
\[
 \norm{E_n}_{\op}
 \le\bigl(\norm{\mathcal M_n}_{\op}+\norm{\mathcal R_n}_{\HS}\bigr)
       \left(1+\frac1{\sqrt2\eta^2}\right)
 \xrightarrow{\Prob}0.
\]
\end{proof}

\subsection{Proof of the conditional real Schur distribution}\label{u:sec:schur-distribution}
\begin{proof}[Proof of Fact~\ref{u:fact:conditional-schur}]
We reproduce the Schur Jacobian calculation of
\cite[Theorem 5.1 and Lemma 5.2]{Edelman} for completeness.
The real Schur theorem gives diagonal blocks of size one or two.
In a two-dimensional block, choosing axes at $45^\circ$ to eigenvectors
of its symmetric part makes the diagonal entries equal to $x_a$.
The determinant then gives $b_ac_a=y_a^2>0$; a reflection and a
quarter-turn arrange $b_a\ge c_a>0$. For a fixed ordering and simple
spectrum, this fixes the block bases up to signs whenever $b_a>c_a$.
Repeated eigenvalues and the lower-dimensional sets $b_a=c_a$ have
Lebesgue measure zero and may be discarded.

\emph{The Jacobian between blocks.}
Here $d$ denotes the formal first-order differential in the real Schur
coordinates: $dQ$, $dU$, and $dG_n$ are matrices of differential
one-forms, and the usual product rule applies. The Jacobian is the
determinant of the resulting linear map between coordinate differentials.
Set $\Omega=Q^\top dQ$, so $\Omega^\top=-\Omega$. Differentiation gives
\[
 D:=Q^\top(dG_n)Q=dU+\Omega U-U\Omega.
\]
Orthogonal conjugation has Jacobian of absolute value one on matrix
space, so it suffices to compute the Jacobian of this differential.
For a lower block $a>b$, upper triangularity of $U$ gives
\[
 D_{ab}=\Omega_{ab}\mathsf S_b-\mathsf S_a\Omega_{ab}
       +\sum_{c<b}\Omega_{ac}N_{cb}
       -\sum_{c>a}N_{ac}\Omega_{cb}.
\]
The rotation blocks in both sums lie farther below the diagonal than
$(a,b)$. Ordering the pairs by decreasing $a-b$ therefore makes this
linear system block triangular. Its diagonal maps, acting on
$H\in\R^{d_a\times d_b}$, satisfy
\[
 \mathcal L_{ab}(H)=H\mathsf S_b-\mathsf S_aH,
 \qquad
 |\det\mathcal L_{ab}|=
 \prod_{\lambda\in\operatorname{spec}(\mathsf S_a)}
 \prod_{\lambda'\in\operatorname{spec}(\mathsf S_b)}|\lambda'-\lambda|.
\]
Indeed, diagonalizing the two blocks over $\C$ makes
$\mathcal L_{ab}$ diagonal on matrix units, with eigenvalues
$\lambda'-\lambda$. Thus all these factors depend only on the spectrum,
not on $N_{ab}$ or $\Delta_a$.

\emph{The Jacobian inside a block.}
After eliminating the rotation variables just treated, a scalar
diagonal block contributes $d\lambda_a$. For a two-dimensional block,
write $\Omega_{aa}=\left(\begin{smallmatrix}0&d\theta_a\\
-d\theta_a&0\end{smallmatrix}\right)$. Its remaining differential is
\[
 d\mathsf S_a+[\Omega_{aa},\mathsf S_a]
 =\begin{pmatrix}
 dx_a+\Delta_a d\theta_a&db_a\\
 -dc_a&dx_a-\Delta_a d\theta_a
 \end{pmatrix},
\]
whose Jacobian with respect to $(x_a,\theta_a,b_a,c_a)$ has absolute
value $2\Delta_a$. Finally, each upper block has differential
$dN_{ab}$ plus already treated rotation terms, and hence contributes
Jacobian one. Relative to Haar volume for $Q$ and Lebesgue measure for
$N_{ab}$, the real $\lambda_a$, and the triples $(x_a,b_a,c_a)$, the
full Jacobian is therefore, up to a constant depending only on the
block pattern,
\[
 J=\prod_{a>b}|\det\mathcal L_{ab}|
       \prod_{a:d_a=2}2\Delta_a.
\]
The sign convention restricts $Q$ to a fixed domain independent of
the other coordinates; its volume is another constant.

\emph{Conditional density and moments.}
For each complex block, the change from $(b,c)$ to
$(y,\Delta)=(\sqrt{bc},b-c)$ satisfies
\[
 \left|\frac{\partial(b,c)}{\partial(y,\Delta)}\right|
 =\frac{2y}{\sqrt{\Delta^2+4y^2}}.
\]
This follows by differentiating $bc=y^2$ and $b-c=\Delta$, using
$b+c=\sqrt{\Delta^2+4y^2}$. Moreover,
\[
 \norm{G_n}_{\HS}^2=\norm U_{\HS}^2
 =\sum_{a:d_a=1}\lambda_a^2
  +\sum_{a:d_a=2}(2x_a^2+2y_a^2+\Delta_a^2)
  +\sum_{a<b}\norm{N_{ab}}_{\HS}^2.
\]
Multiply the Jacobian by the Gaussian density
$\exp(-n\norm{G_n}_{\HS}^2/2)$ and integrate out $Q$; this is the
density factorization in \cite[Section 6.1, equation (10)]{Edelman},
with entries scaled to variance $1/n$.
After fixing the spectrum, all factors depending only on eigenvalues
enter the normalizing constant. The remaining density is proportional to
\[
 \prod_{a<b}e^{-n\norm{N_{ab}}_{\HS}^2/2}\dd N_{ab}
 \prod_{a:d_a=2}
 \frac{\Delta_a e^{-n\Delta_a^2/2}}
      {\sqrt{\Delta_a^2+4y_a^2}}\dd\Delta_a,
 \qquad \Delta_a>0.
\]
The domain is a Cartesian product of Euclidean spaces and positive
half-lines. Normalizing each factor proves the stated
densities and all conditional independence claims.

For the second moments, all expectations below are conditional on the
spectrum. Integrate the derivative of
$t^2(t^2+4y_a^2)^{-1/2}e^{-nt^2/2}$ over $(0,\infty)$.
The boundary terms vanish; dividing by $Z_{n,y_a}$ gives
\[
 n\E\Delta_a^2
 =2-\E\frac{\Delta_a^2}{\Delta_a^2+4y_a^2}\le2.
\]
Finally, $\E\norm{N_{ab}}_{\HS}^2=d_ad_b/n$ by summing the
variances of its scalar Gaussian entries. The argument applies to every
block pattern, including odd dimensions.
\end{proof}

\part{Lower tails}\label{part:lower}
\section{Lower-tail reduction and the border estimate}\label{sec:lower-reduction}
Two estimates prove the real lower-tail theorem. When the core has a
sufficiently large weighted Arnoldi link product,
Proposition~\ref{l:thm:MB} bounds the conditional probability of the
spectral event over the independent border.
Proposition~\ref{l:thm:ML} controls the complementary event; its proof is
in Section~\ref{sec:comparison-links}, while the border bound is proved
below.

\subsection{Lower-tail reduction and probability inputs}\label{l:sec:real_main}

In this part, \(\mathsf X_n\) denotes the unnormalized entry matrix from
\eqref{eq:common-normalization}; its normalized version remains
\(X_n=\mathsf X_n/\sqrt n\). Thus the event
\(\rho(\mathsf X_n)\le r\sqrt n\) is exactly \(\rho(X_n)\le r\).
We first treat real entries and then give the complex extension in
Section~\ref{l:sec:complex}.

Let \(\mu(dx)=f(x)\,dx\) be a probability law on \(\mathbb R\), where
\(f\) is Borel measurable and, for some finite \(K_f\) and some
\(t_f>0\), \begin{equation}
0\le f\le K_f,\qquad
 \mathbb E\xi=0,\qquad \mathbb E\xi^2=1,\qquad
 \mathbb E e^{t_f\xi^2}<\infty,\qquad \xi\sim\mu.
 \label{l:eq:M.1}
\end{equation} Fix a bounded representative of the density, changing it on a null
set if necessary. No lower bound or smoothness is assumed.

We prove the real case of Theorem~\ref{l:thm:MT} in
Subsection~\ref{l:sec:mainproof}, using the two probability estimates below.
The complex case is completed in Section~\ref{l:sec:complex}.

Write \(\operatorname{Ran}M\) for column space, \(P_E\) for orthogonal
projection onto \(E\), and \(M^T\) for transpose. Matrix restrictions
\(M_{I,:}\), \(M_{:,J}\), \(M_{I,J}\) specify rows, columns, or both; a
colon retains all indices, in their original order. Set
\([s]=\{1,\ldots,s\}\), \([0]=\varnothing\), and use \([a,b]\) for
integer index sets in restrictions. For an index set \(J\),
\(\mathbb R^J\) is the space of vectors indexed by \(J\), and
\(\mathrm{Id}_J\) is its identity. Complements are relative to the
ambient index set; \(\mathrm{Id}_s\) is the identity in dimension \(s\).
The random deletion output is \(I_{\mathrm{del}}\); its
deterministic candidates are \(I_0\), with complement \(J_0\).

\subsubsection{The core, the starter, and the
border}\label{l:the-core-the-starter-and-the-border}

Here \(n\) denotes the size of the full matrix and \(d:=n-1\) the size
of its core. We use \(d\) for core dimensions and retain \(\sqrt n\) for
the spectral normalization. Split the \emph{unnormalized} matrix as \begin{equation}
\mathsf X_n=\begin{pmatrix}\xi_{11}&\mathbf y^T\\\mathbf x&A\end{pmatrix},
 \qquad A\in\mathbb R^{d\times d}.
 \label{l:eq:M.3}
\end{equation} The core \(A\), border \(\mathbf b=(\xi_{11},\mathbf x,\mathbf y)\), and
starter \(\mathbf s\sim\mu^{\otimes d}\) are independent. The starter is
an additional random vector, not a row or column of \(\mathsf X_n\); in
particular it is neither \(\mathbf x\) nor \(\mathbf y\). It chooses the
Arnoldi basis and does not enter the spectral event. Thus
\(\mathbb P_{A,\mathbf s}\) samples the \(d^2\) core entries and \(d\)
additional starter entries, whereas \(\mathbb P_{\mathbf b}\) samples
the \(2d+1\) border entries. We also write
\(\mathbb P_\mu=\mathbb P_{A,\mathbf s}\) for the joint original law of
the core and starter; \(\mathbb P_{\mathbf b}\) is the separate border law.

\Needspace{6\baselineskip}
\textbf{Only the core is queried.} Algorithm \ref{l:thm:ARN} acts on the
bottom-right matrix \(A\), with the extra independent vector
\(\mathbf s\) used only to initialize it. All its directions, responses
and residual lengths are functions of \((A,\mathbf s)\). We reserve the
border for a separate step: after fixing \((A,\mathbf s)\), its \(2d+1\)
entries still have their independent original laws, so Proposition \ref{l:thm:MB}
can estimate the spectral event over the border alone. Querying the full
matrix \(\mathsf X_n\) would generally make the query record and basis depend on
that border, preventing this direct conditional use of independence. The
spectral event throughout concerns \(\mathsf X_n\); the core supplies the
link-product condition for this border estimate, not a substitute
spectral-radius event.

Choose a small constant \(\alpha=\alpha(\mu,r)>0\), fixed later, and set
\[
 k=2\lfloor\alpha d/2\rfloor,\qquad m=k/2.
\] Thus \(k\) is even, \(k/d\to\alpha>0\), and \(2\le k<d\) for large
\(d\). All algorithms use this deterministic query length.

We use the classical Arnoldi orthogonalization \cite{Arnoldi1951},
with an explicit completion convention when a residual vanishes.

\begin{algorithms}[Arnoldi queries of the core]\label{l:thm:ARN}

Given the core \(A\in\mathbb R^{d\times d}\), the independent starter
\(\mathbf s\in\mathbb R^d\), and \(k<d\), set
\(\mathbf v_1=\mathbf s/\|\mathbf s\|\) when \(\mathbf s\ne0\), and \(\mathbf v_1=e_1\)
otherwise. For \(j=1,\ldots,k\), observe \(A\mathbf v_j\) and set \[
 P_j=\sum_{i=1}^j\mathbf v_i\mathbf v_i^T,\qquad
 \mathbf r_j=(\mathrm{Id}_d-P_j)A\mathbf v_j,\qquad R_j=\|\mathbf r_j\|.
\] Set \[
 \mathbf v_{j+1}=\begin{cases}
 \displaystyle\frac{\mathbf r_j}{R_j},&R_j>0,\\[6pt]
 \displaystyle\frac{(\mathrm{Id}_d-P_j)e_{i_j}}
                    {\|(\mathrm{Id}_d-P_j)e_{i_j}\|},&R_j=0,
 \end{cases}
 \qquad
 i_j=\min\{i\in[d]:(\mathrm{Id}_d-P_j)e_i\ne0\}.
\] The \(R_j=0\) branch is the deterministic
\emph{orthogonal completion} rule. The index \(i_j\) exists since
\(j<d\). This branch adds one orthogonal unit direction while retaining
all previous directions and responses; \(R_j\) remains zero and no new
starter is sampled. Continue with the next step. Record \[
 V=[\mathbf v_1,\ldots,\mathbf v_k],\qquad V^TV=\mathrm{Id}_k,
\] the responses \(A\mathbf v_1,\ldots,A\mathbf v_k\), and the residual norms. We use
this deterministic orthogonal completion rule below as well.
\end{algorithms}

The construction of the full orthogonal basis and its Hessenberg form
for the border calculation is given in Section \ref{l:sec:real_border}.

\subsubsection{A link estimate under the original
law}\label{l:a-link-estimate-under-the-original-law}

For \(2\le k<d\), define \begin{equation}
w_j=\min\{j,k-j\}\ (1\le j<k),\qquad
 M_k=\sum_{j=1}^{k-1}w_j=\lfloor k^2/4\rfloor,\qquad
 W_k(A,\mathbf s)=\prod_{j=1}^{k-1}(R_j/\sqrt n)^{w_j}.
 \label{l:eq:M.6}
\end{equation} We normalize by the full matrix scale \(\sqrt n=\sqrt{d+1}\).

\begin{proposition}[Original-law weighted links]\label{l:thm:ML}

For each \(0<\varepsilon\le1\), there exist
\(\beta_{\mu,\varepsilon}>0\), \(c_{\mathrm{link}}(\mu,\varepsilon)>0\)
and \(d_0(\mu,\varepsilon)\) such that, for every \(d\ge d_0\), one can
choose an integer \(m\ge1\) satisfying \begin{equation}
2m<d,\qquad \beta_{\mu,\varepsilon}d\le 2m,
 \qquad 256m\le\varepsilon d,
 \label{l:eq:M.7}
\end{equation} for which \begin{equation}
\mathbb P_{A,\mathbf s}\{W_{2m}(A,\mathbf s)\le(1-\varepsilon/4)^{m^2}\}
 \le e^{-c_{\mathrm{link}}(\mu,\varepsilon)d^2}.
 \label{l:eq:M.8}
\end{equation} Here \(A\) and \(\mathbf s\) have their actual independent iid laws
\(\mu\). The main proof uses only the even query length \(2m\), for
which \(M_{2m}=m^2\). The general parameter \(k\) in \eqref{l:eq:M.6} is retained
only for the reusable link and stopping estimates below; their
application here is \(k=2m\). The estimate is unconditional in the core
and starter; the border has not been sampled in its proof. \end{proposition}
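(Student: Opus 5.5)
Under Gaussian entries the Arnoldi iteration reduces the core to Hessenberg form, and the residual lengths $R_j$ are independent chi variables, $R_j\sim\chi_{d-j}$ (the \cite{KV2014} structure). So the plan is: prove \eqref{l:eq:M.8} for the Gaussian core first, then transfer it to the original law $\mu$.

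\textbf{Gaussian step.} For real Ginibre core and starter, $R_j^2/n$ has mean $(d-j)/n\ge 1-2m/d-1/n\ge1-\varepsilon/128$ once $256m\le\varepsilon d$. Taking logarithms,
\[
\log W_{2m}=\sum_j w_j\cdot\tfrac12\log(R_j^2/n).
\]
Each summand has a lower tail $\Prob\{\log(R_j^2/n)\le -t\}\le e^{-c(d-j)t}$ for $t$ of order one, and a sub-exponential lower tail for large $t$. The weights satisfy $w_j\le m$ and $\sum_j w_j=m^2$.

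A Chernoff bound for this weighted sum of independent variables handles the event $\sum_j w_j\log(R_j^2/n)\le -2m^2\cdot\varepsilon/2$. Using $-\log(1-\varepsilon/4)\approx\varepsilon/4$, the exponent is of order $\min\{d\,m^2\varepsilon/ m,\ d\,m^2\varepsilon^2\}$ in the Gaussian regime, which is $\gtrsim d^2$ once $m\ge\beta d/2$. Concretely, the moment generating function gives $\E (R_j^2/n)^{-\lambda w_j}\le (1+O(\lambda w_j/d))^{...}$, which is fine as long as $\lambda w_j<(d-j)/2$. The choice $m\approx \varepsilon d/256$ then yields an $e^{-cd^2}$ bound.

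\textbf{Transfer step (main obstacle).} The residuals are adaptive orthonormal observations: $\mathbf v_{j+1}$ depends on the earlier responses. So Lindeberg-type replacement of individual entries gives additive errors far larger than $e^{-cd^2}$. I would proceed as the overview indicates.

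First, condition sequentially. Given $\mathbf v_1,\dots,\mathbf v_j$ and the previous responses, the new response $A\mathbf v_j$ is determined only partially: the components of $A$ already revealed along earlier directions are fixed. The fresh randomness is the action of $A$ on a direction orthogonal to the queried span.

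Second, use a coordinate deletion argument. From the frame I would delete a set $I_{\mathrm{del}}$ of coordinates on which the query directions are concentrated, so that on the remaining coordinates $J_0$ each direction is flat. A flat direction makes each observation a sum of many independent entries.

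Third, bound the conditional density of the projection $\|(\mathrm{Id}-P_j)A\mathbf v_j\|^2$ near small values. The bounded density $K_f$ gives tilted fiber-density bounds: the conditional density of the fresh $(d-j)$-dimensional residual vector is at most $(C K_f)^{d-j}$ times a Gaussian-like envelope. This yields $\Prob\{R_j\le t\sqrt n\mid\text{past}\}\le (Ct)^{d-j}$ uniformly over the past, with no additive error.

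With these tools, the weighted lower-tail Chernoff computation from the Gaussian step goes through with worse constants. The bound $(Ct)^{d-j}$ replaces the exact $\chi$ density for small $t$. Subgaussian concentration of $\|A\mathbf v\|$ handles the moderate regime, after a union bound over a net of frames of cost $e^{O(dk)}=e^{O(\varepsilon d^2)}$, absorbed by taking $\varepsilon$ (hence $m/d$) small relative to the gain. The hard part will be making the small-ball density bound uniform over adaptively chosen directions, including directions concentrated on few coordinates. I expect the deletion step to be the delicate point: it must cost only $e^{o(d^2)}$ while keeping enough independent entries per query.

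Finally, I would choose $\beta_{\mu,\varepsilon}$ and $m$ so that \eqref{l:eq:M.7} holds.
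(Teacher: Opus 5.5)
Your Gaussian step is essentially the paper's: independent $\chi^2_{d-j}$ residuals and a negative-moment Chernoff bound. The transfer step, however, has a genuine quantitative gap. Under the Gaussian law the event $\{W_{2m}\le(1-\varepsilon/4)^{m^2}\}$ has probability only $e^{-c(\varepsilon)dk}$, with $k=2m$ and $c(\varepsilon)$ of order $\varepsilon^2$. Any passage to $\mu$ must therefore cost less than a small multiple of $\varepsilon^2dk$ in the exponent, and neither of your mechanisms achieves this.

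First, the bound $\mathbb P\{R_j\le t\sqrt n\mid\text{past}\}\le(Ct)^{d-j}$, uniform over the past, is not justified. Given the earlier responses, each row $x_i$ lies on an affine fiber. The conditional density of $\langle x_i,\mathbf v_j\rangle$ is then a ratio $p_{V_j}/p_{V_{j-1}}$ of fiber densities, and the denominator has no lower bound; for compactly supported $f$ it degenerates near the edge of the support of the projection. So there is no uniformly bounded density to feed into a projection bound.

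More importantly, even a valid bound of this form with $C=C(K_f)>1$ is vacuous at $t\approx1-\varepsilon/4$, which is exactly where the event lives. It can occur with every $R_j$ only slightly below $\sqrt n$, so you need Gaussian-accurate control of moderate deviations of the whole adaptive record, not a small-ball estimate.

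The net argument cannot supply this either. Resolving each $\log R_j$ to accuracy $\varepsilon$ needs a frame net of cardinality about $e^{dk\log(C/\varepsilon)}$. A fixed frame gains at best $e^{-c\varepsilon^2dk}$. Both exponents are proportional to $dk$, so decreasing $m/d$ leaves their ratio unchanged. Also, $\varepsilon=1-r$ is given; it is not a parameter you may shrink.

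What is missing is a multiplicative comparison of the full adaptive observation record whose cost is an \emph{arbitrarily small} multiple of $dk$. The paper obtains this in the weighted form of Theorem~\ref{l:thm:AC}: on frames whose coordinate weights are all at most $\sigma$, the cost is $e^{C\sqrt\sigma\,dk}$. It is proved from three ingredients:
\begin{enumerate}
\item the exact likelihood of the record;
\item a tilted Fourier and Brascamp--Lieb bound on the fiber density, valid at every frame and point;
\item an entropy estimate from a sequential transport coupling.
\end{enumerate}
One then chooses $\sigma$ small against the Gaussian rate. Flatness matters because the density error is $\sum_j\ell_j^{3/2}\le k\sqrt\sigma$, not because it produces independent entries.

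Deletion is how such a flat frame is reached, and it carries burdens your sketch does not address.
\begin{itemize}
\item The deleted columns must be fed back as auxiliary input so that the observations still reconstruct the Arnoldi run.
\item The reference then becomes the mixed law $\mathsf Q_{I_0}$, which H\"older controls only when $|I_0|\le\eta_{\mu,\varepsilon}k$.
\item One must show that a flat frame appears after fewer than $\nu k$ deletions, outside an event of probability $e^{-cd^2}$.
\end{itemize}
In the paper (Proposition~\ref{l:thm:SS}) this last step uses two arguments. For active outputs, the coarse comparison at cost $e^{Cdk}$ is beaten by a Gram-volume certificate of order $L_{\mathrm{stop}}dk$. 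For late flat outputs, the weighted comparison is combined with a capped coordinate-weight moment.
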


Section \ref{l:sec:real_stopping} proves Proposition \ref{l:thm:ML} using the
comparison theorem stated in Section \ref{l:sec:real_comparison}.

\subsubsection{A border estimate with the core held
fixed}\label{l:a-border-estimate-with-the-core-held-fixed}

The next estimate holds with a finite law-dependent constant
\(C_{\mathrm{border}}(f)>0\); Section \ref{l:sec:real_border} proves it by
Gaussian border comparison.

\begin{proposition}[Good-core border probability]\label{l:thm:MB}

Fix \(0\le r<R\le1\), \(a>0\), and an integer \(m\ge1\) with \(2m<d\)
and \((r/R)^m\le1/2\). Fix a deterministic core
\(A\in\mathbb R^{d\times d}\) and starter \(\mathbf s\in\mathbb R^d\)
with \(W_{2m}(A,\mathbf s)\ge a^{m^2}\), where \(d=n-1\). The
probability \(\mathbb P_{\mathbf b}\) below is over precisely the
\(2d+1=2n-1\) independent scalar entries \[
 \mathbf b=(\xi_{11},x_1,\ldots,x_d,y_1,\ldots,y_d)\sim\mu^{\otimes(2d+1)}.
\] Then \begin{equation}
\mathbb P_{\mathbf b}\left\{
  \rho\begin{pmatrix}\xi_{11}&\mathbf y^T\\\mathbf x&A\end{pmatrix}\le r\sqrt n
 \right\}
 \le\exp\left\{
    C_{\mathrm{border}}(f)n+\frac{m^2}{4}\log(R/a)
   \right\}.
 \label{l:eq:M.10}
\end{equation} Explicitly, for a border event \(E\), \[
 \mathbb P_{\mathbf b}(E)=
 \int_{\mathbb R^{2d+1}}\mathbf1_E(\xi_{11},\mathbf x,\mathbf y)\,
 f(\xi_{11})\prod_{i=1}^d f(x_i)f(y_i)\,d\xi_{11}\,d\mathbf x\,d\mathbf y.
\] There is no averaging over \(A\) or \(\mathbf s\), no random change
of basis, and no Gaussian border in this statement. A Gaussian border is
introduced only as a comparison device in its proof. \end{proposition}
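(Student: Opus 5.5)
The plan is to reduce the border event to a small-ball estimate for a Gaussian linear image. The determinant of that linear map is computed in the Hessenberg basis of Algorithm~\ref{l:thm:ARN}, where it is controlled by the weighted link product $W_{2m}$.

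\emph{Step 1: Gaussianize the border.} Using $\E e^{t_f\xi^2}<\infty$, I first discard the event $\{\|\mathbf b\|^2>C n\}$. For large $C$ its probability is at most $e^{-c'n^2}$ if one also truncates each coordinate, or $e^{-Cn}$, which is enough; this cost is absorbed into $C_{\mathrm{border}}(f)n$. On the ball $\{\|\mathbf b\|^2\le Cn\}$, the density $\prod f\le K_f^{2n-1}$ is at most $e^{C'n}$ times the standard Gaussian density $(2\pi)^{-(2n-1)/2}e^{-\|\mathbf b\|^2/2}$. Hence
\[
\mathbb P_{\mathbf b}(E)\le e^{C''n}\,\mathbb P_{\mathrm{Gauss}}(E)+e^{-Cn}.
\]
This is the only place where the bounded density is used. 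The Gaussian border is rotation invariant, so I may replace $(\mathbf x,\mathbf y)$ by $(Q^T\mathbf x,Q^T\mathbf y)$. Here $Q=[V,V']$ is an orthogonal completion of the Arnoldi frame. In this basis $Q^TAQ$ has an upper Hessenberg leading $k\times k$ block, with subdiagonal entries $R_1,\dots,R_{k-1}$, and the first column below that block is zero.

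\emph{Step 2: Turn confinement into polynomial constraints.} Write $p(z)=\det(zI-\mathsf X_n)=\det(zI-A)\bigl(z-\xi_{11}-\mathbf y^T(zI-A)^{-1}\mathbf x\bigr)$. On $\{\rho\le r\sqrt n\}$ every root lies in $|z|\le r\sqrt n$. Then for $|z|=R\sqrt n$ the normalized polynomial $p(z)/z^n$ is close to a product whose deviation from its leading behavior is controlled by $(r/R)$. Concretely, I would take the Taylor/Laurent coefficients of order $1,\dots,m$ of $\log(p(z)/z^n)$, or of $p$ itself. Each of them is bounded by $C^n(r/R)^j$-type quantities times the corresponding power of $R\sqrt n$, and the condition $(r/R)^m\le1/2$ handles the geometric tails. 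Since $p$ is affine in $\mathbf x$ when $\mathbf y$ is fixed, with coefficients $\mathbf y^T\operatorname{adj}(zI-A)\mathbf x$, the event confines a linear image $L_{\mathbf y}\mathbf x\in\R^{m}$ to a box whose volume is at most $e^{Cn}\prod_{j\le m}(R\sqrt n)^{j}$, suitably normalized. The Gaussian small-ball bound then gives $\mathbb P\le e^{Cn}\,\mathrm{vol}(\text{box})/|\det L_{\mathbf y}|$. Symmetrically, I would use the roles of $\mathbf y$ and $\mathbf x$ to obtain the second triangular half of the weights.

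\emph{Step 3: The determinant (main obstacle).} In the Hessenberg basis, the entry of $\operatorname{adj}(zI-H)$ linking coordinate $1$ to coordinate $j+1$ equals $R_1\cdots R_j$ times a cofactor. So the $m\times m$ matrix of coefficient functionals in $(x'_1,\dots,x'_m)$ is triangular, with diagonal products $\prod_{i<j}R_i$. Pairing an $\mathbf x$-triangle of length $m$ starting from the starter end with a $\mathbf y$-triangle ending at step $k$ should produce exactly the exponent $\sum_j\min\{j,k-j\}=m^2$. Heuristically this gives
\[
|\det|\gtrsim \sqrt n^{\,m^2}\,W_{2m}\ge (a\sqrt n)^{m^2}.
\]
The ratio of box volume to determinant then yields $(R/a)^{m^2/4}$ after the $\sqrt n$ scalings cancel, up to $e^{Cn}$.

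The hard points are all in this step. The $\mathbf y$-dependence of $L_{\mathbf y}$ must be controlled on a Gaussian-typical set; I would condition on $\mathbf y'$ and use that its first Krylov coordinates are nondegenerate, except on an event of probability $e^{-cn}$ that costs only $e^{Cn}$. The remaining $d-k$ coordinates of the core must be prevented from degrading the triangularity; I would handle this by a Schur complement onto the first $k$ Arnoldi coordinates. Finally, the bookkeeping that turns root confinement into coefficient boxes must lose only $e^{Cn}$ and the advertised factor $\frac{m^2}{4}\log(R/a)$. I expect this to go through after adjusting the exponent constant, possibly with $m^2/4$ coming from using only half of the available triangular structure.
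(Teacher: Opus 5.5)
Your Step~1 already breaks the argument, and the same defect recurs in Step~3.

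\textbf{Additive errors cannot be absorbed.} Write the right-hand side of \eqref{l:eq:M.10} as $e^{C_{\mathrm{border}}(f)n-\Gamma}$ with $\Gamma=\frac{m^2}{4}\log(a/R)$. The bound has content only for $a>R$. In the application $m\ge\beta_{\mu,\varepsilon}d/2$, so $\Gamma$ is of order $n^2$, and $C_{\mathrm{border}}(f)$ may not depend on $m,a,R$.
\begin{itemize}
\item An \emph{additive} error such as $\mathbb P_{\mathbf b}\{\|\mathbf b\|^2>Cn\}$ cannot be ``absorbed into $C_{\mathrm{border}}(f)n$''. For Gaussian-like tails it is genuinely of size $e^{-O(n)}$, since one coordinate of size $\sqrt{Cn}$ suffices.
\item Forcing it below $e^{-cn^2}$ needs a ball with $\|\mathbf b\|^2$ of order $n^2$, on which your density ratio is $e^{\Theta(n^2)}$.
\item For the same reason, controlling $L_{\mathbf y}$ off an exceptional set of probability $e^{-cn}$ is useless. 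You would need anticoncentration of $\det L_{\mathbf y}$ down to scale $e^{-\Theta(n^2)}$, i.e.\ a genuine negative-moment bound in the second border vector.
\end{itemize}
The paper's comparison \eqref{l:eq:D.2} is multiplicative. Cauchy--Schwarz against $N(0,\kappa)^{\otimes(2n-1)}$, with $J_f=\int f^2/\phi_\kappa\le K_f\sqrt{2\pi\kappa}\,\E e^{\xi^2/(2\kappa)}<\infty$, gives $\mathbb P_{\mathbf b}(E)\le e^{J_fn}\mathbb P_\kappa(E)^{1/2}$. That square root, not ``half of the triangular structure'', is the source of $m^2/4$: the Gaussian bound of Lemma~\ref{l:lem:gaussian-border} has exponent $\frac{m^2}{2}\log(R/a)$. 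Your truncation could be rescued only with a target-dependent radius, $\|\mathbf b\|^2\le2\kappa\Gamma$ against $N(0,\kappa)$ with $\kappa\ge(2t_f)^{-1}$, not with radius $\sqrt{Cn}$.

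\textbf{No working determinant mechanism.} Steps~2--3 do not produce $W_{2m}$ together with a finite second integration. Your rotation to the Arnoldi basis and the link $\operatorname{adj}(\cdot)_{j+1,1}\propto R_1\cdots R_j$ are the right starting point. But a triangular system with diagonal $\prod_{i<j}R_i$ is the one-sided structure of Example~\ref{l:thm:NILCORE}, where one border vector is the deterministic $e_1$.
\begin{itemize}
\item With both border vectors random, the coefficient functionals are random linear forms in the other vector, and in coefficient (Taylor) form they are degenerate.
\item For example, integrate $(\mathbf x_{\mathrm b})_{[m]}$ and use the coefficients of $z^{d-2},\ldots,z^{d-1-m}$ in $\mathbf x_{\mathrm b}^T\operatorname{adj}(z\mathrm{Id}_d-B)\mathbf u_{\mathrm b}$. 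The $(\mathbf u_{\mathrm b})_{[m+1,2m]}$-part of every antidiagonal entry is then a multiple of the single coordinate $(\mathbf u_{\mathrm b})_{m+1}$.
\item So the top-degree part of the determinant is a constant times $(\mathbf u_{\mathrm b})_{m+1}^m$. No negative moment uniform in the fixed coordinates follows; it is infinite when they vanish and $m\ge2$.
\item Your Schur-complement remark also does not say how the remaining $d-2m$ directions are neutralized at cost $e^{O(n)}$.
\end{itemize}

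\textbf{What the paper does instead.}
\begin{itemize}
\item It applies Markov to $Y^{-1/2}$, where $Y=\prod_{h=1}^m|P_T(z_h)|$ at the $m$-th roots of some $w\in[R^m/2,R^m]$. On the event, $Y=\prod_j|w-\lambda_j^m|\le(2w)^n$.
\item By Lemma~\ref{l:lem:hessenberg-cofactor}, the coefficient matrix of $\mathbf x_*=(\mathbf x_{\mathrm b})_{[m]}$ at these nodes is $H=\operatorname{diag}(E_{\mathrm{suf}}\mathbf u_*)E_{\mathrm{pre}}+H_0$, which is rank one at each node.
\item Both integrations, over $\mathbf x_*$ and then $\mathbf u_*=(\mathbf u_{\mathrm b})_{[m+1,2m]}$, are Gaussian $-\frac12$-moments uniform in the mean and the shift (Lemma~\ref{l:thm:DN}). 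They yield $|\det E_{\mathrm{pre}}\det E_{\mathrm{suf}}|^{-1/2}=(m^mw^{m-1}W_{2m}|Q_B(w)|)^{-1/2}$.
\item A forward-difference choice of $w$ ensures $|Q_B(w)|\ge(R^m/4\mathrm e)^{d-2m}$, so the tail costs only $e^{O(n)}$ (Lemma~\ref{l:lem:evaluation-determinant}).
\end{itemize}
The idea your plan is missing is evaluation at $m$ distinct nodes, so that the second border vector enters through the nondegenerate Gaussian vector $E_{\mathrm{suf}}\mathbf u_*$ rather than through a triangular coefficient system.
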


\begin{example}[A nilpotent core with strong links]\label{l:thm:NILCORE}

Consider the normalized companion matrix \[
 C_n=\begin{pmatrix}
 \xi_1&\xi_2&\cdots&\xi_n\\
 1&0&\cdots&0\\
 &\ddots&\ddots&\vdots\\
 0&&1&0
 \end{pmatrix},\qquad \xi_j\ \text{independent uniform on }[-1,1].
\] Its core is the nilpotent shift \(S_de_j=e_{j+1}\) for \(1\le j<d\), with \(S_de_d=0\). In
\eqref{l:eq:M.3} this corresponds to \(A=\sqrt nS_d\); with starter \(e_1\),
Arnoldi gives \(R_j=\sqrt n\) and \(W_{2m}=1\), although \(\rho(A)=0\).
Since \(\det(z\mathrm{Id}_n-C_n)=z^n-\sum_{j=1}^n\xi_jz^{n-j}\), Vieta's
formula gives \(|\xi_j|\le\binom njr^j\) whenever all roots have modulus
at most \(r\). Independence therefore yields \[
 \mathbb P\{\rho(C_n)\le r\}
 \le\prod_{j=1}^n\min\{1,\tbinom njr^j\}
 \le2^{n^2}r^{n(n+1)/2}\le2^{-n^2/2}\quad(r=1/8).
\] Thus \(n\) variables can incur an \(n^2\)-scale cost through
accumulated precision. This illustrates the mechanism; Proposition \ref{l:thm:MB}
uses a random border on both sides and a different normalization. For a
general good core, two border integrations replace this
independent-coefficient argument. \end{example}

The Gaussian border calculation gives an exponent of \(m^2/2\);
comparison with the original border law halves it to \(m^2/4\).
The positive term in the resulting exponent is only linear in \(n\).

\subsubsection{Proof of the real lower-tail theorem}\label{l:sec:mainproof}

\begin{proof}[\proofname\ of Theorem \ref{l:thm:MT}, real case]Fix \(r<1\), set
\(\varepsilon=1-r\), \(a=1-\varepsilon/4\), and \(R=(r+a)/2\). Then
\(r<R<a<1\). Proposition \ref{l:thm:ML} supplies \(m\ge\beta_{\mu,\varepsilon}d/2\),
so \((r/R)^m\le1/2\) for large \(d\). On \(W_{2m}>a^{m^2}\), Proposition
\ref{l:thm:MB} gives \[
 \mathbb P_{\mathbf b}\{\rho(\mathsf X_n)\le r\sqrt n\}
 \le e^{C_{\mathrm{border}}(f)n-\delta_rm^2}
 \le e^{-c_{\mathrm{good}}d^2},\qquad
 \delta_r=\tfrac14\log(a/R)>0,
\] for example with
\(c_{\mathrm{good}}=\delta_r\beta_{\mu,\varepsilon}^2/8\) and a
sufficiently large dimension threshold. The starter does not enter the
spectral event. Averaging over \((A,\mathbf s)\), using the trivial
bound one on the bad-core event, gives \[
 \mathbb P\{\rho(\mathsf X_n)\le r\sqrt n\}
 \le e^{-c_{\mathrm{link}}d^2}+e^{-c_{\mathrm{good}}d^2}.
\] Since \(d=n-1\), this proves \eqref{l:eq:M.2} after adjusting the rate and
threshold. \end{proof}

\subsubsection{Reading guide and external
inputs}\label{l:reading-guide-and-external-inputs}

The proof in Subsection \ref{l:sec:mainproof} is the final reduction
assuming Propositions \ref{l:thm:ML} and \ref{l:thm:MB}. Section \ref{l:sec:real_border} first
establishes \ref{l:thm:MB}: it explains why a strong link product is useful for the
spectral event. Section \ref{l:sec:real_comparison} states Theorem \ref{l:thm:AC},
develops exact reconstruction and its flat-coordinate consequence, and
proves the mixed-reference estimate in Subsection
\ref{l:sec:real_reference}.

Section \ref{l:sec:real_stopping} supplies the geometric estimates, proves
\ref{l:thm:SS}, and completes \ref{l:thm:ML} in Subsection \ref{l:sec:real_link_completion}. Only
the analytic proof of Theorem \ref{l:thm:AC} remains deferred; Section
\ref{l:sec:real_comparison_analysis} gives it using the likelihood,
Fourier and sequential coupling estimates, independently of \ref{l:thm:SS} and \ref{l:thm:ML}.
Section \ref{l:sec:complex} proves the complex extension.

The substantive external inequalities are geometric Brascamp--Lieb
\cite[Proposition 2.8]{BCCT2008} and one-dimensional Gaussian
transport--entropy \cite[Theorem 2.9 and Proposition 5.9]{BBLZ2017}. We
derive the projection, singular-value, Gaussian link and border bounds
here. The remaining tools are finite-dimensional integration, Fourier
inversion, Doob's maximal inequality and nonnegativity of relative
entropy.

\subsection{The border calculation}\label{l:sec:real_border}

With the core and starter fixed, we first express the core in Hessenberg
form and compare only the border with a Gaussian border.
Lemma~\ref{l:lem:gaussian-border} states the Gaussian estimate and gives
a short proof of Proposition~\ref{l:thm:MB}. The remaining subsections
prove that estimate by an evaluation determinant and two Gaussian
integrations.

Fix the core \(A\) and starter \(\mathbf s\) as in Proposition \ref{l:thm:MB}.
Continue the recursion of Algorithm \ref{l:thm:ARN} through step \(d-1\), including
its orthogonal completion rule at zero residuals, to obtain
\(Q=[\mathbf v_1,\ldots,\mathbf v_d]\).

\begin{fact}[Hessenberg form of the core]\label{l:thm:HESS}

The matrix \(Q\) is orthogonal, and the transformed core \begin{equation}
B=(Q^TAQ)^T/\sqrt n
 \label{l:eq:M.4}
\end{equation} is lower Hessenberg, with \[
 B_{ij}=0\quad(j>i+1),\qquad
 B_{j,j+1}=R_j/\sqrt n\quad(1\le j<d).
\] \end{fact}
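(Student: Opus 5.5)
The plan is to verify the three claims in turn directly from the recursion of Algorithm~\ref{l:thm:ARN} extended through step \(d-1\): orthonormality of the columns of \(Q\), the vanishing pattern \(\mathbf v_i^TA\mathbf v_j=0\) for \(i>j+1\), and the identification \(\mathbf v_{j+1}^TA\mathbf v_j=R_j\). Transposing \(Q^TAQ\) then turns upper Hessenberg into lower Hessenberg, and dividing by \(\sqrt n\) gives exactly the stated entries of \(B\).

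\emph{Orthonormality.} I will argue by induction on \(j\le d-1\) that \(\mathbf v_1,\ldots,\mathbf v_{j+1}\) are orthonormal. The base case is immediate, since \(\mathbf v_1\) is a unit vector in both branches of the initialization. For the inductive step:
\begin{enumerate}
\item In the branch \(R_j>0\), \(\mathbf v_{j+1}=\mathbf r_j/R_j\) is a unit vector lying in \(\operatorname{Ran}(\mathrm{Id}_d-P_j)\), which is the orthogonal complement of \(\operatorname{span}\{\mathbf v_1,\ldots,\mathbf v_j\}\).
\item In the completion branch \(R_j=0\), the same conclusion holds for the normalized vector \((\mathrm{Id}_d-P_j)e_{i_j}\). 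The index \(i_j\) exists because \(P_j\) has rank \(j<d\).
\end{enumerate}
After \(d\) directions we have \(d\) orthonormal vectors in \(\R^d\), so \(Q\) is orthogonal.

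\emph{Hessenberg pattern.} The key identity is
\[
 A\mathbf v_j=P_jA\mathbf v_j+\mathbf r_j,
\]
with \(P_jA\mathbf v_j\in\operatorname{span}\{\mathbf v_1,\ldots,\mathbf v_j\}\). Moreover, \(\mathbf r_j=R_j\mathbf v_{j+1}\) in both branches: trivially when \(R_j=0\), and by definition when \(R_j>0\). Hence \(A\mathbf v_j\in\operatorname{span}\{\mathbf v_1,\ldots,\mathbf v_{j+1}\}\). By orthonormality this gives \((Q^TAQ)_{ij}=\mathbf v_i^TA\mathbf v_j=0\) for \(i>j+1\). It also gives \(\mathbf v_{j+1}^TA\mathbf v_j=\mathbf v_{j+1}^T\mathbf r_j=R_j\), because \(\mathbf v_{j+1}\perp\operatorname{Ran}P_j\). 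Transposing shows that \(B_{ij}=0\) for \(j>i+1\) and that \(B_{j,j+1}=R_j/\sqrt n\).

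\emph{Main obstacle.} The only delicate point is the completion branch: one must check that \(\mathbf r_j=0=R_j\mathbf v_{j+1}\) there, so that the superdiagonal entry is still \(R_j/\sqrt n=0\) even though \(\mathbf v_{j+1}\) was chosen independently of \(A\mathbf v_j\). One must also note that the recursion only needs to run up to step \(d-1\), where \(j<d\) guarantees the index \(i_j\) exists; the last column \(\mathbf v_d\) requires no residual at all.
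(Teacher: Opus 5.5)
Your proposal is correct and follows the paper's proof step for step. Orthonormality comes from the construction. The expansion \(A\mathbf v_j=P_jA\mathbf v_j+R_j\mathbf v_{j+1}\) holds in both branches, including the completion branch where \(A\mathbf v_j=P_jA\mathbf v_j\) and \(R_j=0\). From it you read off the upper Hessenberg pattern and the subdiagonal entries \(R_j\) of \(Q^TAQ\), then transpose and divide by \(\sqrt n\). You simply spell out the induction and the completion-branch check in more detail than the paper does.
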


\begin{proof}

The construction makes \(\mathbf v_1,\ldots,\mathbf v_d\) orthonormal. For \(j<d\), \[
 A\mathbf v_j=\sum_{i=1}^j\langle \mathbf v_i,A\mathbf v_j\rangle \mathbf v_i+R_j\mathbf v_{j+1}.
\] This identity also holds when \(R_j=0\), since then \(A\mathbf v_j=P_jA\mathbf v_j\).
Thus \(Q^TAQ\) is upper Hessenberg and its \((j+1,j)\)-entry is \(R_j\).
Transposing and dividing by \(\sqrt n\) proves both claims. \end{proof}

Conjugating the whole matrix and then transposing gives the exact
identity \begin{equation}
\left[
  \begin{pmatrix}1&0\\0&Q^T\end{pmatrix}
  \frac{\mathsf X_n}{\sqrt n}
  \begin{pmatrix}1&0\\0&Q\end{pmatrix}
 \right]^T
 =\begin{pmatrix}
   \xi_{11}/\sqrt n&(Q^T\mathbf x)^T/\sqrt n\\
   Q^T\mathbf y/\sqrt n&B
  \end{pmatrix}.
 \label{l:eq:M.5}
\end{equation} Both operations preserve the characteristic polynomial. After fixing
\((A,\mathbf s)\), the \emph{original} border coordinates are still
independent; the rotated non-Gaussian coordinates on the right need not
be.

\subsubsection{Reduction to a Gaussian border}\label{l:gaussian-comparison-and-inverse-moments}

Choose \(\kappa>0\) with \(\mathbb E e^{\xi^2/(2\kappa)}<\infty\), for
example \(\kappa=(2t_f)^{-1}\). If \(\phi_\kappa\) is the
\(N(0,\kappa)\) density, then \begin{equation}
\int_{\mathbb R}\frac{f^2}{\phi_\kappa}
 \le K_f\sqrt{2\pi\kappa}\,\mathbb E e^{\xi^2/(2\kappa)}=:J_f<\infty.
 \label{l:eq:D.1}
\end{equation} Throughout the comparison, keep the revealed \((A,\mathbf s)\), and
hence \(Q,B\), fixed. A border event \(E\) is a Borel event depending
only on the \(2n-1\) border entries \((\xi_{11},\mathbf x,\mathbf y)\), with
these revealed data held fixed. Here \(\mathbb P_{\mathbf b}\) and
\(\mathbb P_\kappa\) denote the product laws \(\mu^{\otimes(2n-1)}\) and
\(N(0,\kappa)^{\otimes(2n-1)}\) on those entries; the core \(B\) is the
same in both. The product likelihood and Cauchy--Schwarz give \begin{equation}
\mathbb P_{\mathbf b}(E)\le J_f^{(2n-1)/2}\mathbb P_\kappa(E)^{1/2}
                  \le e^{J_fn}\mathbb P_\kappa(E)^{1/2}.
 \label{l:eq:D.2}
\end{equation} Here \(J_f\ge1\) by Cauchy--Schwarz. After this comparison in the
original coordinates, the fixed orthogonal changes in \eqref{l:eq:M.5} preserve the
Gaussian product law. We may thus work with \begin{equation}
T=\begin{pmatrix}g_{\mathrm b}&\mathbf x_{\mathrm b}^T\\\mathbf u_{\mathrm b}&B\end{pmatrix},
 \qquad g_{\mathrm b},(\mathbf x_{\mathrm b})_j,(\mathbf u_{\mathrm b})_j
       \ \text{independent }N(0,\kappa/n),\quad 1\le j\le d=n-1,
 \label{l:eq:D.3}
\end{equation} where \(B=(Q^TAQ)^T/\sqrt n\) is the fixed lower-Hessenberg core.
The notation \(T\) denotes the transformed full matrix in
\eqref{l:eq:M.5} under either border law; the independent Gaussian
distributions in \eqref{l:eq:D.3} describe its border under
\(\mathbb P_\kappa\). We write \(\mathbb E_\kappa\) for expectation
under this Gaussian border law, with the core held fixed.

In the next lemma, \(m\) is a freely chosen integer: the proof integrates
\(m\) row coordinates and \(m\) column coordinates, using \(m\)
polynomial evaluation points. The same Hessenberg cofactor factorization
\eqref{l:eq:D.6} works for every such choice. The radius \(R\) is an
auxiliary evaluation scale, not a bound on the spectrum of \(B\).
The estimate holds for every admissible \(m,R\), with the same constant,
but need not be smaller than one: its usefulness depends on the
corresponding link product \(W_{2m}\), which itself changes with \(m\).

\begin{lemma}[Gaussian border bound in terms of Hessenberg links]\label{l:lem:gaussian-border}
Let \(B\in\mathbb R^{d\times d}\) be a fixed lower-Hessenberg matrix,
\(n=d+1\), and let \(T\) have the independent Gaussian border in
\eqref{l:eq:D.3}, with \(\kappa>0\). For every integer \(m\ge1\) with
\(2m<d\) and every pair \(0\le r<R\le1\) with \((r/R)^m\le1/2\), set
\[
 W_{2m}=\prod_{j=1}^{2m-1}|B_{j,j+1}|^{\min(j,2m-j)}.
\]
If \(W_{2m}>0\), then, for a constant \(C_{\mathrm G}(\kappa)\)
independent of \(B,n,m,r,R\),
\begin{equation}\label{l:eq:gaussian-border}
 \mathbb P_\kappa\{\rho(T)\le r\}
 \le e^{C_{\mathrm G}(\kappa)n}
       \left(\frac{R^{m^2}}{W_{2m}}\right)^{1/2}.
\end{equation}
\end{lemma}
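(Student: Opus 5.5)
The plan is to turn the confinement event into a small-ball event for finitely many linear forms in the Gaussian border, and then to integrate out first the row part and then the column part of the border.

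\textbf{Step 1: the characteristic polynomial as a bilinear form in the border.} Expanding along the first row and column,
\[
p(z)=\det(z\mathrm{Id}_n-T)=(z-g_{\mathrm b})\,q(z)-\mathbf x_{\mathrm b}^T\operatorname{adj}(z\mathrm{Id}_d-B)\,\mathbf u_{\mathrm b},\qquad q(z)=\det(z\mathrm{Id}_d-B).
\]
Because \(B\) is lower Hessenberg, each cofactor entry factorizes (this is \eqref{l:eq:D.6}). For \(i<j\) the entry \(\operatorname{adj}(z-B)_{ij}\) equals, up to sign, the link product \(\prod_{l=i}^{j-1}B_{l,l+1}\) times the characteristic polynomials of the leading \((i-1)\times(i-1)\) block and the trailing \((d-j)\times(d-j)\) block of \(B\). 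The entries with \(i\ge j\) are polynomials of lower structured degree.

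\textbf{Step 2: the spectral event as a small-ball event.} I use the evaluation points \(z_l=R\omega^l\), \(l=1,\dots,m\), where \(\omega\) is a primitive \(m\)-th root of unity. On \(\{\rho(T)\le r\}\) we have \(p(z)=\prod_i(z-\lambda_i)\) with all \(|\lambda_i|\le r<R\). Dividing by \(z^n\) and expanding in powers of \(1/z\), the coefficient of \(z^{-s}\) is at most \(\binom ns r^s\le\binom ns R^{s}(r/R)^s\). The hypothesis \((r/R)^m\le1/2\) is what allows averaging over the \(m\) points to isolate these coefficients. The conclusion is that the vector of normalized coefficients of the residual polynomial \(p-(z-g_{\mathrm b})q\), in a band of \(m\) consecutive powers, is confined to a box. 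I then extract a Vandermonde/DFT factor, of size \(e^{O(n)}\) after normalization, and measure the box in units of \(R^{m}\); its volume has order \(R^{m^2}e^{O(n)}\).

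\textbf{Step 3: two Gaussian integrations.} I condition on \(g_{\mathrm b}\), on \(\mathbf u_{\mathrm b}\), and on all coordinates of \(\mathbf x_{\mathrm b}\) except \(x_1,\dots,x_m\). The \(m\) constraints are then affine in \((x_1,\dots,x_m)\). Their coefficient matrix is, up to the DFT factor, triangular, with diagonal entries of the form \(u_{j}\prod B_{l,l+1}\) over links from \(i\) to \(j\). The Gaussian density bound \((n/2\pi\kappa)^{m/2}\) times the box volume, divided by the absolute determinant, bounds the conditional probability. I then integrate over the relevant \(m\) coordinates of \(\mathbf u_{\mathrm b}\) in the same way, with the roles of rows and columns exchanged.

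The two determinants together supply link products in which link \(j\) appears \(\min(j,2m-j)\) times, namely \(W_{2m}\). The row side covers links \(1,\dots,m\) with increasing multiplicity and the column side covers links \(m,\dots,2m-1\) with decreasing multiplicity. Together with \(R^{m^2}\) from the box, this gives a bound \(e^{Cn}R^{m^2}/W_{2m}\) for a squared quantity. Because the determinant singularity is not integrable directly, I will instead bound the small-ball probability by the minimum of \(1\) and the ratio, and apply Cauchy--Schwarz over the two integrations. This produces the square root in \eqref{l:eq:gaussian-border}. The Gaussian factors \((n/\kappa)^{m}\) and the inverse moments \(\mathbb E|u_j|^{-1/2}\) contribute only \(e^{O(m\log n)}\subset e^{C_{\mathrm G}n}\), which can be absorbed since \(m<n\).

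\textbf{Main obstacle.} The hardest step is Step 3's exact identification of the constraint matrix's determinant. One must choose which \(m\) coordinates of \(\mathbf x_{\mathrm b}\) and of \(\mathbf u_{\mathrm b}\) to integrate so that the Jacobian is triangular with diagonal exactly the required link products. The factors coming from the leading and trailing block polynomials must also be controlled by \(e^{O(n)}\) at \(|z|=R\le1\); I would attempt this via the fixed-degree normalization rather than via bounds on \(B\). The first thing I would try is integrating \(x_1,\dots,x_m\) and \(u_{m+1},\dots,u_{2m}\).
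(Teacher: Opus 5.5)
Your outline has the paper's skeleton: the bilinear expansion, the cofactor factorization \eqref{l:eq:D.6}, nodes forming all $m$-th roots of one number, integration of $(\mathbf x_{\mathrm b})_{[m]}$ and then $(\mathbf u_{\mathrm b})_{[m+1,2m]}$, and a $-1/2$ moment. However, Step~2 fails.

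A coordinatewise box loses $e^{\Theta(nm)}$. At $z_l=R\omega^l$ each value only satisfies $|p(z_l)|\le(R+r)^n$, so your box has volume of order $R^{nm}(1+r/R)^{nm}$. DFT or coefficient boxes are no better: $\binom ns(r/R)^s$ peaks near $s=nr/(R+r)$, so averaging over $m$ points cannot isolate the low coefficients. Since the lemma must hold uniformly up to $(r/R)^m=1/2$, this loss cannot be absorbed into $e^{C_{\mathrm G}(\kappa)n}$.

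The hypothesis is instead used through a product identity. For the $m$-th roots $z_h$ of some $w\ge r^m$,
$\prod_h|P_T(z_h)|=\prod_{j=1}^n|w-\lambda_j^m|\le(w+r^m)^n\le(2w)^n$, which costs only $2^n$. Because this constrains a product, which is not a bounded region, the paper does not use density times volume. It writes $\mathbb P_\kappa\{\rho(T)\le r\}\le(w+r^m)^{n/2}\mathbb E_\kappa Y^{-1/2}$ with $Y=\prod_h|P_T(z_h)|$, and then applies the product inverse moment of Lemma~\ref{l:thm:DN}\textnormal{(i)}.

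Nor can the nodes be fixed at radius $R$. The evaluation determinants combine to $|\det E_{\mathrm{pre}}\det E_{\mathrm{suf}}|=m^mw^{m-1}W_{2m}|Q_B(w)|$, where $Q_B$ is a $B$-dependent monic polynomial of degree $d-2m$. For $m=1$ it is the characteristic polynomial of the block $B_{[3,d],[3,d]}$, which the hypothesis $|B_{12}|>0$ does not constrain, so it may vanish at $w=R$. You need a lower bound, not an upper one. The paper gets it by choosing $w\in[R^m/2,R^m]$ depending on $B$ and using a forward difference: $|Q_B(w)|\ge(R^m/(4\mathrm e))^{d-2m}$.

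This also corrects your bookkeeping. $R^{m^2}$ is not a box volume; it is what survives in $(2w)^n/(w^{m-1}|Q_B(w)|)\le2^n(4\mathrm e)^{d-2m}R^{m^2+2m}$. Also, $(n/\kappa)^m=e^{O(m\log n)}$ is not $e^{O(n)}$ when $m\propto n$. It is absorbed only by the factor $m^m$ in $|\mathcal V|^2=m^mw^{m-1}$, via $m\log(n/(\kappa m))\le n/\kappa$.

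Step~3 also fails. With $\mathbf u_{\mathrm b}$ fixed, the coefficient matrix of $\mathbf x_*$ is, up to sign, $H_{hj}=\sum_{\ell=1}^d\operatorname{adj}(z_h\mathrm{Id}_d-B)_{j\ell}(\mathbf u_{\mathrm b})_\ell$. This is a full matrix: it contains the non-factorizing entries with $\ell<j$, so it is not triangular. Isolating $\mathbf u_*=(\mathbf u_{\mathrm b})_{[m+1,2m]}$ gives $H=\operatorname{diag}(E_{\mathrm{suf}}\mathbf u_*)E_{\mathrm{pre}}+H_0$ with $H_0$ fixed but arbitrary. Hence $\det H$ is not a product of coordinates $u_j$ and link products, and the scalar moments $\mathbb E|u_j|^{-1/2}$ say nothing about $\mathbb E|\det H|^{-1/2}$. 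The missing ingredient is an inverse moment for a shifted random diagonal determinant, uniform in the shift: $\mathbb E|\det(\mathcal D(\mathbf Z)+D)|^{-1/2}\le C^m(\det\Gamma)^{-1/4}$. This is Lemma~\ref{l:thm:DN}\textnormal{(ii)}, proved by Schur-complement induction on the conditional Gaussian law, and it is applied to $H_{\mathbb R}(E_{\mathrm{pre}})_{\mathbb R}^{-1}=\mathcal D(\mathbf Z_u)+(H_0)_{\mathbb R}(E_{\mathrm{pre}})_{\mathbb R}^{-1}$.
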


\begin{proof}[\proofname\ of Proposition \ref{l:thm:MB}]
The Hessenberg core in \eqref{l:eq:M.4} has
\(B_{j,j+1}=R_j/\sqrt n\), so its \(W_{2m}\) is exactly the Arnoldi
link product in the hypothesis. Substituting \(W_{2m}\ge a^{m^2}\)
in Lemma~\ref{l:lem:gaussian-border} gives
\begin{equation}\label{l:eq:D.15}
 \mathbb P_\kappa\{\rho(T)\le r\}
 \le\exp\{C_{\mathrm G}(\kappa)n+(m^2/2)\log(R/a)\}.
\end{equation}
The transformation \eqref{l:eq:M.5} preserves the normalized spectrum.
Applying the border comparison \eqref{l:eq:D.2} therefore yields
\[
\begin{aligned}
 \mathbb P_{\mathbf b}\{\rho(\mathsf X_n)\le r\sqrt n\}
 &\le e^{J_fn}\bigl[\mathbb P_\kappa\{\rho(T)\le r\}\bigr]^{1/2}\\
 &\le\exp\left\{\left(J_f+\tfrac12C_{\mathrm G}(\kappa)\right)n
                   +\frac{m^2}{4}\log(R/a)\right\}.
\end{aligned}
\]
This proves \eqref{l:eq:M.10}, with
\(C_{\mathrm{border}}(f)=J_f+C_{\mathrm G}(\kappa)/2\).
\end{proof}

\subsubsection{Gaussian inverse moments}\label{l:gaussian-inverse-moments}

The first border integration uses a negative moment of a product of
Gaussian evaluations. The second uses a negative moment of a shifted
determinant. The lemma gives these two estimates in that order.

\begin{lemma}[Gaussian inverse moments for conjugate-paired values]\label{l:thm:DN}
Let \(s,p\) be nonnegative integers with \(m=s+2p\ge1\), and let
\[
 \mathbf Z=(X_1,\ldots,X_s,A_1,B_1,\ldots,A_p,B_p)\in\mathbb R^m
\]
be a real Gaussian vector with arbitrary mean and positive definite
covariance \(\Gamma\). Its coordinates may be correlated. Form the
\(m\) real or conjugate-paired values
\[
 \mathbf Y=(X_1,\ldots,X_s,
 A_1+iB_1,A_1-iB_1,\ldots,A_p+iB_p,A_p-iB_p).
\]
There is a universal \(C_*\ge\sqrt2\) such that:

\smallskip\noindent
\textnormal{(i) Product estimate (used for the row integration).}
\begin{equation}\label{l:eq:D.5}
 \mathbb E\prod_{h=1}^m|Y_h|^{-1/2}
 =\mathbb E\left[
   \prod_{j=1}^s|X_j|^{-1/2}
   \prod_{j=1}^p(A_j^2+B_j^2)^{-1/2}\right]
 \le C_*^m(\det\Gamma)^{-1/4}.
\end{equation}

\smallskip\noindent
\textnormal{(ii) Shifted determinant estimate (used for the column integration).}
For \(\mathbf Y\) above, define the real block-diagonal matrix
\[
 \mathcal D(\mathbf Z)=\operatorname{diag}
 \left(X_1,\ldots,X_s,
 \begin{pmatrix}A_1&-B_1\\B_1&A_1\end{pmatrix},\ldots,
 \begin{pmatrix}A_p&-B_p\\B_p&A_p\end{pmatrix}\right).
\]
Denote the displayed two-dimensional blocks by \(\mathcal M(A_j,B_j)\).
Each represents multiplication by \(A_j+iB_j\)
in real coordinates:
\[
 (A_j+iB_j)(u+iv)
 =(A_ju-B_jv)+i(B_ju+A_jv).
\]
Equivalently, \(\mathcal D(\mathbf Z)\) is the real-coordinate
representation of \(\operatorname{diag}(\mathbf Y)\) acting on
vectors with the same real/conjugate-pair pattern. For every fixed
\(D\in\mathbb R^{m\times m}\),
\begin{equation}\label{l:eq:D.4}
 \mathbb E|\det(\mathcal D(\mathbf Z)+D)|^{-1/2}
 \le C_*^m(\det\Gamma)^{-1/4}.
\end{equation}
The constant is uniform in the Gaussian mean and in \(D\).
Taking \(D=0\) recovers \textnormal{(i)}, since
\(\det\mathcal D(\mathbf Z)=\prod_{j=1}^sX_j
\prod_{j=1}^p(A_j^2+B_j^2)\).
\end{lemma}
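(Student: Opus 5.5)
Since (ii) with \(D=0\) recovers (i), I would prove (ii) directly, treating (i) only as the model case. First reduce to a product structure. Write \(\mathbf Z=\mathbf m+\Gamma^{1/2}\mathbf G\) with \(\mathbf G\) standard. The density of \(\mathbf Z\) is bounded by \((2\pi)^{-m/2}(\det\Gamma)^{-1/2}\), uniformly in the mean. Split the expectation of \(|\det(\mathcal D(\mathbf Z)+D)|^{-1/2}\) according to whether \(|\mathbf Z-\mathbf m|\) is large. This alone would give \((\det\Gamma)^{-1/2}\) rather than \((\det\Gamma)^{-1/4}\), which is the wrong power. So instead apply Cauchy--Schwarz in the form used for \eqref{l:eq:D.2}: the target \((\det\Gamma)^{-1/4}\) is exactly the square root of the density bound. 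Writing
\[
\mathbb E|\det(\mathcal D+D)|^{-1/2}=\int |\det(\mathcal D(z)+D)|^{-1/2}\,p(z)\,dz,
\]
bound \(p(z)\le (p(z)\,\|p\|_\infty)^{1/2}\). It then suffices to show that \(\int|\det(\mathcal D(z)+D)|^{-1/2}\sqrt{p(z)}\,dz\le C^m\) for the Gaussian weight \(\sqrt p\), which is again an unnormalized Gaussian density with covariance \(2\Gamma\) and normalizing factor \((\det\Gamma)^{1/4}\). After this step the problem is: for a Gaussian vector \(\mathbf Z'\) with covariance \(2\Gamma\), show \(\mathbb E|\det(\mathcal D(\mathbf Z')+D)|^{-1/2}\lesssim C^m(\det\Gamma)^{0}\)-scaled appropriately. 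Equivalently, one needs an estimate of the form \(\int_{\R^m}|\det(\mathcal D(z)+D)|^{-1/2}e^{-\langle z-\mathbf m, (2\Gamma)^{-1}(z-\mathbf m)\rangle}\,dz\le C^m(\det\Gamma)^{1/2}\).

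For the key estimate I would use a layer-cake decomposition in the variables \(z\). For each fixed \(z\), \(\det(\mathcal D(z)+D)\) is a polynomial of degree one separately in each real coordinate \(X_j\). In each complex pair \((A_j,B_j)\), it is a polynomial whose leading part is \(A_j^2+B_j^2\). More precisely, by expanding along the relevant row/block, it has the form \(|a w+b|^2\)-type, or \(\alpha(A^2+B^2)+\text{linear}+\gamma\) with \(\alpha\) equal to the complementary minor. The one-dimensional facts \(\int_{\R}|ax+b|^{-1/2}e^{-x^2/(2\sigma^2)}dx\le C|a|^{-1/2}\sigma^{1/2}\), and the planar analogue for a quadratic form in the pair, let me integrate out coordinates one at a time (Fubini), peeling off a minor each time. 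The difficulty is that the Gaussian weight is correlated. I would handle this by a triangular (Cholesky) change of variables: \(\Gamma=LL^T\) with \(L\) lower triangular in the block order, and integrate the last block first conditionally on the earlier ones. The conditional variance is \(L_{jj}^2\), and \(\prod L_{jj}^2=\det\Gamma\). Each step contributes \(C\,(\text{conditional variance})^{1/4}\) times \(|\text{minor}|^{-1/2}\), and the minor reduces the problem to dimension one (or two) lower with a new shift \(D'\) (a Schur complement, which is still a fixed matrix conditionally). Conditional means are absorbed, since the estimates are uniform in means and shifts. Induction on the number of blocks then gives \(C_*^m(\det\Gamma)^{-1/4}\) directly, without even needing the Cauchy--Schwarz step.

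The main obstacle is the induction step. The coefficient of the last variable in \(\det(\mathcal D+D)\) is a minor that depends on the earlier variables. The one-dimensional bound yields \(|\text{minor}|^{-1/2}\), which must be exactly of the inductive form \(|\det(\mathcal D(\mathbf Z_{<})+D')|^{-1/2}\) for a fixed \(D'\). This holds when the minor is the principal minor obtained by deleting the last block, because that minor involves only the earlier diagonal variables plus the corresponding entries of \(D\). For a complex pair, the leading term in \((A,B)\) is \(\det(\text{minor})\cdot(A^2+B^2)\), plus lower-order terms that are linear in \(A,B\) with coefficients given by other cofactors. The required planar bound is \(\int_{\R^2}|q(x)|^{-1/2}\,\text{Gaussian}\le C|\alpha|^{-1/2}\sigma\) for a real quadratic \(q\) with leading form \(\alpha|x|^2\). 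Completing the square gives \(q=\alpha(|x-x_0|^2-c)\), and \(\int |\,|y|^2-c|^{-1/2}e^{-|y-y_0|^2/2\sigma^2}dy\le C\sigma\) uniformly in \(c\in\R\) and \(y_0\). Checking this uniformity, by a radial computation with the density bounded by \(\sigma^{-2}\) and the singularity \(|\,|y|^2-c|^{-1/2}\) being integrable in the radial variable, is the delicate point. The constant \(C_*\ge\sqrt2\) absorbs these one- and two-dimensional constants.
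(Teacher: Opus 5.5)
Your argument is correct and is essentially the paper's proof. Like the paper, you run an induction over blocks that integrates one scalar or pair block conditionally on the others, using one-block bounds uniform in the mean and shift. The leading coefficient is the principal minor of the same form with a fixed shift, and $\det\Gamma$ factors as the product of the conditional covariance determinants. The paper integrates the first block given the rest rather than the last given the earlier ones, which is immaterial. For the pair bound it uses that $\{|\det(\mathcal M(x,y)+\Delta)|<t\}$ is an annulus of area at most $2\pi t$, together with the conditional density bound $(2\pi\sqrt{\det S})^{-1}$.

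Two minor points. The Cauchy--Schwarz reduction in your first paragraph only returns the same problem for covariance $2\Gamma$, and its exponents are garbled; it can be dropped, as you yourself note. For a pair block the conditional covariance is a general $2\times 2$ matrix $S$, not isotropic. Your radial computation should therefore be phrased through the density supremum and the annulus area, which yields the factor $(\det S)^{-1/4}$.
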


\begin{proof}

We prove \textnormal{(ii)} first. Let \(G\sim N(\mu_G,v)\), with
\(v>0\). For every fixed shift \(d_{\mathrm{sc}}\in\mathbb R\)
and every \(t>0\), the Gaussian density bound gives
\[
 \mathbb P\{|G+d_{\mathrm{sc}}|<t\}
 \le\min\left\{1,\frac{2t}{\sqrt{2\pi v}}\right\},
\]
uniformly in \(\mu_G\) and \(d_{\mathrm{sc}}\), since the event is
an interval of length \(2t\) and the density is at most
\((2\pi v)^{-1/2}\). For a single pair and a fixed real
\(2\times2\) shift \(D\), completing squares gives
\[
 \det(\mathcal M(x,y)+D)
 =(x+a_D)^2+(y+b_D)^2-c_D,\qquad c_D\ge0,
\]
where \(a_D,b_D,c_D\) depend only on \(D\). For \(t>0\), the set
\[
 \mathcal A_t=\{(x,y)\in\mathbb R^2:
                  |\det(\mathcal M(x,y)+D)|<t\}
\]
is an annulus centered at \((-a_D,-b_D)\), or a disk if the inner
radius vanishes. Its squared radii are \(c_D+t\) and
\(\max\{0,c_D-t\}\), so
\[
 \operatorname{area}(\mathcal A_t)
 =\pi\bigl(c_D+t-\max\{0,c_D-t\}\bigr)\le2\pi t.
\]
If \((G_1,G_2)\) is Gaussian with covariance \(S>0\), its density
is at most \((2\pi\sqrt{\det S})^{-1}\). Thus, uniformly in its mean
and in \(D\),
\[
 \mathbb P\{|\det(\mathcal M(G_1,G_2)+D)|<t\}
 \le\min\left\{1,\frac{t}{\sqrt{\det S}}\right\}.
\]
Integrating these bounds by \(\mathbb E X^{-1/2}=\tfrac12\int_0^\infty
 t^{-3/2}\mathbb P\{X<t\}\,dt\) gives the one-block estimates, with
factors \(v^{-1/4}\) and \((\det S)^{-1/4}\).

For more than one block, write
\(\mathbf Z=(\mathbf Z_1,\mathbf Z_{\mathrm{rem}})\), where
\(\mathbf Z_1\in\mathbb R^q\), \(q\in\{1,2\}\), is the first
scalar or pair block. Partition the fixed shift accordingly:
\[
 \mathcal D(\mathbf Z)+D
 =\begin{pmatrix}
   \mathcal D_1(\mathbf Z_1)+D_{11}&D_{12}\\
   D_{21}&\mathsf C(\mathbf Z_{\mathrm{rem}})
  \end{pmatrix},\qquad
 \mathsf C(\mathbf Z_{\mathrm{rem}})
 =\mathcal D_{\mathrm{rem}}(\mathbf Z_{\mathrm{rem}})+D_{22}.
\]
Here \(\mathcal D_1\) is the first block and
\(\mathcal D_{\mathrm{rem}}\) contains the remaining blocks.
The polynomial \(\det\mathsf C\) is not identically zero: its
highest-degree part is \(\det\mathcal D_{\mathrm{rem}}\), the product
of the remaining scalar coordinates and the pair factors
\(A_j^2+B_j^2\). Since \(\mathbf Z_{\mathrm{rem}}\) has a
nondegenerate Gaussian density, \(\mathsf C\) is invertible almost
surely.

Fix \(\mathbf Z_{\mathrm{rem}}\) where \(\mathsf C\) is invertible.
The Schur complement identity \cite[Section 0.8.5]{HJ2013} is
\[
 \det(\mathcal D(\mathbf Z)+D)
 =\det\mathsf C\,
   \det\bigl(\mathcal D_1(\mathbf Z_1)+\Delta\bigr),
 \qquad \Delta=D_{11}-D_{12}\mathsf C^{-1}D_{21}.
\]
Thus \(\Delta\) is a fixed real shift under this conditioning.
Writing the Gaussian covariance in the same partition,
\[
 \Gamma=\begin{pmatrix}
  \Gamma_{11}&\Gamma_{1,\mathrm{rem}}\\
  \Gamma_{\mathrm{rem},1}&\Gamma_{\mathrm{rem}}
 \end{pmatrix},\qquad
 S=\Gamma_{11}-\Gamma_{1,\mathrm{rem}}
       \Gamma_{\mathrm{rem}}^{-1}\Gamma_{\mathrm{rem},1}>0,
\]
the conditional law of \(\mathbf Z_1\) is Gaussian with covariance
\(S\). Its mean depends on \(\mathbf Z_{\mathrm{rem}}\), but the
one-block estimate is uniform in that mean and in \(\Delta\). Therefore
\[
 \mathbb E\bigl[|\det(\mathcal D(\mathbf Z)+D)|^{-1/2}
                  \mid\mathbf Z_{\mathrm{rem}}\bigr]
 \le C_*^q(\det S)^{-1/4}|\det\mathsf C|^{-1/2}.
\]
Now average over \(\mathbf Z_{\mathrm{rem}}\) and apply induction to
its marginal Gaussian law, whose covariance is \(\Gamma_{\mathrm{rem}}\),
and to the fixed shift \(D_{22}\). This gives
\[
 \begin{aligned}
 \mathbb E|\det(\mathcal D(\mathbf Z)+D)|^{-1/2}
 &\le C_*^q(\det S)^{-1/4}\,
             \mathbb E|\det\mathsf C|^{-1/2}\\
 &\le C_*^m(\det S)^{-1/4}
             (\det\Gamma_{\mathrm{rem}})^{-1/4}
  = C_*^m(\det\Gamma)^{-1/4},
 \end{aligned}
\]
where the last equality uses
\(\det\Gamma=\det S\det\Gamma_{\mathrm{rem}}\).
The one-block bounds supply the base of this induction, proving
\eqref{l:eq:D.4}.
Setting \(D=0\) then gives \eqref{l:eq:D.5} by the determinant identity
in the statement. \end{proof}

\subsubsection{The evaluation determinant and choice of
nodes}\label{l:the-evaluation-determinant-and-choice-of-nodes}

Characteristic-polynomial approaches to spectral-radius bounds also
appear in \cite{BCG2022,MR2025}. Here we use a finite evaluation
determinant to bound the conditional probability of the spectral
event, with randomness only in the independent border.

Recall that \([j]=\{1,\ldots,j\}\), with \([0]=\varnothing\),
and \([j,k]=\{\ell\in\mathbb Z:j\le\ell\le k\}\) is an integer
index set. For row and column sets \(I,J\), \(B_{I,J}\) keeps those
rows and columns in their original order. In particular,
\(B_{[j],[j]}\) is the leading \(j\times j\) principal block, and
\(B_{[j,d],[j,d]}\) is the principal block on indices \(j,\ldots,d\).

Write \(a_j=B_{j,j+1}\), \({d_{\mathrm{tail}}}=d-2m\ge1\), and
\(s_j=\prod_{i=j}^{2m-1}a_i\), with \(s_{2m}=1\). The assumption \(W_{2m}>0\) in Lemma~\ref{l:lem:gaussian-border}
makes all these links nonzero. Define the monic prefix and
suffix polynomials \[
 \theta_j(z)=\det(z\mathrm{Id}_j-B_{[j],[j]}),\qquad
 \varphi_j(z)=\det(z\mathrm{Id}_{d-j+1}-B_{[j,d],[j,d]}),
\] with \(\theta_0=\varphi_{d+1}=1\).

\begin{lemma}[Hessenberg cofactor factorization]\label{l:lem:hessenberg-cofactor}
For every \(z\in\mathbb C\) and \(1\le j\le\ell\le d\),
\begin{equation}
\operatorname{adj}(z\mathrm{Id}_d-B)_{j\ell}
 =\left(\prod_{i=j}^{\ell-1}a_i\right)
       \theta_{j-1}(z)\varphi_{\ell+1}(z),\qquad j\le\ell.
 \label{l:eq:D.6}
\end{equation}
\end{lemma}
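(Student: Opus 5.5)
We prove \eqref{l:eq:D.6} by a direct cofactor expansion that exploits the lower-Hessenberg pattern of \(M:=z\mathrm{Id}_d-B\). Since \(B_{ij}=0\) for \(j>i+1\), the matrix \(M\) has zeros strictly above its first superdiagonal, and \(M_{i,i+1}=-a_i\). By definition,
\[
\operatorname{adj}(M)_{j\ell}=(-1)^{j+\ell}\det M^{(\ell,j)},
\]
where \(M^{(\ell,j)}\) denotes \(M\) with row \(\ell\) and column \(j\) deleted. The plan is to show that for \(j\le\ell\) this minor is block lower triangular, with three diagonal blocks: the leading block \(M_{[j-1],[j-1]}\), a middle square block, and the trailing block \(M_{[\ell+1,d],[\ell+1,d]}\).

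The first step is to identify the blocks. Index the rows of the minor by \([d]\setminus\{\ell\}\) and its columns by \([d]\setminus\{j\}\), and split both into the pieces "below \(j\)", "between", and "above \(\ell\)". Rows \(1,\ldots,j-1\) have nonzero entries only in columns \(\le j\). Column \(j\) has been deleted, so their surviving support lies in columns \(1,\ldots,j-1\). The middle block consists of rows \(j,\ldots,\ell-1\) and columns \(j+1,\ldots,\ell\); it has size \(\ell-j\). Row \(i\) in this range has support in columns \(\le i+1\le\ell\), so these rows have no entries in columns beyond \(\ell\).

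Within the middle block, the entry in row \(i\) and column \(c\) vanishes whenever \(c>i+1\). Relabeling the columns \(c\mapsto c-1\) makes this block lower triangular with diagonal entries \(M_{i,i+1}=-a_i\) for \(i=j,\ldots,\ell-1\). The remaining rows \(\ell+1,\ldots,d\) and columns \(\ell+1,\ldots,d\) form \(M_{[\ell+1,d],[\ell+1,d]}\).

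The second step is the determinant evaluation. Since the rows above the trailing block have no entries in columns \(>\ell\), the minor is block lower triangular. Its determinant is therefore
\[
\theta_{j-1}(z)\prod_{i=j}^{\ell-1}(-a_i)\,\varphi_{\ell+1}(z).
\]
The empty cases \(j=1\), \(\ell=d\) and \(j=\ell\) are covered by the conventions \(\theta_0=\varphi_{d+1}=1\) and the empty product equal to one.

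The third step is the sign bookkeeping. The factor \((-1)^{\ell-j}\) from the product of the \(-a_i\) combines with the cofactor sign \((-1)^{j+\ell}\) to give \((-1)^{2\ell}=1\). This yields \eqref{l:eq:D.6}.

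The only delicate point is confirming that deleting column \(j\), rather than some other column, leaves the index sets aligned so that the middle block is square and triangular. This follows from the count: there are \(\ell-j\) middle rows \(j,\ldots,\ell-1\) and \(\ell-j\) middle columns \(j+1,\ldots,\ell\). The identity is polynomial in \(z\) and the entries of \(B\), so no invertibility assumption on \(M\) is needed.
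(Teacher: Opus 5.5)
Your proposal is correct and follows essentially the same route as the paper: delete row \(\ell\) and column \(j\), partition rows as \([j-1],[j,\ell-1],[\ell+1,d]\) and columns as \([j-1],[j+1,\ell],[\ell+1,d]\), and use the lower-Hessenberg pattern to get a block lower-triangular minor. Its middle block is lower triangular with diagonal \(-a_j,\ldots,-a_{\ell-1}\), and the sign \((-1)^{j+\ell}(-1)^{\ell-j}=1\) finishes the computation.
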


\begin{proof}
Let \(\mathsf C_{j,\ell}(z)\) be the matrix obtained from
\(z\mathrm{Id}_d-B\) by deleting row \(\ell\) and column \(j\), retaining
the original order. Partition its rows and columns respectively as
\[
 \begin{aligned}
 \text{rows:}\quad &[j-1],\quad [j,\ell-1],\quad [\ell+1,d],\\
 \text{columns:}\quad &[j-1],\quad [j+1,\ell],\quad [\ell+1,d].
 \end{aligned}
\]
Because \((z\mathrm{Id}_d-B)_{ab}=0\) when \(b>a+1\), the blocks
above the diagonal vanish. Thus
\[
 \mathsf C_{j,\ell}(z)=
 \begin{pmatrix}
  \mathsf C_{\mathrm{pre}}(z)&0&0\\
  *&\mathsf C_{\mathrm{mid}}(z)&0\\
  *&*&\mathsf C_{\mathrm{suf}}(z)
 \end{pmatrix},\qquad
 \mathsf C_{\mathrm{mid}}(z)
   =(z\mathrm{Id}_d-B)_{[j,\ell-1],[j+1,\ell]}.
\]
The prefix and suffix blocks are the principal blocks on \([j-1]\)
and \([\ell+1,d]\), with determinants \(\theta_{j-1}(z)\) and
\(\varphi_{\ell+1}(z)\). The middle block is lower triangular:
its entry in relative row \(p\), column \(q\) comes from original
row \(j+p-1\), column \(j+q\), so it vanishes for \(q>p\).
Its diagonal is \(-a_j,\ldots,-a_{\ell-1}\), hence
\[
 \det\mathsf C_{\mathrm{mid}}(z)
   =(-1)^{\ell-j}\prod_{i=j}^{\ell-1}a_i.
\]
The adjugate uses the transposed cofactor, so block triangularity gives
\[
 \begin{aligned}
 \operatorname{adj}(z\mathrm{Id}_d-B)_{j\ell}
  &=(-1)^{j+\ell}\det\mathsf C_{j,\ell}(z)\\
  &=(-1)^{j+\ell+\ell-j}
      \theta_{j-1}(z)\left(\prod_{i=j}^{\ell-1}a_i\right)
      \varphi_{\ell+1}(z)\\
  &=\left(\prod_{i=j}^{\ell-1}a_i\right)
      \theta_{j-1}(z)\varphi_{\ell+1}(z).
 \end{aligned}
\]
Here the sign is \((-1)^{2\ell}=1\). Empty blocks have determinant
one, and empty products equal one, so the calculation also covers
\(j=\ell\), \(j=1\) and \(\ell=d\).
\end{proof}

For the two conditional integrations, we select only \(2m\) of the
\(2d+1\) border coordinates:
\[
 \mathbf x_*=(\mathbf x_{\mathrm b})_{[m]},\qquad
 \mathbf u_*=(\mathbf u_{\mathrm b})_{[m+1,2m]}.
\]
The full \(d\times d\) core \(B\) is retained, and all remaining
border variables are held fixed during these integrations.
The following evaluation matrices correspond to these two coordinate sets.

\begin{definition}[Prefix and suffix evaluation matrices]\label{l:thm:EVALMAT}
Let \(w>0\), and let \(z_1,\ldots,z_m\) be all the distinct roots of
\(z^m=w\), ordered with the real roots first and each nonreal root
adjacent to its complex conjugate. The two \(m\times m\) evaluation
matrices are \begin{equation}
(E_{\mathrm{pre}})_{hj}=s_j\theta_{j-1}(z_h)\quad(1\le j\le m),
 \qquad (E_{\mathrm{suf}})_{h\ell}=s_\ell^{-1}\varphi_{\ell+1}(z_h)
                    \quad(m+1\le\ell\le2m).
 \label{l:eq:D.7}
\end{equation} The columns of \(E_{\mathrm{suf}}\) retain their increasing original labels.
\end{definition}

Equation \eqref{l:eq:D.6} separates the selected cofactors as
\((E_{\mathrm{pre}})_{hj}(E_{\mathrm{suf}})_{h\ell}\).
The next lemma identifies the remaining polynomial factor and chooses
nodes at which both evaluation matrices are invertible.

\begin{lemma}[Evaluation determinant and choice of nodes]\label{l:lem:evaluation-determinant}
Fix the real lower-Hessenberg core \(B\) and an integer \(m\) with
\(2m<d\) and \(W_{2m}>0\), and write \(d_{\mathrm{tail}}=d-2m\).

\textup{(i)} There is a unique real monic polynomial \(Q_B\) of degree
\(d_{\mathrm{tail}}\), depending only on \(B\) and \(m\), such that,
for every \(w>0\) and its \(m\)-th roots \(z_1,\ldots,z_m\),
\[
 \frac{\det[\varphi_{2m-j+1}(z_h)]_{\substack{1\le h\le m\\0\le j<m}}}
      {\det[z_h^j]_{\substack{1\le h\le m\\0\le j<m}}}
   =(-1)^{d_{\mathrm{tail}}(m-1)}Q_B(w).
\]
For the matrices in Definition~\ref{l:thm:EVALMAT}, this gives
\begin{equation}
 |\det E_{\mathrm{pre}}\det E_{\mathrm{suf}}|
   =m^m w^{m-1}W_{2m}|Q_B(w)|.
 \label{l:eq:D.8}
\end{equation}

\textup{(ii)} For every \(0<R\le1\), there exists
\(w\in[R^m/2,R^m]\) such that
\begin{equation}
 |Q_B(w)|\ge\left(\frac{R^m}{4\mathrm e}\right)^{d_{\mathrm{tail}}}.
 \label{l:eq:D.9}
\end{equation}
The point \(w\) can be chosen to depend only on \(B,m,R\).
\end{lemma}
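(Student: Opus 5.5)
The plan is to prove (i) by reducing the ratio of determinants to a symmetric function of the roots of \(z^m=w\), and (ii) by a Chebyshev extremal bound.

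\textbf{Part (i).} For \(0\le j<m\), the polynomial \(\varphi_{2m-j+1}\) is the characteristic polynomial of the principal block \(B_{[2m-j+1,d],[2m-j+1,d]}\). It is real and monic of degree \(d-2m+j=d_{\mathrm{tail}}+j\).

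\emph{Step 1: a universal formula for the ratio.} Consider indeterminates \(z_1,\dots,z_m\), and set \(N=d_{\mathrm{tail}}\). Expand each column polynomial in monomials. The ratio of \(\det[\varphi_{2m-j+1}(z_h)]\) to the Vandermonde determinant is a symmetric polynomial: by the bialternant formula it is an integer combination of Schur polynomials \(s_\lambda(z_1,\dots,z_m)\), with coefficients polynomial in the coefficients of the \(\varphi\)'s. The partitions \(\lambda\) that occur lie in the \(m\times N\) box. The only box-filling term \(\lambda=(N^m)\) comes from the leading monomials \(z^{N+j}\), and it equals \(s_{(N^m)}=e_m^N\). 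Every other \(\lambda\) in this expansion satisfies \(|\lambda|<mN\).

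\emph{Step 2: specialization to the roots of \(z^m=w\).} Evaluate at the \(m\) roots of \(z^m=w\). There \(e_1=\cdots=e_{m-1}=0\) and \(e_m=(-1)^{m-1}w\). A homogeneous symmetric polynomial of degree \(D\) therefore becomes \(c\,w^{D/m}\) when \(m\mid D\), and vanishes otherwise. Hence the ratio is \(((-1)^{m-1}w)^N\) plus a polynomial in \(w\) of degree \(<N\). Its coefficients are real, since \(B\) is real. This defines \(Q_B\) with the stated sign \((-1)^{N(m-1)}\). Uniqueness follows because the polynomial is determined on \(w>0\).

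\emph{Step 3: identity \eqref{l:eq:D.8}.}
\begin{itemize}
\item In \(E_{\mathrm{pre}}\), factor \(s_j\) out of column \(j\). Since \(\theta_{j-1}\) is monic of degree \(j-1\), column operations reduce the matrix to the Vandermonde matrix, so \(\det E_{\mathrm{pre}}=\prod_{j\le m}s_j\cdot V\).
\item In \(E_{\mathrm{suf}}\), factor out \(s_\ell^{-1}\). Reindexing \(\ell=2m-j\) reverses the column order and contributes only a sign, so part (i) gives \(|\det E_{\mathrm{suf}}|=\prod_{\ell=m+1}^{2m}s_\ell^{-1}\,|V|\,|Q_B(w)|\).
\item The squared Vandermonde \(|V|^2\) is the absolute discriminant of \(z^m-w\), namely \(m^mw^{m-1}\).
\item In \(\prod_{j\le m}s_j\prod_{\ell>m}s_\ell^{-1}\), the exponent of \(a_i\) is \(\min(i,m)\) minus \(\#\{\ell\in[m+1,2m]:\ell\le i\}\). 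This equals \(i\) for \(i<m\) and \(m-(i-m)=2m-i\) for \(i\ge m\), i.e.\ \(\min(i,2m-i)\). The \(s\)-product is therefore exactly \(W_{2m}\).
\end{itemize}
Together these give \eqref{l:eq:D.8}.

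\textbf{Part (ii).} \(Q_B\) is monic of degree \(N\) on the interval \([R^m/2,R^m]\), whose length is \(L=R^m/2\). The Chebyshev extremal property says every monic polynomial of degree \(N\) has sup norm at least \(2(L/4)^N=2(R^m/8)^N\) on an interval of length \(L\). Since \(8<4\mathrm e\), this is at least \((R^m/(4\mathrm e))^N\). Take \(w\) to be the smallest maximizer of \(|Q_B|\) on the interval, which depends only on \(B,m,R\). (For \(N=0\) the bound is trivial.)

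\textbf{Main obstacle.} The delicate step is Step 2 of part (i): correctly identifying the leading coefficient, including its sign, and checking that lower-order Schur terms contribute only lower powers of \(w\). The bookkeeping of column orders and signs in \(E_{\mathrm{suf}}\) is routine once absolute values are taken.
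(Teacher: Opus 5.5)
Your proof is correct. For part (i) it follows the same route as the paper; for part (ii) it uses a different key inequality.

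\textbf{Part (i).} The paper has the same skeleton:
\begin{itemize}
\item reduce the prefix determinant to a Vandermonde by column operations;
\item show the suffix ratio is a symmetric polynomial;
\item specialize at \(e_1=\cdots=e_{m-1}=0\), \(e_m=(-1)^{m-1}w\);
\item read off the top term \(e_m^{d_{\mathrm{tail}}}\).
\end{itemize}
The difference is how symmetry and the leading term are obtained. You expand into bialternants and Schur functions. The box argument (column \(j\) has degree \(d_{\mathrm{tail}}+j\)) then makes it transparent that the only top-degree contribution is \(s_{(N^m)}=e_m^N\), with coefficient one. The paper instead proves divisibility of the numerator by the Vandermonde directly, invokes the fundamental theorem of symmetric polynomials, and identifies the top homogeneous part as \(\prod_h z_h^{d_{\mathrm{tail}}}\). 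Your bookkeeping of the \(s\)-exponents \(\min(i,2m-i)\) and the discriminant \(m^mw^{m-1}\) matches the paper.

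\textbf{Part (ii).} The paper takes \(w\) to maximize \(|Q_B|\) over a grid of \(d_{\mathrm{tail}}+1\) equally spaced points in \([R^m/2,R^m]\). It bounds the top forward difference by
\[
d_{\mathrm{tail}}!\,\Delta^{d_{\mathrm{tail}}}\le 2^{d_{\mathrm{tail}}}|Q_B(w)|,
\]
then applies Stirling. This is fully elementary, produces exactly the constant \(4\mathrm e\), and makes \(w\) a finite-selection choice.

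Your Chebyshev extremal bound \(\sup\ge 2(L/4)^N\) is a classical external input. It gives the sharper lower bound \(2(R^m/8)^{d_{\mathrm{tail}}}\). Taking the smallest maximizer on the whole interval is still a deterministic function of \((B,m,R)\), since the maximizer set of the continuous \(|Q_B|\) is a nonempty closed subset of a compact interval.

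Either version suffices downstream in Lemma~\ref{l:lem:gaussian-border}, where only the \(e^{O(n)}\) order of this factor matters.
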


\begin{proof}
For \textup{(i)}, first compute the prefix determinant. Write
\[
 \mathcal V=\det[z_h^{j-1}]_{h,j=1}^m.
\]
Since \(\theta_{j-1}\) is monic of degree \(j-1\), write
\[
 \theta_{j-1}(z)=z^{j-1}+\sum_{q=0}^{j-2}c_{q,j}z^q,
 \qquad 1\le j\le m,
\]
where the sum is empty for \(j=1\). In the matrix
\([\theta_{j-1}(z_h)]_{h,j=1}^m\), the first column is \(1\).
Proceeding through columns \(j=2,\ldots,m\), subtract the linear
combination \(\sum_{q=0}^{j-2}c_{q,j}(z_h^q)_{h=1}^m\) of the
already reduced columns \(1,\ldots,j-1\) from column \(j\).
This leaves column \((z_h^{j-1})_{h=1}^m\) and does not change the
determinant. Factoring out the column weights in
Definition~\ref{l:thm:EVALMAT} therefore gives
\[
 \det[\theta_{j-1}(z_h)]_{h,j=1}^m=\mathcal V,
 \qquad
 \det E_{\mathrm{pre}}
   =\left(\prod_{j=1}^m s_j\right)\mathcal V.
\]
Thus the prefix contributes precisely one Vandermonde factor.

Next compute the suffix determinant. Reverse the suffix family, writing
\(p_j=\varphi_{2m-j+1}\), monic of degree
\(d_{\mathrm{tail}}+j\) for \(0\le j<m\). The quotient
\[
 S(z_1,\ldots,z_m)
   =\frac{\det[p_j(z_h)]_{\substack{1\le h\le m\\0\le j<m}}}
          {\det[z_h^j]_{\substack{1\le h\le m\\0\le j<m}}}
\]
is a real polynomial, as follows. If \(z_h=z_k\) for \(h<k\),
the numerator has two equal rows and vanishes, so \(z_k-z_h\) divides
it. These are distinct irreducible linear factors in
\(\mathbb R[z_1,\ldots,z_m]\), hence their product divides the numerator.
That product is precisely the denominator:
\[
 \det[z_h^j]_{\substack{1\le h\le m\\0\le j<m}}
    =\prod_{1\le h<k\le m}(z_k-z_h).
\]
Thus the quotient \(S\) is a polynomial. Exchanging any two variables
changes the sign of both determinants, so \(S\) is symmetric.
Its total degree is at most \(md_{\mathrm{tail}}\), and its highest
homogeneous part is \(\prod_{h=1}^m z_h^{d_{\mathrm{tail}}}\).
To show that evaluating \(S\) at the roots of \(z^m-w\) produces
an ordinary polynomial in \(w\), we use the fundamental theorem of
symmetric polynomials. It gives a real polynomial \(P\) such that
\(S=P(e_1,\ldots,e_m)\), where
\[
 e_j(z_1,\ldots,z_m)
   =\sum_{1\le h_1<\cdots<h_j\le m}z_{h_1}\cdots z_{h_j},
 \qquad 1\le j\le m.
\]
Comparing coefficients in \(\prod_{h=1}^m(t-z_h)=t^m-w\) gives
\[
 e_j(z_1,\ldots,z_m)=0\quad(1\le j<m),\qquad
 e_m(z_1,\ldots,z_m)=\prod_{h=1}^m z_h=(-1)^{m-1}w.
\]
Consequently,
\[
 S(z_1,\ldots,z_m)=P\bigl(0,\ldots,0,(-1)^{m-1}w\bigr),
\]
which is a real polynomial in \(w\). Only powers of \(e_m\) survive;
since \(e_m\) has total degree \(m\), the degree in \(w\) is at most
\(d_{\mathrm{tail}}\). The highest homogeneous part of \(S\) gives
the leading coefficient \((-1)^{d_{\mathrm{tail}}(m-1)}\).
This defines the monic \(Q_B\); its coefficients depend only on \(B,m\),
and the displayed identity for every \(w>0\) proves uniqueness.

Factoring out the suffix column weights and using that column
reversal changes only the sign, we obtain
\[
 |\det E_{\mathrm{suf}}|
   =\left(\prod_{\ell=m+1}^{2m}|s_\ell|^{-1}\right)
       |\mathcal V|\,|Q_B(w)|.
\]
Combining this with the prefix calculation gives
\[
 |\det E_{\mathrm{pre}}\det E_{\mathrm{suf}}|
   =\frac{\prod_{j=1}^m|s_j|}{\prod_{\ell=m+1}^{2m}|s_\ell|}
       |\mathcal V|^2|Q_B(w)|.
\]
Finally,
\[
 |\mathcal V|^2=m^mw^{m-1},\qquad
 \frac{\prod_{j=1}^m|s_j|}{\prod_{\ell=m+1}^{2m}|s_\ell|}
   =\prod_{i=1}^{2m-1}|a_i|^{\min(i,2m-i)}=W_{2m},
\]
which proves \eqref{l:eq:D.8}.

For \textup{(ii)}, define the grid
\[
 \omega_j=\frac{R^m}{2}+\frac{jR^m}{2d_{\mathrm{tail}}},
 \qquad j\in\{0,\ldots,d_{\mathrm{tail}}\},
\]
and choose
\[
 \begin{aligned}
 j_*&=\min\left\{j\in\{0,\ldots,d_{\mathrm{tail}}\}:
       |Q_B(\omega_j)|=\max_{0\le\ell\le d_{\mathrm{tail}}}
                         |Q_B(\omega_\ell)|\right\},\\
 w&=\omega_{j_*}\in[R^m/2,R^m].
 \end{aligned}
\]
This choice depends only on \(B,m,R\). The grid spacing is
\(\Delta=R^m/(2d_{\mathrm{tail}})\). Taking the
\(d_{\mathrm{tail}}\)-th forward difference of the monic polynomial
\(Q_B\), with step \(\Delta\), gives
\[
 d_{\mathrm{tail}}!\Delta^{d_{\mathrm{tail}}}
 =\left|\sum_{j=0}^{d_{\mathrm{tail}}}
       (-1)^{d_{\mathrm{tail}}-j}\binom{d_{\mathrm{tail}}}{j}
       Q_B(\omega_j)\right|
 \le 2^{d_{\mathrm{tail}}}|Q_B(w)|.
\]
Using \(d_{\mathrm{tail}}!\ge(d_{\mathrm{tail}}/\mathrm e)^{d_{\mathrm{tail}}}\)
proves \eqref{l:eq:D.9}. Since \(w>0\), \(W_{2m}>0\) and
\(Q_B(w)\ne0\), \eqref{l:eq:D.8} makes both matrices invertible.
\end{proof}

From now on, use the \(w\) and nodes supplied by
Lemma~\ref{l:lem:evaluation-determinant}\textup{(ii)}. They remain fixed
in every border integration.

\subsubsection{The two border integrations}\label{l:the-two-border-integrations}

Let \(P_T(z)=\det(z\mathrm{Id}_n-T)\). The block determinant formula gives
\[
 P_T(z)=(z-g_{\mathrm b})\det(z\mathrm{Id}_d-B)
       -\mathbf x_{\mathrm b}^T\operatorname{adj}(z\mathrm{Id}_d-B)\mathbf u_{\mathrm b}.
\]
For invertible \(z\mathrm{Id}_d-B\), this is the Schur complement
identity; both sides are polynomials, so it holds for every \(z\).
At the nodes \(z_h^m=w\) chosen above, define
\(Y=\prod_{h=1}^m|P_T(z_h)|\). If \(\lambda_1,\ldots,\lambda_n\)
are the eigenvalues of \(T\), then
\[
 Y=\prod_{j=1}^n|w-\lambda_j^m|\le(w+r^m)^n
 \quad\text{on }\{\rho(T)\le r\}.
\]
Consequently,
\(\mathbb P_\kappa\{\rho(T)\le r\}\le(w+r^m)^{n/2}\mathbb E_\kappa Y^{-1/2}\).
The following estimate performs both border integrations and leaves only
the deterministic evaluation matrices.

\begin{lemma}[Two-stage border negative moment]\label{l:lem:border-negative-moment}
For the fixed core, nodes and evaluation matrices above,
\begin{equation}\label{l:eq:border-negative-moment}
 \mathbb E_\kappa\prod_{h=1}^m|P_T(z_h)|^{-1/2}
 \le C^m(n/\kappa)^{m/2}
       |\det E_{\mathrm{pre}}\det E_{\mathrm{suf}}|^{-1/2}.
\end{equation}
\end{lemma}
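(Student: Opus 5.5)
Condition on all border variables except the selected coordinates $\mathbf x_*=(\mathbf x_{\mathrm b})_{[m]}$ and $\mathbf u_*=(\mathbf u_{\mathrm b})_{[m+1,2m]}$, and integrate these two blocks one after the other. By the block determinant formula and Lemma~\ref{l:lem:hessenberg-cofactor}, each evaluation $P_T(z_h)$ is affine in $\mathbf x_*$ for fixed $\mathbf u_*$, and also affine in $\mathbf u_*$ for fixed $\mathbf x_*$. The cross term is bilinear. Only the pairs $j\le m<\ell$ need care: there the cofactor factorizes as $s_j\theta_{j-1}(z_h)\cdot s_\ell^{-1}\varphi_{\ell+1}(z_h)$, using $\prod_{i=j}^{\ell-1}a_i=s_j/s_\ell$. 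So the cross term at node $z_h$ is $-\sum_{j\le m<\ell}x_j(E_{\mathrm{pre}})_{hj}(E_{\mathrm{suf}})_{h\ell}u_\ell$. Entries with $j>\ell$, together with all terms involving unselected coordinates, are absorbed into a shift that depends only on the fixed variables.

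\textbf{First integration, over $\mathbf x_*$.} Fix everything else. The vector $(P_T(z_h))_h$ equals $-\operatorname{diag}(E_{\mathrm{suf}}\mathbf u_*)_hE_{\mathrm{pre}}\mathbf x_*$ plus a shift, up to lower-order couplings from $\mathbf u_*$ that the triangular structure lets us fold into the shift. Let $\lambda_h=(E_{\mathrm{suf}}\mathbf u_*)_h$, which is real or conjugate-paired according to the node pattern. Pass to real coordinates for the conjugate pairs. The map $\mathbf x_*\mapsto(P_T(z_h))$ is then a real affine Gaussian map with covariance $(\kappa/n)\mathcal K\mathcal K^T$, where $\mathcal K$ is the real form of $\operatorname{diag}(\lambda)E_{\mathrm{pre}}$; its determinant is $\prod_h|\lambda_h|\,|\det E_{\mathrm{pre}}|$, up to a factor $2^{O(m)}$ from the pair change of basis. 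Lemma~\ref{l:thm:DN}(i) then gives
\[
 \mathbb E_{\mathbf x_*}\prod_h|P_T(z_h)|^{-1/2}\le C^m(n/\kappa)^{m/4}|\det E_{\mathrm{pre}}|^{-1/2}\prod_h|\lambda_h|^{-1/2}.
\]

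\textbf{Second integration, over $\mathbf u_*$.} Here $\prod_h|\lambda_h|=|\det\mathcal D(\mathbf Z)|$, where $\mathbf Z$ is the real-coordinate version of $E_{\mathrm{suf}}\mathbf u_*$ plus any fixed shift coming from unselected variables. This $\mathbf Z$ is Gaussian with covariance determinant $(\kappa/n)^m|\det E_{\mathrm{suf}}|^2$, up to $2^{O(m)}$. Lemma~\ref{l:thm:DN}(ii) then contributes $C^m(n/\kappa)^{m/4}|\det E_{\mathrm{suf}}|^{-1/2}$. Multiplying the two bounds and averaging over the remaining fixed variables gives \eqref{l:eq:border-negative-moment}.

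The step I expect to be the main obstacle is the exact bookkeeping of the affine structure in the first integration. When $\mathbf u_*$ is fixed, the coefficient matrix multiplying $\mathbf x_*$ must be exactly $\operatorname{diag}(\lambda)E_{\mathrm{pre}}$ plus terms independent of $\mathbf x_*$. Every term bilinear in $(\mathbf x_*,\mathbf u_*)$ has $j\le m<\ell$, so this holds; but the terms pairing $\mathbf x_*$ with unselected $\mathbf u$-coordinates are also linear in $\mathbf x_*$ and must be handled. I would treat them in one of two ways. The first is to add them into an effective $\lambda_h$ shift, which would require Lemma~\ref{l:thm:DN}(ii) with a general fixed $D$. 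The second, which I will check first, is to reorganize so that the selected $\mathbf u_*$ block is integrated with a $\mathbf x_*$-independent shift $D$ in \eqref{l:eq:D.4}, whose uniformity in $D$ is exactly what is needed. A second point to check is that the real/conjugate-pair ordering of the nodes is consistent with Lemma~\ref{l:thm:DN}, so that the real-coordinate Jacobians cost only $2^{O(m)}$.
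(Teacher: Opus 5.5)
This is the paper's proof once you adopt the remedy you flag at the end, but the first integration as written is wrong and needs restating. With all of $\mathbf u_{\mathrm b}$ fixed, the coefficient matrix of $\mathbf x_*$ is
\[
 H=\operatorname{diag}(E_{\mathrm{suf}}\mathbf u_*)E_{\mathrm{pre}}+H_0 .
\]
Here $H_0$ collects the cofactors $\operatorname{adj}(z_h\mathrm{Id}_d-B)_{j\ell}$, $j\le m$, multiplied by the unselected coordinates $(\mathbf u_{\mathrm b})_\ell$, $\ell\notin[m+1,2m]$. These terms are linear in $\mathbf x_*$, so they cannot be placed in the shift, and no triangular structure removes them.

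\begin{itemize}
\item Those with $\ell\ge j$ factor as $(E_{\mathrm{pre}})_{hj}$ times a factor depending only on $(h,\ell)$. They would merely shift the mean of $\mathbf Z$.
\item Those with $\ell<j\le m$ are below-diagonal adjugate entries. They are not covered by \eqref{l:eq:D.6} and need not factor through $E_{\mathrm{pre}}$. Your remark that entries with $j>\ell$ are absorbed into a shift is exactly what fails.
\end{itemize}

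Hence Lemma~\ref{l:thm:DN}(i) gives $C^m(n/\kappa)^{m/4}|\det H|^{-1/2}$, not $C^m(n/\kappa)^{m/4}|\det E_{\mathrm{pre}}|^{-1/2}\prod_h|\lambda_h|^{-1/2}$. A diagonal ``effective $\lambda_h$'' shift alone cannot absorb $H_0$.

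The repair is the one you propose. Write
\[
 H_{\mathbb R}(E_{\mathrm{pre}})_{\mathbb R}^{-1}=\mathcal D(\mathbf Z_u)+(H_0)_{\mathbb R}(E_{\mathrm{pre}})_{\mathbb R}^{-1}.
\]
The last term depends only on coordinates that stay fixed in both integrations. Apply \eqref{l:eq:D.4} with this general $D$; its uniformity in $D$ and in the mean gives $C^m(n/\kappa)^{m/4}|\det E_{\mathrm{pre}}\det E_{\mathrm{suf}}|^{-1/2}$ for $\mathbb E_{\mathbf u_*}|\det H|^{-1/2}$.

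Your two options coincide here, since this $D$ is independent of both $\mathbf x_*$ and $\mathbf u_*$. This is precisely the paper's argument. Where $H$ is singular, the first bound is trivially $+\infty$; finiteness of the second expectation shows this happens only on a null set.
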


\begin{proof}
Recall the scalar quantity
\[
 Y=\prod_{h=1}^m|P_T(z_h)|,\qquad
 Y^{-1/2}=\prod_{h=1}^m|P_T(z_h)|^{-1/2},
\]
whose negative half-power is the integrand in
\eqref{l:eq:border-negative-moment}.
Throughout these two integrations, replace each conjugate pair of
values by its real and imaginary parts, as in Lemma~\ref{l:thm:DN}.
For a matrix \(K\) with real rows and conjugate-paired rows, write
\(K_{\mathbb R}\) for the real matrix obtained by making the same
replacement in its rows. Each pair replacement has determinant of
absolute value \(1/2\), so, with \(b_{\mathrm{pair}}\le m/2\) pairs,
\[
 |\det K_{\mathbb R}|=2^{-b_{\mathrm{pair}}}|\det K|.
\]
In particular, if \(\mathbf v\sim N(0,(\kappa/n)\mathrm{Id}_m)\),
then \(K_{\mathbb R}\mathbf v\) has covariance determinant
\[
 2^{-2b_{\mathrm{pair}}}(\kappa/n)^m|\det K|^2.
\]
Below \(C\) is universal and may increase between lines.

\emph{Integrate the row.} Fix \(g_{\mathrm b}\), the entire border
column \(\mathbf u_{\mathrm b}\), and the row coordinates
\((\mathbf x_{\mathrm b})_j\) for \(m<j\le d\). Integrate only over
\(\mathbf x_*=(\mathbf x_{\mathrm b})_{[m]}\), the first \(m\)
coordinates of the border row. The evaluation vector is
\((P_T(z_h))_{h=1}^m=\mathbf b_{\mathrm{shift}}-H\mathbf x_*\), where
\[
 H_{hj}=\sum_{\ell=1}^d
 \operatorname{adj}(z_h\mathrm{Id}_d-B)_{j\ell}(\mathbf u_{\mathrm b})_\ell,
 \qquad 1\le h,j\le m,
\]
and the shift vector is explicitly
\[
 \begin{aligned}
 (\mathbf b_{\mathrm{shift}})_h
  &=(z_h-g_{\mathrm b})\det(z_h\mathrm{Id}_d-B)\\
  &\quad-\sum_{j=m+1}^d(\mathbf x_{\mathrm b})_j
       \sum_{\ell=1}^d
       \operatorname{adj}(z_h\mathrm{Id}_d-B)_{j\ell}
       (\mathbf u_{\mathrm b})_\ell,
       \qquad 1\le h\le m.
 \end{aligned}
\]
It collects all terms independent of \(\mathbf x_*\) and is fixed
during the row integration. Since \(\mathbf x_*\) is centered, it is
also the conditional mean of the evaluation vector.
The real coordinate vector of \((P_T(z_h))_{h=1}^m\) has covariance
determinant \(2^{-2b_{\mathrm{pair}}}(\kappa/n)^m|\det H|^2\).
Applying the product bound \eqref{l:eq:D.5} to the conjugate-paired
values \((P_T(z_h))_{h=1}^m\), whose absolute product is \(Y\), and
absorbing the factor
\(2^{b_{\mathrm{pair}}/2}\) into \(C^m\), gives, when \(H\) is invertible,
\begin{equation}\label{l:eq:D.10}
 \mathbb E_{\mathbf x_*}Y^{-1/2}
 \le C^m(n/\kappa)^{m/4}|\det H|^{-1/2}.
\end{equation}
The product bound is uniform in \(\mathbf b_{\mathrm{shift}}\), so its
dependence on \(\mathbf u_*\) does not enter the right-hand side of
\eqref{l:eq:D.10}. We next integrate the remaining factor
\(|\det H|^{-1/2}\) over those column coordinates.

\emph{Integrate the column.} Fix the coordinates
\((\mathbf u_{\mathrm b})_\ell\) of the border column vector
\(\mathbf u_{\mathrm b}\) for
\(\ell\in[d]\setminus[m+1,2m]\). Integrate only over
\(\mathbf u_*=(\mathbf u_{\mathrm b})_{[m+1,2m]}\), the \(m\) border
column coordinates with indices \(m+1,\ldots,2m\).
The cofactor factorization \eqref{l:eq:D.6} gives
\begin{equation}\label{l:eq:D.11}
 H=\operatorname{diag}(E_{\mathrm{suf}}\mathbf u_*)E_{\mathrm{pre}}+H_0,
\end{equation}
with \(H_0\) fixed. Let \(\mathbf Z_u\) consist of the real values
and the real and imaginary parts of the conjugate-paired values of
\(E_{\mathrm{suf}}\mathbf u_*\). Complex multiplication in these
coordinates gives
\[
 H_{\mathbb R}(E_{\mathrm{pre}})_{\mathbb R}^{-1}
 =\mathcal D(\mathbf Z_u)
    +(H_0)_{\mathbb R}(E_{\mathrm{pre}})_{\mathbb R}^{-1}.
\]
The determinant on the left equals \(\det H/\det E_{\mathrm{pre}}\),
and the second term on the right is a fixed real matrix. This is
exactly the shifted determinant in Lemma~\ref{l:thm:DN}\textnormal{(ii)}.
The real coordinates of \(E_{\mathrm{suf}}\mathbf u_*\) in
Lemma~\ref{l:thm:DN} have covariance determinant
\(2^{-2b_{\mathrm{pair}}}(\kappa/n)^m|\det E_{\mathrm{suf}}|^2\).
Applying \eqref{l:eq:D.4} and again absorbing the factor
\(2^{b_{\mathrm{pair}}/2}\) into \(C^m\) yields
\begin{equation}\label{l:eq:D.12}
 \mathbb E_{\mathbf u_*}|\det H|^{-1/2}
 \le C^m(n/\kappa)^{m/4}
       |\det E_{\mathrm{pre}}\det E_{\mathrm{suf}}|^{-1/2}.
\end{equation}
Its finiteness proves that \(H\) is invertible almost surely, as required
in the row integration. Combining \eqref{l:eq:D.10} and \eqref{l:eq:D.12}
proves \eqref{l:eq:border-negative-moment}. Both bounds are uniform in
all fixed outside coordinates, so Tonelli also permits averaging over
the rest of the border.
\end{proof}

\subsubsection{Completion of the Gaussian estimate}\label{l:the-quadratic-exponent}

\begin{proof}[\proofname\ of Lemma~\ref{l:lem:gaussian-border}]
Apply Lemma~\ref{l:lem:border-negative-moment} to the spectral-event bound
above, then insert the evaluation determinant \eqref{l:eq:D.8}:
\begin{equation}\label{l:eq:D.13}
 \mathbb P_\kappa\{\rho(T)\le r\}
 \le C^m\left[
 \frac{(n/\kappa)^m(w+r^m)^n}
      {m^mw^{m-1}W_{2m}|Q_B(w)|}\right]^{1/2}.
\end{equation}
Here \(r^m\le R^m/2\le w\le R^m\). Thus \(w+r^m\le2w\), and
\eqref{l:eq:D.9}, with \(d_{\mathrm{tail}}=n-1-2m\), gives
\begin{equation}\label{l:eq:D.14}
 \mathbb P_\kappa\{\rho(T)\le r\}
 \le C^m\left[
 2^n(4\mathrm e)^{d_{\mathrm{tail}}}
 \left(\frac{n}{\kappa m}\right)^m
 \frac{R^{m^2+2m}}{W_{2m}}\right]^{1/2}.
\end{equation}
The radial power is
\(m(n-m+1-d_{\mathrm{tail}})=m^2+2m\). Since \(R\le1\), we may
replace \(R^{m^2+2m}\) by \(R^{m^2}\). All remaining factors are
bounded by \(e^{C_{\mathrm G}(\kappa)n}\), using
\(m,d_{\mathrm{tail}}\le n\) and
\(m\log(n/(\kappa m))\le n/\kappa\).
This proves \eqref{l:eq:gaussian-border}.
\end{proof}

\Needspace{14\baselineskip}
\section{Observation comparison and the link estimate}\label{sec:comparison-links}
\label{l:proof-strategy-and-contributions}
We prove the original-law link estimate in Proposition~\ref{l:thm:ML}.
Arnoldi queries are adaptive under both entry laws. For Gaussian entries,
orthogonality gives fresh Gaussian responses and hence independent chi
laws for the links. To transfer
probabilities of order \(e^{-cn^2}\), we compare the entire observation
record by a multiplicative bound that preserves this scale.
Theorem~\ref{l:thm:AC} makes the cost explicit in terms of the query
frame's coordinate weights: it is small when these weights are small.
Subsection~\ref{l:sec:real_comparison} states this comparison,
reconstructs the observations after separating a fixed column set, and
provides the reference-link estimate.

The remaining difficulty is that flatness of the frame is itself
random. After completing the queries, we delete concentrated coordinates;
many deletions force a large Gram-determinant loss. For each fixed
candidate deleted set, an independent-column reference controls that
loss, while a coordinate-weight moment controls the comparison cost.
Subsection~\ref{l:sec:real_stopping} uses these estimates to control
failed or late flattening and completes the link proof. The analytic
comparison proof is in Subsection~\ref{l:sec:real_comparison_analysis}.
The coarse comparison controls failure to reach flatness within the
deletion budget; the weighted comparison controls outputs that become
flat only after many deletions.

\subsection{Observation comparison, reconstruction, and reference
links}\label{l:sec:real_comparison}

For the link estimates, fix a deterministic candidate set
\(I_0\subseteq[d]\), write \(J_0=[d]\setminus I_0\), and assume
\(k\le h:=|J_0|\). The extra starter \(\mathbf s\sim\mu^{\otimes d}\)
is independent of \(A\). We supply the starter and the separated
columns at the outset, and make orthogonal observations of only the
remaining matrix. Thus the source and auxiliary input in our application
are
\[
 X=A_{:,J_0}\in\mathbb R^{d\times h},\qquad
 \mathsf C=(\mathbf s,A_{:,I_0}).
\]
Lemma~\ref{l:thm:RECONPROP} constructs these observations so that they,
together with \(\mathsf C\), recover the original \(k\)-step Arnoldi run.
The original link and deletion events are therefore functions of this
record.

Under \(\mathsf Q_{I_0}\), the auxiliary input keeps its original law,
while \(X\) has independent standard Gaussian entries, independent of
\(\mathsf C\) (Definition~\ref{l:thm:FIXREF}). The candidate \(I_0\)
is fixed throughout this comparison; we do not condition on the random
deletion output. We first state the comparison in a general observation
model, then verify the reconstruction and estimate the reference links.

\subsubsection{The observation comparison
theorem}\label{l:the-observation-comparison-theorem}

Think of an algorithm that accesses a matrix through \(k\) matrix-vector
products. It chooses a unit direction, observes the matrix's response,
and uses the information obtained so far to choose the next direction,
orthogonal to all previous ones. The comparison below applies to the
resulting record of observations. Adaptive transcript likelihoods and
truncation on low-information events are used, for example, in Gaussian
query lower bounds \cite[Lemma 3.4 and Section 4]{SER2020}. Universality
for specific message-passing iterations is studied in \cite{BLM2015}.
The result here compares general predictable orthonormal observations
under iid and Gaussian entry laws, with an explicit cost determined by
the coordinate weights of the realized frame.

Here \(d,h\) are arbitrary source dimensions; in the core application,
\(d=n-1\) and \(h=d-|I_0|\). The source matrix below is unnormalized,
so its Gaussian reference has entry variance one.

Let \(X\in\mathbb R^{d\times h}\) have iid entries of law \(\mu\), and
let \(\mathsf C\) be an independent auxiliary input in a standard Borel space
\(\mathcal C\). The input may include a starting vector or an
independent random seed. Fix a Borel update rule before the experiment:
\[
 \mathbf u_a=\mathbf u_a(\mathsf C,\mathbf y_1,\ldots,\mathbf y_{a-1})\in\mathbb R^h,\qquad
 \mathbf y_a=X\mathbf u_a,\qquad U=[\mathbf u_1,\ldots,\mathbf u_k],\qquad Y=[\mathbf y_1,\ldots,\mathbf y_k].
\] At step \(a\), the rule may use only \(\mathsf C\) and the preceding
responses; this is the predictability requirement. It must produce a
unit vector orthogonal to \(\mathbf u_1,\ldots,\mathbf u_{a-1}\) on every history. Thus
\(U\in\mathbb R^{h\times k}\), \(U^TU=\mathrm{Id}_k\), and
\(\operatorname{Ran}U\) is \(k\)-dimensional, with \(k\le h\). The rule
is fixed, while its output frame \(U\) is random because the responses
are random. For an input \(c\in\mathcal C\), consider a supplied
response matrix
\(\mathbf z=[\mathbf z_1,\ldots,\mathbf z_k]\in\mathbb R^{d\times k}\),
with columns \(\mathbf z_a\in\mathbb R^d\). Here \(\mathbf z\) denotes a
value of the observations \(Y\). Define \[
 U(c,\mathbf z)=\big[\mathbf u_a(c,\mathbf z_1,\ldots,\mathbf z_{a-1})\big]_{a=1}^k
          \in\mathbb R^{h\times k}.
\] This runs the fixed rule on the supplied history: its \(a\)-th query
uses only the first \(a-1\) response columns. Set \begin{equation}
\ell_j(U)=\sum_{a=1}^kU_{ja}^2,\qquad
 \Theta(U)=\sum_{j=1}^h\ell_j(U)^{3/2},\qquad
 0\le\ell_j\le1,\quad\sum_{j=1}^h\ell_j=k.
 \label{l:eq:A.1}
\end{equation} We call \(\ell_j(U)\) the coordinate weight of the query subspace:
\(\ell_j(U)=\|P_{\operatorname{Ran}U}e_j\|^2\), equivalently the largest
possible value of \(|(\mathbf v)_j|^2\) over unit vectors
\(\mathbf v\in\operatorname{Ran}U\). Small coordinate weights therefore mean
that every vector in the query subspace is spread across the input
coordinates.

\begin{definition}[Original and reference observation laws]\label{l:thm:OBSLAW}

On the observation space \(\mathcal C\times\mathbb R^{d\times k}\),
define \[
 P=\mathcal L(\mathsf C,Y),\qquad
 G=\mathcal L(\mathsf C)\otimes G_0,\qquad G_0=N(0,1)^{\otimes dk}.
\] Equivalently, \(G\) is obtained by replacing \(X\) with a standard
Gaussian matrix and running the same rule. Predictability and
orthogonality make its responses fresh Gaussian columns, as proved in
Lemma \ref{l:thm:LIKE} in Section \ref{l:sec:real_comparison_analysis}. The auxiliary
input keeps its original law in both experiments. \end{definition}

An observation event is a Borel set
\(F\subseteq\mathcal C\times\mathbb R^{d\times k}\): it occurs when
\((\mathsf C,Y)\in F\), so it may depend on both \(\mathsf C\) and \(Y\). For a fixed
input \(c\), write \[
 F_c=\{\mathbf z:(c,\mathbf z)\in F\},\qquad
 P_c=\text{law of the responses when the input is fixed at }c.
\] Thus \(F_c\) is determined by the response record alone; in
particular, its query frame is the known function \(U(c,\mathbf z)\).
The comparison concerns events observable from this record, such as the
link and deletion events in Lemma \ref{l:thm:RECONPROP}.

Both experiments run the same update rule; their realized query frames
may differ because their responses differ. Any predictable Borel rule
satisfying orthonormality is allowed, including directions fixed in
advance. No Arnoldi or Krylov recurrence is assumed.

Ordinary Arnoldi is one example, with \(h=d\), \(\mathsf C=\mathbf s\),
and the orthogonal completion rule of Algorithm~\ref{l:thm:ARN} at a
zero residual. For the separated-column experiment above,
Lemma~\ref{l:thm:RECONPROP} will construct a frame \(\widehat V\) with
\[
 U=\widehat V,\qquad
 Y=X\widehat V=[\mathbf y_1^{\mathrm{rem}},\ldots,\mathbf y_k^{\mathrm{rem}}].
\]
Under this identification, the observation laws are
\[
 P=\mathcal L_{\mathbb P_\mu}(\mathsf C,Y),\qquad
 G=\mathcal L_{\mathsf Q_{I_0}}(\mathsf C,Y).
\]
Thus \(G\) is the joint law of the auxiliary input and the
remaining-matrix responses under \(\mathsf Q_{I_0}\), while
\(\mathsf Q_{I_0}\) itself is a law of the matrix and starter.

\Needspace{12\baselineskip}
\begin{theorem}[Coarse and weighted observation comparison]\label{l:thm:AC}

There are law-dependent positive constants
\(C_{\mathrm{coarse}},C_{\mathrm{weight}},\delta_\mu\), with
\(\delta_\mu\le1/4\), and
\(\theta_{\mathrm{coarse}},\theta_\mu\in(0,1)\), such that, when
\(h\le d\), every Borel observation event \(F\) satisfies \begin{equation}
P(F)\le e^{C_{\mathrm{coarse}}dk}G(F)^{\theta_{\mathrm{coarse}}}.
 \label{l:eq:A.2}
\end{equation} For \(F\subseteq\{\max_j\ell_j(U)\le\delta_\mu\}\), one has \begin{equation}
P(F)\le\left[\mathbb E_G\mathbf1_F
                    e^{C_{\mathrm{weight}}d\Theta(U)}\right]^{\theta_\mu}.
 \label{l:eq:A.3}
\end{equation} The constants are independent of dimensions, the query rule and the
law of \(\mathsf C\). \end{theorem}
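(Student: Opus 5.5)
Both bounds come from the explicit likelihood ratio of the observation record, followed by Hölder's inequality. Fix the input $c$. By predictability and orthonormality, under $G$ the responses are fresh: conditional on the first $a-1$ responses, $\mathbf y_a=X\mathbf u_a$ is independent of them and $N(0,\mathrm{Id}_d)$. This is Lemma~\ref{l:thm:LIKE}. Under $P$ the rows of $X$ are independent, so the record factorizes over rows $i\in[d]$. Row $i$ is observed through the $k$ scalars $(X_{i,:}\mathbf u_a)_{a\le k}$, which are sequential linear functionals of an iid vector in $\mathbb R^h$ along an adaptive orthonormal frame.

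First I would show that $P_c$ has a density with respect to $G_0$. Write $\Pi_i$ for the row likelihood ratio, obtained by integrating the product density $\prod_j f(x_j)$ over the fiber $\{x: xU=\mathbf z_{i,:}\}$. This fiber is an affine subspace of codimension $k$, and the integral is compared with the Gaussian fiber density. Because the frame is built sequentially, this is best done by a telescoping sequential factorization rather than all at once.

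\textbf{Coarse bound \eqref{l:eq:A.2}.} By Hölder, with $p>1$,
\[
P(F)=\mathbb E_G\Bigl[\mathbf 1_F\prod_i\Pi_i\Bigr]\le G(F)^{1-1/p}\Bigl(\mathbb E_G\prod_i\Pi_i^{p}\Bigr)^{1/p}.
\]
It then suffices to show that the $p$-th moment of the ratio grows at most like $e^{Cdk}$. For a single row and a single step, boundedness of the density $f\le K_f$ and the square-exponential moment give a conditional fiber density bounded by $C^k$ times a Gaussian of slightly larger variance, in the spirit of \eqref{l:eq:D.1}. Integrating step by step (Tonelli, since each step's conditional bound is uniform over the history) yields $\mathbb E_G\Pi_i^p\le e^{Ck}$. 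Taking the product over the $d$ rows gives the claim with $\theta_{\mathrm{coarse}}=1-1/p$.

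\textbf{Weighted bound \eqref{l:eq:A.3}.} Here flatness has to be exploited. When $\max_j\ell_j\le\delta_\mu$, the projected row $X_{i,:}U$ is a sum of many small independent contributions. A local CLT / Edgeworth expansion for its density then gives $\log\Pi_i\le C\sum_j\ell_j^{3/2}$ plus a centered fluctuation term. The natural tool is the tilted fiber-density bound from the introduction: the Fourier inversion of $\prod_j\hat\mu(U_{j,:}^T t)$ against the Gaussian, where the third-order cumulant correction is controlled by $\sum_j\|U_{j,:}\|^3=\Theta(U)$. Bounded density controls large frequencies, since the entries are not lattice. Summing over the $d$ rows produces $e^{Cd\Theta(U)}$.

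The fluctuation, however, is random and adaptive. I would handle it by a sequential transport coupling: couple each new response column conditionally, with a transport–entropy estimate (the one-dimensional Gaussian transport--entropy input \cite{BBLZ2017}) applied per step, and use the geometric Brascamp--Lieb inequality \cite{BCCT2008} to pass from coordinate weights to the fiber density. A Hölder step with exponent $\theta_\mu$ then absorbs the martingale part, via a Doob maximal bound. Since the frame is predictable, a small weight at the final time implies small weights at all earlier steps, because the nested subspaces have monotone weights. This lets the constraint $\max_j\ell_j\le\delta_\mu$ be used throughout the sequential argument.

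\textbf{Main obstacle.} The hardest step is obtaining the weighted bound multiplicatively, with no additive error. The Fourier/CLT error for the fiber density must be relative, uniformly over the far tails of the response. I would first try to get this by an exponential tilt of each row to recenter at the observed value, and then bound the tilted characteristic function using the bounded density. The price of this approach is the loss of the exponent $\theta_\mu<1$.
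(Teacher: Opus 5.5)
Both parts of your proposal have genuine gaps.

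\textbf{Coarse bound.} The per-row moment step fails. There is no pointwise estimate $p_U(y)\le C^k e^{-\|y\|^2/(2\sigma^2)}$. Already for $h=k=1$ the fiber density is $f$ itself, and a bounded density with a square-exponential moment can carry spikes of height $K_f$ at points tending to infinity; \eqref{l:eq:D.1} is only an integrated bound. Stepwise conditional densities are not uniformly bounded over histories either. For the uniform law, an earlier response near the edge of the support confines the next response to a tiny interval.

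Most importantly, the rows do not factor. Every $\Pi_i$ is evaluated at the common frame $U(\mathbf z)$, which depends on all $d$ coordinates of the earlier responses. Hence $\mathbb E_G\prod_i\Pi_i^p\neq\prod_i\mathbb E_G\Pi_i^p$, and an adaptive frame can align with directions in which $X$ is large. Row-by-row control via $\|U^T\mathbf x_i\|\le\|\mathbf x_i\|$ gives only $e^{Cdh}$. Note also that you never use $h\le d$.

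The paper takes a different route:
\begin{itemize}
\item The Brascamp--Lieb bound \eqref{l:eq:G.1} gives $p_U\le(\sqrt{\mathrm e}K_f)^k$ uniformly in $U$, so $L_c\le e^{Cdk+\|Y\|_{\mathrm{HS}}^2/2}$.
\item It then computes $\mathbb E_{G_0}L_c^{1+t}=\mathbb E_{P_c}L_c^{t}$ under the original law.
\item Adaptivity is neutralized pathwise by $\|XU\|_{\mathrm{HS}}^2\le\sum_{a\le k}s_a(X)^2$, valid for every orthonormal $U$ however it was chosen.
\item The top-$k$ energy estimate \eqref{l:eq:G.4} of Lemma~\ref{l:thm:SVAL} then gives $e^{C(d+h)k}\le e^{2Cdk}$. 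This is exactly where $h\le d$ enters.
\end{itemize}

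\textbf{Weighted bound.} You list the right tools, but the mechanism that assembles them is missing. ``A H\"older step absorbs the martingale part'' cannot give \eqref{l:eq:A.3}, which allows no multiplicative loss and must hold for arbitrary, possibly very rare, events $F$. With the natural tilt $\mathbf t_i=U\mathbf z^{(i)}$, the leftover term $\sum_j\mathcal R_\mu((U\mathbf z^{(i)})_j)+C\sum_j\sqrt{\ell_j}(U\mathbf z^{(i)})_j^2$ is neither centered nor of order $\ell_j^{3/2}$ on such events. Moreover, a transport--entropy coupling bounds errors by $H(M\mid G_0)$ for some law $M$. That says nothing about $P(F)$ until $M$ is tied to $F$.

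The missing idea is the entropy-gain inequality of Lemma~\ref{l:thm:ENT}. For every response law $M$ carried by flat frames,
\[
\mathbb E_M\log L_c\le C_{\mathrm{ent}}d\,\mathbb E_M\Theta(U_c)+(1-\lambda)H(M\mid G_0).
\]
Its proof runs as follows:
\begin{itemize}
\item Couple $\mathbf Z\sim M$ to Gaussian columns in query order, so that $\mathbb E\|E_{\mathrm{err}}\|_{\mathrm{HS}}^2\le 2H(M\mid G_0)$.
\item Apply \eqref{l:eq:A.7} with the non-saddle tilt $\mathbf t_i=N_{i,:}^T+\lambda(E_{\mathrm{err}})_{i,:}^T$. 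This leaves a quadratic coefficient $\frac12-\lambda+C\lambda^2\le\frac{1-\lambda}{2}$, strictly below what transport--entropy pays for.
\item The predictable Gaussian part satisfies $\mathbb E|N_{ij}|^3+\mathbb E\sqrt{\ell_j}N_{ij}^2\le 46\,\mathbb E\ell_j^{3/2}$. This is where Doob enters (Lemma~\ref{l:thm:GAUSS3}).
\end{itemize}
Next take $M=P_c(\cdot\mid F_c)$, truncating $L_c$ if needed. The identity $H(M\mid G_0)=\mathbb E_M\log L_c-\log P_c(F_c)$ gives $\log P_c(F_c)\le C_{\mathrm{ent}}d\,\mathbb E_M\Theta-\lambda H(M\mid G_0)$. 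The Gibbs variational inequality converts this into $P_c(F_c)\le\bigl[\int_{F_c}e^{C_{\mathrm{ent}}d\Theta/\lambda}\,dG_0\bigr]^{\lambda}$, and concavity averages over $c$.

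Thus $\theta_\mu=\lambda<1$ comes from this entropy deficit, not from recentring the tilt. Only a one-sided density bound, valid for every $\mathbf t\in\operatorname{Ran}U$, is needed; a relative Edgeworth error is not.
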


The coarse assertion applies without a coordinate-weight cap. On
\(\max_j\ell_j\le\sigma\), the identity \(\sum_j\ell_j=k\) gives
\(\Theta(U)\le k\sqrt\sigma\). Thus the weight has logarithmic cost at
most \(C_{\mathrm{weight}}\sqrt\sigma\,dk\), which is small relative to
\(dk\) when the cap is small. The stopping argument will
also use the finer random cost \(\Theta(U)\) itself, which is why
\eqref{l:eq:A.3} retains it inside the Gaussian expectation. The following
reconstruction expresses our matrix products in this setting.

\subsubsection{Reconstruction and comparison for fixed column
sets}\label{l:sec:real_links}

We now verify that the separated-column observations recover the
Arnoldi run on the full core \(A\). The later deletion rule
(Algorithm~\ref{l:thm:DEL}) is applied to its completed frame; separating
columns for comparison leaves the original spectral event unchanged.

\begin{definition}[The reference for a fixed candidate]\label{l:thm:FIXREF}

Fix \(I_0\subseteq[d]\), set \(J_0=[d]\setminus I_0\), and assume
\(k\le |J_0|\). The variables \(\mathbf s\) and \(A_{:,I_0}\) are
supplied to the query rule at the start. Under the original law they
have their original joint law and are independent of the remaining
matrix \(A_{:,J_0}\), which has iid entries of law \(\mu\). Under the
reference law \(\mathsf Q_{I_0}\), only the entries of \(A_{:,J_0}\) are
replaced by independent standard Gaussians, independent of
\((\mathbf s,A_{:,I_0})\). The latter pair retains its original joint
law and is averaged over, not fixed to a typical value. \end{definition}

\Needspace{6\baselineskip}
\textbf{What is reconstructed.} Given the retained auxiliary input
\((\mathbf s,A_{:,I_0})\), we rerun the same Arnoldi recursion by
querying only \(A_{:,J_0}\). At each step the full response is recovered
from \[
 A\mathbf v_j=A_{:,I_0}(\mathbf v_j)_{I_0}+A_{:,J_0}(\mathbf v_j)_{J_0}.
\] The first term is directly available from the supplied columns; the
second comes from queries to the remaining matrix. Recovering this sum
recovers the original next direction, so the entire response record,
link product and deletion output are unchanged. Because the restricted directions \((\mathbf v_j)_{J_0}\) need not be
orthogonal, the lemma first orthogonalizes them and expresses their
responses through the resulting queries. The reconstruction identity
is the sum above written in this orthogonal basis.

\Needspace{8\baselineskip}

Here \(V=[\mathbf v_1,\ldots,\mathbf v_k]\) is the completed Arnoldi
frame for \(A\). For a fixed retained set \(J_0\), whenever
\(V_{J_0,:}\) has rank \(k\),
write \[
 \ell_i(J_0):=(P_{\operatorname{Ran}V_{J_0,:}})_{ii},\qquad i\in J_0.
\] For a cap \(\sigma>0\), call the retained frame flat if it has rank
\(k\) and \(\max_{i\in J_0}\ell_i(J_0)\le\sigma\). This is the criterion
used by Algorithm \ref{l:thm:DEL} with cap \(\sigma\).

\begin{lemma}[Exact reconstruction and the comparison inputs]\label{l:thm:RECONPROP}

Fix \(I_0\subseteq[d]\), write \(J_0=[d]\setminus I_0\), and assume
\(k\le |J_0|\). Starting from \(\mathbf s\) and the columns
\(A_{:,I_0}\), one can successively construct orthonormal vectors
\(\widehat{\mathbf v}_1,\ldots,\widehat{\mathbf v}_k\in\mathbb R^{J_0}\) and form \[
 \mathbf y_j^{\mathrm{rem}}:=A_{:,J_0}\widehat{\mathbf v}_j\in\mathbb R^d,
 \qquad j=1,\ldots,k.
\] Here \(\mathrm{rem}\) refers to the remaining columns indexed by
\(J_0\). Each \(\widehat{\mathbf v}_j\) is a Borel function only of
\(\mathbf s,A_{:,I_0},\mathbf y_1^{\mathrm{rem}},\ldots,\mathbf y_{j-1}^{\mathrm{rem}}\).
The construction has the following properties.

\begin{enumerate}
\def\labelenumi{\arabic{enumi}.}
\tightlist
\item
  For every realization of \((A,\mathbf s)\), the data
  \(\mathbf s,A_{:,I_0},\mathbf y_1^{\mathrm{rem}},\ldots,\mathbf y_j^{\mathrm{rem}}\)
  determine exactly the original products \(A\mathbf v_1,\ldots,A\mathbf v_j\) and hence
  the next Arnoldi vector \(\mathbf v_{j+1}\). Thus the resulting objects are
  the orthonormal vectors \(\widehat{\mathbf v}_j\), their products with the
  remaining matrix, and the reconstructed original Arnoldi vectors and
  residual lengths. Consequently the original link and deletion events
  are measurable with respect to
  \(\sigma(\mathbf s,A_{:,I_0},\mathbf y_1^{\mathrm{rem}},\ldots,\mathbf y_k^{\mathrm{rem}})\).
\item
  If \(V_{J_0,:}\) has rank \(k\), then, with
  \(\widehat V=[\widehat{\mathbf v}_1,\ldots,\widehat{\mathbf v}_k]\), \[
   \operatorname{Ran}\widehat V=\operatorname{Ran}V_{J_0,:},\qquad
   \sum_{a=1}^k (\widehat{\mathbf v}_a)_i^2=\ell_i(J_0)\quad(i\in J_0).
  \]
\item
  Under \(\mathsf Q_{I_0}\), the vectors
  \(\mathbf y_1^{\mathrm{rem}},\ldots,\mathbf y_k^{\mathrm{rem}}\) are independent
  \(N(0,\mathrm{Id}_d)\) vectors, jointly independent of
  \((\mathbf s,A_{:,I_0})\).
\end{enumerate}

\end{lemma}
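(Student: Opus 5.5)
We construct the vectors \(\widehat{\mathbf v}_j\) inductively, alongside the original Arnoldi run, and maintain one invariant. After step \(j\), the data \(\mathbf s,A_{:,I_0},\mathbf y_1^{\mathrm{rem}},\ldots,\mathbf y_j^{\mathrm{rem}}\) determine two things: the Arnoldi vectors \(\mathbf v_1,\ldots,\mathbf v_{j+1}\), and the products \(A\mathbf v_1,\ldots,A\mathbf v_j\). The base case holds because \(\mathbf v_1\) is a function of \(\mathbf s\) alone.

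At step \(j\), the vector \(\mathbf v_j\) is already known. We run Gram--Schmidt on the restricted vector \((\mathbf v_j)_{J_0}\) against \(\widehat{\mathbf v}_1,\ldots,\widehat{\mathbf v}_{j-1}\).
\begin{itemize}
\item If the residual is nonzero, its normalization is \(\widehat{\mathbf v}_j\).
\item If the residual vanishes, we define \(\widehat{\mathbf v}_j\) by a deterministic orthogonal completion inside \(\mathbb R^{J_0}\): project the first coordinate vector \(e_i\), \(i\in J_0\), that is not in the current span, as in Algorithm~\ref{l:thm:ARN}. This is possible because \(j\le k\le|J_0|\).
\end{itemize}
In either case \((\mathbf v_j)_{J_0}=\sum_{a\le j}c_{ja}\widehat{\mathbf v}_a\), with coefficients that are explicit Borel functions of the known data. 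The response \(\mathbf y_j^{\mathrm{rem}}=A_{:,J_0}\widehat{\mathbf v}_j\) is then revealed. We recover
\[
A\mathbf v_j=A_{:,I_0}(\mathbf v_j)_{I_0}+\sum_{a\le j}c_{ja}\mathbf y_a^{\mathrm{rem}}.
\]
From this, \(P_j\), \(\mathbf r_j\), \(R_j\) and \(\mathbf v_{j+1}\) follow, including the completion branch when \(R_j=0\). This closes the induction and gives predictability. Item 1 follows: the link product \(W_k\) is a function of the \(R_j\), and the deletion output is a function of the completed frame \(V\). Both are therefore measurable with respect to the stated \(\sigma\)-algebra. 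Borel measurability holds because every branch condition (zero or nonzero residual, choice of the index \(i_j\)) is a Borel set and normalization is continuous off those sets.

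For item 2, suppose \(V_{J_0,:}\) has rank \(k\). Then no restricted vector \((\mathbf v_j)_{J_0}\) can lie in the span of the earlier ones. So the completion branch is never used, and Gram--Schmidt gives \(\operatorname{span}\{\widehat{\mathbf v}_a\}_{a\le j}=\operatorname{span}\{(\mathbf v_a)_{J_0}\}_{a\le j}\) for every \(j\). In particular \(\operatorname{Ran}\widehat V=\operatorname{Ran}V_{J_0,:}\). Since \(\widehat V\) has orthonormal columns, \(\widehat V\widehat V^T\) is the orthogonal projection onto this range. Its diagonal entries are \(\sum_a(\widehat{\mathbf v}_a)_i^2=\ell_i(J_0)\).

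For item 3, under \(\mathsf Q_{I_0}\) the matrix \(X=A_{:,J_0}\) is standard Gaussian and independent of \(\mathsf C=(\mathbf s,A_{:,I_0})\). The update rule is predictable and orthonormal, so this is exactly the setting of Definition~\ref{l:thm:OBSLAW}. We can cite Lemma~\ref{l:thm:LIKE}, or argue directly. Condition on \(\mathsf C\) and on the responses of the first \(a-1\) steps. Rotational invariance of rows shows that \(X\widehat{\mathbf v}_a\) is \(N(0,\mathrm{Id}_d)\) and independent of the past. The reason is that the past responses are functions of \(X\) restricted to \(\operatorname{span}\{\widehat{\mathbf v}_b\}_{b<a}\), and for a Gaussian matrix this is independent of \(X\) restricted to the orthogonal complement, which contains \(\widehat{\mathbf v}_a\).

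The main obstacle is making this last independence argument rigorous with adaptively chosen directions. The plan is to write the joint density of \((\mathsf C,\mathbf y_1,\ldots,\mathbf y_a)\) by successive conditioning. Fix \(\mathsf C\) and the past. Complete \(\widehat{\mathbf v}_1,\ldots,\widehat{\mathbf v}_a\) to a deterministic orthonormal basis, which depends Borel-measurably on the past. By the invariance of \(N(0,1)^{\otimes dh}\) under right multiplication by an orthogonal matrix, the new column \(X\widehat{\mathbf v}_a\) is independent of the past columns and has law \(N(0,\mathrm{Id}_d)\). The only points needing care are the measurable completion and the zero-residual convention; the bookkeeping in items 1 and 2 is routine.
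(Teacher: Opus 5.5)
Your proposal is correct and follows the paper's proof. It uses the same inductive Gram--Schmidt of \((\mathbf v_j)_{J_0}\) against the earlier \(\widehat{\mathbf v}_a\), with the orthogonal completion rule at a zero residual, and the same recovery identity \(A\mathbf v_j=A_{:,I_0}(\mathbf v_j)_{I_0}+\sum_{a\le j}c_{ja}\mathbf y_a^{\mathrm{rem}}\), which is the paper's \eqref{l:eq:S.3} with \(c_{jj}=\widehat R_j\). Item 2 comes from ordinary Gram--Schmidt at full restricted rank, and item 3 from Lemma~\ref{l:thm:LIKE}; your sketched direct Gaussian argument is the conditional orthogonal change of variables in that lemma's proof, so citing it, as the paper does, suffices.
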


\begin{proof}

Starting with the original \(\mathbf v_1\), suppose the previous products have
been recovered, so \(\mathbf v_j\) is known. Orthogonalize its restriction: \[
 b_{ja}=\widehat{\mathbf v}_a^T(\mathbf v_j)_{J_0}\quad(a<j),\qquad
 \widehat{\mathbf r}_j=(\mathbf v_j)_{J_0}-\sum_{a=1}^{j-1}b_{ja}\widehat{\mathbf v}_a,\qquad \widehat R_j=\|\widehat{\mathbf r}_j\|.
\] Choose \(\widehat{\mathbf v}_j=\widehat{\mathbf r}_j/\widehat R_j\) when \(\widehat R_j>0\). If \(\widehat R_j=0\), apply
the same orthogonal completion rule as in Algorithm \ref{l:thm:ARN}, now in
\(\mathbb R^{J_0}\), with \[
 \widehat P_{j-1}=\sum_{a=1}^{j-1}\widehat{\mathbf v}_a\widehat{\mathbf v}_a^T:
 \qquad
 \widehat{\mathbf v}_j=\frac{(\mathrm{Id}_{J_0}-\widehat P_{j-1})e_i}
 {\|(\mathrm{Id}_{J_0}-\widehat P_{j-1})e_i\|},
\] where \(i\) is the smallest index in \(J_0\) for which the
denominator is nonzero. This supplements the reconstructed query frame
when a restricted direction is linearly dependent, while preserving the
original Arnoldi directions and responses. Such an index exists since
\(j\le k\le|J_0|\). Form the product \(\mathbf y_j^{\mathrm{rem}}\) defined
above. Linearity gives \begin{equation}
A\mathbf v_j=A_{:,I_0}(\mathbf v_j)_{I_0}
          +\sum_{a=1}^{j-1}b_{ja}\mathbf y_a^{\mathrm{rem}}+\widehat R_j\mathbf y_j^{\mathrm{rem}}.
 \label{l:eq:S.3}
\end{equation} The identity holds also when \(\widehat R_j=0\). It advances the original
Arnoldi recursion and proves (1) by induction. Each construction step is
Borel measurable and uses only the previously formed products. At full
restricted rank every \(\widehat R_j>0\), so \(\widehat V\) is obtained by
ordinary Gram--Schmidt and is a basis of
\(\operatorname{Ran}V_{J_0,:}\); its projection diagonals give (2).
Finally, the Gaussian assertion of Lemma \ref{l:thm:LIKE} gives (3). That lemma is
proved by orthogonal changes of variables independently of the present
link estimates. \end{proof}

On the event \(\{\text{flat output},\,I_{\mathrm{del}}=I_0\}\) of
Algorithm \ref{l:thm:DEL}, the retained frame meets the flatness criterion above.
Thus Lemma \ref{l:thm:RECONPROP} gives
\(\max_{i\in J_0}\sum_{a=1}^k(\widehat{\mathbf v}_a)_i^2\le\sigma\).

With the identification above, Theorem~\ref{l:thm:AC} applies. If
\(F\) is determined by these products and the supplied data, and
\(F\subseteq\{\max_{i\in J_0}\sum_{a=1}^k(\widehat{\mathbf v}_a)_i^2\le\sigma\}\)
with \(0<\sigma\le\delta_\mu\), then
\(\Theta(\widehat V)\le k\sqrt\sigma\) on \(F\), so \[
 \mathbb P_\mu(F)\le
 e^{\theta_\mu C_{\mathrm{weight}}dk\sqrt\sigma}
 \mathsf Q_{I_0}(F)^{\theta_\mu}.
\] Both probabilities average over \(\mathbf s\), the separated columns
and the remaining columns, with their laws as specified in Definition
\ref{l:thm:FIXREF}. This direct consequence will be used in Subsection
\ref{l:sec:real_link_completion}.

\subsubsection{Gaussian and mixed-column reference
links}\label{l:sec:real_reference}

Use the weights and normalization in \eqref{l:eq:M.6}, and write
\(\mathcal B_\varepsilon=\{W_k\le(1-\varepsilon/4)^{M_k}\}\).
We first estimate this event for an entirely Gaussian matrix, then
restore the fixed columns indexed by \(I_0\) to law \(\mu\), obtaining
\(\mathsf Q_{I_0}\). We denote the joint law of the all-Gaussian
matrix and its independent starter by \(\mathbb P_{\mathrm G}\),
with expectation \(\mathbb E_{\mathrm G}\). The starter has the same law in both experiments and is not replaced.
More generally, the estimate allows any independent starter law.

\begin{lemma}[Mixed reference links]\label{l:thm:LM}

For every fixed \(\varepsilon\in(0,1]\), there are
\(\eta_{\mu,\varepsilon}\in(0,1]\) and
\(c_{\mathrm{mix}}(\mu,\varepsilon)>0\) such that \begin{equation}
\mathsf Q_{I_0}(\mathcal B_\varepsilon)
 \le2e^{-c_{\mathrm{mix}}(\mu,\varepsilon)dk}
 \label{l:eq:L.3}
\end{equation} whenever \(k\ge2\), \(128k\le\varepsilon d\), and
\(|I_0|\le\eta_{\mu,\varepsilon}k\). The bound is uniform over the
choice of \(I_0\) and over every independent starter law. \end{lemma}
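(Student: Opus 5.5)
The plan is to replace the $|I_0|$ columns of law $\mu$ by Gaussian columns at a cost $e^{O(\eta dk)}$, then show that for a Gaussian core with column-dependent variances the Arnoldi links stochastically dominate independent chi variables, and finish with a Chernoff bound on the weighted log-product.

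\emph{Step 1: likelihood comparison on the separated columns.} Under $\mathsf Q_{I_0}$ only the $d|I_0|$ entries of $A_{:,I_0}$ are non-Gaussian; the starter and $A_{:,J_0}$ are fixed in law. Choose $\kappa\ge1$ with $\mathbb E e^{\xi^2/(2\kappa)}<\infty$, which is possible since enlarging $\kappa$ only helps. Apply the product-likelihood Cauchy--Schwarz bound exactly as in \eqref{l:eq:D.1}--\eqref{l:eq:D.2}, but to these $d|I_0|$ entries with everything else held fixed. This gives
\[
 \mathsf Q_{I_0}(\mathcal B_\varepsilon)\le J_f^{d|I_0|/2}\,\mathbb P_{\kappa}(\mathcal B_\varepsilon)^{1/2}.
\]
Here $\mathbb P_\kappa$ makes $A=GD$, where $G$ has iid $N(0,1)$ entries, $D$ is diagonal with entries $\sqrt\kappa$ on $I_0$ and $1$ on $J_0$, and the starter is independent with arbitrary law. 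Since $|I_0|\le\eta k$, the prefactor is at most $e^{C_f\eta dk}$. We will choose $\eta=\eta_{\mu,\varepsilon}$ small at the end.

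\emph{Step 2: the links under $\mathbb P_\kappa$ dominate independent chi variables.} Condition on the starter and run Algorithm~\ref{l:thm:ARN} on $A=GD$. At step $j$, decompose
\[
 D\mathbf v_j=\sum_{a<j}c_aD\mathbf v_a+\rho_j\mathbf u_j,
\]
where $\mathbf u_j$ is a unit vector orthogonal to $D\mathbf v_1,\dots,D\mathbf v_{j-1}$. Because $\sigma_{\min}(D)\ge1$ (as $\kappa\ge1$), we get
\[
 \rho_j=\operatorname{dist}\bigl(D\mathbf v_j,\operatorname{span}\{D\mathbf v_a\}_{a<j}\bigr)\ge\operatorname{dist}\bigl(\mathbf v_j,\operatorname{span}\{\mathbf v_a\}_{a<j}\bigr)=1.
\]
Each $GD\mathbf v_a$ lies in $\operatorname{span}(\mathbf v_1,\dots,\mathbf v_{a+1})\subseteq\operatorname{Ran}P_j$ for $a<j$, so
\[
 R_j=\rho_j\,\|(\mathrm{Id}_d-P_j)G\mathbf u_j\|\ge\|(\mathrm{Id}_d-P_j)G\mathbf u_j\|.
\]
The past record consists of the starter and $GD\mathbf v_a=G(\cdot)$ applied to $\operatorname{span}(D\mathbf v_a)_{a<j}$. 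By rotation invariance (the argument of Lemma~\ref{l:thm:LIKE}), $G\mathbf u_j$ is therefore $N(0,\mathrm{Id}_d)$ and independent of the past, while $P_j$ is past-measurable. Inductively, $R_j\ge\chi_{d-j}^{(j)}$, with the chi variables independent across $j$ (and the orthogonal-completion branch causes no trouble). This also covers the all-Gaussian case $I_0=\varnothing$.

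\emph{Step 3: Chernoff bound for the weighted product.} On $\mathcal B_\varepsilon$ we have $\sum_j w_j\log(R_j^2/n)\le2M_k\log(1-\varepsilon/4)$. Use Markov with the exponent $-\lambda\sum_jw_j\log(\chi_{d-j}^2/n)$ and the exact negative moments
\[
 \mathbb E(\chi^2_N)^{-s}=2^{-s}\Gamma(N/2-s)/\Gamma(N/2)\le N^{-s}e^{Cs^2/N}\qquad(s\le N/4).
\]
Here $N=d-j\ge(1-\varepsilon/128)d$, so $\log(N/n)\ge\log(1-\varepsilon/64)$. Take $s=\lambda w_j\le\lambda k/2$ with $\lambda=c\,d/k$. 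The gain is then about $2\lambda M_k[\log(1-\varepsilon/64)-\log(1-\varepsilon/4)]\asymp c\varepsilon dk$, and the loss is $C\lambda^2\sum_jw_j^2/d\lesssim c^2dk$. A small $c=c(\varepsilon)$ gives $\mathbb P_\kappa(\mathcal B_\varepsilon)\le e^{-c_1dk}$. Combining with Step~1 yields $\mathsf Q_{I_0}(\mathcal B_\varepsilon)\le e^{C_f\eta dk-c_1dk/2}$. Choosing $\eta_{\mu,\varepsilon}\le c_1/(4C_f)$ proves \eqref{l:eq:L.3}, with the factor $2$ left for slack. The bound depends only on $|I_0|$ and never on the starter law.

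\emph{Main obstacle.} The delicate point is Step~2 with a non-identity column scaling. A naive reduction through the restricted frame $V_{J_0,:}$ can degenerate: for example, $\mathbf v_1$ may sit on $I_0$. The randomness of $A_{:,I_0}$ must therefore be used, and this is why I Gaussianize those columns with variance $\kappa\ge1$, so that the pivot $\rho_j\ge1$ comes for free. I would check carefully that the past really is measurable with respect to $G$ restricted to $\operatorname{span}(D\mathbf v_a)_{a<j}$ together with the starter, so that $G\mathbf u_j$ is fresh.
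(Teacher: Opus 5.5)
Your proof is correct. It differs from the paper's in how the columns indexed by $I_0$ are restored.

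\textbf{The paper's route.} The paper first proves the all-Gaussian bound: the links are independent $\chi_{d-j}$ by Lemma~\ref{l:thm:LIKE}, and integer negative moments of order $2p$, with $p=\lfloor\varepsilon d/(64k)\rfloor$, give the Chernoff bound. It then restores the $\mu$-columns by H\"older with exponent $1+q$, where $q\le\min\{1,2t_f\}$, against the \emph{standard} Gaussian. Finiteness of $D_q=\int f^{1+q}\phi_1^{-q}$ needs only $\mathbb E e^{q\xi^2/2}<\infty$. So the reference stays iid $N(0,1)$, and nothing about Arnoldi beyond the isotropic case is needed.

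\textbf{Your route.} You use Cauchy--Schwarz instead. This forces an inflated variance $\kappa\ge\max\{1,(2t_f)^{-1}\}$ so that $\int f^2/\phi_\kappa<\infty$, and it leaves you with a column-scaled core $A=GD$. You handle that core with a monotonicity argument, which I checked:
\begin{itemize}
\item since $D\succeq\mathrm{Id}$, the pivot satisfies $\rho_j\ge1$;
\item earlier responses lie in $\operatorname{Ran}P_j$;
\item the Gram--Schmidt vectors $\mathbf u_j$ of $D\mathbf v_j$ are predictable orthonormal queries to $G$, and their responses generate the same filtration as the Arnoldi record, because the triangular change of variables is invertible once $\rho_j>0$.
\end{itemize}
Hence $R_j\ge\|(\mathrm{Id}_d-P_j)G\mathbf u_j\|$, and the right-hand sides are independent $\chi_{d-j}$ variables. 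The completion branch has probability zero here.

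\textbf{Comparison.} The paper's route is shorter. Yours reuses the $L^2$ likelihood device of the border comparison \eqref{l:eq:D.2}, and it produces a reusable fact: Arnoldi links of $GD$ with $D\succeq\mathrm{Id}$ stochastically dominate the Ginibre links.

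\textbf{Small corrections.}
\begin{itemize}
\item For non-integer $s<1$, the bound $\mathbb E(\chi^2_N)^{-s}\le N^{-s}e^{Cs^2/N}$ needs an extra factor $e^{Cs/N}$. Summed over $j$ this is only $O(ck)$, so it is harmless; alternatively, use integer exponents as the paper does.
\item The Markov gain is $\lambda M_k[\log(1-\varepsilon/64)-2\log(1-\varepsilon/4)]$, not twice the bracket you wrote. The correct quantity is larger, so this only helps.
\item Cap $\eta_{\mu,\varepsilon}$ at $1$ to match the statement.
\end{itemize}
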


\begin{proof}

We first bound the all-Gaussian probability, uniformly in the starter.
Lemma \ref{l:thm:LIKE} makes the Arnoldi responses fresh standard Gaussian vectors.
The predictable residual projection at step \(j\) has rank \(d-j\), so
\(R_j^2\), \(1\le j<k\), are independent \(\chi^2_{d-j}\) variables,
where \(\chi^2_\nu\) is the chi-square law with \(\nu\) degrees of freedom.
This is the usual Gaussian Hessenberg structure; see \cite{KV2014} for
the Ginibre representation and its characteristic-polynomial recursion.
For \(Z\sim\chi^2_\nu\) and integers \(s\ge0\) with \(2s<\nu\), the
gamma integral gives \begin{equation}
\mathbb E Z^{-s}=\prod_{\ell=1}^s(\nu-2\ell)^{-1}.
 \label{l:eq:L.2}
\end{equation} Set \(p=\lfloor\varepsilon d/(64k)\rfloor\), \(D=k+1+2pk\). Since
\(128k\le\varepsilon d\), \[
 \frac{\varepsilon d}{128k}\le p\le\frac{\varepsilon d}{64k},
 \qquad D\le\varepsilon(d+1)/16.
\] Every denominator in \eqref{l:eq:L.2} with \(s=pw_j\) is at least \(d+1-D\).
Independence and \(-\log(1-u)\le2u\), \(0\le u\le1/2\), imply
\(\log\mathbb E_{\mathrm G}W_k^{-2p}\le\varepsilon pM_k/8\). Markov,
\(\log(1-\varepsilon/4)\le-\varepsilon/4\), and \(M_k\ge k^2/8\) now
yield \begin{equation}
\mathbb P_{\mathrm G}(\mathcal B_\varepsilon)
 \le e^{-3\varepsilon pM_k/8}
 \le e^{-c_{\mathrm G}(\varepsilon)dk},
 \qquad c_{\mathrm G}(\varepsilon)=\varepsilon^2/4096.
 \label{l:eq:L.1}
\end{equation}

It remains to restore the columns in \(I_0\). Choose
\(0<q\le\min\{1,2t_f\}\) and let \(\phi_1(x)=(2\pi)^{-1/2}e^{-x^2/2}\).
Boundedness of the density and the square-exponential moment give \[
 D_q:=\int_{\mathbb R}f(x)^{1+q}\phi_1(x)^{-q}\,dx
 \le(K_f\sqrt{2\pi})^q\mathbb E e^{q\xi^2/2}<\infty.
\] Also \(D_q\ge1\), by Jensen's inequality under the standard Gaussian
law. Write \(s=|I_0|\). The likelihood ratio of the mixed-column law
relative to the all-Gaussian law is \[
 L_{I_0}(A,\mathbf s)=\prod_{i=1}^d\prod_{j\in I_0}
                           \frac{f(A_{ij})}{\phi_1(A_{ij})},\qquad
 \mathbb E_{\mathrm G}L_{I_0}^{1+q}=D_q^{ds}.
\] The starter contributes no likelihood factor. H\"older and \eqref{l:eq:L.1} give
\begin{equation}
\begin{aligned}
 \mathsf Q_{I_0}(\mathcal B_\varepsilon)
 &\le D_q^{ds/(1+q)}
              \mathbb P_{\mathrm G}(\mathcal B_\varepsilon)^{q/(1+q)}\\
 &\le\exp\!\left\{\frac{ds\log D_q-qc_{\mathrm G}(\varepsilon)dk}{1+q}\right\}.
 \end{aligned}
 \label{l:eq:L.4}
\end{equation} Take \[
 \eta_{\mu,\varepsilon}
 =\min\left\{1,\frac{qc_{\mathrm G}(\varepsilon)}{2(1+\log D_q)}\right\},
 \qquad
 c_{\mathrm{mix}}=\frac{qc_{\mathrm G}(\varepsilon)}{2(1+q)}.
\] For \(s\le\eta_{\mu,\varepsilon}k\), \eqref{l:eq:L.4} is at most
\(e^{-c_{\mathrm{mix}}dk}\), which implies \eqref{l:eq:L.3}. \end{proof}

The moment order \(1+q\) may be arbitrarily close to one, so no
assumption that \(\int f^2/\phi_1<\infty\) is needed.
\subsection{Stopping and completion of the link
estimate}\label{l:sec:real_stopping}

To prove Proposition~\ref{l:thm:ML}, for each \(\varepsilon\in(0,1]\)
we choose an even query count \(k\) proportional to \(d\) so that, for
sufficiently large \(d\),
\[
 \mathbb P_\mu\{W_k(A,\mathbf s)\le(1-\varepsilon/4)^{M_k}\}
 \le e^{-c_{\mathrm{link}}d^2}.
\]
The weighted comparison of Theorem~\ref{l:thm:AC} requires small
coordinate weights. We therefore delete concentrated coordinates from
the completed Arnoldi frame. All \(k\) queries have already been made:
deletion changes only the retained frame, leaving \(A\), its responses
and \(W_k\) unchanged. The separated columns remain available through
the reconstruction of Subsection~\ref{l:sec:real_links}.

Proposition~\ref{l:thm:SS} shows that a flat frame is reached after few
deletions, outside an event of probability \(e^{-cd^2}\). On this good
event, comparison transfers the mixed-reference link bound of
Lemma~\ref{l:thm:LM}; Subsection~\ref{l:sec:real_link_completion} then
completes the proof. We first give the deletion rule and its volume
certificate, followed by the geometric inputs and the stopping proof.
The geometric proofs are collected in Subsection~\ref{l:sec:real_geometry}.

\subsubsection{Coordinate deletion as a tool for the link
estimate}\label{l:coordinate-deletion-as-a-tool-for-the-link-estimate}

For \(J\subseteq[d]\), the restricted matrix
\(V_{J,:}=[(\mathbf v_1)_J,\ldots,(\mathbf v_k)_J]\) need not have orthonormal columns.

\begin{algorithms}[Delete concentrated coordinates after the queries]\label{l:thm:DEL}

Fix the frame \(V=[\mathbf v_1,\ldots,\mathbf v_k]\) obtained after the \(k\) queries of
Algorithm \ref{l:thm:ARN}, a cap \(\sigma\in(0,1/4]\), and an integer deletion
budget \(B_{\mathrm{del}}\ge0\) with \(B_{\mathrm{del}}+k\le d\),
\(B_{\mathrm{del}}<d\). Start with \(J=[d]\), and at each step form \begin{equation}
\mathsf G(J)=V_{J,:}^TV_{J,:},\qquad
 \ell_j(J)=V_{j,:}\mathsf G(J)^{-1}V_{j,:}^T\quad(j\in J),
 \label{l:eq:S.1}
\end{equation} using the coordinate weights only if \(\mathsf G(J)\) is nonsingular. For the
retained subspace \(E_J=\operatorname{Ran}V_{J,:}\subseteq\mathbb R^J\),
\[
 \ell_j(J)=(P_{E_J})_{jj}=\|P_{E_J}e_j\|^2
 =\sup_{\substack{\mathbf v\in E_J\\\|\mathbf v\|=1}}|(\mathbf v)_j|^2.
\] Thus \(\ell_j(J)\) is the largest squared \(j\)-th coordinate of a
unit vector in the retained subspace. Calling the frame flat means that
none of these coordinate weights exceeds \(\sigma\).

\Needspace{7\baselineskip}

\begin{enumerate}
\def\labelenumi{\arabic{enumi}.}
\tightlist
\item
  If \(\mathsf G(J)>0\) and every \(\ell_j(J)\le\sigma\), stop with
  \textbf{flat} output.
\item
  Otherwise, if \(d-|J|=B_{\mathrm{del}}\), stop with \textbf{active}
  output.
\item
  Otherwise delete the smallest \(j\in J\) with \(\ell_j(J)>\sigma\); if
  \(\mathsf G(J)\) is singular, delete the smallest \(j\in J\). Repeat from step
  1.
\end{enumerate}

Set \(I_{\mathrm{del}}=[d]\setminus J\). Only rows of the fixed frame
are deleted; no further queries are made. \end{algorithms}

All choices are deterministic and Borel for fixed \(V\), including at
singular Grams. An active output has used the whole budget; a flat
output first reached on deletion \(B_{\mathrm{del}}\) is still
classified as flat.

\begin{proposition}[Few deletions suffice to flatten the queries]\label{l:thm:SS}

Given \(\sigma_{\max}>0\), one can choose a cap
\(0<\sigma\le\min\{\sigma_{\max},1/4\}\) and an integer
\(L_{\mathrm{stop}}\ge2\), depending only on \(\mu,\sigma_{\max}\). For
every \(\nu,\alpha_{\max}\in(0,1]\), there then exist
\(\beta_{\mathrm{stop}}>0\), \(c_{\mathrm{stop}}>0\) and a dimension
threshold \(d_0\), depending only on
\(\mu,\sigma_{\max},\nu,\alpha_{\max}\), such that there exists a choice
of even integers \(k=k(d)\), one for each \(d\ge d_0\), satisfying \begin{equation}
\beta_{\mathrm{stop}}d\le k\le\alpha_{\max}d,
 \qquad B_{\mathrm{del}}=L_{\mathrm{stop}}k,
 \qquad B_{\mathrm{del}}+k\le d,\quad B_{\mathrm{del}}<d,
 \label{l:eq:S.5}
\end{equation} and Algorithm \ref{l:thm:DEL}, run with cap \(\sigma\), satisfies \begin{equation}
\mathbb P_{A,\mathbf s}\{\text{active output, or flat output with }
                |I_{\mathrm{del}}|\ge\lceil\nu k\rceil\}
 \le e^{-c_{\mathrm{stop}}d^2}.
 \label{l:eq:S.6}
\end{equation} The probability is taken under \[
 \mathcal L(A,\mathbf s)=\mu^{\otimes d^2}\otimes\mu^{\otimes d}.
\] Here \(A\) is the bottom-right core of \(\mathsf X_n\), and \(\mathbf s\) is
the additional independent starter for the Arnoldi iteration in
Algorithm \ref{l:thm:ARN}. In particular, \(\sigma\) and \(L_{\mathrm{stop}}\) are
independent of \(\nu,\alpha_{\max}\) and \(d\). \end{proposition}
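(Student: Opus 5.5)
The plan rests on a volume certificate for Algorithm~\ref{l:thm:DEL}. If $\mathsf G(J)>0$ and $j\in J$, then $\mathsf G(J\setminus\{j\})=\mathsf G(J)-V_{j,:}^TV_{j,:}$, and the matrix determinant lemma gives
\[
 \det\mathsf G(J\setminus\{j\})=\det\mathsf G(J)\,\bigl(1-\ell_j(J)\bigr).
\]
Since $\det\mathsf G([d])=1$, a run of the algorithm whose first $N$ deletions all remove coordinates with $\ell_j>\sigma$ produces a set $I$ of size $N$ with $\det(\mathrm{Id}_k-V_{I,:}^TV_{I,:})\le(1-\sigma)^N$. Deletions at a singular Gram only make this determinant zero, so the bound persists. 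Both bad events in \eqref{l:eq:S.6} therefore imply that some set $I$ with $|I|=N$ has Gram loss at least $(1-\sigma)^{-N}$. For an active output, $N=B_{\mathrm{del}}=L_{\mathrm{stop}}k$. For a late flat output, $N=\lceil\nu k\rceil$, taking the first $N$ deleted coordinates. I would union bound over candidate sets; there are at most $\binom{d}{N}\le e^{C(L_{\mathrm{stop}}+1)k\log(e d/k)}=e^{O(d\log d)}$ of them, which is negligible against $e^{-cd^2}$.

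\emph{Active output.} This event depends only on the full Arnoldi record, so I take $I_0=\varnothing$, $h=d$, and apply the coarse bound \eqref{l:eq:A.2}. Its cost is $e^{C_{\mathrm{coarse}}dk}$, independent of $\sigma$. Under $G$ the Arnoldi responses are fresh Gaussian columns (Lemma~\ref{l:thm:LIKE}), and the frame $V$ is then close to a uniformly rotated $k$-frame. For fixed $I$ of size $N$, I would bound the law of $\det(\mathrm{Id}-V_{I,:}^TV_{I,:})$ by a matrix-Beta density with exponent of order $(d-k-N)/2$. This would give
\[
 G\{\det(\mathrm{Id}-V_I^TV_I)\le(1-\sigma)^N\}\le e^{-c\sigma Nd}.
\]
I would establish it by a negative-moment computation in the spirit of \eqref{l:eq:L.2}, running Gram--Schmidt on the residual Gaussian responses. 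With $N=L_{\mathrm{stop}}k$ the Gaussian bound is $e^{-c\sigma L_{\mathrm{stop}}dk}$. After raising to the power $\theta_{\mathrm{coarse}}$, this beats $e^{C_{\mathrm{coarse}}dk}$ once $L_{\mathrm{stop}}\ge C_{\mathrm{coarse}}/(c\,\theta_{\mathrm{coarse}}\sigma)$. This is why $\sigma$ is fixed first, depending only on $\mu$ and $\sigma_{\max}$, and $L_{\mathrm{stop}}$ is chosen afterwards.

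\emph{Late flat output.} Here the gain is only $e^{-c\sigma\nu dk}$, and the coarse cost is too large. Fix the candidate $I_0$ equal to the full deletion output, with $J_0=[d]\setminus I_0$. On $\{\text{flat},\,I_{\mathrm{del}}=I_0\}$, Lemma~\ref{l:thm:RECONPROP} shows that the event is observable and that the retained frame has $\max_i\ell_i(J_0)\le\sigma\le\delta_\mu$, so the weighted bound \eqref{l:eq:A.3} applies. I would not bound $\Theta$ crudely by $k\sqrt\sigma$. Instead I would apply Cauchy--Schwarz inside the Gaussian expectation:
\[
 \mathbb E_G\mathbf 1_Fe^{C_{\mathrm{weight}}d\Theta}\le G(F)^{1/2}\bigl(\mathbb E_Ge^{2C_{\mathrm{weight}}d\Theta}\bigr)^{1/2}.
\]
Under the Gaussian reference the weights are typically about $k/d$, so $d\Theta\approx dk\sqrt{k/d}\le dk\sqrt{\alpha_{\max}}$. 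I would prove the corresponding exponential moment using the same Gram--Schmidt representation and concentration of the $\ell_i$. Choosing $k\le\beta d$ with $\beta$ small enough that $C\sqrt{\beta}<c\sigma\nu/4$ then makes the gain dominate. This fixes $\beta_{\mathrm{stop}}$ and the choice of $k(d)$ in \eqref{l:eq:S.5}.

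\emph{Main obstacle.} The difficulty is that the reference $\mathsf Q_{I_0}$ keeps the $|I_0|\ge\nu k$ separated columns in law $\mu$. The Gram-loss estimate must therefore hold for this mixed law, with the coordinates in $I_0$ being exactly those whose rows are heavy. A likelihood restoration as in Lemma~\ref{l:thm:LM} costs $e^{Cd|I_0|}$, which is of the same order $dk$ as the gain. My first attempt would be to condition on $(\mathbf s,A_{:,I_0})$: the Gram loss on $I_0$ would then be controlled by the remaining Gaussian responses alone, since the reconstruction identity \eqref{l:eq:S.3} separates the $I_0$ contribution from fresh Gaussian columns. The aim is a bound $e^{-c\sigma|I_0|d}$ that is uniform in the supplied columns.
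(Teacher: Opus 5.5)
Your active-output branch is a valid variant of the paper's. Take $I_0=\varnothing$: the event is observable from the full Arnoldi record, \eqref{l:eq:A.2} applies with $h=d$, and under the all-Gaussian core $\operatorname{Ran}V$ lies in the span of $\mathbf s$ and $k$ fresh Gaussian responses. A volume-loss bound of the type in Lemma~\ref{l:thm:GV} then gives a gain $e^{-cL_{\mathrm{stop}}dk}$; to apply it, treat $\mathbf s$ as a $\mu$-column rather than calling $V$ nearly Haar, and pad with extra independent columns so that $s<m_{\mathrm{col}}$. The paper instead uses the candidate deletion set as $I_0$ in both branches, so one volume estimate serves both.

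The genuine gap is in the late flat branch, exactly at the obstacle you flag, and your proposed fix cannot work. No bound on the Gram loss over $I_0$ can be uniform in the supplied data $(\mathbf s,A_{:,I_0})$. For example, take $\mathbf s=(1,\ldots,1)^T$ and $A_{:,i}=Me_i$ for $i\in I_0$. Then $A\mathbf v_1\in\operatorname{Ran}V$ equals $(M/\sqrt d)\mathbf 1_{I_0}$ plus a Gaussian vector of norm $O(\sqrt d)$. Applying \eqref{l:eq:G.8} to $\operatorname{span}\{A\mathbf v_1\}$ gives $\Delta_{I_0}(\operatorname{Ran}V)\ge\log(\|A\mathbf v_1\|^2/\|(A\mathbf v_1)_{J_0}\|^2)\gtrsim\log(M^2s/d^2)$. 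For large $M$ this exceeds $\sigma s$ with conditional probability close to one. So the $\mu$-columns must be integrated out together with the Gaussian responses, not conditioned on.

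The missing idea is the inclusion \eqref{l:eq:P.2}. The Krylov residuals are almost surely positive, so \eqref{l:eq:S.3} places $\operatorname{Ran}V$ inside the column space of $\mathsf M^{\mathrm{ref}(I_0)}=[\mathbf s,A_{:,I_0},\mathbf y^{\mathrm{rem}}_1,\ldots,\mathbf y^{\mathrm{rem}}_k]$. Under $\mathsf Q_{I_0}$ this matrix has independent columns with iid $\mu$ or $N(0,1)$ entries. Monotonicity of $\Delta_{I_0}$ then turns the adaptive certificate into a non-adaptive volume-loss estimate for a mixed independent-entry matrix. Lemma~\ref{l:thm:GV} supplies it, averaging over the $\mu$-columns through inverse Gram moments of the retained rows and square-exponential moments of $\operatorname{tr}\mathsf G_{\mathrm{del}}$. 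It gives $e^{-c_{\mathrm{vol}}ds^2/(s+k+1)}$, i.e.\ $e^{-c\nu^2dk}$ on late outputs, which suffices because $k/d$ is chosen last.

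The same inclusion is needed for the comparison cost, where your Cauchy--Schwarz step has a second gap. You keep the uncapped $\Theta(\widehat V)$ in $\mathbb E e^{2C_{\mathrm{weight}}d\Theta}$, but typical weights of order $k/d$ do not control an exponential moment at scale $d$. Uncapped, this moment can be of size $e^{c\,dk}$ with $c$ independent of $k/d$ and $\nu$. Here is the mechanism. A response coordinate of size $t\sqrt d$ costs about $e^{-t^2d/2}$. It gives the next Arnoldi direction a squared coordinate of about $t^2/(1+t^2)$, adding about $d\,(t^2/(1+t^2))^{3/2}$ to $d\Theta$. This can happen at each of the $k$ steps, at different coordinates. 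At $t=1$ the gain beats the cost once $C_{\mathrm{weight}}>2^{-1/2}$, and for small $\nu$ no choice of $k/d$ rescues the bound.

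The fix has two steps. First, use flatness on $F$ to cap the weights at $\sigma$ before dropping the indicator. Then dominate the capped weights by $\Psi_{I_0}$, the diagonal of the projection onto $\operatorname{Ran}\mathsf M^{\mathrm{ref}(I_0)}_{J_0,:}$ capped at $\sigma$, and bound its exponential moment by Lemma~\ref{l:thm:GE}. This forces the extra constraint $\sigma\le\min\{\delta_\mu,\delta_{4C_{\mathrm{weight}}}\}$ when the cap is fixed. The cap is essential, because the larger reference space need not itself be flat.
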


The budget \(B_{\mathrm{del}}=L_{\mathrm{stop}}k\) bounds the actual
deletion count \(|I_{\mathrm{del}}|\). With
\(k=2\lfloor\alpha(n-1)/2\rfloor\), this gives the explicit expression
\[
 B_{\mathrm{del}}=L_{\mathrm{stop}}k
 =2L_{\mathrm{stop}}\left\lfloor\frac{\alpha(n-1)}2\right\rfloor.
\] The multiplier is fixed in \eqref{l:eq:P.4}. We call
\[
 \mathcal H:=\{\text{flat output},\ |I_{\mathrm{del}}|<\lceil\nu k\rceil\}
\]
the good deletion event. Since the count is integer, its condition is
equivalent to \(|I_{\mathrm{del}}|<\nu k\). The target \(\nu k\) is
used only to define success: the algorithm continues until the frame is
flat or the budget is exhausted. Proposition~\ref{l:thm:SS} bounds
\(\mathbb P_\mu(\mathcal H^c)\).

\subsubsection{The volume certificate}\label{l:how-the-deletion-rule-works}
\label{l:volume-loss-under-coordinate-deletion}

At a retained set \(J\), the matrix \(V_{J,:}\mathsf G(J)^{-1/2}\) is an
orthonormal frame whenever \(\mathsf G(J)>0\). Its squared row norms
are the coordinate weights \(\ell_j(J)\). Removing a row changes this
normalization and may increase the remaining weights, so the rule
recomputes them after each deletion. There is no assertion that removing
only the initially concentrated rows suffices.

Each deletion is a rank-one update. When \(\mathsf G(J)>0\), the matrix
determinant lemma, also used in reverse iterative volume sampling
\cite[Theorem 2]{DW2018}, gives
\[
 \det\mathsf G(J\setminus\{j\})
 =\det(\mathsf G(J)-V_{j,:}^TV_{j,:})
 =\det\mathsf G(J)(1-\ell_j(J)).
\]
At a nonsingular step the deleted weight exceeds \(\sigma\), so the
determinant falls by a factor at most \(1-\sigma\). Once a Gram is
singular it remains singular under further deletion. Starting from
\(\mathsf G([d])=\mathrm{Id}_k\), we obtain \begin{equation}
\det \mathsf G(J)\le(1-\sigma)^{|I_{\mathrm{del}}|}
             \le e^{-\sigma|I_{\mathrm{del}}|}.
 \label{l:eq:S.2}
\end{equation} An output with
\(s\) deletions therefore has logarithmic volume loss at least
\(\sigma s\). On a flat output, \(0<\mathsf G(J)\preceq\mathrm{Id}_k\)
and, for \(j\in J\),
\(\sum_{a=1}^k(\mathbf v_a)_j^2\le\ell_j(J)\le\sigma\).
The full-rank argument below shows that singular Grams have probability
zero in our application; the algorithm nevertheless specifies their
outcome.

\subsubsection{Two geometric estimates and their
roles}\label{l:two-geometric-estimates-and-their-roles}

In both lemmas below, the matrix has independent entries, each column
consists of iid entries of law \(\mu\) or iid standard Gaussian
entries, and the column types are fixed in advance. Constants are uniform over these
types. Here \(m_{\mathrm{col}}\) counts columns; \(m=k/2\) remains
reserved for the border and final link estimates. The proofs are in
Subsection \ref{l:sec:real_geometry}.

\begin{definition}[Coordinate-volume loss]\label{l:thm:VLOSS}

For a linear subspace \(E\subseteq\mathbb R^d\) and \(I_0\subseteq[d]\)
of size \(s\), define \[
 \Delta_{I_0}(E)=-\log\det(\mathrm{Id}_s-(P_E)_{I_0,I_0}),
 \qquad-\log0=+\infty.
\] For any orthonormal frame \(Q_E\) of \(E\), Sylvester's identity
gives
\[
 \Delta_{I_0}(E)=-\log\det((Q_E)_{I_0^c,:}^T(Q_E)_{I_0^c,:}),
\]
with empty determinant one. Thus this is the loss measured in \eqref{l:eq:S.2}.
\end{definition}

Since \(E\subseteq E'\) implies \(P_E\preceq P_{E'}\), determinant
monotonicity on the coordinate block gives, including singular cases, \begin{equation}
E\subseteq E'\quad\Longrightarrow\quad\Delta_{I_0}(E)\le\Delta_{I_0}(E').
 \label{l:eq:G.8}
\end{equation}

\Needspace{12\baselineskip}
\begin{lemma}[Volume loss for independent columns]\label{l:thm:GV}

Fix \(a>0\). There are \(\kappa_{\mathrm{vol}}(\mu,a)>0\) and
\(c_{\mathrm{vol}}(\mu,a)>0\) such that the following holds. Let
\(\mathsf M\in\mathbb R^{d\times {m_{\mathrm{col}}}}\) have independent
columns with \[
 \mathsf M_{:,j}\sim\nu_j^{\otimes d},\qquad
 \nu_j\in\{\mu,N(0,1)\},\qquad 1\le j\le {m_{\mathrm{col}}},
\] where the laws \(\nu_1,\ldots,\nu_{m_{\mathrm{col}}}\) are fixed in
advance. Uniformly over these choices, for every fixed deletion set
\(I_0\subseteq[d]\) of size \(s\), \begin{equation}
1\le s<{m_{\mathrm{col}}},\quad {m_{\mathrm{col}}}^2\le\kappa_{\mathrm{vol}}ds
 \quad\Longrightarrow\quad
 \mathbb P\{\Delta_{I_0}(\operatorname{Ran}\mathsf M)\ge as\}
 \le e^{-c_{\mathrm{vol}}ds^2/{m_{\mathrm{col}}}}.
 \label{l:eq:G.9}
\end{equation} \end{lemma}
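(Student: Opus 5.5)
The plan is to use the row independence of $\mathsf M$ rather than work with $P_E$ directly. Write $J=I_0^c$ and $G=\mathsf M_{J,:}^T\mathsf M_{J,:}$. If $\mathsf M$ has full column rank (almost surely, because the columns have densities or are Gaussian), Sylvester's identity in Definition~\ref{l:thm:VLOSS} with $Q_E=\mathsf M(\mathsf M^T\mathsf M)^{-1/2}$ gives
\[
 \Delta_{I_0}(\operatorname{Ran}\mathsf M)
 =\log\frac{\det(\mathsf M^T\mathsf M)}{\det G}
 =\log\det\bigl(\mathrm{Id}_s+\mathsf M_{I_0,:}G^{-1}\mathsf M_{I_0,:}^T\bigr).
\]
Distinct rows of $\mathsf M$ are independent, since the column laws are products over rows. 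Hence $\mathsf M_{I_0,:}$ is independent of $G$, and I would condition on $\mathsf M_{J,:}$. First, by the standing assumptions $s<m_{\mathrm{col}}$ and $m_{\mathrm{col}}^2\le\kappa_{\mathrm{vol}}ds$, we have $m_{\mathrm{col}}\le\kappa_{\mathrm{vol}}d$. So the $(d-s)\times m_{\mathrm{col}}$ matrix $\mathsf M_{J,:}$ is very tall once $\kappa_{\mathrm{vol}}$ is small.

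\emph{Good part.} Let $\lambda_1\le\cdots\le\lambda_{m_{\mathrm{col}}}$ be the eigenvalues of $G$. Fix a small $c_0$ and split $G^{-1}=G^{-1}P_{\mathrm{good}}+G^{-1}P_{\mathrm{bad}}$ along the eigenspaces with $\lambda\ge c_0d$ and $\lambda<c_0d$. The good part satisfies $G^{-1}P_{\mathrm{good}}\preceq(c_0d)^{-1}\mathrm{Id}$. By subadditivity of $\log\det(\mathrm{Id}+\cdot)$ over a sum of positive semidefinite terms, it contributes at most $\|\mathsf M_{I_0,:}\|_{\HS}^2/(c_0d)$. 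This is a sum of $sm_{\mathrm{col}}$ independent squared subgaussian variables with mean $sm_{\mathrm{col}}\le\kappa_{\mathrm{vol}}ds$. Taking $\kappa_{\mathrm{vol}}\ll ac_0$, a Chernoff bound makes this contribution exceed $as/2$ with probability at most $e^{-cds}$, and $e^{-cds}\le e^{-cds^2/m_{\mathrm{col}}}$.

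\emph{Bad part.} The bad part has rank equal to the number of small eigenvalues. I would organize it dyadically. Let $N_t=\#\{i:\lambda_i<2^{-t}c_0d\}$ for $t\ge0$. An eigenvalue in $[2^{-t-1}c_0d,\,2^{-t}c_0d)$ contributes at most $\log(1+\|\mathsf M_{I_0,:}\|^2 2^{t+1}/(c_0d))$. On the event $\|\mathsf M_{I_0,:}\|^2\le Cd$, which has failure probability $e^{-cd^2}$ (hence also at most $e^{-cds^2/m_{\mathrm{col}}}$, since $s^2/m_{\mathrm{col}}<d$), the contribution is $\le C'(1+t)$. Summing over levels, the bad part is at most $C''\sum_{t\ge0}N_t$. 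So $\Delta\ge as$ forces $\sum_tN_t\ge c_1s$, with $c_1=a/(2C'')$.

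\emph{Main obstacle: a subspace small-ball estimate.} The key input, and the step I expect to be hardest, is
\[
 \mathbb P\{N_t\ge j\}\le\exp\{-c(1+t)\,dj\}\qquad(j\ge1,\ t\ge0),
\]
uniformly over the column types. The event $N_t\ge j$ says there is a $j$-dimensional subspace $F\subseteq\mathbb R^{m_{\mathrm{col}}}$ with $\|\mathsf M_{J,:}u\|^2\le2^{-t}c_0d$ for all unit $u\in F$. For a fixed orthonormal basis $u_1,\ldots,u_j$ of $F$, each of the $d-s$ rows gives a $j$-dimensional vector $(\langle\text{row},u_i\rangle)_i$. This vector has a density bounded by $C^j$: its coordinates are a linear image of independent coordinates with bounded densities (here the bounded density $K_f$ is used), or Gaussians. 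Hence the probability that $\sum_i\|\mathsf M_{J,:}u_i\|^2\le j2^{-t}c_0d$ is at most $(C2^{-t/2}\sqrt{c_0})^{j(d-s)}$, by tensorizing the row small-ball bound in $\mathbb R^j$. A net over the Grassmannian at scale $2^{-t}$ has cardinality $e^{Cjm_{\mathrm{col}}(1+t)}$, and $m_{\mathrm{col}}\le\kappa_{\mathrm{vol}}d$ makes it negligible against $e^{-c(1+t)dj}$. Approximation errors are handled on the event $\|\mathsf M\|\le C\sqrt d$, where large $t$ uses the finer net. I expect the care to go into making this net argument uniform in $t$, and into the very small levels where $\lambda_i$ is near $0$; there the polynomial singular-value tail from the bounded density should absorb the cost.

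\emph{Conclusion.} A union bound over the dyadic profiles $(N_t)$ with $\sum_tN_t\ge c_1s$ yields $\mathbb P\{\sum_tN_t\ge c_1s\}\le e^{-c\,ds}$. Combining the good and bad parts gives $\mathbb P\{\Delta_{I_0}\ge as\}\le3e^{-cds}$. Finally, $ds\ge ds^2/m_{\mathrm{col}}$ because $s<m_{\mathrm{col}}$, and the constant $3$ is absorbed by shrinking $c$ (the exponent $ds^2/m_{\mathrm{col}}\ge s^2/\kappa_{\mathrm{vol}}\cdot d/d$ is at least $d/m_{\mathrm{col}}\ge1/\kappa_{\mathrm{vol}}$, which is large for small $\kappa_{\mathrm{vol}}$). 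This proves \eqref{l:eq:G.9}, in a form somewhat stronger than stated.
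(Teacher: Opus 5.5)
Your proposal has a genuine gap in the final aggregation step. Your key input is only a \emph{marginal} estimate, $\mathbb P\{N_t\ge j\}\le e^{-c(1+t)dj}$, one level at a time. A union bound over profiles needs the probability of the \emph{joint} event $\bigcap_t\{N_t\ge n_t\}$. Marginals bound that only by $\min_t e^{-c(1+t)dn_t}=e^{-cd\max_t(1+t)n_t}$.

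Take the admissible nonincreasing profile $n_t=\min\{m_{\mathrm{col}},\lfloor K/(1+t)\rfloor\}$. It has $\max_t(1+t)n_t\le K$ but $\sum_t n_t\asymp K\log\min\{K,m_{\mathrm{col}}\}$. The pigeonhole form of the union bound hits the same harmonic obstruction. So from your input you can conclude essentially only $\mathbb P\{\sum_tN_t\ge c_1s\}\le e^{-cds/\log s}$. This is weaker than \eqref{l:eq:G.9} once $s\gg m_{\mathrm{col}}/\log m_{\mathrm{col}}$.

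That regime is exactly where the lemma is used in Proposition~\ref{l:thm:SS}: there $s\asymp m_{\mathrm{col}}\asymp k\asymp d$, and a rate linear in $dk$ is needed to beat $e^{C_{\mathrm{coarse}}dk}$ and the $2^d$ union. What is missing is a joint estimate on all small eigenvalues at once. One option is a small-ball bound for $\prod_{i\le j}\lambda_i=\min_{\dim F=j}\det(G|_F)$ with scale-adapted nets; another is a negative-moment bound.

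The paper takes the second route. It starts from $\Delta_{I_0}\le\log\det(\mathrm{Id}+\tau d\,\mathsf G_{\mathrm{keep}}^{-1})+\operatorname{tr}\mathsf G_{\mathrm{del}}/(\tau d)$. It then uses Corollary~\ref{l:thm:IGRAM}, whose sequential base-times-height argument controls $\prod_i(1+\tau d/\lambda_i)$ jointly in $L^p$, with $p=q_0ds/m_{\mathrm{col}}$. That product also collects the typical contribution $\approx\tau m_{\mathrm{col}}$ from all eigenvalues, so $\tau\asymp s/m_{\mathrm{col}}$ is forced. This is why the stated rate is $ds^2/m_{\mathrm{col}}$ rather than your claimed $ds$.

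Two smaller claims are also false as written.
\begin{itemize}
\item The event $\|\mathsf M_{I_0,:}\|^2\le Cd$ fails with probability $e^{-\Theta(d)}$, not $e^{-cd^2}$: a single Gaussian entry exceeding $\sqrt{Cd}$ already does it. And $e^{-cd}$ is not below $e^{-cds^2/m_{\mathrm{col}}}$ when $s^2\gg m_{\mathrm{col}}$. This is easy to avoid. Write $\mathsf G_{\mathrm{keep}}=W\Lambda W^T$ and apply Hadamard's inequality to $\Lambda+W^T\mathsf G_{\mathrm{del}}W$. This gives $\Delta_{I_0}\le\operatorname{tr}\mathsf G_{\mathrm{del}}/(c_0d)+\sum_i\log^+(c_0d/\lambda_i)$, with no operator-norm event needed.
\item In your net argument, the event $\|\mathsf M\|\le C\sqrt d$ likewise fails with probability $e^{-cd}$, which is too large for $j\ge2$. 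You need the top-$j$ energy bound, as in Lemma~\ref{l:thm:SVAL}(ii).
\end{itemize}
With these repairs your level-by-level estimates are sound; the $t=0$ case is essentially \eqref{l:eq:G.5}. The aggregation gap, however, remains.
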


\begin{lemma}[Capped coordinate-weight exponential moment]\label{l:thm:GE}

For each fixed \(\theta>0\) there are
\(\delta_\theta,\alpha_\theta,C_\theta>0\), depending only on
\(\mu,\theta\), such that, for the independent-column matrix
\(\mathsf M\in\mathbb R^{h\times {m_{\mathrm{col}}}}\) and the
coordinate weights of its range
\(\ell_i=(P_{\operatorname{Ran}\mathsf M})_{ii}\),
\(1\le {m_{\mathrm{col}}}\le h\), \(0<\delta\le\delta_\theta\), and
\({m_{\mathrm{col}}}/h\le\alpha_\theta\), \begin{equation}
\log\mathbb E e^{\theta h\sum_{i=1}^h\min(\ell_i,\delta)^{3/2}}
 \le C_\theta h^2({m_{\mathrm{col}}}/h)^{3/2}.
 \label{l:eq:G.6}
\end{equation} \end{lemma}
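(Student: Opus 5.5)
The plan is to dominate each coordinate weight by a row-norm quantity, so that the exponent becomes, on a good event, a sum of independent row contributions. Write $\mathsf M_{i,:}$ for the $i$-th row and $x_i=\|\mathsf M_{i,:}\|^2$. Because all entries are independent, the $x_i$ are independent sums of $m_{\mathrm{col}}$ independent squares of subgaussian variables (law $\mu$ or $N(0,1)$ per column), with uniform bounds $\mathbb E e^{tx_i}\le e^{Ctm_{\mathrm{col}}}$ for $t\le t_0$ and tails $\mathbb P\{x_i>s\}\le e^{-c s}$ for $s\ge Km_{\mathrm{col}}$. From $\ell_i=\mathsf M_{i,:}(\mathsf M^T\mathsf M)^{-1}\mathsf M_{i,:}^T$ we have
\[
 \ell_i\le\frac{x_i}{s_{\min}(\mathsf M)^2}.
\]
On $\mathcal G_\lambda=\{s_{\min}(\mathsf M)^2\ge\lambda h\}$ this gives $\min(\ell_i,\delta)\le y_i:=\min\{x_i/(\lambda h),\delta\}$.

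\textbf{Good event.} Fix a small $\lambda_0$. On $\mathcal G_{\lambda_0}$, the exponent is bounded by $\theta h\sum_i y_i^{3/2}$, and independence reduces the estimate to the one-row bound
\[
 \log\mathbb E e^{\theta h y_i^{3/2}}\le C_\theta h(m_{\mathrm{col}}/h)^{3/2}.
\]
Split on $x_i\le Km_{\mathrm{col}}$. There the exponent is at most $a:=\theta K^{3/2}\lambda_0^{-3/2}m_{\mathrm{col}}^{3/2}h^{-1/2}$. Since $m_{\mathrm{col}}\le\alpha_\theta h$, we have $a\le\theta K^{3/2}\lambda_0^{-3/2}\alpha_\theta^{1/2}m_{\mathrm{col}}$, and $e^{a}-1\le ae^{a}$ is then handled together with $\log(1+u)\le u$. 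On $x_i>Km_{\mathrm{col}}$, the cap gives $y_i^{3/2}\le\delta^{1/2}y_i$, so the exponent is at most $\theta\delta^{1/2}x_i/\lambda_0$. Once $\delta\le\delta_\theta$ is small, this is absorbed by the tail $e^{-cx_i}$, contributing $O(e^{-c'Km_{\mathrm{col}}})$. This crude tail term can exceed the per-row target $m_{\mathrm{col}}^{3/2}h^{-1/2}$ when $m_{\mathrm{col}}$ is small. I would first try choosing the threshold as $K\max\{m_{\mathrm{col}},\log h\}$. If that does not close, I would use the cap $\theta h\delta^{3/2}$ together with Chernoff bounds on the count of rows with $x_i>K m_{\mathrm{col}}$.

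\textbf{Bad event.} Two deterministic bounds are available on $\mathcal G_{\lambda_0}^c$. The cap gives $\sum_i\min(\ell_i,\delta)^{3/2}\le\delta^{1/2}\sum_i\ell_i=\delta^{1/2}m_{\mathrm{col}}$, so the integrand is at most $e^{\theta\delta^{1/2}hm_{\mathrm{col}}}$. For the probability, a net argument on the unit sphere of $\mathbb R^{m_{\mathrm{col}}}$, together with the small-ball lower tail of $\|\mathsf Mu\|^2$ (a sum of $h$ independent terms with uniformly bounded anticoncentration), gives $\mathbb P(\mathcal G_\lambda^c)\le(C\lambda)^{ch}e^{Cm_{\mathrm{col}}}$. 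I would stratify $\lambda$ dyadically below $\lambda_0$ and, on each layer, repeat the good-event computation with $\lambda$ in place of $\lambda_0$.

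\textbf{Main obstacle.} The hard step is this bad event. The available lower-tail probability for $s_{\min}$ is only $e^{-ch\log(1/\lambda)}$, whereas the exponent on that event can be of order $\delta^{1/2}hm_{\mathrm{col}}$, or $h^{1/2}m_{\mathrm{col}}^{3/2}$ after stratification. Both are as large as $h^2$ when $m_{\mathrm{col}}\asymp h$, while the target is only $h^2(m_{\mathrm{col}}/h)^{3/2}$. A single global singular-value event is therefore too costly. As a fallback I would localize: bound $\ell_i$ by $x_i/s_{\min}(\mathsf M_{-i})^2$, which is independent of row $i$. Alternatively, apply the geometric Brascamp--Lieb inequality \cite[Proposition 2.8]{BCCT2008} to the orthonormal rank-one decomposition $\sum_i P_Ee_ie_i^TP_E=P_E$. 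This would let only coordinates with large $\ell_i$ pay, each at cost $\delta^{3/2}$, with small $\alpha_\theta$ supplying the $(m_{\mathrm{col}}/h)^{3/2}$ saving.
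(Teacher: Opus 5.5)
Your good-event computation is essentially the row part of the argument. However, the proposal has a genuine gap at the step you flag yourself: the contribution of small singular values is never controlled, and none of your fallbacks supplies the missing mechanism.

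Any bound of the form $\ell_i\le x_i/s_{\min}^2$ charges one bad direction to all $h$ coordinates, while the probability of that direction is only $e^{-ch\log(1/\lambda)}$. Localizing to $\ell_i\le x_i/s_{\min}(\mathsf M_{-i})^2$ does not help. Deleting a row only lowers singular values, so a small singular value of $\mathsf M$ is inherited by every $\mathsf M_{-i}$, and the same global event reappears.

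The idea you need is that a small singular direction carries total coordinate mass one. Split $P_{\operatorname{Ran}\mathsf M}$ along the left singular vectors $\mathbf u_j$. The part $b_i=\sum_{0<s_j<a\sqrt h}|(\mathbf u_j)_i|^2$ satisfies $\sum_ib_i\le N_{\mathrm{small}}=\#\{j:s_j<a\sqrt h\}$. For the remaining directions, Bessel's inequality in row $i$ gives $\ell_i-b_i\le q_i/(a^2h)$, where $q_i=\|\mathsf M_{i,:}\|^2$. Combined with $\min(x+y,\delta)^{3/2}\le C\sqrt\delta\,x+C\min(y,\delta)^{3/2}$, this yields
\[
 \sum_{i=1}^h\min(\ell_i,\delta)^{3/2}
 \le C\sqrt\delta\,N_{\mathrm{small}}
 +C\sum_{i=1}^h\min\bigl(q_i/(a^2h),\delta\bigr)^{3/2}.
\]
So $b$ bad directions cost only $C\theta\sqrt\delta\,hb$ in the exponent. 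This cost is matched by the count estimate $\mathbb P\{N_{\mathrm{small}}\ge b\}\le3e^{-chb}$ of Lemma~\ref{l:thm:SVAL}(ii), which comes from a tensorized small-ball bound for $\|\mathsf MU\|_{\mathrm{HS}}$ over $b$-frames, a net, and the energy bound. After Cauchy--Schwarz, and with $\delta$ small, the singular part contributes only $O(e^{-ch})$, and the row part factorizes over the independent rows. No estimate on $s_{\min}$ alone can replace this, because its probability does not improve with the number of bad directions.

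The row step also contains a bookkeeping error. Write $\phi(y)=\theta h\min(y/(\lambda h),\delta)^{3/2}$. Your term $O(e^{-c'Km_{\mathrm{col}}})$ appears because you bound $\mathbb E[e^{\phi(x_i)}\mathbf 1\{x_i>Km_{\mathrm{col}}\}]$ itself, rather than its excess over $\mathbb P\{x_i>Km_{\mathrm{col}}\}$. That probability mass is already absorbed by $e^{\phi(Km_{\mathrm{col}})}$. Since $\phi'(y)\le C\sqrt{y/h}$, layer-cake integration gives $\log\mathbb E e^{\phi(x_i)}\le\phi(Km_{\mathrm{col}})+O(h^{-1/2})$. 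This needs $\delta$ small enough that $e^{\phi(y)}\mathbb P\{x_i>y\}\le e^{-cy/2}$ for $y\ge Km_{\mathrm{col}}$. The total over rows, $O(\sqrt h)$, is within the target $C_\theta\sqrt h\,m_{\mathrm{col}}^{3/2}$ because $m_{\mathrm{col}}\ge1$.

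Neither of your proposed repairs works. The threshold $K\max\{m_{\mathrm{col}},\log h\}$ produces a main term of order $\sqrt h(\log h)^{3/2}$, which is too large when $m_{\mathrm{col}}\ll\log h$. Charging the cap $\theta h\delta^{3/2}$ to each tail row also fails, because $e^{\theta h\delta^{3/2}}$ swamps the per-row probability $e^{-cKm_{\mathrm{col}}}$.
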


Lemma \ref{l:thm:GV} makes the volume-loss certificate unlikely under the reference
law. Lemma \ref{l:thm:GE} controls the exponential weight in the comparison for flat
outputs. The cap is essential here: the larger reference space need not
itself be flat, even when the query space it contains is flat.

\subsubsection{The volume certificate under a fixed reference
law}\label{l:the-volume-certificate-under-a-fixed-reference-law}

Fix \(I_0\) with \(s=|I_0|\le d-k\), set \(J_0=I_0^c\), and work under
\(\mathsf Q_{I_0}\). Lemma \ref{l:thm:RECONPROP} shows that \begin{equation}
\mathsf M^{\mathrm{ref}(I_0)}
 =[\mathbf s,A_{:,I_0},\mathbf y_1^{\mathrm{rem}},\ldots,\mathbf y_k^{\mathrm{rem}}]
 \in\mathbb R^{d\times(s+k+1)}
 \label{l:eq:P.1}
\end{equation} has independent entries: the first \(s+1\) columns have law \(\mu\)
and the last \(k\) are Gaussian.

Almost surely, every restriction of
\([\mathbf s,A\mathbf s,\ldots,A^{k-1}\mathbf s]\) to at least \(k\)
rows has rank \(k\). Indeed, a selected \(k\)-row minor is a nonzero
polynomial: diagonal \(A\) with distinct entries and
\(\mathbf s=(1,\ldots,1)^T\) give a Vandermonde witness. Both the
original and mixed physical laws have joint densities. Thus
\(\mathbf s\ne0\) and all residuals used to form the first \(k\) Arnoldi
directions are nonzero: \(R_j>0\) for \(1\le j<k\). Now
\(\mathbf v_1\in\operatorname{span}\{\mathbf s\}\), and \eqref{l:eq:S.3} expresses each
\(A\mathbf v_j\) as a linear combination of the columns of
\(\mathsf M^{\mathrm{ref}(I_0)}\). Since these \(R_j\) are positive,
Arnoldi orthogonalization keeps every \(\mathbf v_j\) in this column space, by
induction. Restricting the resulting inclusion to \(J_0\) gives \begin{equation}
\operatorname{Ran}V\subseteq\operatorname{Ran}\mathsf M^{\mathrm{ref}(I_0)},
 \qquad
 \operatorname{Ran}V_{J_0,:}\subseteq
       \operatorname{Ran}\mathsf M^{\mathrm{ref}(I_0)}_{J_0,:}.
 \label{l:eq:P.2}
\end{equation} On \(\{I_{\mathrm{del}}=I_0\}\), \eqref{l:eq:S.2} and \eqref{l:eq:G.8} imply
\(\Delta_{I_0}(\operatorname{Ran}\mathsf M^{\mathrm{ref}(I_0)})\ge\sigma s\).
Lemma \ref{l:thm:GV}, with \(a=\sigma\) and \({m_{\mathrm{col}}}=s+k+1\), therefore
gives \begin{equation}
\mathsf Q_{I_0}\{I_{\mathrm{del}}=I_0\}
 \le\exp\{-c_{\mathrm{vol}}ds^2/(s+k+1)\}
 \label{l:eq:P.3}
\end{equation} whenever \({m_{\mathrm{col}}}^2\le\kappa_{\mathrm{vol}}ds\). Here and
below the volume constants are evaluated at \((\mu,\sigma)\).

\subsubsection{Proof of the stopping
estimate}\label{l:sec:real_stopping_proof}

\begin{proof}[\proofname\ of Proposition \ref{l:thm:SS}]
We estimate the complement of the good deletion event \(\mathcal H\)
by fixing candidate deletion sets before applying comparison.

\Needspace{6\baselineskip}
\textbf{Split the failure event.} For each fixed candidate deletion set
\(I_0\subseteq[d]\), write \[
 s:=|I_0|,\qquad J_0:=[d]\setminus I_0,\qquad h:=|J_0|=d-s.
\] Thus \(s\) is a deterministic candidate deletion count; on
\(\{I_{\mathrm{del}}=I_0\}\), it equals the actual number deleted.
Define \[
 \begin{aligned}
 F_{I_0}^{\mathrm{act}}
   &:=\{\text{active output},\ I_{\mathrm{del}}=I_0\},\\
 F_{I_0}^{\mathrm{flat}}
   &:=\{\text{flat output},\ I_{\mathrm{del}}=I_0\}.
 \end{aligned}
\] An active output always has \(|I_{\mathrm{del}}|=B_{\mathrm{del}}\).
Thus the event in \eqref{l:eq:S.6} is exactly \[
 \mathcal E:=\mathcal H^c=
 \mathop{\dot\bigcup}_{|I_0|=B_{\mathrm{del}}}F_{I_0}^{\mathrm{act}}
 \,\dot\cup\,
 \mathop{\dot\bigcup}_{\lceil\nu k\rceil\le |I_0|\le B_{\mathrm{del}}}
 F_{I_0}^{\mathrm{flat}}.
\] Lemma \ref{l:thm:RECONPROP} makes each fixed-set event observable. We estimate it
under its own reference law \(\mathsf Q_{I_0}\), then compare back to
\(\mathbb P_\mu\) before summing.

\Needspace{6\baselineskip}
\textbf{A volume bound for both branches.} Choose the cap \[
 0<\sigma\le\min\{\sigma_{\max},1/4,\delta_\mu,
                                      \delta_{4C_{\mathrm{weight}}}\}.
\] This depends only on \(\mu,\sigma_{\max}\) and permits both weighted
comparison and Lemma \ref{l:thm:GE}. The constants \(c_{\mathrm{vol}}\) and
\(\kappa_{\mathrm{vol}}\) below are those of Lemma \ref{l:thm:GV} at \(a=\sigma\).
We will choose \(L_{\mathrm{stop}}\ge2\) and a sufficiently small
\(\alpha>0\), and set \(k=2\lfloor\alpha d/2\rfloor\). For now, assume
the geometric hypotheses hold for all
\(\lceil\nu k\rceil\le s\le L_{\mathrm{stop}}k\); we verify this
simultaneously at the end. For a fixed \(I_0\) of size \(s\), write
\({m_{\mathrm{col}}}=s+k+1\). The volume certificate gives
\eqref{l:eq:P.3}:
\[
 \mathsf Q_{I_0}\{I_{\mathrm{del}}=I_0\}
 \le e^{-c_{\mathrm{vol}}ds^2/(s+k+1)}.
\]
\Needspace{6\baselineskip}
\textbf{Active outputs: choose the budget large enough.} Fix a
candidate \(I_0\) of size
\(s=|I_0|=B_{\mathrm{del}}=L_{\mathrm{stop}}k\): an active output has
used the full deletion budget. Since
\(F_{I_0}^{\mathrm{act}}\subseteq\{I_{\mathrm{del}}=I_0\}\) and
\(s+k+1\le3s\) for \(k\ge2\), coarse comparison \eqref{l:eq:A.2} gives \[
 \begin{aligned}
 \mathbb P_\mu(F_{I_0}^{\mathrm{act}})
 &\le e^{C_{\mathrm{coarse}}dk}
       \mathsf Q_{I_0}(F_{I_0}^{\mathrm{act}})^{\theta_{\mathrm{coarse}}}\\
 &\le\exp\{(C_{\mathrm{coarse}}
          -\theta_{\mathrm{coarse}}c_{\mathrm{vol}}L_{\mathrm{stop}}/3)dk\}.
 \end{aligned}
\] The comparison costs a fixed multiple of \(dk\), whereas the volume
bound supplies a multiple of \(L_{\mathrm{stop}}dk\). We can therefore
fix \begin{equation}
L_{\mathrm{stop}}:=\max\left\{2,
 \left\lceil\frac{6(C_{\mathrm{coarse}}+1)}
 {\theta_{\mathrm{coarse}}c_{\mathrm{vol}}}\right\rceil\right\}.
 \label{l:eq:P.4}
\end{equation} Then \(\theta_{\mathrm{coarse}}c_{\mathrm{vol}}L_{\mathrm{stop}}/6
\ge C_{\mathrm{coarse}}+1\), and in particular
\(\mathbb P_\mu(F_{I_0}^{\mathrm{act}})\le e^{-dk}\).

\Needspace{6\baselineskip}
\textbf{Flat but late outputs: make the weighted cost small.} Fix a
candidate \(I_0\) whose deletion count \(s=|I_0|\) satisfies \[
 \lceil\nu k\rceil\le s\le B_{\mathrm{del}}=L_{\mathrm{stop}}k.
\] Since \(F_{I_0}^{\mathrm{flat}}\subseteq\{I_{\mathrm{del}}=I_0\}\),
the common reference estimate, \(s\ge\nu k\), and
\({m_{\mathrm{col}}}=s+k+1\le(L_{\mathrm{stop}}+2)k\) give \[
 \mathsf Q_{I_0}(F_{I_0}^{\mathrm{flat}})
 \le e^{-c_{\mathrm{vol}}\nu^2dk/(L_{\mathrm{stop}}+2)}.
\] When \(\nu\) is small, this decay need not absorb the fixed
\(C_{\mathrm{coarse}}dk\) cost. We use flatness to apply
\eqref{l:eq:A.3}, keeping its random comparison cost inside the
expectation. On the retained coordinates \(J_0\), set
\[
 \Psi_{I_0}=\sum_{j\in J_0}
 \min\{(P_{\operatorname{Ran}\mathsf M^{\mathrm{ref}(I_0)}_{J_0,:}})_{jj},
       \sigma\}^{3/2}.
\] On \(F_{I_0}^{\mathrm{flat}}\), the reconstructed frame
\(\widehat V\) spans \(\operatorname{Ran}V_{J_0,:}\). Flatness and
\eqref{l:eq:P.2} therefore give
\[
 \ell_j(\widehat V)\le
 \min\{(P_{\operatorname{Ran}\mathsf M^{\mathrm{ref}(I_0)}_{J_0,:}})_{jj},
       \sigma\},\qquad j\in J_0.
\]
Thus \(\Theta(\widehat V)\le\Psi_{I_0}\) on this event.
Applying \eqref{l:eq:A.3} and then
Cauchy--Schwarz yields \begin{equation}
\begin{aligned}
 \mathbb P_\mu(F_{I_0}^{\mathrm{flat}})
 &\le\left[\mathbb E_{\mathsf Q_{I_0}}
       \mathbf1_{F_{I_0}^{\mathrm{flat}}}
       e^{C_{\mathrm{weight}}d\Psi_{I_0}}\right]^{\theta_\mu}\\
 &\le\left[\mathsf Q_{I_0}(F_{I_0}^{\mathrm{flat}})
       \mathbb E_{\mathsf Q_{I_0}}e^{2C_{\mathrm{weight}}d\Psi_{I_0}}
       \right]^{\theta_\mu/2}.
 \end{aligned}
 \label{l:eq:P.6}
\end{equation}
The first factor is the late-output probability controlled by the
volume bound above; the second controls the cost of comparison.
For this second factor, \(h\ge d/2\) implies
\(2d/h\le4\), so Lemma \ref{l:thm:GE}
applies at parameter \(4C_{\mathrm{weight}}\). Set
\(C_{\mathrm{cap}}=C_{4C_{\mathrm{weight}}}(L_{\mathrm{stop}}+2)^{3/2}\).
Since \(h\le d\), \({m_{\mathrm{col}}}\le(L_{\mathrm{stop}}+2)k\), and
\(k/d\le\alpha\), it gives \begin{equation}
\log\mathbb E_{\mathsf Q_{I_0}}e^{2C_{\mathrm{weight}}d\Psi_{I_0}}
 \le C_{4C_{\mathrm{weight}}}\sqrt h\,{m_{\mathrm{col}}}^{3/2}
 \le C_{\mathrm{cap}}\sqrt\alpha\,dk.
 \label{l:eq:P.7}
\end{equation} Substitution in \eqref{l:eq:P.6} makes the required balance explicit: \[
 \mathbb P_\mu(F_{I_0}^{\mathrm{flat}})
 \le\exp\left\{-\frac{\theta_\mu}{2}
 \left(\frac{c_{\mathrm{vol}}\nu^2}{L_{\mathrm{stop}}+2}
               -C_{\mathrm{cap}}\sqrt\alpha\right)dk\right\}.
\] Consequently,
\(\mathbb P_\mu(F_{I_0}^{\mathrm{flat}})\le e^{-c_{\mathrm{flat}}dk}\)
provided that
\[
 C_{\mathrm{cap}}\sqrt\alpha\le
 \frac{c_{\mathrm{vol}}\nu^2}{2(L_{\mathrm{stop}}+2)}
\]
where
\[
 c_{\mathrm{flat}}:=\frac{\theta_\mu c_{\mathrm{vol}}\nu^2}
                            {4(L_{\mathrm{stop}}+2)}>0.
\]

\Needspace{6\baselineskip}
\textbf{Verify the parameter conditions.} The cap \(\sigma\) and
multiplier \(L_{\mathrm{stop}}\) are now fixed. Choose \(\alpha>0\)
small enough that \begin{equation}
\begin{gathered}
 \alpha\le\alpha_{\max},\quad (L_{\mathrm{stop}}+2)\alpha\le1/4,\quad
 (L_{\mathrm{stop}}+2)^2\alpha\le\kappa_{\mathrm{vol}}\nu,\quad
 2(L_{\mathrm{stop}}+2)\alpha\le\alpha_{4C_{\mathrm{weight}}},\\
 C_{\mathrm{cap}}\sqrt\alpha
       \le c_{\mathrm{vol}}\nu^2/[2(L_{\mathrm{stop}}+2)].
 \end{gathered}
 \label{l:eq:P.5}
\end{equation} All right-hand bounds are positive and independent of \(d\), so these
conditions are compatible. The first gives the prescribed ceiling on
\(k/d\); the last is exactly the cost bound just obtained. To check the
geometric hypotheses, take any candidate deletion count \(s=|I_0|\) in
\([\lceil\nu k\rceil,L_{\mathrm{stop}}k]\). Then \[
 \begin{gathered}
 {m_{\mathrm{col}}}=s+k+1\le(L_{\mathrm{stop}}+2)k,\qquad h=d-s\ge d/2,\\
 {m_{\mathrm{col}}}^2\le(L_{\mathrm{stop}}+2)^2k^2\le\kappa_{\mathrm{vol}}\nu dk
                            \le\kappa_{\mathrm{vol}}ds,\\
 \frac {m_{\mathrm{col}}}h\le2(L_{\mathrm{stop}}+2)\frac kd
                   \le\alpha_{4C_{\mathrm{weight}}}.
 \end{gathered}
\] These are precisely the volume and capped-moment hypotheses used
above. For sufficiently large \(d\), the rounding of \(k\) also gives
\(k\ge\max\{2,\alpha d/2\}\). With
\(B_{\mathrm{del}}=L_{\mathrm{stop}}k\), we also have
\(B_{\mathrm{del}}+k\le d\) and \(B_{\mathrm{del}}<d\). Thus the
algorithm and both estimates apply.

\Needspace{6\baselineskip}
\textbf{Sum the fixed-set bounds.} Using the event decomposition, with
at most \(2^d\) possible sets in each branch, we obtain \[
 \begin{aligned}
 \mathbb P_\mu(\mathcal E)
 &=\sum_{|I_0|=B_{\mathrm{del}}}\mathbb P_\mu(F_{I_0}^{\mathrm{act}})
   +\sum_{\lceil\nu k\rceil\le|I_0|\le B_{\mathrm{del}}}
                         \mathbb P_\mu(F_{I_0}^{\mathrm{flat}})\\
 &\le2^d(e^{-dk}+e^{-c_{\mathrm{flat}}dk})\\
 &\le2\exp\{d\log2-\tfrac\alpha2
                              \min(1,c_{\mathrm{flat}})d^2\}
 \le e^{-c_{\mathrm{stop}}d^2}
 \end{aligned}
\] for sufficiently large \(d\), with, for example,
\(c_{\mathrm{stop}}=\alpha\min(1,c_{\mathrm{flat}})/4\). This proves
\eqref{l:eq:S.6} and gives \eqref{l:eq:S.5} with \(\beta_{\mathrm{stop}}=\alpha/2\). Every
summand was estimated under the original law; no conditioning on the
random deletion set is used. \end{proof}

\subsubsection{Completion of the original-law link
estimate}\label{l:sec:real_link_completion}

\begin{proof}[\proofname\ of Proposition \ref{l:thm:ML}]
Fix \(\varepsilon\in(0,1]\). Take \(\eta_{\mu,\varepsilon}\) and
\(c_{\mathrm{mix}}\) from Lemma \ref{l:thm:LM}. Choose
\(0<\tau\le\min\{\delta_\mu,1/4\}\) so small that \[
 C_{\mathrm{weight}}\sqrt\tau\le c_{\mathrm{mix}}/4.
\] Apply Proposition \ref{l:thm:SS} with \(\sigma_{\max}=\tau\),
\(\nu=\eta_{\mu,\varepsilon}\) and \(\alpha_{\max}=\varepsilon/128\). It
supplies a cap \(\sigma\le\tau\), an even \(k\) and a deletion budget
\(B_{\mathrm{del}}\), with \(k\ge\beta_{\mathrm{stop}}d\). Set
\(m=k/2\), so \(M_k=m^2\).
The choice of \(\nu\) allows the mixed-reference link estimate on every
good deletion output; the choice of \(\tau\) pays the comparison cost.

\Needspace{6\baselineskip}
\textbf{Discard unsuccessful deletion.} On the original probability space of
\((A,\mathbf s)\), define \[
 \mathcal B_\varepsilon
   :=\{W_k(A,\mathbf s)\le(1-\varepsilon/4)^{M_k}\}.
\] For the good deletion event \(\mathcal H\) defined above,
\eqref{l:eq:S.6} gives
\(\mathbb P_\mu(\mathcal H^c)\le e^{-c_{\mathrm{stop}}d^2}\).
Define the deterministic family of candidate sets and their events by
\[
 \begin{aligned}
 \mathcal I&:=\{I_0\subseteq[d]:|I_0|<\lceil\nu k\rceil,
                              \ |I_0|\le B_{\mathrm{del}}\},\\
 F_{I_0}&:=\mathcal B_\varepsilon\cap\mathcal H
                                  \cap\{I_{\mathrm{del}}=I_0\}.
 \end{aligned}
\] The deletion count never exceeds its budget, so we have the disjoint
union \[
 \mathcal B_\varepsilon
 = (\mathcal B_\varepsilon\cap\mathcal H^c)
       \,\dot\cup\,\mathop{\dot\bigcup}_{I_0\in\mathcal I}F_{I_0}.
\] In particular, \[
 \mathbb P_\mu(\mathcal B_\varepsilon)
 \le e^{-c_{\mathrm{stop}}d^2}
       +\sum_{I_0\in\mathcal I}\mathbb P_\mu(F_{I_0}).
\]

\Needspace{6\baselineskip}
\textbf{Compare bad links on each good output.} Fix \(I_0\in\mathcal I\) and write
\(J_0=[d]\setminus I_0\). Since \(|I_0|\le B_{\mathrm{del}}\) and
\(B_{\mathrm{del}}+k\le d\), we have \(k\le|J_0|\). Lemma \ref{l:thm:RECONPROP}
makes \(F_{I_0}\) measurable in the reconstructed products and auxiliary
data. On this event the retained frame has full rank and
\(\max_{i\in J_0}\ell_i(\widehat V)\le\sigma\); since the weights sum to
\(k\), we have \(\Theta(\widehat V)\le k\sqrt\sigma\).
The integer inequality
\(|I_0|<\lceil\nu k\rceil\) implies
\(|I_0|<\nu k\le\eta_{\mu,\varepsilon}k\). Since
\(F_{I_0}\subseteq\mathcal B_\varepsilon\), Lemma \ref{l:thm:LM} gives \[
 \mathsf Q_{I_0}(F_{I_0})
 \le\mathsf Q_{I_0}(\mathcal B_\varepsilon)
 \le2e^{-c_{\mathrm{mix}}dk}.
\] Since \(\sigma\le\tau\), we have
\(C_{\mathrm{weight}}\sqrt\sigma\le c_{\mathrm{mix}}/4\).
The flat-coordinate comparison of Theorem~\ref{l:thm:AC} gives
\begin{equation}
\begin{aligned}
 \mathbb P_\mu(F_{I_0})
 &\le e^{\theta_\mu C_{\mathrm{weight}}\sqrt\sigma\,dk}
          \mathsf Q_{I_0}(F_{I_0})^{\theta_\mu}\\
 &\le2^{\theta_\mu}
    e^{-\theta_\mu(c_{\mathrm{mix}}-C_{\mathrm{weight}}\sqrt\sigma)dk}
 \le e^{-c_{\mathrm{cand}}dk},\qquad c_{\mathrm{cand}}:=\theta_\mu c_{\mathrm{mix}}/2,
 \end{aligned}
 \label{l:eq:S.7}
\end{equation} for sufficiently large \(d\). These are probabilities of the full
events \(F_{I_0}\), not probabilities conditioned on
\(I_{\mathrm{del}}=I_0\).

\Needspace{6\baselineskip}
\textbf{Summing the candidates.} Since \(|\mathcal I|\le2^d\) and
\(k\ge\beta_{\mathrm{stop}}d\), \[
 \begin{aligned}
 \sum_{I_0\in\mathcal I}\mathbb P_\mu(F_{I_0})
 &\le 2^d e^{-c_{\mathrm{cand}}dk}
 \le e^{d\log2-c_{\mathrm{cand}}\beta_{\mathrm{stop}}d^2}\\
 &\le e^{-c_{\mathrm{few}}d^2},\qquad
 c_{\mathrm{few}}:=c_{\mathrm{cand}}\beta_{\mathrm{stop}}/2,
 \end{aligned}
\] for sufficiently large \(d\). Substituting into the event
decomposition yields \[
 \mathbb P_\mu(\mathcal B_\varepsilon)
 \le e^{-c_{\mathrm{stop}}d^2}+e^{-c_{\mathrm{few}}d^2}
 \le e^{-c_{\mathrm{link}}d^2},\qquad
 c_{\mathrm{link}}:=\tfrac12\min\{c_{\mathrm{stop}},c_{\mathrm{few}}\},
\] after increasing the dimension threshold. This is \eqref{l:eq:M.8}. Finally,
\(2m=k\ge\beta_{\mathrm{stop}}d\) and \(256m=128k\le\varepsilon d\), so
\eqref{l:eq:M.7} holds with \(\beta_{\mu,\varepsilon}=\beta_{\mathrm{stop}}\); also
\(m\ge1\) for large \(d\). \end{proof}

\subsubsection{Proofs of the two geometric
estimates}\label{l:sec:real_geometry}

We now prove Lemmas \ref{l:thm:GV} and \ref{l:thm:GE} for the independent-entry matrices with
fixed column types specified above. Lemma \ref{l:thm:SVAL} supplies the
singular-value estimates used in the proof of Lemma \ref{l:thm:GE}.

\paragraph{Projection densities and inverse Gram
moments}\label{l:projection-densities-and-inverse-gram-moments}

Projection-density bounds for independent bounded-density coordinates
are established in \cite[Theorem 1.1]{RV2015}. We give the fiberwise
version below, since later substitutions require the specified density
representative on every fiber.

\begin{lemma}[Canonical projection bound]\label{l:thm:GP}

If the independent coordinates of \(x\in\mathbb R^h\) have everywhere
bounded densities \(f_j\le K\), then for every coisometry
\(L:\mathbb R^h\to\mathbb R^q\), \(LL^T=\mathrm{Id}_q\), the canonical
density satisfies \begin{equation}
\int_{\ker L}\prod_j f_j((L^Ty)_j+w_j)\,dw
 \le(\sqrt{\mathrm e}K)^q
 \quad\text{for every }y.
 \label{l:eq:G.1}
\end{equation} Consequently, for a fixed subspace \(F\) of codimension \(q\ge1\), \begin{equation}
\mathbb P\{\operatorname{dist}(x,F)\le u\sqrt q\}\le\min\{1,(C_Ku)^q\},
 \qquad
 \|\operatorname{dist}(x,F)^{-2}\|_{L^p}\le C'_K/q
 \quad(1\le p\le q/4).
 \label{l:eq:G.2}
\end{equation} \end{lemma}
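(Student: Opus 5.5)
The plan is to identify the fiber integral in \eqref{l:eq:G.1} as the density of the pushforward $Lx$, bound that density by a Fourier/Brascamp--Lieb argument, and then read off \eqref{l:eq:G.2} from it.

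\textbf{Identifying the fiber integral.} Since $LL^T=\mathrm{Id}_q$, the map $(y,w)\mapsto L^Ty+w$ from $\mathbb R^q\times\ker L$ to $\mathbb R^h$ is an isometry. Fubini therefore shows that
\[
g(y)=\int_{\ker L}\prod_j f_j\bigl((L^Ty)_j+w_j\bigr)\,dw
\]
is a density of $Lx=\sum_j x_j\mathbf l_j$, where $\mathbf l_j=Le_j\in\mathbb R^q$. These vectors satisfy $\sum_j\mathbf l_j\mathbf l_j^T=\mathrm{Id}_q$. Write $c_j=\|\mathbf l_j\|^2\in[0,1]$, so that $\sum_jc_j=q$. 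The claim \eqref{l:eq:G.1} is pointwise in $y$ for the fixed bounded representatives, so a mere a.e.\ bound on the density of $Lx$ is not enough.

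\textbf{Heavy coordinates.} Let $H=\{j:c_j>1/2\}$; then $|H|<2q$. I condition on the heavy coordinates $x_H$, or equivalently integrate the fiber variables in the directions of $\mathbf l_j$, $j\in H$, last. For these coordinates only the trivial bound $f_j\le K$ is used, in a change of variables whose Jacobian is controlled by the frame condition. What remains is the same problem for the light coordinates, now with a fixed shift and a reduced frame $\sum_{j\notin H}\mathbf l_j\mathbf l_j^T\preceq\mathrm{Id}$.

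\textbf{Light coordinates.} For $j\notin H$ we have $1/c_j\ge2$. I write the light-part density by Fourier inversion as an integral of $\prod_j\hat f_j(\langle\mathbf l_j,t\rangle)$. Geometric Brascamp--Lieb \cite[Proposition 2.8]{BCCT2008}, applied with unit directions $\mathbf l_j/\|\mathbf l_j\|$ and exponents $c_j$, bounds this integral by $\prod_j\|\hat f_j\|_{1/c_j}$, up to the rescaling factors $\|\mathbf l_j\|^{-1}$. Hausdorff--Young, valid since $1/c_j\ge2$, then gives
\[
\|\hat f_j\|_{1/c_j}\le\|f_j\|_{(1-c_j)^{-1}}\le K^{c_j},
\]
where the last step uses $\int f_j=1$ and $f_j\le K$. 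Keeping track of the sharp (Babenko--Beckner) constants, together with the factors $\|\mathbf l_j\|^{-1}$ and $(2\pi)^{-q}$, should produce $(\sqrt{\mathrm e}\,K)^{q}$ after optimization over the $c_j$; this is the form obtained in \cite[Theorem 1.1]{RV2015}.

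\textbf{Main obstacle.} Two points need care. First, the bookkeeping must give exactly $\sqrt{\mathrm e}K$ per dimension, uniformly in how the weight $q$ is split between heavy and light coordinates. Second, the bound must hold at \emph{every} $y$ for the given representatives, not merely almost everywhere. For the second point I would first prove the estimate for mollified densities $f_j*\phi_\epsilon$, which are still bounded by $K$ and are continuous, so Fourier inversion holds pointwise. I would then pass to the limit fiberwise using Fatou and Lebesgue differentiation on the fiber. If that limit passage fails at exceptional $y$, the fallback is a direct induction on $q$ that peels off one heavy direction at a time and uses only $f_j\le K$.

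\textbf{Consequences.} For \eqref{l:eq:G.2}, let $L$ be a coisometry onto $F^\perp$, so that $\operatorname{dist}(x,F)=\|Lx\|$. Since $Lx$ has density at most $(\sqrt{\mathrm e}K)^q$, and the ball of radius $u\sqrt q$ in $\mathbb R^q$ has volume at most $(Cu)^q$, we get $\mathbb P\{\|Lx\|\le u\sqrt q\}\le\min\{1,(C_Ku)^q\}$.

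For the negative moments, the layer-cake formula gives
\[
\mathbb E\operatorname{dist}^{-2p}=\int_0^\infty\mathbb P\{\operatorname{dist}<t^{-1/(2p)}\}\,dt,
\]
and I insert the small-ball bound with $u=t^{-1/(2p)}q^{-1/2}$. Because $2p\le q/2<q$, the resulting integral converges and equals $(C/q)^p$ times a factor bounded uniformly in $p\le q/4$. Taking $p$-th roots gives $\|\operatorname{dist}^{-2}\|_{L^p}\le C'_K/q$.
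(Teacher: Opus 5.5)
Your identification of the fiber integral as the density of $Lx$ is fine, and so is your derivation of \eqref{l:eq:G.2} from \eqref{l:eq:G.1}. The Fourier--Brascamp--Lieb--Hausdorff--Young computation is also sound when every $c_j\le1/2$.

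\textbf{The gap is the heavy-coordinate step.} This is where the whole difficulty lies, and your sketch does not supply it. Neither of your readings gives a uniform bound: using only $f_j\le K$ on $x_H$ with a change of variables fails, and so does fixing $x_H$ and running the light argument. Here is a concrete example in $\mathbb R^3$.
\begin{itemize}
\item Let $\mathbf p_1,\mathbf p_2,\mathbf p_3$ lie in the $(e_1,e_2)$-plane at mutual angles $2\pi/3$, with $\norm{\mathbf p_j}^2=2/3-\epsilon^2$.
\item Put $\mathbf l_j=\mathbf p_j+\epsilon e_3$, so $c_j=2/3$ and these coordinates are heavy.
\item Complete to an exact frame with the light vectors $\sqrt{3/2}\,\epsilon e_1$, $\sqrt{3/2}\,\epsilon e_2$ and three copies of $\sqrt{(1-3\epsilon^2)/3}\,e_3$.
\end{itemize}
Since $\sum_j\mathbf p_j=0$, the heavy Jacobian is $|\det[\mathbf l_1,\mathbf l_2,\mathbf l_3]|\asymp\epsilon$, so the trivial bound on the heavy coordinates gives order $K^3/\epsilon$. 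If you fix $x_H$ instead, the light frame is $\operatorname{diag}(\tfrac32\epsilon^2,\tfrac32\epsilon^2,1-3\epsilon^2)\ne\mathrm{Id}_3$. Geometric Brascamp--Lieb then does not apply as you state it, and the light density genuinely has size $\epsilon^{-2}$.

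Both bounds blow up as $\epsilon\to0$, yet the true density stays below $(\sqrt{\mathrm e}K)^3$. Worse, for $q=2$ take three vectors of squared norm $2/3$ at angles $2\pi/3$. They form an exact frame in which every coordinate is heavy, so the heavy step would have to solve the original problem by itself. The uniform bookkeeping you list as an obstacle is therefore not a detail, and your fallback induction is not specified.

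\textbf{The missing idea, which is what the paper does, is to apply geometric Brascamp--Lieb on the fiber $\ker L$ itself, in physical space.}
\begin{itemize}
\item Set $\ell_j=(P_{\ker L})_{jj}=1-c_j$ and let $B_j$ be the normalized coordinate maps on $\ker L$. Then $\sum_{\ell_j>0}\ell_jB_j^TB_j=\mathrm{Id}_{\ker L}$ holds exactly.
\item Write $f_j((L^Ty)_j+w_j)=h_j(B_jw)^{\ell_j}$ with $h_j(s)=f_j((L^Ty)_j+\sqrt{\ell_j}\,s)^{1/\ell_j}$. From $f_j\le K$ and $\int f_j=1$ you get $\int h_j\le\ell_j^{-1/2}K^{1/\ell_j-1}$.
\item Coordinates with $\ell_j=0$ (your $c_j=1$) are constant on the fiber and contribute at most $K$.
\item Since $\sum_j(1-\ell_j)=q$, this gives $K^q\prod_j\ell_j^{-\ell_j/2}\le(\sqrt{\mathrm e}K)^q$ via $-\ell\log\ell\le1-\ell$.
\end{itemize}
This holds at every $y$ for the given representatives. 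No Hausdorff--Young is used, so there is no restriction on the weights and no heavy/light split. There is also no need for mollification, which would anyway need separate care for the $c_j=1$ factors, where Lebesgue differentiation on the fiber says nothing.

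In fact, carrying the Babenko--Beckner constants through your light computation yields exactly $K^q\prod_j(1-c_j)^{-(1-c_j)/2}$, the same quantity. The two Brascamp--Lieb applications are dual, and only the kernel-side one survives $c_j>1/2$.
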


\begin{proof}

Write \(\ell_j^{\ker}=(P_{\ker L})_{jj}\). For \(\ell_j^{\ker}>0\), its
normalized coordinate maps \(B_j:\ker L\to\mathbb R\) satisfy
\(B_jB_j^T=1\) and
\(\sum_{\substack{1\le j\le h\\\ell_j^{\ker}>0}}\ell_j^{\ker}B_j^TB_j=\mathrm{Id}_{\ker L}\).
The geometric Brascamp--Lieb inequality \cite[Proposition 2.8]{BCCT2008}
says \[
 \int_{\ker L}\prod_{\substack{1\le j\le h\\\ell_j^{\ker}>0}}
                    h_j(B_jw)^{\ell_j^{\ker}}\,dw
 \le\prod_{\substack{1\le j\le h\\\ell_j^{\ker}>0}}
                    \left(\int_{\mathbb R}h_j\right)^{\ell_j^{\ker}}.
\] For each \(1\le j\le h\) with \(\ell_j^{\ker}>0\), apply it to
\(h_j=f_j((L^Ty)_j+\sqrt{\ell_j^{\ker}}\,\cdot)^{1/\ell_j^{\ker}}\). Its
integral is at most \((\ell_j^{\ker})^{-1/2}K^{1/\ell_j^{\ker}-1}\). The
zero-coordinate maps contribute constants at most \(K\). Since
\(\sum_{j=1}^h(1-\ell_j^{\ker})=q\), the fiber bound is \[
 K^q\prod_{\ell_j^{\ker}>0}(\ell_j^{\ker})^{-\ell_j^{\ker}/2}\le K^qe^{q/2},
\] using \(-c\log c\le1-c\). This proves \eqref{l:eq:G.1} for the actual chosen
density representatives, with no exceptional fiber. Orthogonal
disintegration identifies it as a projection density. Multiplication by
the \(q\)-ball volume proves the first estimate in \eqref{l:eq:G.2}. More
explicitly, for \(Y=\operatorname{dist}(x,F)/\sqrt q\) and \(0<s<q\), \[
 \mathbb E Y^{-s}
 \le sC_K^q\int_0^{1/C_K}u^{q-s-1}\,du
    +s\int_{1/C_K}^\infty u^{-s-1}\,du
 =C_K^s\frac q{q-s}.
\] Take \(s=2p\le q/2\) and a \(p\)-th root to obtain the second bound.
\end{proof}

\begin{corollary}[Inverse Gram moments]\label{l:thm:IGRAM}

Assume \eqref{l:eq:M.1}. Let \(\mathsf M\in\mathbb R^{h\times m_{\mathrm{col}}}\)
have independent entries, with each column consisting of iid entries of
law \(\mu\) or \(N(0,1)\), the column types being fixed. If
\(1\le m_{\mathrm{col}}\le h/2\), then \(\mathsf G=\mathsf M^T\mathsf M>0\)
almost surely. For every \(J\subseteq[m_{\mathrm{col}}]\), write
\(J^c=[m_{\mathrm{col}}]\setminus J\). For \(t\ge0\) and
\(1\le p\le h/8\), \begin{equation}
\begin{aligned}
 \left\|\frac{\det \mathsf G_{J^c,J^c}}{\det \mathsf G}\right\|_{L^p}
 &\le(C_{\mathrm{inv}}(\mu)/h)^{|J|},\\
 \|\det(\mathrm{Id}_{m_{\mathrm{col}}}+th\mathsf G^{-1})\|_{L^p}
 &\le(1+C_{\mathrm{inv}}(\mu)t)^{m_{\mathrm{col}}}.
 \end{aligned}
 \label{l:eq:G.3}
\end{equation} Here \(C_{\mathrm{inv}}(\mu)>0\) depends only on \(\mu\), uniformly
over the column types, and the empty determinant is one. \end{corollary}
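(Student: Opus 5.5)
The plan is to reduce both bounds to the distance estimate \eqref{l:eq:G.2} of Lemma~\ref{l:thm:GP}, using the base-times-height factorization of Gram determinants. First, every column law ($\mu$ or $N(0,1)$) has a density bounded by $K:=\max\{K_f,(2\pi)^{-1/2}\}$, so Lemma~\ref{l:thm:GP} applies to each column with this $K$. For almost sure positivity, we reveal the columns one at a time. A fixed subspace of dimension less than $h$ has Lebesgue measure zero, so each new column lies outside the span of the preceding ones almost surely. Hence $\mathsf M$ has full column rank and $\mathsf G>0$ almost surely.

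\textbf{Ratio of minors.} Reorder the columns so that those indexed by $J^c$ come first and those in $J=\{j_1<\cdots<j_{|J|}\}$ come last. Gram determinants are invariant under simultaneous permutation, and Gram--Schmidt gives
\[
 \frac{\det\mathsf G}{\det\mathsf G_{J^c,J^c}}=\prod_{i=1}^{|J|}\operatorname{dist}(\mathsf M_{:,j_i},F_i)^2,
 \qquad F_i=\operatorname{span}\{\mathsf M_{:,J^c},\mathsf M_{:,j_1},\ldots,\mathsf M_{:,j_{i-1}}\}.
\]
Each $F_i$ has dimension at most $m_{\mathrm{col}}-1$, so its codimension satisfies $q_i\ge h-m_{\mathrm{col}}+1\ge h/2$. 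Moreover $F_i$ is independent of $\mathsf M_{:,j_i}$. Since $p\le h/8\le q_i/4$, \eqref{l:eq:G.2} applies conditionally on $F_i$ and gives
\[
 \mathbb E\bigl[\operatorname{dist}(\mathsf M_{:,j_i},F_i)^{-2p}\,\big|\,F_i\bigr]\le(C'_K/q_i)^p\le(2C'_K/h)^p .
\]
This bound is uniform in the conditioning. We now peel off the factors in reverse order $i=|J|,\ldots,1$ by the tower property: conditioning on all columns except $\mathsf M_{:,j_{|J|}}$ fixes every earlier factor and $F_{|J|}$, and so on downward. This yields
\[
 \mathbb E\Bigl(\frac{\det\mathsf G_{J^c,J^c}}{\det\mathsf G}\Bigr)^p\le(2C'_K/h)^{p|J|}.
\]
Taking $p$-th roots gives the first bound with $C_{\mathrm{inv}}=2C'_K$. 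The only point requiring care is exactly this sequential conditioning: the distance factors are dependent, so Hölder's inequality on a product of $L^p$ norms would lose powers. Using the uniform conditional bound avoids that loss.

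\textbf{Determinant bound.} Expand the determinant of the identity plus a matrix as a sum of principal minors, and apply Jacobi's complementary-minor identity $\det(\mathsf G^{-1})_{J,J}=\det\mathsf G_{J^c,J^c}/\det\mathsf G$:
\[
 \det(\mathrm{Id}+th\mathsf G^{-1})=\sum_{J\subseteq[m_{\mathrm{col}}]}(th)^{|J|}\,\frac{\det\mathsf G_{J^c,J^c}}{\det\mathsf G}.
\]
All terms are nonnegative. Minkowski's inequality in $L^p$ together with the first bound then gives
\[
 \bigl\|\det(\mathrm{Id}+th\mathsf G^{-1})\bigr\|_{L^p}\le\sum_J (th)^{|J|}(C_{\mathrm{inv}}/h)^{|J|}=(1+C_{\mathrm{inv}}t)^{m_{\mathrm{col}}}.
\]
The constant depends only on $K_f$, hence only on $\mu$, and is uniform over the column types.
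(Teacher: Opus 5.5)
Your proof is correct and follows the paper's argument essentially step for step. Both write the minor ratio via Gram--Schmidt as a product of inverse squared distances to spans of codimension at least \(h-m_{\mathrm{col}}+1\ge h/2\). Both apply \eqref{l:eq:G.2} to these factors conditionally, one at a time in reverse order, so that no independence of the distances is needed. Both obtain the second bound from the complementary-minor expansion and Minkowski's inequality. Your explicit choice \(K=\max\{K_f,(2\pi)^{-1/2}\}\) and the check \(p\le h/8\le q_i/4\) spell out details the paper leaves implicit.
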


\begin{proof}

Each column has a density and hence almost surely avoids the span of its
predecessors; thus \(\mathsf G>0\). First expose the columns in \(J^c\), then
add those in \(J\) in a fixed order. The determinant ratio is the
product of the inverse squared distances of each new column to the
preceding span. Every such span has codimension at least
\(h-{m_{\mathrm{col}}}+1\). Apply \eqref{l:eq:G.2} to the last factor conditional
on all preceding columns, and repeat backwards. This bounds the moment
of the product without asserting independence of the distances. The
second estimate follows from the complementary-minor identity
\(\det(\mathrm{Id}_{m_{\mathrm{col}}}+th\mathsf G^{-1})=\sum_{J\subseteq[{m_{\mathrm{col}}}]}(th)^{|J|}
\det \mathsf G_{J^c,J^c}/\det \mathsf G\) and Minkowski's inequality. \end{proof}

\paragraph{Proof of the coordinate-volume
bound}\label{l:proof-of-the-coordinate-volume-bound}

\begin{proof}[Proof of Lemma \ref{l:thm:GV}]Write
\(\mathsf G_{\mathrm{del}}=\mathsf M_{I_0,:}^T\mathsf M_{I_0,:}\),
\(\mathsf G_{\mathrm{keep}}=\mathsf M_{I_0^c,:}^T\mathsf M_{I_0^c,:}\), and
\(h=d-s\). These Grams are independent because they involve disjoint
rows. For the small aspect ratios chosen below,
\(m_{\mathrm{col}}<h/2\), and Corollary \ref{l:thm:IGRAM} gives
\(\mathsf G_{\mathrm{keep}}>0\) almost surely. The matrix
\(\mathsf M(\mathsf G_{\mathrm{del}}+\mathsf G_{\mathrm{keep}})^{-1/2}\) has orthonormal
columns spanning \(\operatorname{Ran}\mathsf M\): its Gram is the
identity and the right factor is invertible. After restricting this
frame to \(I_0^c\), its Gram determinant, hence its squared volume, is
\(\det \mathsf G_{\mathrm{keep}}/\det(\mathsf G_{\mathrm{del}}+\mathsf G_{\mathrm{keep}})\);
taking the negative logarithm gives the equality below. Thus \begin{equation}
\Delta_{I_0}(\operatorname{Ran}\mathsf M)=\log\frac{\det(\mathsf G_{\mathrm{del}}+\mathsf G_{\mathrm{keep}})}
                              {\det \mathsf G_{\mathrm{keep}}}
 \le\log\det(\mathrm{Id}_{m_{\mathrm{col}}}+\tau d\mathsf G_{\mathrm{keep}}^{-1})
       +\frac{\operatorname{tr}\mathsf G_{\mathrm{del}}}{\tau d},\quad\tau>0.
 \label{l:eq:G.10}
\end{equation} To verify the inequality, add
\(\tau d\mathrm{Id}_{m_{\mathrm{col}}}\) to the retained Gram in the
numerator. Factor its determinant, apply
\(\log\det(\mathrm{Id}_{m_{\mathrm{col}}}+D)\le
\operatorname{tr}D\) for a positive semidefinite matrix \(D\), and use
\((\mathsf G_{\mathrm{keep}}+\tau d\mathrm{Id}_{m_{\mathrm{col}}})^{-1}\preceq(\tau d)^{-1}\mathrm{Id}_{m_{\mathrm{col}}}\).
No commutation of the two Grams is used.

Choose \(\lambda>0\) and \(C_{\mathrm{vol}}\ge1\), depending on \(\mu\),
such that \(C_{\mathrm{vol}}\ge4C_{\mathrm{inv}}/3\) and
\(\log\mathbb E e^{t\xi^2}\le C_{\mathrm{vol}}t\) for both scalar laws
and \(0\le t\le\lambda\). Set \[
 \tau_0=\frac a{4C_{\mathrm{vol}}},\quad
 q_0=\min\{3/32,\lambda\tau_0\},\quad
 \kappa_{\mathrm{vol}}=\min\{1/4,q_0,a\tau_0/(4C_{\mathrm{vol}})\}.
\] Set \(\tau=\tau_0s/{m_{\mathrm{col}}}\),
\(q=q_0s/{m_{\mathrm{col}}}\), \(p=qd\). The assumed aspect bound gives
\(s<{m_{\mathrm{col}}}<d/4\), \(h>3d/4\),
\(1\le {m_{\mathrm{col}}}\le p<h/8\), and \(q/\tau\le\lambda\). Thus
Corollary \ref{l:thm:IGRAM} applies at moment order \(p\). From \eqref{l:eq:G.10},
independence, and the entrywise square-exponential bound, \[
 \begin{aligned}
 \log\mathbb E e^{p\Delta_{I_0}(\operatorname{Ran}\mathsf M)}
 &\le p{m_{\mathrm{col}}}\log(1+C_{\mathrm{inv}}\tau d/h)
       +C_{\mathrm{vol}}qs{m_{\mathrm{col}}}/\tau\\
 &\le C_{\mathrm{vol}}q\tau d{m_{\mathrm{col}}}+C_{\mathrm{vol}}qs{m_{\mathrm{col}}}/\tau\\
 &=C_{\mathrm{vol}}q\tau_0ds+C_{\mathrm{vol}}q{m_{\mathrm{col}}}^2/\tau_0
 \le\tfrac12 aqds.
 \end{aligned}
\] Markov proves \eqref{l:eq:G.9} with \(c_{\mathrm{vol}}=aq_0/2\). \end{proof}

\paragraph{Singular-value estimates}\label{l:singular-value-estimates}

The following lemma separates two uses of singular values. Part (i)
does not require the aspect ratio to be small and supplies the energy bound
for the coarse comparison in Subsection \ref{l:sec:real_comparison_analysis}.
Part (ii) controls the small-singular-value count needed for the capped
coordinate-weight estimate, and requires \(m_{\mathrm{col}}/h\) to be small.

\begin{lemma}[Singular-value energy and small-value count]\label{l:thm:SVAL}

Let \(\mathsf M\in\mathbb R^{h\times {m_{\mathrm{col}}}}\),
\(1\le {m_{\mathrm{col}}}\le h\), have independent columns with
\(\mathsf M_{:,j}\sim\nu_j^{\otimes h}\), where the laws
\(\nu_j\in\{\mu,N(0,1)\}\) are fixed in advance. Write
\(s_1(\mathsf M)\ge\cdots\ge s_{m_{\mathrm{col}}}(\mathsf M)\ge0\) for
its singular values. There are positive constants
\(t_{\mathrm{en}},C_{\mathrm{en}},a_{\mathrm{sing}},
 c_{\mathrm{sing}},\alpha_{\mathrm{sing}}\), depending only on \(\mu\),
with the following properties.

\Needspace{6\baselineskip}
\textbf{(i)} For every integer \(1\le b\le {m_{\mathrm{col}}}\), \begin{equation}
\mathbb E\exp\left(t_{\mathrm{en}}\sum_{j=1}^bs_j(\mathsf M)^2\right)
 \le e^{C_{\mathrm{en}}(h+{m_{\mathrm{col}}})b}.
 \label{l:eq:G.4}
\end{equation}
\Needspace{7\baselineskip}
\noindent\textbf{(ii)} If \({m_{\mathrm{col}}}\le\alpha_{\mathrm{sing}}h\),
then the count \[
 N_{\mathrm{small}}:=\#\{1\le j\le {m_{\mathrm{col}}}:
               s_j(\mathsf M)<a_{\mathrm{sing}}\sqrt h\}
\] satisfies \begin{equation}
\mathbb P\{N_{\mathrm{small}}\ge b\}\le3e^{-c_{\mathrm{sing}}hb}
 \quad(1\le b\le {m_{\mathrm{col}}}).
 \label{l:eq:G.5}
\end{equation} \end{lemma}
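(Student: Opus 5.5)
The plan for both parts is the same: a fixed-frame estimate over independent rows, followed by a net on the Stiefel manifold of \(b\)-frames in \(\mathbb R^{m_{\mathrm{col}}}\). Write \(S_b(\mathsf M)=\sum_{j\le b}s_j(\mathsf M)^2\). By Ky Fan,
\[
S_b(\mathsf M)=\max\{\|\mathsf MV\|_{\HS}^2:\ V\in\mathbb R^{m_{\mathrm{col}}\times b},\ V^TV=\mathrm{Id}_b\}.
\]
The rows of \(\mathsf M\) are independent, and each row has independent coordinates of law \(\mu\) or \(N(0,1)\). Hence, for a fixed frame \(V\), the quantity \(\|\mathsf MV\|_{\HS}^2=\sum_{i\le h}\|\mathsf M_{i,:}V\|^2\) is a sum of \(h\) independent terms. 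A \(1/4\)-net of frames in operator norm has cardinality \(e^{Cm_{\mathrm{col}}b}\).

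\textbf{Part (i).} First, fix a unit \(u\in\mathbb R^b\). The variable \(\langle \mathsf M_{i,:}V,u\rangle\) is a weighted sum of independent centered square-exponential variables with unit coefficient norm. As in the coupling proof, it satisfies \(\mathbb E e^{\langle g,\cdot\rangle}\le e^{C|g|^2}\). Integrating against an independent standard Gaussian \(g\in\mathbb R^b\) then gives \(\mathbb E e^{t\|\mathsf M_{i,:}V\|^2}\le e^{Cb}\) for small \(t=t(\mu)\). By row independence,
\[
\mathbb E e^{t\|\mathsf MV\|_{\HS}^2}\le e^{Chb}.
\]
Next comes the approximation step. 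Let \(V_*\) be a maximizer and let \(V'\) be a net point with \(\|V_*-V'\|\le1/4\). The difference \(V_*-V'\) has range of dimension at most \(2b\), so
\[
\|\mathsf M(V_*-V')\|_{\HS}\le\tfrac14\sqrt{S_{2b}}\le\tfrac{\sqrt2}{4}\sqrt{S_b}.
\]
The triangle inequality then gives \(S_b\le C\max_{\text{net}}\|\mathsf MV'\|_{\HS}^2\). Bounding the exponential of a maximum by the sum over the net yields \eqref{l:eq:G.4} with \(t_{\mathrm{en}}=t/C\) and exponent \(C(h+m_{\mathrm{col}})b\).

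\textbf{Part (ii).} If \(N_{\mathrm{small}}\ge b\), let \(V_*\) span the bottom \(b\) right singular vectors; then \(\|\mathsf MV_*\|_{\HS}^2<a^2hb\). By (i) and Markov's inequality, \(S_{2b}\le K_0hb\) except on an event of probability \(e^{-chb}\), using \(m_{\mathrm{col}}\le h\). On this event every net point \(V'\) with \(\|V_*-V'\|\le\epsilon\) satisfies
\[
\|\mathsf MV'\|_{\HS}^2\le2(a^2+\epsilon^2K_0)hb.
\]
Here \(\epsilon\) is a small fixed constant, and the net now has size \(e^{C_\epsilon m_{\mathrm{col}}b}\).

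The key fixed-frame small-ball estimate comes from Lemma~\ref{l:thm:GP}. Applied with the coisometry \(V^T\), each projected row \(\mathsf M_{i,:}V\in\mathbb R^b\) has density at most \((\sqrt{\mathrm e}K)^b\), where \(K=\max(K_f,(2\pi)^{-1/2})\). Therefore
\[
\mathbb E e^{-\lambda\|\mathsf M_{i,:}V\|^2}\le(\sqrt{\mathrm e}K)^b(\pi/\lambda)^{b/2}.
\]
Take \(\lambda=\lambda_0/(a'^2)\) with \(a'^2=2(a^2+\epsilon^2K_0)\). The exponential Markov inequality over \(h\) independent rows then gives
\[
\mathbb P\{\|\mathsf MV\|_{\HS}^2\le a'^2hb\}\le\bigl(C_K a'\bigr)^{hb}e^{\lambda_0 hb}.
\]
This is at most \(e^{-2c\,hb}\) once \(a\) and \(\epsilon\) are small. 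A union bound over the net costs \(e^{C_\epsilon m_{\mathrm{col}}b}\), which is absorbed when \(m_{\mathrm{col}}\le\alpha_{\mathrm{sing}}h\) with \(\alpha_{\mathrm{sing}}\) small. Adding the \(e^{-chb}\) exception from (i) gives \eqref{l:eq:G.5} with the factor \(3\).

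\textbf{Main obstacle.} The step needing most care is the approximation in (ii). A crude bound \(\|\mathsf M\|_{\op}\le C\sqrt h\) fails only with probability \(e^{-ch}\), which is too weak because the target is \(e^{-chb}\). This is why the plan controls the perturbation through the rank-\(2b\) energy \(S_{2b}\) using (i), rather than through \(\|\mathsf M\|_{\op}\). The order of constant choices also matters: \(K_0\) comes from (i), then \(\epsilon\) and \(a_{\mathrm{sing}}\) are chosen small, and only then \(\alpha_{\mathrm{sing}}\), to beat the net entropy.
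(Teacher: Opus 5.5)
Your proposal is correct and follows essentially the paper's proof. Both use Gaussian linearization for a fixed frame, a volumetric net on the Stiefel manifold, control of the net approximation through the top-$b$ energy from (i) rather than through $\|\mathsf M\|_{\op}$, and the projection density bound of Lemma~\ref{l:thm:GP} for the fixed-frame small ball, with the constants chosen in the same order. The differences are cosmetic: the paper uses the direct bound $\|\mathsf M\Delta\|_{\HS}\le(\sum_{j\le b}s_j(\mathsf M)^2)^{1/2}\|\Delta\|_{\op}$ in place of your detour through $S_{2b}\le 2S_b$, and it bounds the small ball by density times volume rather than by a Laplace transform.
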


\begin{proof}[\proofname\ of Lemma \ref{l:thm:SVAL}]\textbf{Energy estimate.} We use a
net argument for the mixed-column model; see also
\cite[Chapters 2 and 4]{V2018}. Choose \(v_\mu\ge1\) so that both
scalar laws obey
\(\mathbb E e^{t\xi}\le e^{v_\mu t^2/2}\). Gaussian linearization gives,
for a fixed orthonormal \({m_{\mathrm{col}}}\times b\) frame \(U\), \[
 \mathbb E e^{s\|\mathsf M U\|_{\mathrm{HS}}^2}
 \le(1-2v_\mu s)^{-hb/2}\quad(0<s<(2v_\mu)^{-1}).
\] The Stiefel manifold has an operator-norm \(\eta\)-net \(\mathcal N\)
of size at most \((1+2/\eta)^{{m_{\mathrm{col}}}b}\), by disjoint-ball
volume comparison in the ambient normed space. For any
\({m_{\mathrm{col}}}\times b\) matrix \(\Delta\),
\(\|\mathsf M\Delta\|_{\mathrm{HS}}\le
(\sum_{j=1}^bs_j(\mathsf M)^2)^{1/2}\|\Delta\|_{\mathrm{op}}\).
Approximating a maximizing frame by a \(1/4\)-net therefore gives \[
 \sum_{j=1}^bs_j(\mathsf M)^2\le(16/9)\max_{U\in\mathcal N}
 \|\mathsf M U\|_{\mathrm{HS}}^2.
\] Summing the fixed-frame moment bound over this net proves \eqref{l:eq:G.4}.
Consequently, for every fixed \(t_{\mathrm{tail}}>0\), some
\(K_{t_{\mathrm{tail}}}(\mu)\) satisfies
\(\mathbb P\{\sum_{j=1}^bs_j(\mathsf M)^2>K_{t_{\mathrm{tail}}}hb\}
\le e^{-t_{\mathrm{tail}}hb}\).

\Needspace{6\baselineskip}
\textbf{Small-singular-value count.} For a fixed frame \(U\), \eqref{l:eq:G.1}
applied independently to the \(h\) rows bounds the density of
\(\mathsf M U\) by \(C_\mu^{hb}\). Hence \[
 \mathbb P\{\|\mathsf M U\|_{\mathrm{HS}}\le a\sqrt{hb}\}
 \le(C'_\mu a)^{hb}\le e^{-2hb}
\] for a sufficiently small \(a>0\). Set \(a_{\mathrm{sing}}=a/2\) and
\(t_{\mathrm{tail}}=1\). On the energy event
\(\sum_{j=1}^bs_j(\mathsf M)^2\le K_1hb\), any frame of \(b\) small
singular directions has a net approximant with image norm at most
\(a\sqrt{hb}\), provided \(\eta\sqrt{K_1}\le a/2\). Fix such a net and
choose \(\alpha_{\mathrm{sing}}\) so that
\({m_{\mathrm{col}}}\log(1+2/\eta)\le h\). Its size is then at most
\(e^{hb}\). The union bound over the net and the energy complement prove
\eqref{l:eq:G.5}. \end{proof}

\paragraph{Proof of the capped coordinate-weight
bound}\label{l:proof-of-the-capped-coordinate-weight-bound}

We separate small singular directions from the rest. Their contribution
to the total coordinate mass is at most their number; the remaining
directions are controlled by independent row energies. This is why the
preceding lemma estimates a count, rather than only the least singular
value.

\begin{proof}[\proofname\ of Lemma \ref{l:thm:GE}]\textbf{Reduce to two exponential
moments.} Write
\(s_1(\mathsf M)\ge\cdots\ge s_{m_{\mathrm{col}}}(\mathsf M)\ge0\) for
the singular values, with orthonormal left and right singular vectors
\(\mathbf u_j\in\mathbb R^h\), \(\mathbf v_j\in\mathbb R^{m_{\mathrm{col}}}\),
satisfying \(\mathsf M\mathbf v_j=s_j\mathbf u_j\). Take
\(a_{\mathrm{sing}},c_{\mathrm{sing}},\alpha_{\mathrm{sing}}\) from
Lemma \ref{l:thm:SVAL}. Write \[
 N_{\mathrm{small}}=\#\{j:s_j(\mathsf M)<a_{\mathrm{sing}}\sqrt h\},
 \qquad q_i=\sum_{j=1}^{m_{\mathrm{col}}}\mathsf M_{ij}^2.
\] Define the contribution of the small positive singular directions by
\[
 b_i:=\sum_{\substack{1\le j\le {m_{\mathrm{col}}}\\0<s_j<a_{\mathrm{sing}}\sqrt h}}
                  |(\mathbf u_j)_i|^2,\qquad 1\le i\le h.
\] This is the \(i\)-th diagonal of the orthogonal projection onto their
span. Since every \(\mathbf u_j\) has norm one, \[
 \begin{aligned}
 \sum_{i=1}^h b_i
 &=\sum_{\substack{1\le j\le {m_{\mathrm{col}}}\\0<s_j<a_{\mathrm{sing}}\sqrt h}}
                         \sum_{i=1}^h|(\mathbf u_j)_i|^2\\
 &=\#\{1\le j\le {m_{\mathrm{col}}}:0<s_j<a_{\mathrm{sing}}\sqrt h\}
 \le N_{\mathrm{small}}.
 \end{aligned}
\] The inequality allows for zero singular values, which are counted by
\(N_{\mathrm{small}}\) but do not contribute to the range.

The left singular vectors with \(s_j>0\) form an orthonormal basis of
\(\operatorname{Ran}\mathsf M\). Consequently, \[
 P_{\operatorname{Ran}\mathsf M}
   =\sum_{\substack{1\le j\le {m_{\mathrm{col}}}\\s_j>0}}\mathbf u_j\mathbf u_j^T,
 \qquad
 \ell_i=b_i+\sum_{\substack{1\le j\le {m_{\mathrm{col}}}\\s_j\ge a_{\mathrm{sing}}\sqrt h}}
                               |(\mathbf u_j)_i|^2.
\] For these remaining directions, substitute
\((\mathbf u_j)_i=(\mathsf M\mathbf v_j)_i/s_j\) and use the lower bound on \(s_j\): \[
 \begin{aligned}
 \ell_i-b_i
 &=\sum_{\substack{1\le j\le {m_{\mathrm{col}}}\\s_j\ge a_{\mathrm{sing}}\sqrt h}}
                   \frac{|(\mathsf M\mathbf v_j)_i|^2}{s_j^2}\\
 &\le\frac{1}{a_{\mathrm{sing}}^2h}
       \sum_{\substack{1\le j\le {m_{\mathrm{col}}}\\s_j\ge a_{\mathrm{sing}}\sqrt h}}
                   |(\mathsf M\mathbf v_j)_i|^2\\
 &\le\frac{1}{a_{\mathrm{sing}}^2h}
                   \sum_{q=1}^{m_{\mathrm{col}}}|\mathsf M_{iq}|^2
 =\frac{q_i}{a_{\mathrm{sing}}^2h}.
 \end{aligned}
\] The last inequality is Bessel's inequality applied to the \(i\)-th
row: \((\mathsf M\mathbf v_j)_i\) is its component along \(\mathbf v_j\), and the
squared components along any subset of these orthonormal directions sum
to at most the squared row norm \(q_i\). Using
\(\min(x+y,\delta)^{3/2}\le
C\sqrt\delta\,x+C\min(y,\delta)^{3/2}\), we obtain \begin{equation}
\sum_{i=1}^h\min(\ell_i,\delta)^{3/2}
 \le C\sqrt\delta N_{\mathrm{small}}
       +C\sum_{i=1}^h\min(q_i/(a_{\mathrm{sing}}^2h),\delta)^{3/2}.
 \label{l:eq:G.7}
\end{equation} Set \(D_\theta=2C\theta\) and define \[
 g_h(y)=D_\theta h\min\left\{\frac{y}{a_{\mathrm{sing}}^2h},\delta\right\}^{3/2}.
\] Cauchy--Schwarz, followed by independence of the rows, now gives \[
 \begin{aligned}
 \log\mathbb E e^{\theta h\sum_{i=1}^h\min(\ell_i,\delta)^{3/2}}
 &\le\frac12\log\mathbb E e^{D_\theta h\sqrt\delta N_{\mathrm{small}}}
       +\frac12\sum_{i=1}^h\log\mathbb E e^{g_h(q_i)}.
 \end{aligned}
\] We control the singular-count term first, then the row terms.

\Needspace{6\baselineskip}
\textbf{The singular-count term.} Choose \(\delta\) small enough that
\(D_\theta\sqrt\delta\le c_{\mathrm{sing}}/2\). Summing \eqref{l:eq:G.5} over
integer levels gives \[
 \mathbb E e^{D_\theta h\sqrt\delta N_{\mathrm{small}}}
 \le1+3\sum_{b=1}^{m_{\mathrm{col}}} e^{-c_{\mathrm{sing}}hb/2}
 \le1+C_\mu e^{-c_\mu h}.
\] Thus the first logarithmic moment is \(O_\mu(e^{-c_\mu h})\).

\Needspace{6\baselineskip}
\textbf{The row terms and conclusion.} A common square-exponential
moment gives \(\lambda_\mu,K_{\mathrm{row}}>0\) such that \[
 \mathbb P\{q_i>y\}\le e^{-\lambda_\mu y/2}
 \qquad(y\ge K_{\mathrm{row}}{m_{\mathrm{col}}}).
\] Decreasing \(\delta\) if necessary ensures
\(g_h(y)\le\lambda_\mu y/4\); also
\(0\le g_h'(y)\le C_{\mu,\theta}\sqrt{y/h}\) almost everywhere. With
\(r_0=K_{\mathrm{row}}{m_{\mathrm{col}}}\), tail integration gives \[
 \begin{aligned}
 \mathbb E e^{g_h(q_i)}
 &\le e^{g_h(r_0)}+
       \int_{r_0}^{\infty}g_h'(y)e^{g_h(y)}\mathbb P\{q_i>y\}\,dy\\
 &\le \exp\{C_{\mu,\theta}{m_{\mathrm{col}}}^{3/2}/\sqrt h\}
       +\frac{C_{\mu,\theta}}{\sqrt h}
          \int_0^\infty\sqrt y\,e^{-\lambda_\mu y/4}\,dy.
 \end{aligned}
\] The integral is finite. Since \({m_{\mathrm{col}}}\ge1\) and
\(\log(e^x+u)\le x+u\) for \(x,u\ge0\), this implies
\(\log\mathbb E e^{g_h(q_i)}\le C_{\mu,\theta}{m_{\mathrm{col}}}^{3/2}/\sqrt h\).
Substitution into the initial moment decomposition gives a total of
\(C_{\mu,\theta}\sqrt h\,{m_{\mathrm{col}}}^{3/2}+O_\mu(e^{-c_\mu h})\).
Since \(\sqrt h\,{m_{\mathrm{col}}}^{3/2}\ge1\), the remainder is
absorbed. Taking \(\alpha_\theta\le\alpha_{\mathrm{sing}}\) and
\(\delta_\theta\) small enough for both cap conditions proves \eqref{l:eq:G.6}.
\end{proof}
\subsection{Proof of the observation
comparison}\label{l:sec:real_comparison_analysis}

We return to the general observation model of
Subsection~\ref{l:the-observation-comparison-theorem} and prove
Theorem~\ref{l:thm:AC}. Fix the auxiliary input \(c\), and write
\(L_c=dP_c/dG_0\) for the response likelihood and \(U_c\) for the frame
determined by the responses. The coarse comparison follows from the
exact likelihood, the projection-density bound \eqref{l:eq:G.1}, and
the energy estimate \eqref{l:eq:G.4}.

For the weighted comparison, the tilted density bound and a coupling
in the query order yield the entropy estimate of
Lemma~\ref{l:thm:ENT}: under its hypotheses,
\[
 \mathbb E_M\log L_c
 \le C_{\mathrm{ent}}d\mathbb E_M\Theta(U_c)
       +(1-\lambda)H(M\mid G_0),\qquad 0<\lambda<1.
\] Here \(M\) is an auxiliary law on response records and
\(H(M\mid G_0)=\int\log(dM/dG_0)\,dM\) is relative entropy.
For the event comparison, \(M\) is the original response law conditioned
on the event, with likelihood truncation when needed. The coefficient
\(1-\lambda<1\) leaves a negative entropy term and converts the average
coordinate-weight cost into a Gaussian exponential integral.
Subsection~\ref{l:sec:real_comparison_proof} derives this consequence;
Subsection~\ref{l:sec:entropy_gain} proves the entropy estimate from the
density bound and the coupling. Neither argument uses the stopping
estimate or the original-law link estimate.

\subsubsection{The exact observation
likelihood}\label{l:the-exact-observation-likelihood}

Fix the auxiliary input \(c\). The response laws are \(P_c,G_0\); the
symbols \(P,G\) continue to denote the joint laws above. Write
\(U_c(\mathbf z)=U(c,\mathbf z)\) for the query frame generated along
the response record \(\mathbf z\). Every such history is realizable: the
deterministic matrix \(X_{\mathbf z}=\mathbf zU_c(\mathbf z)^T\)
produces it, since \(X_{\mathbf z}\mathbf u_a(c,\mathbf z_{<a})=\mathbf z_a\) at
each step. The observation imposes \(XU_c(\mathbf z)=\mathbf z\);
\(X_{\mathbf z}\) is just the particular realization with all unobserved
coordinates set to zero. For a deterministic orthonormal frame
\(U\in\mathbb R^{h\times k}\) and \(y\in\mathbb R^k\), define the
density by its fiber integral \begin{equation}
p_U(y)=\int_{(\operatorname{Ran}U)^\perp}
                  \prod_{j=1}^h f((Uy+w)_j)\,dw.
 \label{l:eq:A.4}
\end{equation} A Borel orthogonal completion supplies measurable fiber coordinates;
the integral is independent of the completion. We use this specific
density representative throughout.

\begin{lemma}[Exact observation likelihood]\label{l:thm:LIKE}

For a response matrix \(\mathbf z\in\mathbb R^{d\times k}\), write
\(\mathbf z^{(i)}=(z_{i1},\ldots,z_{ik})^T\in\mathbb R^k\) for its
\(i\)-th row as a column vector. The response likelihood
\(L_c=dP_c/dG_0\) is \begin{equation}
L_c(\mathbf z)=\prod_{i=1}^d\frac{p_{U_c(\mathbf z)}(\mathbf z^{(i)})}{\phi_k(\mathbf z^{(i)})},
 \qquad\phi_k(y)=(2\pi)^{-k/2}e^{-\|y\|^2/2}.
 \label{l:eq:A.5}
\end{equation} \end{lemma}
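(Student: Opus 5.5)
We compute the law of the response record sequentially in the query order, for a fixed input \(c\). The plan is to show that for every bounded Borel test function \(\Phi\) on \(\mathbb R^{d\times k}\),
\[
 \mathbb E\,\Phi(Y)=\int\Phi(\mathbf z)\prod_{i=1}^d p_{U_c(\mathbf z)}(\mathbf z^{(i)})\,d\mathbf z,
\]
where \(d\mathbf z\) is Lebesgue measure on \(\mathbb R^{d\times k}\). Dividing by the Gaussian density \(\prod_i\phi_k(\mathbf z^{(i)})\) then gives \eqref{l:eq:A.5}. The same computation with \(f=\phi_1\) gives \(p_U=\phi_k\), since an orthogonal change of variables preserves standard Gaussian measure. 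Hence under Gaussian \(X\) the responses have law \(G_0\), independently of \(c\). This is the fresh-Gaussian assertion used in Lemma~\ref{l:thm:RECONPROP}(3) and Definition~\ref{l:thm:OBSLAW}.

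\textbf{Step 1: a change of variables on the whole matrix.} The rows of \(X\) are independent with density \(\prod_j f(x_j)\) on \(\mathbb R^h\). Fix a Borel orthogonal completion \(W(U)\in\mathbb R^{h\times(h-k)}\) of each orthonormal frame \(U\); for example, apply Gram--Schmidt to the coordinate vectors with a deterministic tie rule, as in Algorithm~\ref{l:thm:ARN}. Map \(X\mapsto(\mathbf z,\mathbf w)\) with
\[
 \mathbf z_a=X\mathbf u_a(c,\mathbf z_{<a}),\qquad \mathbf w=XW(U_c(\mathbf z)).
\]
The inverse map is \(X=\mathbf zU_c(\mathbf z)^T+\mathbf wW(U_c(\mathbf z))^T\). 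Both maps are Borel, because the query rule is Borel and \(U_c(\mathbf z)\) depends only on \(\mathbf z\). The key point is the Jacobian. The \(a\)-th response column \(\mathbf z_a\) depends on \(X\) and on earlier responses only through \(\mathbf u_a\), which is a function of \(\mathbf z_{<a}\). The map is therefore triangular in the order \(\mathbf z_1,\ldots,\mathbf z_k,\mathbf w\): fixing \(\mathbf z_{<a}\), the map \(X\mapsto X\mathbf u_a\) is just a coordinate of \(X\) in an orthonormal basis.

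To make this rigorous without a global smoothness argument, I would integrate the columns one at a time. First condition on \(\mathbf z_{<a}\). Then use the fact that for a fixed orthonormal frame, the coordinates \(X[U\;W]\) form an orthogonal change of variables with unit Jacobian for each row. Tonelli and induction on \(a\) then give
\[
 \mathbb E\,\Phi(Y)=\int\Phi(\mathbf z)\int\prod_{i=1}^d\prod_j f\bigl((U_c(\mathbf z)\mathbf z^{(i)}+W\mathbf w^{(i)})_j\bigr)\,d\mathbf w\,d\mathbf z .
\]

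\textbf{Step 2: the fiber integrals.} The inner integral factorizes over rows \(i\). Each factor is exactly the fiber integral \(p_{U_c(\mathbf z)}(\mathbf z^{(i)})\) from \eqref{l:eq:A.4}, because \(w\mapsto W\mathbf w^{(i)}\) parametrizes \((\operatorname{Ran}U)^\perp\) isometrically. Independence of this integral from the choice of completion follows from the rotation invariance of Lebesgue measure on \((\operatorname{Ran}U)^\perp\).

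\textbf{Main obstacle.} The delicate point is Step 1's sequential change of variables. The frame varies with \(\mathbf z\), so a direct Jacobian computation on the full map is awkward, and measurability has to be tracked throughout. I would handle this by the induction described above. At step \(a\), freeze \(\mathbf z_{<a}\), hence \(\mathbf u_1,\ldots,\mathbf u_a\). Complete these to an orthonormal basis by a Borel rule, rotate each row, and integrate out the new coordinate \(\mathbf z_a\) while keeping the remaining orthogonal coordinates as the new unobserved variables. Orthogonality of \(\mathbf u_a\) to the earlier directions guarantees that the previously fixed coordinates are unchanged. At the end, the unobserved coordinates span \((\operatorname{Ran}U)^\perp\), and Tonelli assembles the pieces. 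Since the formula holds pointwise with the chosen representative \(f\), no null-set issues arise.
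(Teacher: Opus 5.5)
Your proposal is correct and follows essentially the same route as the paper: with the response history frozen, you rotate the unrevealed coordinates of each source row so that the next query becomes a coordinate, and this is measure-preserving rowwise by Tonelli. Integrating over the unobserved coordinates of the affine fiber \(U_c(\mathbf z)\mathbf z^{(i)}+(\operatorname{Ran}U_c(\mathbf z))^\perp\) then yields \(p_{U_c(\mathbf z)}\), with \(p_U=\phi_k\) in the Gaussian case; the global map \(X\mapsto(\mathbf z,\mathbf w)\) of your Step 1 is just the composition of the paper's stepwise rotations \(O_a(\mathbf r)\), so your induction is exactly its proof.
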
 \begin{proof} At step \(a\), let
\(\mathbf r\in\mathbb R^{d\times(a-1)}\) be the preceding response
matrix and \(\mathbf V\in\mathbb R^{d\times(h-a+1)}\) the unrevealed
coordinates of the source rows. Using a fixed Borel completion, the next
query determines an orthogonal matrix
\(O_a(\mathbf r)\in\mathbb R^{(h-a+1)\times(h-a+1)}\) whose first row is
that query's coordinate vector in the remaining subspace. The change of
variables is \[
 (\mathbf r,\mathbf V)\longmapsto
       (\mathbf r,\mathbf V O_a(\mathbf r)^T).
\] The first column of the transformed array is the next response. For
each fixed history \(\mathbf r\), the map rotates each row of
\(\mathbf V\) orthogonally. Tonelli therefore shows that it preserves
the full Lebesgue integral, even for Borel update rules. After \(k\)
queries, \(U_c(\mathbf z)\) has \(k\) orthonormal columns. Fixing the
responses \(\mathbf z\) leaves each source row \(\mathbf x_i=X_{i,:}^T\)
in the \((h-k)\)-dimensional affine fiber \[
 \mathbf x_i=U_c(\mathbf z)\mathbf z^{(i)}+\mathbf w_i,\qquad
 \mathbf w_i\in(\operatorname{Ran}U_c(\mathbf z))^\perp.
\] Integrating over the unobserved coordinates \(\mathbf w_i\), rather
than fixing them to zero, gives the marginal response density and hence
\eqref{l:eq:A.4} and \eqref{l:eq:A.5}. For Gaussian sources, \(p_U=\phi_k\) pointwise, so
the responses are independent standard Gaussians. \end{proof}

\subsubsection{A pointwise bound for the fiber
density}\label{l:sec:real_fourier}

The likelihood formula evaluates the fiber density at a frame chosen
from the responses. We therefore need a bound for the specified
representative at every frame and point. We obtain it by tilting the
scalar entry densities and estimating the resulting Fourier integral.

Centering and the square-exponential moment imply, for some
\(v_\mu\ge1\), \begin{equation}
\Lambda_\mu(t)=\log\mathbb E e^{t\xi},\qquad
 0\le\Lambda_\mu(t)\le v_\mu t^2/2.
 \label{l:eq:A.6}
\end{equation}

\begin{lemma}[Pointwise density bound with coordinate-weight error]\label{l:thm:ENV}

There are \(C_{\mathrm{env}},\delta_{\mathrm{env}}>0\), depending only
on \(\mu\), such that, for every \(U\in\mathbb R^{h\times k}\) with
\(U^TU=\mathrm{Id}_k\) and coordinate weights
\(\ell_j=\sum_{a=1}^k U_{ja}^2\le\delta_{\mathrm{env}}\) for \(1\le j\le h\),
every \(y\in\mathbb R^k\), and
every \(\mathbf t=(t_1,\ldots,t_h)^T\in\operatorname{Ran}U\), \begin{equation}
\log p_U(y)\le-\tfrac k2\log(2\pi)-\langle\mathbf t,Uy\rangle
 +\sum_{j=1}^h\Lambda_\mu(t_j)
 +C_{\mathrm{env}}\sum_{j=1}^h(\ell_j^{3/2}+\sqrt{\ell_j}t_j^2).
 \label{l:eq:A.7}
\end{equation} \end{lemma}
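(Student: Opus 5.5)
The plan is an exponential tilt of each scalar density combined with Fourier inversion on the $k$-dimensional observation space. Fix $\mathbf t\in\operatorname{Ran}U$ and define the tilted densities
\[
 f_{t}(x)=f(x)e^{tx-\Lambda_\mu(t)},
\]
which are again probability densities. For $x=Uy+w$ with $w\perp\operatorname{Ran}U$, the identity $\langle\mathbf t,x\rangle=\langle\mathbf t,Uy\rangle$ holds because $\mathbf t\in\operatorname{Ran}U$. Hence
\[
 \prod_j f(x_j)=e^{-\langle\mathbf t,Uy\rangle+\sum_j\Lambda_\mu(t_j)}\prod_j f_{t_j}(x_j),
\]
and therefore
\[
 p_U(y)=e^{-\langle\mathbf t,Uy\rangle+\sum_j\Lambda_\mu(t_j)}\,p_U^{\mathbf t}(y),
\]
where $p_U^{\mathbf t}$ is the fiber integral \eqref{l:eq:A.4} computed with the tilted densities. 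It thus suffices to prove
\[
 \log p_U^{\mathbf t}(y)\le-\tfrac k2\log(2\pi)+C\sum_j\bigl(\ell_j^{3/2}+\sqrt{\ell_j}t_j^2\bigr)
\]
uniformly in $y$. Here $p_U^{\mathbf t}$ is the density of $U^T\mathbf x$, where $\mathbf x$ has independent coordinates with laws $f_{t_j}$. Its value at the specific representative is controlled via Lemma~\ref{l:thm:GP}, which bounds the fiber integral everywhere.

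Next, bound the sup of the density of $S=U^T\mathbf x=\sum_j x_j U_{j,:}^T$ by Fourier inversion:
\[
 p^{\mathbf t}_U(y)\le(2\pi)^{-k}\int_{\R^k}\prod_j|\widehat f_{t_j}(\langle U_{j,:},\omega\rangle)|\,d\omega .
\]
This requires integrability of the characteristic function, which I would first obtain by convolving with a tiny Gaussian and passing to the limit. The fiber bound \eqref{l:eq:G.1} makes the density bounded and lets the smoothing converge at every point, not only almost everywhere; uniformity of \eqref{l:eq:G.1} in the tilt holds because $f_t\le K_fe^{|t|\cdot|x|}$ only locally, so some care is needed.

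Split the frequency domain at $|\omega|\le\rho$ for a small constant $\rho$. Each projection satisfies $|\langle U_{j,:},\omega\rangle|\le\sqrt{\ell_j}|\omega|$, and this is small on the low-frequency region since $\ell_j\le\delta_{\mathrm{env}}$. In that region I would expand $\log\widehat f_{t}(s)$ around $s=0$. The tilted mean is $\Lambda_\mu'(t)$ and the tilted variance is $\Lambda_\mu''(t)=1+O(|t|)$. The cubic remainder is $O(|s|^3)$, uniformly for bounded $t$.

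To handle the sum $\sum_j$, I would use
\[
 \sum_j\langle U_{j,:},\omega\rangle^2=|\omega|^2
\]
together with the weighted bounds
\[
 \sum_j|\langle U_{j,:},\omega\rangle|^3\le|\omega|^3\max_j\sqrt{\ell_j}\quad\text{and}\quad\sum_j|t_j|\,\langle U_{j,:},\omega\rangle^2 .
\]
This gives
\[
 |\prod_j\widehat f_{t_j}|\le\exp\Bigl(-\tfrac12|\omega|^2+C\sum_j\bigl(|t_j|+|\langle U_{j,:},\omega\rangle|\bigr)\langle U_{j,:},\omega\rangle^2\Bigr).
\]
The Gaussian integral $(2\pi)^{-k}\int e^{-|\omega|^2/2}=(2\pi)^{-k/2}$ produces the main term. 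The error terms must be converted into the $\sum_j(\ell_j^{3/2}+\sqrt{\ell_j}t_j^2)$ form. For this I would average the quadratic form under the Gaussian weight, using $\E\langle U_{j,:},g\rangle^2=\ell_j$ and $\E|\langle U_{j,:},g\rangle|^3\lesssim\ell_j^{3/2}$. Then I would use $|t_j|\ell_j\le\sqrt{\ell_j}(t_j^2+\ell_j)$ and Jensen/Hölder on the exponent. Large $|t_j|$ (where the expansion fails) I would treat separately: there the crude bound $|\widehat f_t|\le1$ costs only the factor $e^{\ell_j|\omega|^2/2}$, which is cheaply dominated by $\sqrt{\ell_j}t_j^2$ once $|t_j|\gtrsim1$.

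The high-frequency region $|\omega|>\rho$ is the main obstacle, since no smoothness of $f$ is assumed and $\widehat f$ need not decay. There I would not use Fourier decay at all. Instead I would compare directly with the pointwise fiber bound. Write $p^{\mathbf t}_U$ as a Gaussian-smoothed density plus an error. Alternatively, exploit that for $\ell_j\le\delta$, the vector $U^T\mathbf x$ is a sum of many small independent pieces, and group coordinates into blocks each with total weight of order $\rho^{-2}$. Each block's contribution has a characteristic function bounded by $1-c$ at frequencies $|\omega|\ge\rho$, by the bounded density (a non-lattice, Riemann–Lebesgue-type uniform estimate on the tilted family). The product over many blocks then decays like $e^{-ck}$ per unit volume. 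The delicate part is making this decay beat $e^{C k}$-sized volume factors uniformly in $\mathbf t$, while only paying $\sqrt{\ell_j}t_j^2$. That step is where I expect most of the work.
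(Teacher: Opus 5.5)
Your tilting step (the factor $e^{-\langle\mathbf t,Uy\rangle+\sum_j\Lambda_\mu(t_j)}$ is constant on each fiber because $\mathbf t\in\operatorname{Ran}U$) matches the paper. So does the reduction to showing $\int_{\mathbb R^k}\prod_j|\chi_{t_j}(U_{j,:}\omega)|\,d\omega\le(2\pi)^{k/2}e^{C\sum_j(\ell_j^{3/2}+\sqrt{\ell_j}t_j^2)}$, where $\chi_t=\widehat{f_t}$.

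\textbf{The gap: the frequency decomposition.} Splitting at $|\omega|\le\rho$ with $\rho$ a fixed constant puts the cut in the wrong place.
\begin{itemize}
\item The Gaussian main term $\int e^{-|\omega|^2/2}\,d\omega=(2\pi)^{k/2}$ lives at $|\omega|\asymp\sqrt k$. The ball of radius $\rho$ carries only a super-exponentially small fraction of it; for Gaussian $f$ the integrand is exactly $e^{-|\omega|^2/2}$.
\item Hence your ``high-frequency'' region must produce the full main term with relative error $e^{C\sum_j(\ell_j^{3/2}+\sqrt{\ell_j}t_j^2)}$. With equal weights and $\mathbf t=0$ this error is $e^{Ck\sqrt{k/h}}$. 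No argument where decay $e^{-ck}$ beats volume factors $e^{Ck}$ can deliver so sharp a constant.
\item The block claim also fails. A block's characteristic function is close to $1$ whenever $\omega$ is nearly orthogonal to that block's rows, whatever $|\omega|$ is.
\item Your low-frequency bookkeeping has the same defect. Tying the remainders to $|\omega|$, as in $\sum_j|\langle U_{j,:},\omega\rangle|^3\le\sqrt{\max_j\ell_j}\,|\omega|^3$, is far too lossy at $|\omega|\asymp\sqrt k$. Jensen also goes the wrong way for an upper bound on $\int e^{X}$.
\end{itemize}

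\textbf{The missing idea: decouple coordinatewise before splitting.} With $B_j=U_{j,:}/\sqrt{\ell_j}$ one has $\sum_j\ell_jB_j^TB_j=\mathrm{Id}_k$, so geometric Brascamp--Lieb gives
\[
 \int_{\mathbb R^k}\prod_j|\chi_{t_j}(U_{j,:}\omega)|\,d\omega
 \le\prod_{j:\ell_j>0}\Bigl(\int_{\mathbb R}|\chi_{t_j}(\sqrt{\ell_j}\,s)|^{1/\ell_j}\,ds\Bigr)^{\ell_j}.
\]
The split is then made separately in each scalar integral, at $|s|\le u_0/\sqrt{\ell_j}$.
\begin{itemize}
\item \emph{Local part.} The symmetrized cosine bound $|\chi_t(u)|^2\le e^{-v(t)u^2+Cu^4}$ gives $\sqrt{2\pi}\,(v(t)^{-1/2}+O(\ell_j))$. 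No cubic term appears in the modulus.
\item \emph{Tail, small $|t|$.} No smoothness is needed, and your claim that $\widehat f$ need not decay is wrong. Riemann--Lebesgue and absolute continuity give $\sup_{|u|\ge u_0}|\chi_t(u)|\le\rho_\mu<1$, using $\|\chi_t-\chi_0\|_\infty\le\|f_t-f\|_1$. The bounded density gives $\|\chi_t\|_2^2\le C_\mu e^{C_\mu t^2}$ by Plancherel. The power $1/\ell_j\ge2$ then makes the tail $O(\ell_j)$.
\item \emph{Tail, $|t|>t_0$.} The crude bound $C\ell_j^{-1/2}e^{Ct^2}$ suffices.
\item \emph{Assembly.} Multiplying the logarithms by $\ell_j$ turns these into $O(\ell_j^2+\ell_j|t_j|)\le O(\ell_j^{3/2}+\sqrt{\ell_j}t_j^2)$. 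The identity $\sum_j\ell_j=k$ then yields exactly $(2\pi)^{k/2}$.
\end{itemize}

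\textbf{A secondary gap: the bound must hold at every $y$.} For the specified representative, boundedness is not enough for Gaussian smoothing to converge pointwise; you need continuity. Moreover, \eqref{l:eq:G.1} does not apply to $f_t$, which need not be bounded. The paper proves continuity from the kernel-side Brascamp--Lieb estimate with $L^{r_j}$ norms, $r_j=(1-\ell_j)^{-1}$, together with translation continuity in $L^{r_j}$.
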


We prove the bound directly for \eqref{l:eq:A.4}, with \(\log0=-\infty\). The
scalar estimate below retains the exact Gaussian normalization;
geometric Brascamp--Lieb then combines the source coordinates.

\paragraph{A scalar tilted Fourier
estimate}\label{l:a-scalar-tilted-fourier-estimate}

Define the tilted density and its characteristic function by \[
 f_t(x)=e^{tx-\Lambda_\mu(t)}f(x),\qquad
 \chi_t(u)=\int e^{iux}f_t(x)\,dx.
\]
The next lemma supplies the scalar factors for geometric
Brascamp--Lieb: we will apply it with \(c=\ell_j\) and \(t=t_j\), the
coordinate weight and the corresponding component of \(\mathbf t\).

\Needspace{12\baselineskip}

\begin{lemma}[Normalized tilted Fourier integral]\label{l:thm:FTILT}

There exist
\(\delta_{\mathrm{Four}}=\delta_{\mathrm{Four}}(\mu)\in(0,1/4]\) and
\(C_{\mathrm{Four}}=C_{\mathrm{Four}}(\mu)>0\) such that the
characteristic functions \(\chi_t\) defined above satisfy \begin{equation}
c\log\left[(2\pi)^{-1/2}
        \int_{\mathbb R}|\chi_t(\sqrt c\,z)|^{1/c}\,dz\right]
 \le C_{\mathrm{Four}}(c^{3/2}+\sqrt c\,t^2),
 \quad 0<c\le\delta_{\mathrm{Four}},\quad t\in\mathbb R.
 \label{l:eq:F.1}
\end{equation} \end{lemma}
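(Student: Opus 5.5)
The plan is to split the frequency integral in \eqref{l:eq:F.1} into a small-frequency region, where the tilted characteristic function is nearly Gaussian, and a large-frequency region, where it is uniformly bounded away from one. For large tilts $|t|$ a crude $L^2$ bound is enough. Throughout I use only \eqref{l:eq:M.1}: bounded density, centering, unit variance and square-exponential moment. I also use \eqref{l:eq:A.6} and the identities $\Lambda_\mu'(t)=\E_{f_t}x$ and $\Lambda_\mu''(t)=\operatorname{Var}_{f_t}(x)=:\sigma_t^2$. Write $\mathcal J_c(t)=(2\pi)^{-1/2}\int_{\mathbb R}|\chi_t(\sqrt c\,z)|^{1/c}\dd z$. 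The goal is $\log\mathcal J_c(t)\le C(\sqrt c+t^2/\sqrt c)$.

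\textbf{Large tilts, $|t|\ge t_0$.} Fix $t_0>0$ (chosen below). Since $|\chi_t|\le1$ and $1/c\ge2$, we have $|\chi_t|^{1/c}\le|\chi_t|^2$. Plancherel then gives
\[
\int|\chi_t(\sqrt c z)|^2\dd z=2\pi c^{-1/2}\int f_t^2 .
\]
Bounding one factor of $f$ by $K_f$, I get
\[
\int f_t^2\le K_f\int e^{2tx-2\Lambda_\mu(t)}f\dd x=K_fe^{\Lambda_\mu(2t)-2\Lambda_\mu(t)}\le K_fe^{2v_\mu t^2},
\]
using $\Lambda_\mu\ge0$. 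Hence $\log\mathcal J_c(t)\le C+\tfrac12\log(1/c)+2v_\mu t^2$. Since $t^2\ge t_0^2$ and $\log(1/c)\le C/\sqrt c$, each term is at most $C_{t_0}t^2/\sqrt c$ for $c\le\delta_{\mathrm{Four}}$. Multiplying by $c$ gives the claim in this range.

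\textbf{Small tilts, $|t|\le t_0$: uniform local expansion.} The square-exponential moment makes the tilted laws have uniformly bounded third absolute moments for $|t|\le t_0$. Also $\sigma_t^2=1+O(|t|)\in[1/2,2]$ once $t_0$ is small. A third-order Taylor expansion of $\log\chi_t$ near $0$ then gives, for $|u|\le u_0$,
\[
|\chi_t(u)|\le\exp(-\sigma_t^2u^2/2+C_3|u|^3),
\]
with $u_0,C_3$ uniform in $|t|\le t_0$. I shrink $u_0$ so that $C_3u_0\le\sigma_t^2/4$. For $|z|\le u_0/\sqrt c$ the integrand is then at most $\exp(-\sigma_t^2z^2/2+C_3\sqrt c|z|^3)$, and $C_3\sqrt c|z|^3\le\sigma_t^2z^2/4$ there. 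Using $e^x-1\le xe^x$, the correction contributes at most $C\sqrt c\int|z|^3e^{-\sigma_t^2z^2/4}\dd z=O(\sqrt c)$. So this part of $\mathcal J_c(t)$ is at most $\sigma_t^{-1}(1+C\sqrt c)$.

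\textbf{Small tilts: high frequencies.} This is the step I expect to be the main obstacle. I need $\gamma>0$ with $\sup_{|t|\le t_0,\ |u|\ge u_0}|\chi_t(u)|\le1-\gamma$. A density rules out $|\chi_t(u)|=1$ for $u\neq0$. Riemann--Lebesgue handles large $|u|$ uniformly, because the $L^2$ bound above controls $\int|\chi_t|^2$ and the map $t\mapsto f_t$ is continuous in $L^1$ on $|t|\le t_0$. Compactness in $(t,u)$ on the remaining bounded region then gives the uniform constant. With $\gamma$ in hand,
\[
\int_{|\sqrt cz|\ge u_0}|\chi_t(\sqrt c z)|^{1/c}\dd z\le(1-\gamma)^{1/c-2}\cdot2\pi c^{-1/2}K_fe^{2v_\mu t_0^2},
\]
which is $O(\sqrt c)$ for small $c$.

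\textbf{Conclusion for small tilts.} Combining the two parts, $\log\mathcal J_c(t)\le-\log\sigma_t+C\sqrt c\le C|t|+C\sqrt c$. Therefore
\[
c\log\mathcal J_c(t)\le Cc|t|+Cc^{3/2}\le C(c^{3/2}+\sqrt c\,t^2),
\]
where the last step uses AM--GM in the form $c|t|=\sqrt{c^{3/2}\cdot\sqrt c\,t^2}\le\tfrac12(c^{3/2}+\sqrt c\,t^2)$. This proves \eqref{l:eq:F.1} after fixing $t_0$, $u_0$ and then $\delta_{\mathrm{Four}}\le1/4$ small.
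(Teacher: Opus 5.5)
Your proposal is correct and follows essentially the same route as the paper: Plancherel with $f^2\le K_f f$ for large tilts, then for small tilts a near-Gaussian comparison on $|z|\le u_0/\sqrt c$ together with a uniform spectral gap at high frequencies (absolute continuity, Riemann--Lebesgue, $L^1$-continuity of $t\mapsto f_t$), and AM--GM to finish. The only difference is that your cubic Taylor bound gives an $O(\sqrt c)$ correction instead of the $O(c)$ the paper gets from the symmetrized cosine/fourth-moment bound; this still suffices, because after multiplying by $c$ the correction becomes $c\cdot\sqrt c=c^{3/2}$, which the right-hand side allows.
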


\begin{proof}

Plancherel and \(f^2\le K_f f\) give \begin{equation}
\|\chi_t\|_2^2
 \le2\pi K_f e^{\Lambda_\mu(2t)-2\Lambda_\mu(t)}
 \le C_\mu e^{C_\mu t^2}.
 \label{l:eq:F.2}
\end{equation} On a sufficiently small fixed tilt interval \(|t|\le t_0\),
\(v(t)=\Lambda_\mu''(t)=1+O(|t|)\ge3/4\), and the tilted fourth moments
are uniformly bounded. These statements follow by differentiating under
the integral, dominated by the square-exponential moment on each compact
tilt interval. For independent \(Y,Y'\) of density \(f_t\), the
elementary cosine bound gives \[
 |\chi_t(u)|^2=\mathbb E\cos(u(Y-Y'))
 \le\exp\{-v(t)u^2+C_\mu u^4\}.
\] Choose a small fixed \(u_0>0\). On \(|z|\le u_0/\sqrt c\), comparison
with \(e^{-v(t)z^2/2}\) shows that the positive excess of
\(|\chi_t(\sqrt c z)|^{1/c}\) over this Gaussian is bounded by
\(C_\mu c z^4e^{-b_\mu z^2}\). To see the integrable decay, choose
\(u_0\) so the quartic remainder is at most a fixed fraction of the
quadratic term, then use \(e^v-1\le ve^v\). Its integral is \(O(c)\).

Absolute continuity implies \(|\chi_0(u)|<1\) for \(u\ne0\): equality
would force \(e^{iu\xi}\) to be constant almost surely, placing the law
on a countable set. The Riemann--Lebesgue lemma and compactness give
\(\sup_{|u|\ge u_0}|\chi_0(u)|<1\). Also
\(\|\chi_t-\chi_0\|_\infty\le\|f_t-f\|_1\to0\). Decreasing \(t_0\)
yields a common bound \(\rho_\mu<1\) on this region. By \eqref{l:eq:F.2}, the
remaining integral is at most \[
 c^{-1/2}\rho_\mu^{1/c-2}\|\chi_t\|_2^2=O(c).
\] The last assertion follows, for instance, by
\(e^{\lambda/c}\ge\lambda^2/(2c^2)\), \(\lambda=-\log\rho_\mu>0\). We
have proved \[
 (2\pi)^{-1/2}\int|\chi_t(\sqrt c z)|^{1/c}\,dz
 \le v(t)^{-1/2}+C_\mu c\le e^{C_\mu(c+|t|)}
 \qquad(|t|\le t_0).
\] Multiplying its logarithm by \(c\), and using \(c^2\le c^{3/2}\) and
\(c|t|\le(c^{3/2}+\sqrt c\,t^2)/2\), proves \eqref{l:eq:F.1} on this interval.

For \(|t|>t_0\), use \(|\chi_t|\le1\), \(1/c\ge2\) and \eqref{l:eq:F.2}: the same
normalized integral is at most \(C_\mu c^{-1/2}e^{C_\mu t^2}\). Its
logarithm times \(c\) is at most \(C'_\mu\sqrt c\,t^2\), since \(t_0>0\)
is fixed and \(c\log(1/\sqrt c)\le\sqrt c\). This completes \eqref{l:eq:F.1}.
\end{proof}

\paragraph{From the Fourier integral to every affine
fiber}\label{l:from-the-fourier-integral-to-every-affine-fiber}

Fourier inversion must recover the specified fiber integral at every
point. We establish its continuity for this purpose, then bound the
inverse integral and undo the tilt.

\begin{proof}[Proof of Lemma \ref{l:thm:ENV}]Fix \(U\in\mathbb R^{h\times k}\) with
\(U^TU=\mathrm{Id}_k\), \(\ell_j\le\delta_{\mathrm{Four}}\) for \(1\le j\le h\), and
\(\mathbf t\in\operatorname{Ran}U\). The tilted fiber density is \[
 p_{U,\mathbf t}(y)=\int_{\ker U^T}
             \prod_{j=1}^h f_{t_j}((Uy+w)_j)\,dw.
\]

\Needspace{6\baselineskip}
\textbf{Continuity of the fiber density.} Write \(r_j=(1-\ell_j)^{-1}\) for \(1\le j\le h\).
Boundedness of \(f\) gives \(\int f_{t_j}^{r_j}\le K_f^{r_j-1}
 e^{\Lambda_\mu(r_jt_j)-r_j\Lambda_\mu(t_j)}<\infty\). The
kernel-coordinate Brascamp--Lieb calculation in the proof of Lemma \ref{l:thm:GP},
applied to the telescoping difference of two products, gives \[
 \begin{aligned}
 |p_{U,\mathbf t}(y+v)-p_{U,\mathbf t}(y)|
 &\le C_U\sum_{j=1}^h
 \|f_{t_j}(\cdot+(Uv)_j)-f_{t_j}\|_{r_j}
       \prod_{\substack{1\le i\le h\\i\ne j}}\|f_{t_i}\|_{r_i},\\
 C_U&=\prod_{j=1}^h(1-\ell_j)^{-(1-\ell_j)/2}.
 \end{aligned}
\] The same inequality bounds each fiber integral by
\(C_U\prod_{j=1}^h\|f_{t_j}\|_{r_j}<\infty\). Translation continuity in
\(L^{r_j}\), with \(1\le r_j\le4/3\), therefore proves continuity at
every \(y\). This auxiliary bound is used only for continuity.

\Needspace{6\baselineskip}
\textbf{The Fourier bound.} Omit zero rows, whose
characteristic-function factors are \(|\chi_{t_j}(0)|=1\); products in
the following Brascamp--Lieb calculation run over \(1\le j\le h\) with
\(\ell_j>0\). Normalize each remaining row by setting
\(B_j=U_{j,:}/\sqrt{\ell_j}:\mathbb R^k\to\mathbb R\). Then \[
 B_jB_j^T=1,\qquad
 \sum_{\substack{1\le j\le h\\\ell_j>0}}\ell_j B_j^TB_j=\sum_{j=1}^hU_{j,:}^TU_{j,:}
 =U^TU=\mathrm{Id}_k,\qquad \sum_{j=1}^h\ell_j=k.
\] Thus the geometric Brascamp--Lieb weights are precisely \(\ell_j\).
For each \(1\le j\le h\) with \(\ell_j>0\), define \[
 H_j(s)=|\chi_{t_j}(\sqrt{\ell_j}\,s)|^{1/\ell_j},
 \qquad I_j=\int_{\mathbb R}H_j(s)\,ds.
\] These integrals are finite by \eqref{l:eq:F.2}, since \(1/\ell_j\ge2\). The
identity \(U_{j,:}z=\sqrt{\ell_j}B_jz\) gives \[
 |\chi_{t_j}(U_{j,:}z)|=H_j(B_jz)^{\ell_j}.
\] Consequently geometric Brascamp--Lieb
\cite[Proposition 2.8]{BCCT2008} yields \[
 \int_{\mathbb R^k}\prod_{j=1}^h|\chi_{t_j}(U_{j,:}z)|\,dz
 =\int_{\mathbb R^k}\prod_{\substack{1\le j\le h\\\ell_j>0}}H_j(B_jz)^{\ell_j}\,dz
 \le\prod_{\substack{1\le j\le h\\\ell_j>0}}\left(\int_{\mathbb R}H_j(s)\,ds\right)^{\ell_j}
 =\prod_{\substack{1\le j\le h\\\ell_j>0}}I_j^{\ell_j}.
\] For each \(1\le j\le h\) with \(\ell_j>0\), apply Lemma \ref{l:thm:FTILT} with \(c=\ell_j\) and \(t=t_j\): \[
 \ell_j\log\big[(2\pi)^{-1/2}I_j\big]
 \le C_{\mathrm{Four}}(\ell_j^{3/2}+\sqrt{\ell_j}t_j^2).
\] Sum these logarithmic bounds, use \(\sum_{j=1}^h\ell_j=k\), and
exponentiate to obtain \begin{equation}
\int_{\mathbb R^k}\prod_{j=1}^h|\chi_{t_j}(U_{j,:}z)|\,dz
 \le(2\pi)^{k/2}
       \exp\left\{C_{\mathrm{Four}}\sum_{j=1}^h(\ell_j^{3/2}+\sqrt{\ell_j}t_j^2)\right\}.
 \label{l:eq:F.3}
\end{equation} Independence gives the product characteristic function below.
Its integrability in \eqref{l:eq:F.3} permits Fourier inversion
\cite[Chapter 8]{F1999}. The inverse is a continuous density; the
continuity just proved makes it agree with our specified fiber integral
at every point. Thus, for every \(y\in\mathbb R^k\),
\[
 p_{U,\mathbf t}(y)=\frac1{(2\pi)^k}
 \int_{\mathbb R^k}e^{-i\langle\boldsymbol\omega,y\rangle}
       \prod_{j=1}^h\chi_{t_j}(U_{j,:}\boldsymbol\omega)
       \,d\boldsymbol\omega.
\]

\Needspace{6\baselineskip}
\textbf{Undoing the tilt.} The definition of the scalar tilt gives
\(f(x)=e^{-t_jx+\Lambda_\mu(t_j)}f_{t_j}(x)\). Multiplying over
coordinates, \[
 \prod_{j=1}^h f((Uy+w)_j)
 =e^{-\langle\mathbf t,Uy+w\rangle+\sum_{j=1}^h\Lambda_\mu(t_j)}
       \prod_{j=1}^h f_{t_j}((Uy+w)_j).
\] Since \(w\in\ker U^T=(\operatorname{Ran}U)^\perp\),
\(\langle\mathbf t,w\rangle=0\). Thus the exponential factor is constant
along the fiber, and integrating in \(w\) yields \[
 p_U(y)=e^{-\langle\mathbf t,Uy\rangle+\sum_{j=1}^h\Lambda_\mu(t_j)}p_{U,\mathbf t}(y).
\] Taking absolute values in the inverse formula and using \eqref{l:eq:F.3} proves
\eqref{l:eq:A.7}, with \(C_{\mathrm{env}}=C_{\mathrm{Four}}\) and
\(\delta_{\mathrm{env}}=\delta_{\mathrm{Four}}\), simultaneously for all
frames and points. \end{proof}

\subsubsection{Gaussian coupling in the query
order}\label{l:sec:real_transport}

We couple a response law \(M\) to fresh Gaussian columns, evaluating
all queries on the target response history. Entropy bounds the coupling
error; the following moment estimate controls Gaussian sums in this frame.

\Needspace{12\baselineskip}

\paragraph{A coupling adapted to the query
order}\label{l:a-coupling-adapted-to-the-query-order}

\begin{lemma}[Sequential Gaussian coupling]\label{l:thm:SEQCOUP}

Let \(G_0\) be the product standard Gaussian law on
\(\mathbb R^{d\times k}\), and let \(M\) be a probability law with
\(H(M\mid G_0)<\infty\). There is a coupling of a response matrix
\(\mathbf Z\sim M\) and independent vectors
\(\mathbf g_1,\ldots,\mathbf g_k\sim N(0,\mathrm{Id}_d)\) such that
\({\mathbf Z}_{:,\le j}\) is a Borel function of
\(\mathbf g_1,\ldots,\mathbf g_j\) for every \(j\), and \begin{equation}
\mathbb E\sum_{j=1}^k\|{\mathbf Z}_{:,j}-\mathbf g_j\|^2\le2H(M\mid G_0).
 \label{l:eq:C.1}
\end{equation} In particular, \(\mathbf g_j\) is independent of all preceding target
and Gaussian columns. Any query determined by \({\mathbf Z}_{:,<j}\) is
known before \(\mathbf g_j\) is revealed. \end{lemma}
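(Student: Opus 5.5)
The statement to prove is Lemma~\ref{l:thm:SEQCOUP}. I would build the coupling column by column, using the chain rule for relative entropy together with the one-dimensional-per-coordinate Gaussian transport--entropy (Talagrand $T_2$) inequality cited from \cite{BBLZ2017}. Since $G_0$ is a product of its columns, disintegrate $M$ along the query order. Write $M_1$ for the law of the first column, and $M_j(\cdot\mid \mathbf z_{<j})$ for the regular conditional law of the $j$th column given the first $j-1$ columns. The chain rule gives
\[
 H(M\mid G_0)=\sum_{j=1}^k\mathbb E_M\,H\bigl(M_j(\cdot\mid \mathbf Z_{:,<j})\mid N(0,\mathrm{Id}_d)\bigr).
\]
In particular, each conditional entropy is finite for $M$-almost every history. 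Talagrand's inequality on $\mathbb R^d$ (tensorized from the one-dimensional statement, or applied directly) gives, for every such history, $W_2^2(M_j(\cdot\mid\mathbf z_{<j}),N(0,\mathrm{Id}_d))\le 2H(M_j(\cdot\mid\mathbf z_{<j})\mid N(0,\mathrm{Id}_d))$.

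The coupling then goes as follows. At step $j$, draw $\mathbf g_j\sim N(0,\mathrm{Id}_d)$ independently of everything so far, and set $\mathbf Z_{:,j}=T_j(\mathbf Z_{:,<j},\mathbf g_j)$, where $T_j(\mathbf z_{<j},\cdot)$ is the Brenier optimal transport map pushing $N(0,\mathrm{Id}_d)$ to $M_j(\cdot\mid\mathbf z_{<j})$. The map exists because the source is absolutely continuous, and it is optimal for quadratic cost, so $\mathbb E\|T_j(\mathbf z_{<j},\mathbf g)-\mathbf g\|^2=W_2^2$. By induction, $\mathbf Z_{:,\le j}$ is a function of $\mathbf g_1,\ldots,\mathbf g_j$ and has the correct joint law $M$ restricted to its first $j$ columns. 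Moreover $\mathbf g_j$ is independent of $(\mathbf g_{<j},\mathbf Z_{:,<j})$. Summing the conditional transport costs and using the chain rule yields \eqref{l:eq:C.1}. Any query that is a Borel function of $\mathbf Z_{:,<j}$ is then a function of $\mathbf g_{<j}$, hence known before $\mathbf g_j$ is revealed.

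The main obstacle is measurability: the transport map must be chosen jointly Borel in $(\mathbf z_{<j},\mathbf g)$. I would handle this with a measurable selection of optimal plans. Optimal couplings depend measurably on their marginals in the weak topology, and the conditional kernel is Borel because $\mathbb R^{d\times k}$ is standard Borel. Uniqueness of the Brenier map for an absolutely continuous source then upgrades the plan to a map. As a fallback that avoids maps entirely, I would instead use a Borel kernel of optimal plans and realize the conditional draw from an auxiliary uniform variable. That variable can be taken to be a measurable function of $\mathbf g_j$, by splitting off a sub-coordinate, or simply included in the seed. This only requires $\mathbf Z_{:,\le j}$ to be measurable with respect to $\mathbf g_{\le j}$ augmented by independent randomization, and I would check whether the downstream entropy argument tolerates that weaker form.
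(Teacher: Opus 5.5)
Your argument is correct, but it couples at a different granularity from the paper. The paper never uses $d$-dimensional transport. It orders all $dk$ scalar coordinates column by column and sets $\widehat Z_\ell=F^{-1}_{K_\ell(\widehat Z_{<\ell},\cdot)}(\Phi(\widehat g_\ell))$, which is a Knothe--Rosenblatt (conditional-quantile) coupling. It then applies the one-dimensional Gaussian transport--entropy inequality to each scalar conditional law and sums with the chain rule over all $dk$ coordinates. You use one Brenier map per column, the dimension-free inequality on $\mathbb R^d$, and the chain rule over columns. The final bound $2H(M\mid G_0)$ is the same; your per-column cost is the conditional $W_2^2$, which is at most the corresponding quantile-coupling cost, though that refinement is never needed.

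What the paper's route buys is that measurability is free. The quantile map is explicitly jointly Borel through rational thresholds, and no existence theorem for optimal maps is needed. Your route needs finite second moments of the conditional laws, which follow almost everywhere from finite conditional entropy. More seriously, it needs a jointly Borel version of $(\mathbf z_{<j},\mathbf g)\mapsto T_j(\mathbf z_{<j},\mathbf g)$. A measurable selection of optimal plans gives a Borel family of plans, but ``uniqueness upgrades the plan to a map'' still needs an argument producing a jointly Borel representative. One option is the measurability theorem of Fontbona, Gu\'erin and M\'el\'eard. Another is to define $T_j$ as the limit of plan-averages of $y$ over dyadic cubes shrinking to $\mathbf g$; this is jointly Borel and agrees almost everywhere with the Brenier map by martingale convergence.

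Your fallback should be dropped. Taking the auxiliary uniform variable from a sub-coordinate of $\mathbf g_j$ changes the variable being transported, so you lose control of $\|\mathbf Z_{:,j}-\mathbf g_j\|^2$. An independent seed would suffice for Lemma~\ref{l:thm:ENT}, but not for the stated clause that $\mathbf Z_{:,\le j}$ is a Borel function of $\mathbf g_{\le j}$. The natural map-free fallback is exactly the paper's scalar quantile construction.
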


\begin{proof}

Order the \(dk\) scalar coordinates by columns, and denote them by
\(\widehat Z_\ell,\widehat g_\ell\). Let \(\gamma=N(0,1)\), let \(\Phi\)
be its distribution function, and let \(K_\ell(z_{<\ell},\cdot)\) be
conditional laws under \(M\). Using independent
\(\widehat g_\ell\sim\gamma\), define recursively \[
 \widehat Z_\ell=
 F^{-1}_{K_\ell(\widehat Z_{<\ell},\cdot)}
       (\Phi(\widehat g_\ell)),\qquad 1\le\ell\le dk,
\] Here, for a probability law \(K\) on \(\mathbb R\),
\(F_K(x)=K((-\infty,x])\) and
\(F_K^{-1}(u)=\inf\{x:F_K(x)\ge u\}\), \(0<u<1\), are its
distribution function and generalized inverse.
The conditional quantile is jointly Borel, using rational thresholds in
its definition. The recursion has the prescribed conditional laws, hence
joint law \(M\). The one-dimensional Gaussian transport--entropy
inequality \cite{T1996} and the entropy chain rule
\cite[Section 1.4]{DE1997} give \[
 \mathbb E\sum_{\ell=1}^{dk}|\widehat Z_\ell-\widehat g_\ell|^2
 \le2\sum_{\ell=1}^{dk}\mathbb E
 H(K_\ell(\widehat Z_{<\ell},\cdot)\mid\gamma)
 =2H(M\mid G_0).
\] The conditional construction is also recorded in
\cite[Theorem 2.9 and Proposition 5.9]{BBLZ2017}. Finite total entropy
makes the conditional costs finite almost everywhere; null histories may
be assigned arbitrary probability kernels.

Regrouping coordinates into columns proves \eqref{l:eq:C.1}. The sequential
construction gives \[
 \sigma(\mathbf g_1,\ldots,\mathbf g_j,{\mathbf Z}_{:,\le j})=\sigma(\mathbf g_1,\ldots,\mathbf g_j),
\] so the next Gaussian column is independent of the entire past.
\end{proof}

\Needspace{12\baselineskip}

\paragraph{Third moments of predictable Gaussian
sums}\label{l:third-moments-of-predictable-gaussian-sums}

The next estimate allows coefficients chosen from earlier responses;
its random sum of squared coefficients will be a coordinate weight.

\begin{lemma}[Gaussian third moments and a random variance sum]\label{l:thm:GAUSS3}

Let \((\mathcal F_j)_{j=0}^k\) be a filtration, and let \(g_j\) be
\(\mathcal F_j\)-measurable scalar standard Gaussians
independent of \(\mathcal F_{j-1}\), and let bounded real \(a_j\) be
\(\mathcal F_{j-1}\)-measurable. For \(S_k=\sum_{j=1}^k a_jg_j\), \begin{equation}
\mathbb E|S_k|^3
 \le27\,\mathbb E\left(\sum_{j=1}^k a_j^2\right)^{3/2}.
 \label{l:eq:C.2}
\end{equation} Moreover, \[
 \mathbb E|S_k|^3+
 \mathbb E\left[\left(\sum_{j=1}^k a_j^2\right)^{1/2}|S_k|^2\right]
 \le46\,\mathbb E\left(\sum_{j=1}^k a_j^2\right)^{3/2}.
\] No independence between \(S_k\) and \(\sum_{j=1}^k a_j^2\) is
assumed. \end{lemma}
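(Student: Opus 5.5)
The plan is to realize $S_k$ as a continuous martingale sampled at an integer time, with quadratic variation exactly $Q:=\sum_{j=1}^k a_j^2$. We then control $\mathbb E|S_k|^3$ through It\^o's formula, Doob's maximal inequality and H\"older. The second display follows from the first by one more H\"older step.

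\emph{Embedding.} On a product extension of the probability space, attach independent standard Brownian bridges on each interval $[j-1,j]$, independent of $\mathcal F_k$. Define $B$ on $[j-1,j]$ by $B(t)=B(j-1)+(t-j+1)g_j+\text{bridge}_j(t)$, so that $B(j)-B(j-1)=g_j$. Let $\mathcal G_t=\mathcal F_{j-1}\vee\sigma(B_s:s\le t)$ for $t\in[j-1,j)$. Conditionally on $\mathcal F_{j-1}$, the variable $g_j$ is $N(0,1)$ because it is independent of $\mathcal F_{j-1}$, and the bridge is independent of everything else. Hence the path of $B$ on $[j-1,j]$ is a Brownian increment independent of $\mathcal G_{j-1}$, and $B$ is a $(\mathcal G_t)$-Brownian motion. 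Set $H_t=a_j$ for $t\in(j-1,j]$. This process is predictable because $a_j$ is $\mathcal F_{j-1}$-measurable, and it is bounded. Therefore $M_t=\int_0^tH\,dB$ is a continuous square-integrable martingale with $M_k=S_k$ and $\langle M\rangle_k=Q$. Boundedness of the $a_j$ gives finite moments of all orders, so no localization issue arises in the steps below.

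\emph{Third moment.} Since $x\mapsto|x|^3$ is $C^2$ with second derivative $6|x|$, It\^o's formula and the vanishing expectation of the stochastic integral (a true martingale, by bounded coefficients) give
\[
 \mathbb E|M_k|^3=3\,\mathbb E\int_0^k|M_t|\,d\langle M\rangle_t\le3\,\mathbb E[M_k^*Q],
\]
where $M_k^*=\sup_{t\le k}|M_t|$. H\"older with exponents $3$ and $3/2$, followed by Doob's $L^3$ inequality $\|M_k^*\|_3\le\tfrac32\|M_k\|_3$, yields
\[
 x\le\tfrac92\,x^{1/3}y^{2/3},\qquad x=\mathbb E|S_k|^3,\quad y=\mathbb E Q^{3/2}.
\]
Since $x$ is finite, we can divide to get $x\le(9/2)^{3/2}y<10y\le27y$. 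This proves \eqref{l:eq:C.2}; the case $x=0$ is trivial.

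\emph{Mixed term.} H\"older with exponents $3$ and $3/2$ gives
\[
 \mathbb E[Q^{1/2}|S_k|^2]\le(\mathbb E Q^{3/2})^{1/3}(\mathbb E|S_k|^3)^{2/3}\le27^{2/3}y=9y.
\]
Adding this to the bound on $\mathbb E|S_k|^3$ gives at most $36y\le46y$. At no point is independence between $S_k$ and $Q$ used.

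The only delicate step is the embedding: one must verify that $B$ is Brownian with respect to a filtration in which $H$ is predictable. If that construction were troublesome, the fallback is a purely discrete argument. Expand $|S_j|^3$ step by step, and use the conditional Gaussianity of $a_jg_j$ given $\mathcal F_{j-1}$ to bound each conditional increment by $C\,a_j^2(|S_{j-1}|+|a_j|)$. Then apply the discrete Doob inequality together with $\sum_j|a_j|^3\le Q^{3/2}$. This still produces a constant below $27$, though with more bookkeeping.
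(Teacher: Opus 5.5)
Your proof is correct, but it follows a different route from the paper's.

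\textbf{The paper's route.} The paper stays entirely discrete. It applies the pointwise Taylor inequality $|x+y|^3\le|x|^3+3x|x|y+3|x|y^2+|y|^3$ to each increment $a_jg_j$ and takes conditional expectations. The linear term vanishes and the last term contributes $\sum_j|a_j|^3\,\mathbb E|g_j|^3\le 2\,\mathbb E V^{3/2}$, which gives $\mathbb E|S_k|^3\le 3\,\mathbb E(S_*V)+2\,\mathbb E V^{3/2}$. H\"older, the discrete Doob $L^3$ inequality and a Young absorption then give the constant $13$. The mixed term is handled pointwise via $\sqrt V|S_k|^2\le\tfrac13V^{3/2}+\tfrac23|S_k|^3$. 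Your ``fallback'' is exactly this argument.

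\textbf{Your route.} The Brownian-bridge embedding replaces the Taylor inequality by the exact It\^o identity $\mathbb E|M_k|^3=3\,\mathbb E\int_0^k|M_t|\,d\langle M\rangle_t$. No cubic remainder appears, and you get the sharper constant $(9/2)^{3/2}<10$ directly, with no Young absorption.

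The embedding is genuinely needed for this. The exact heat-equation identity involves the intermediate values $S_{j-1}+a_j\sqrt u\,g$, which are not discrete partial sums. Only the continuous running maximum controls them; bounding them crudely by $|S_{j-1}|+|a_j||g|$ would bring the $\sum_j|a_j|^3$ term back.

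\textbf{Costs.} The price of your route is enlarging the probability space and verifying that $B$ is a $(\mathcal G_t)$-Brownian motion for which $H$ is predictable. Your sketch handles this correctly. The point that makes it work is that $(g_j,\text{bridge}_j)$ is jointly independent of $\mathcal F_{j-1}\vee\sigma(\text{bridges}_{<j})\supseteq\mathcal G_{j-1}$. This holds because the bridges are mutually independent and independent of $\mathcal F_k$, and $g_j$ is independent of $\mathcal F_{j-1}$. You also correctly include only $\mathcal F_{j-1}$, not $\mathcal F_j$, in $\mathcal G_t$ for $t\in[j-1,j)$.

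Your H\"older treatment of the mixed term, giving $27+9=36$, is as valid as the paper's pointwise inequality.

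Overall, the paper's argument is more elementary and self-contained, while yours is cleaner and gives the better constant at the cost of stochastic-calculus machinery. Since the lemma only needs some universal constant, either route suffices.
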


\begin{proof}

Write \(S_j=\sum_{a=1}^ja_ag_a\), \(V=\sum_{a=1}^k a_a^2\), and
\(S_* =\max_{0\le j\le k}|S_j|\). The elementary Taylor bound \[
 |x+y|^3\le |x|^3+3x|x|y+3|x|y^2+|y|^3
\] follows by integrating the second derivative \(6|x|\). Apply it to
each increment and take conditional expectations. The linear term
vanishes, \(\mathbb E|g_a|^3<2\), and
\(\sum_{a=1}^k|a_a|^3\le V^{3/2}\). Hence, with \(X=\mathbb E|S_k|^3\),
\(Y=\mathbb E V^{3/2}\), \[
 X\le3\mathbb E(S_*V)+2Y
 \le\frac92X^{1/3}Y^{2/3}+2Y.
\] The second inequality is H\"older together with Doob's \(L^3\) maximal
inequality \(\|S_*\|_3\le(3/2)\|S_k\|_3\). Young's inequality gives \[
 X\le\frac X3+
       \left[\frac23(9/2)^{3/2}+2\right]Y,
 \qquad X\le[(9/2)^{3/2}+3]Y\le13Y.
\] All moments are finite because the coefficients are bounded and the
sum is finite. This proves \eqref{l:eq:C.2}, with room in its stated constant.
Finally, \[
 \sqrt V|S_k|^2\le\tfrac13V^{3/2}+\tfrac23|S_k|^3
\] gives the second assertion; even the weaker constant \(27\) from
\eqref{l:eq:C.2} gives \(27+1/3+18<46\). \end{proof}

\subsubsection{Completion of the comparison
proof}\label{l:sec:real_comparison_proof}

We continue at fixed auxiliary input \(c\), and average over its common
law only after obtaining the conditional event bounds.

\paragraph{The coarse comparison}\label{l:the-coarse-comparison}

\begin{proof}[Proof of the coarse assertion of Theorem \ref{l:thm:AC}]\leavevmode\par
The projection
bound \eqref{l:eq:G.1} and \eqref{l:eq:A.5} give
\(L_c(Y)\le e^{C_\mu dk+\|Y\|_{\mathrm{HS}}^2/2}\). Under the original
source law, pointwise
\(\|XU\|_{\mathrm{HS}}^2\le\sum_{a=1}^k s_a(X)^2\). The energy estimate
\eqref{l:eq:G.4} of Lemma \ref{l:thm:SVAL}, applied to \(X\) when \(h\le d\), therefore gives,
for a sufficiently small fixed \(t>0\), \[
 \mathbb E_{G_0} L_c^{1+t}=\mathbb E_{P_c} L_c^t\le e^{C'_\mu dk}.
\] The equality first holds for nonnegative extended integrals; the
bound proves finiteness. H\"older yields \eqref{l:eq:A.2}. The moment estimate is
uniform in the fixed auxiliary input, so it also holds after averaging
over \(\mathsf C\). \end{proof}

\paragraph{The weighted comparison}\label{l:the-weighted-comparison}

Set \(\delta_\mu=\min\{1/4,\delta_{\mathrm{env}}\}\). We state the
entropy estimate and derive the weighted event bound from it. Its proof
in Subsection~\ref{l:sec:entropy_gain} combines the density and coupling
estimates established above.

\begin{lemma}[Entropy gain for the observation likelihood]\label{l:thm:ENT}

There are \(C_{\mathrm{ent}}>0\) and \(\lambda\in(0,1)\), depending only
on \(\mu\), such that the following holds for every fixed auxiliary
input \(c\). Let \(M\ll G_0\) be a probability law on response matrices
\(\mathbb R^{d\times k}\), with \(H(M\mid G_0)<\infty\) and
\(\log L_c\in L^1(M)\), satisfying \[
 M\{\mathbf z:L_c(\mathbf z)>0,
       \max_{1\le j\le h}\ell_j(U_c(\mathbf z))\le\delta_\mu\}=1.
\] Then \begin{equation}
\mathbb E_M\log L_c\le C_{\mathrm{ent}}d\mathbb E_M\Theta(U_c)
                         +(1-\lambda)H(M\mid G_0).
 \label{l:eq:A.13}
\end{equation} Here
\(\mathbb E_M\Theta(U_c)=\int\Theta(U_c(\mathbf z))\,M(d\mathbf z)\).
The functions \(L_c,U_c\) are the same observation functions for every
choice of \(M\). \end{lemma}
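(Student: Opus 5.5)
The plan is to bound the observation likelihood pointwise with the envelope Lemma~\ref{l:thm:ENV}, using a tilt read off from the responses, and then to average the resulting error terms under $M$ through the sequential coupling of Lemma~\ref{l:thm:SEQCOUP}. In that coupling, the Gaussian part of each error is controlled by Lemma~\ref{l:thm:GAUSS3}, and the transport part is controlled by the entropy.

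\textbf{Step 1 (pointwise bound).} Fix $\mathbf z$ in the full-measure set where $L_c(\mathbf z)>0$ and $\max_j\ell_j(U_c(\mathbf z))\le\delta_\mu$, and write $U=U_c(\mathbf z)$. For each row $y=\mathbf z^{(i)}$, apply \eqref{l:eq:A.7} with the admissible tilt $\mathbf t=Uy\in\operatorname{Ran}U$, for which $\|\mathbf t\|=\|y\|$. I would use centering and the square-exponential moment to refine \eqref{l:eq:A.6} to
\[
 \Lambda_\mu(t)\le \tfrac{t^2}{2}+C_\mu\min\{|t|^3,t^2\}.
\]
Since $-\langle\mathbf t,Uy\rangle+\tfrac12\|\mathbf t\|^2+\tfrac12\|y\|^2=0$ and $-\log\phi_k(y)=\tfrac k2\log(2\pi)+\tfrac12\|y\|^2$, summing over rows would give
\[
 \log L_c(\mathbf z)\le C\sum_{i=1}^d\sum_{j=1}^h\Bigl(\ell_j^{3/2}+\sqrt{\ell_j}\,t_{ij}^2+\min\{|t_{ij}|^3,t_{ij}^2\}\Bigr),
 \qquad t_{ij}=(U\mathbf z^{(i)})_j .
\]
The first term contributes exactly $C d\,\Theta(U)$.

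\textbf{Step 2 (coupling).} Couple $\mathbf Z\sim M$ with fresh Gaussian columns $\mathbf g_a$ as in Lemma~\ref{l:thm:SEQCOUP}, and set $\Delta=\mathbf Z-\mathbf g$, so that $\mathbb E\|\Delta\|_{\HS}^2\le2H(M\mid G_0)$. The frame $U=U_c(\mathbf Z)$ is predictable: column $a$ of $U$ is fixed by $\mathbf Z_{:,<a}$, which is independent of $\mathbf g_a$. Split $t_{ij}=x_{ij}+y_{ij}$ with
\[
 x_{ij}=\sum_a U_{ja}g_{ia},\qquad y_{ij}=(U\Delta^{(i)})_j .
\]
For each $(i,j)$, $x_{ij}$ is a predictable Gaussian sum whose random variance sum is $\ell_j$. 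Lemma~\ref{l:thm:GAUSS3}, in both of its assertions, therefore bounds $\mathbb E\bigl(|x_{ij}|^3+\sqrt{\ell_j}\,x_{ij}^2\bigr)$ by $46\,\mathbb E\ell_j^{3/2}$. Summing over $i$ and $j$ gives at most $C d\,\mathbb E_M\Theta(U_c)$. For the transport part, $\sum_j y_{ij}^2=\|\Delta^{(i)}\|^2$ because $U^TU=\mathrm{Id}_k$. Hence
\[
 \sum_{i,j}\sqrt{\ell_j}\,y_{ij}^2\le\sqrt{\delta_\mu}\,\|\Delta\|_{\HS}^2,
\]
which has expectation at most $2\sqrt{\delta_\mu}\,H(M\mid G_0)$. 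The cross terms would be handled by Cauchy--Schwarz and Young: each splits into a Gaussian piece of the above type and a $\sqrt{\delta_\mu}$-small multiple of $\|\Delta\|_{\HS}^2$.

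\textbf{Step 3 (expected main obstacle).} The hard term is $\sum_{i,j}\min\{|t_{ij}|^3,t_{ij}^2\}$ evaluated on the transport part $y_{ij}$. There is no small factor here: one has $\min\{|y|^3,y^2\}\le y^2$, but $\sum_j y_{ij}^2=\|\Delta^{(i)}\|^2$ carries a constant $C_\mu$ that need not be below $1$. If it is not, the total coefficient of $H(M\mid G_0)$ would exceed $1-\lambda$.

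What I would try first is to exploit flatness, which gives $|y_{ij}|\le\sqrt{\ell_j}\,\|\Delta^{(i)}\|$. Then
\[
 \min\{|y_{ij}|^3,y_{ij}^2\}\le y_{ij}^2\min\bigl\{1,\sqrt{\delta_\mu}\,\|\Delta^{(i)}\|\bigr\}.
\]
Rows with $\|\Delta^{(i)}\|\le\delta_\mu^{-1/4}$ then contribute only $\delta_\mu^{1/4}\|\Delta^{(i)}\|^2$, which is small. For the rows with large transport, I would not use the envelope bound at all. Instead I would use the cruder projection bound \eqref{l:eq:G.1}, which gives $\log\bigl(p_U/\phi_k\bigr)(y)\le C_\mu k+\tfrac12\|y\|^2$, and compare it with the entropy such rows necessarily carry. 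Comparing this per-row cost against the row's share of entropy (via the chain rule behind Lemma~\ref{l:thm:SEQCOUP}) is the step I expect to be delicate.

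\textbf{Conclusion.} Collecting the three steps, and shrinking $\delta_\mu$ (in terms of $\mu$ only) so that every $\sqrt{\delta_\mu}$ and $\delta_\mu^{1/4}$ coefficient of $H(M\mid G_0)$ sums to at most $1-\lambda$, would yield \eqref{l:eq:A.13}. Note that the trivial identity $\mathbb E_M\log L_c=H(M\mid G_0)-H(M\mid P_c)$ gives only coefficient one, so this smallness is exactly where the entropy gain comes from. The integrability hypotheses $H(M\mid G_0)<\infty$ and $\log L_c\in L^1(M)$ justify taking expectations termwise.
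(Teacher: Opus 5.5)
Your coupling, the split of $U\mathbf z^{(i)}$ into the predictable Gaussian part $N_{i,:}^T$ and the transport part $(E_{\mathrm{err}})_{i,:}^T$, and your use of Lemma~\ref{l:thm:GAUSS3} all match the paper. Step~3, however, is a genuine gap, and it is caused by your choice of tilt $\mathbf t=U\mathbf z^{(i)}$ in Step~1.

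That tilt cancels the quadratic terms exactly. But it pushes the entire transport error into $\sum_j\mathcal R_\mu((\mathbf t_i)_j)$ and the envelope term, and their constants need not be below $1/2$. For the Gaussian scale mixture of Remark~\ref{u:rem:ordinary-subgaussian}, for instance, $\Lambda_\mu(t)=2t^2+O(1)$, so $\mathcal R_\mu(t)\approx\tfrac32t^2$ for large $t$. Flatness only gives $|(E_{\mathrm{err}})_{ij}|\le\sqrt{\ell_j}\,\|\Delta^{(i)}\|$, which can be of order one in many coordinates once $\|\Delta^{(i)}\|\gtrsim\delta_\mu^{-1/2}$. Nothing prevents $M$ (a conditioned original law, say) from putting its transport cost on such rows.

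Your fallback for these rows does not close, for two reasons.
\begin{itemize}
\item The bound from \eqref{l:eq:G.1} carries an additive $C_\mu k$ per row. The full-tilt bound on the remaining rows needs $\sqrt{\delta_\mu}\,\|\Delta^{(i)}\|\ll1$ there. So absorbing $C_\mu k$ into the transport cost of a large row forces $\delta_\mu\lesssim1/k$, which is not uniform in $k$.
\item Its quadratic part $\tfrac12\|\mathbf z^{(i)}\|^2$ puts a coefficient no smaller than $1/2$ on $\|\Delta^{(i)}\|^2$. After $\mathbb E\|\Delta\|_{\HS}^2\le2H(M\mid G_0)$, that is coefficient one on the entropy, so there is no gain at all.
\end{itemize}

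The missing idea is to tilt only partially toward the transport. The paper takes $\mathbf t_i=N_{i,:}^T+\lambda(E_{\mathrm{err}})_{i,:}^T\in\operatorname{Ran}U$ with a small fixed $\lambda$. The quadratic terms then equal
\[
\tfrac12\|U\mathbf z^{(i)}-\mathbf t_i\|^2=\tfrac{(1-\lambda)^2}{2}\|(E_{\mathrm{err}})_{i,:}\|^2 .
\]
Meanwhile \eqref{l:eq:A.9} with $x=N_{ij}$ and $u=\lambda(E_{\mathrm{err}})_{ij}$, together with $(a+b)^2\le2a^2+2b^2$ in the envelope term, charges the transport only $C_\mu\lambda^2\|(E_{\mathrm{err}})_{i,:}\|^2$, however large $C_\mu$ is. The resulting coefficient $\tfrac12-\lambda+C_\mu\lambda^2\le\tfrac{1-\lambda}2$ in \eqref{l:eq:A.10} holds uniformly over rows, and the coupling bound turns it into $(1-\lambda)H(M\mid G_0)$.

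So the entropy gain comes from a first-order decrease in $\lambda$ beating a second-order remainder cost, not from smallness of $\delta_\mu$. With this tilt you need no row split, no fallback bound and no shrinking of $\delta_\mu$. Flatness is used only to make Lemma~\ref{l:thm:ENV} applicable, together with $\sqrt{\ell_j}\le1$; your Step~2 treatment of the Gaussian parts then goes through unchanged.
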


\begin{proof}[\proofname\ of the weighted assertion of Theorem \ref{l:thm:AC}]Let
\(F\) satisfy the coordinate-weight cap in the theorem, and use its
section \(F_c\). First suppose \(P_c(F_c)>0\) and \(\log L_c\) is
bounded on \(F_c\). Now choose \(M=P_c(\cdot\mid F_c)\). Its density and
entropy are \[
 \frac{dM}{dG_0}=\frac{\mathbf1_{F_c}L_c}{P_c(F_c)},\qquad
 H(M\mid G_0)=\mathbb E_M\log L_c-\log P_c(F_c).
\] In particular, \(M\) meets the hypotheses of Lemma \ref{l:thm:ENT}. Combining
this identity with \eqref{l:eq:A.13} gives \begin{equation}
\log P_c(F_c)\le C_{\mathrm{ent}}d\mathbb E_M\Theta(U_c)
                                  -\lambda H(M\mid G_0).
 \label{l:eq:A.14}
\end{equation} Define the weighted Gaussian integral over \(F_c\) by \[
 I_F(c)=\int_{F_c}
       e^{C_{\mathrm{ent}}d\Theta(U_c(\mathbf z))/\lambda}\,G_0(d\mathbf z).
\] The entropy variational inequality (see \cite[Section 1.4]{DE1997})
here follows directly by applying Jensen's inequality under \(M\) to
\(e^{C_{\mathrm{ent}}d\Theta(U_c)/\lambda}/(dM/dG_0)\). Using
\(M(F_c)=1\), we obtain \[
 \frac{C_{\mathrm{ent}}d}{\lambda}\mathbb E_M\Theta(U_c)-H(M\mid G_0)
 \le\log I_F(c).
\] Thus \(P_c(F_c)\le I_F(c)^\lambda\). The integral is finite since
\(0\le\Theta\le k\).

For an arbitrary section \(F_c\), use
\(F_{c,m}=F_c\cap\{e^{-m}\le L_c\le e^m\}\). When \(P_c(F_{c,m})>0\),
its conditional law meets the preceding hypotheses; null sections
satisfy the bound automatically. Each truncated Gaussian integral is at
most \(I_F(c)\). Since \(P_c\{0<L_c<\infty\}=1\), taking increasing
limits gives \(P_c(F_c)\le I_F(c)^\lambda\) for every section satisfying
the cap.

Finally average the section probabilities over the common law of \(\mathsf C\).
Concavity gives \[
 \begin{aligned}
 P(F)&=\int_{\mathcal C}P_c(F_c)\,\mathcal L(\mathsf C)(dc)\\
 &\le\mathbb E_{\mathsf C}[I_F(\mathsf C)^\lambda]
       \le[\mathbb E_{\mathsf C} I_F(\mathsf C)]^\lambda\\
 &=\left[\mathbb E_G\mathbf1_F
       e^{C_{\mathrm{ent}}d\Theta(U)/\lambda}\right]^\lambda.
 \end{aligned}
\] This is \eqref{l:eq:A.3}, with \(\theta_\mu=\lambda\) and
\(C_{\mathrm{weight}}=C_{\mathrm{ent}}/\lambda\). \end{proof}

\subsubsection{Proof of the entropy estimate}\label{l:sec:entropy_gain}

We now prove the entropy estimate, keeping the queries evaluated on the
target response record throughout the coupling.

\begin{proof}[\proofname\ of Lemma \ref{l:thm:ENT}]Fix \(c\) and a response law \(M\)
satisfying the hypotheses of the lemma.

\Needspace{6\baselineskip}
\textbf{Coupling and moments.} Use Lemma \ref{l:thm:SEQCOUP} to couple
\(\mathbf Z\sim M\) with independent standard Gaussian columns
\(\mathbf g_1,\ldots,\mathbf g_k\), and denote the coupling law by
\(\Pi\). Write \(\mathbb E_\Pi\) for expectation on this space. Every
query is evaluated on \(\mathbf Z\): write \(U=U_c(\mathbf Z)\),
\(\mathbf u_a=\mathbf u_a(c,{\mathbf Z}_{:,<a})\), and \(\ell_j=\ell_j(U)\). The
sequential coupling makes \(\mathbf u_a\) known before \(\mathbf g_a\) is
revealed. The matrix \(\mathbf Z\) is a response record with law \(M\);
the construction requires no source matrix. Define the \(d\times h\)
matrices \begin{equation}
\begin{aligned}
 N&=\sum_{a=1}^k \mathbf g_a\mathbf u_a^T,&
 E_{\mathrm{err}}&=\sum_{a=1}^k({\mathbf Z}_{:,a}-\mathbf g_a)\mathbf u_a^T,\\
 \mathbf ZU^T&=N+E_{\mathrm{err}},&
 \mathbb E_\Pi\|E_{\mathrm{err}}\|_{\mathrm{HS}}^2
 &=\mathbb E_\Pi\sum_{a=1}^k\|{\mathbf Z}_{:,a}-\mathbf g_a\|^2\le2H(M\mid G_0).
 \end{aligned}
 \label{l:eq:A.11}
\end{equation} Thus \(N\) uses Gaussian responses in the same query frame, and
\(E_{\mathrm{err}}\) is the coupling error. The norm equality follows
pathwise from orthonormality. Each row of \(N\) and
\(E_{\mathrm{err}}\), transposed, lies in \(\operatorname{Ran}U\). Write
\(g_{ia}=(\mathbf g_a)_i\) for the scalar coordinates. For
\(1\le i\le d\) and \(1\le j\le h\), the sum
\(N_{ij}=\sum_{a=1}^k (\mathbf u_a)_jg_{ia}\) has predictable coefficients and
variance sum \(\sum_{a=1}^k (\mathbf u_a)_j^2=\ell_j\). Lemma \ref{l:thm:GAUSS3} gives \begin{equation}
\mathbb E_\Pi|N_{ij}|^3+\mathbb E_\Pi[\sqrt{\ell_j}|N_{ij}|^2]
 \le46\mathbb E_\Pi\ell_j^{3/2}.
 \label{l:eq:A.12}
\end{equation}

\Needspace{6\baselineskip}
\textbf{Choice of tilt and entropy bound.} Write \(C_\mu\) for positive
constants depending only on \(\mu\), which may increase below and are
independent of \(0<\lambda<1\). The local expansion of \(\Lambda_\mu\)
and \eqref{l:eq:A.6} give \begin{equation}
\mathcal R_\mu(t):=\Lambda_\mu(t)-t^2/2
 \le C_\mu\min\{|t|^3,t^2\}\qquad(t\in\mathbb R).
 \label{l:eq:A.8}
\end{equation} Using the cubic bound when \(|u|\le|x|\) and the quadratic bound
otherwise yields \begin{equation}
\mathcal R_\mu(x+u)\le C_\mu(|x|^3+u^2).
 \label{l:eq:A.9}
\end{equation}
We keep the Gaussian part of the tilt and include only a small fraction
of the coupling error. The decrease in the quadratic likelihood term is
linear to first order in \(\lambda\), while the added remainder cost is
quadratic.
For each \(1\le i\le d\), write \(y={\mathbf Z}_{i,:}^T\) and take \[
 \mathbf t_i=N_{i,:}^T+\lambda(E_{\mathrm{err}})_{i,:}^T
           \in\operatorname{Ran}U,
\] leaving \(\lambda\) to be chosen. The pointwise bound \eqref{l:eq:A.7} permits
this tilt on the coupling space. Subtracting
\(\log\phi_k(y)=-\tfrac k2\log(2\pi)-\|y\|^2/2\) and expanding
\(\Lambda_\mu(s)=s^2/2+\mathcal R_\mu(s)\) gives \[
 \begin{aligned}
 \log\frac{p_U(y)}{\phi_k(y)}
 \le{}&\frac{\|y\|^2}{2}-\langle\mathbf t_i,Uy\rangle
          +\frac{\|\mathbf t_i\|^2}{2}
          +\sum_{j=1}^h\mathcal R_\mu((\mathbf t_i)_j)\\
 &+C_{\mathrm{env}}\Theta(U)
          +C_{\mathrm{env}}\sum_{j=1}^h\sqrt{\ell_j}(\mathbf t_i)_j^2.
 \end{aligned}
\] The Gaussian normalizing constants cancel. Since
\(Uy=(N+E_{\mathrm{err}})_{i,:}^T\) and \(\|Uy\|=\|y\|\), the three
quadratic terms equal \[
 \frac12\|Uy-\mathbf t_i\|^2
 =\frac{(1-\lambda)^2}{2}\|(E_{\mathrm{err}})_{i,:}\|^2.
\] Apply \eqref{l:eq:A.9} with \(x=N_{ij}\), \(u=\lambda(E_{\mathrm{err}})_{ij}\).
Also use \((a+b)^2\le2a^2+2b^2\) and \(\sqrt{\ell_j}\le1\). The two
remaining sums satisfy \[
 \begin{aligned}
 \sum_{j=1}^h\mathcal R_\mu((\mathbf t_i)_j)
 &\le C_\mu\sum_{j=1}^h|N_{ij}|^3
       +C_\mu\lambda^2\|(E_{\mathrm{err}})_{i,:}\|^2,\\
 \sum_{j=1}^h\sqrt{\ell_j}(\mathbf t_i)_j^2
 &\le2\sum_{j=1}^h\sqrt{\ell_j}N_{ij}^2
       +2\lambda^2\|(E_{\mathrm{err}})_{i,:}\|^2.
 \end{aligned}
\] Substitution gives, for a fixed sufficiently large \(C_\mu\ge1\), \[
 \log\frac{p_U(y)}{\phi_k(y)}
 \le C_\mu\!\left[\Theta(U)+\sum_{j=1}^h
         (|N_{ij}|^3+\sqrt{\ell_j}N_{ij}^2)\right]
       +a_\lambda\|(E_{\mathrm{err}})_{i,:}\|^2,
\] where the error coefficient is bounded by \begin{equation}
a_\lambda:=\frac12-\lambda+C_\mu\lambda^2
       \le\frac{1-\lambda}{2}<\frac12
       \quad\text{for sufficiently small fixed }\lambda\in(0,1).
 \label{l:eq:A.10}
\end{equation} Fix such a \(\lambda\), depending only on \(\mu\).

Take logarithms of the product \eqref{l:eq:A.5} and sum the row bounds. Since the frame is common to all rows,
\(\Theta(U)\) appears exactly \(d\) times, while
\(\sum_{i=1}^d\|(E_{\mathrm{err}})_{i,:}\|^2
=\|E_{\mathrm{err}}\|_{\mathrm{HS}}^2\). Thus \[
 \begin{aligned}
 \log L_c(\mathbf Z)
 &=\sum_{i=1}^d\log\frac{p_U({\mathbf Z}_{i,:}^T)}
                              {\phi_k({\mathbf Z}_{i,:}^T)}\\
 &\le C_\mu d\Theta(U)
       +C_\mu\sum_{i=1}^d\sum_{j=1}^h
           (|N_{ij}|^3+\sqrt{\ell_j}N_{ij}^2)
       +\frac{1-\lambda}{2}\|E_{\mathrm{err}}\|_{\mathrm{HS}}^2.
 \end{aligned}
\] Take \(\mathbb E_\Pi\) and apply \eqref{l:eq:A.11}--\eqref{l:eq:A.12}. Since
\(\mathbf Z\sim M\), expectations of functions of \(\mathbf Z\) alone
are \(\mathbb E_M\). The coupling cost \(2H(M\mid G_0)\) turns the error
coefficient \((1-\lambda)/2\) into \(1-\lambda\), yielding \eqref{l:eq:A.13} with a
law-dependent \(C_{\mathrm{ent}}\). All choices are uniform in the
auxiliary input \(c\) and the law \(M\). \end{proof}

The weighted comparison also holds with any fixed exponent
\(\gamma\in(1,3/2]\) on the coordinate weights, since
\(\ell^{3/2}\le\ell^\gamma\) on \([0,1]\). We retain \(3/2\), the power
supplied by the Gaussian third moment, throughout the stopping and
geometric estimates.
\section{Complex entries with zero pseudovariance}\label{l:sec:complex}

The complex case of Theorem~\ref{l:thm:MT} follows the same route as the
real case. We give the details at the few places where the field matters.
Write \(\mu(dz)=f(z)\,d^2z\), where \(f\) is an everywhere bounded
Borel density and \(d^2z=d(\operatorname{Re}z)\,d(\operatorname{Im}z)\)
is planar Lebesgue measure. Assume, for some $K<\infty$ and $t_0>0$,
\begin{equation}
0\le f\le K,\qquad
 \mathbb E\zeta=0,\qquad \mathbb E|\zeta|^2=1,\qquad
 \mathbb E\zeta^2=0,\qquad
 \mathbb E e^{t_0|\zeta|^2}<\infty,\qquad \zeta\sim\mu.
 \label{l:eq:Z.1}
\end{equation}
For geometry and comparison, we replace transposes by adjoints and
treat each entry as a two-dimensional real block. The Gaussian Arnoldi
residual squares now have Gamma laws, but their negative moments still
give the reference-link estimate needed in
Section~\ref{l:sec:real_stopping}. The deletion and stopping argument
can then be reused. The border calculation in
Subsection~\ref{l:sec:real_border} becomes a little simpler: its Gaussian
input is a single complex scalar rather than a real scalar or a
conjugate-pair block.

Zero pseudovariance enters through the identity
\(\operatorname{Cov}_{\mathbb R}(\zeta)=\tfrac12\mathrm{Id}_2\), where
\(\operatorname{Cov}_{\mathbb R}(\zeta)\) is the covariance matrix of
\((\operatorname{Re}\zeta,\operatorname{Im}\zeta)\). This covariance
matches the circular Gaussian reference in the comparison argument.
The real and imaginary parts may still be dependent, and the law need
not be rotationally invariant.

We use \(M^*\) for the conjugate transpose and
\(\langle x,y\rangle_{\mathbb R}=\Re(x^*y)\).
For \(v>0\), the law \(\mathcal N_{\mathbb C}(0,v)\) has density
\((\pi v)^{-1}e^{-|z|^2/v}\) and second absolute moment \(v\).
Unless we explicitly use real coordinates, dimension counts and Gram
determinants below are taken over the complex field.

\subsection{Geometry and observation comparison}
\label{l:projection-geometry-for-independent-complex-entries}
\label{l:a-pointwise-density-bound-for-two-dimensional-blocks}
\label{l:adaptive-comparison-with-complex-queries}

\begin{lemma}[Complex projection and geometric estimates]\label{l:thm:CGEO}

If \(x\in\mathbb C^h\) has independent entries with planar densities
\(f_j\le K\), then, for every complex coisometry
\(L:\mathbb C^h\to\mathbb C^q\), meaning \(LL^*=\mathrm{Id}_q\),
the canonical projection density satisfies \begin{equation}
\int_{\ker L}\prod_j f_j((L^*y)_j+w_j)\,dw\le(eK)^q
 \quad\text{for every }y\in\mathbb C^q.
 \label{l:eq:Z.3}
\end{equation} For independent-entry matrices whose column laws are \(\mu\) or
\(\mathcal N_{\mathbb C}(0,1)\), the conclusions of Corollary \ref{l:thm:IGRAM} and
Lemmas \ref{l:thm:GV} and \ref{l:thm:GE} hold with \(\mathsf G=\mathsf M^*\mathsf M\), complex ranges,
and law-dependent constants. The energy bound \eqref{l:eq:G.4} holds as well.
\end{lemma}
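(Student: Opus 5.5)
The plan is to reduce every complex statement to its real counterpart by identifying $\mathbb C^h$ with $\mathbb R^{2h}$ and working with two-dimensional real blocks. For the projection bound \eqref{l:eq:Z.3}, a complex coisometry $L$ becomes a real coisometry $L_{\mathbb R}:\mathbb R^{2h}\to\mathbb R^{2q}$, and $\ker L$ is a real subspace of real dimension $2(h-q)$. The coordinate maps are now the block projections $B_j:\ker L\to\mathbb R^2$, one for each complex coordinate. Write $\ell_j^{\ker}=(P_{\ker L})_{jj}$ for the complex diagonal. Because $P_{\ker L}$ is complex linear, its real $2\times2$ diagonal block is $\ell_j^{\ker}\mathrm{Id}_2$. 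So after normalizing $B_j$ by $\sqrt{\ell_j^{\ker}}$ we get $B_jB_j^T=\mathrm{Id}_2$ and $\sum_j\ell_j^{\ker}B_j^TB_j=\mathrm{Id}$, which is exactly the geometric Brascamp--Lieb datum with $\mathbb R^2$-valued factors. This case is also covered by \cite[Proposition 2.8]{BCCT2008}.

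Repeating the computation in the proof of Lemma~\ref{l:thm:GP} with $h_j=f_j(\cdot+\sqrt{\ell_j^{\ker}}\,\cdot)^{1/\ell_j^{\ker}}$ on $\mathbb R^2$, each integral is bounded by $(\ell_j^{\ker})^{-1}K^{1/\ell_j^{\ker}-1}$. The Jacobian factor is now $\ell^{-1}$ instead of $\ell^{-1/2}$, since the scaling is two-dimensional. Taking the product and using $\sum_j(1-\ell_j^{\ker})=q$ together with $-c\log c\le 1-c$ gives $K^q\prod_j(\ell_j^{\ker})^{-\ell_j^{\ker}}\le (eK)^q$, which is \eqref{l:eq:Z.3}. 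Integrating over a complex $q$-ball of radius $u\sqrt q$ (real dimension $2q$) then yields the complex analogue of \eqref{l:eq:G.2}: the bound $(C_Ku)^{2q}$, and the inverse-distance moment bound $C'_K/q$ for $p\le q/4$, again by tail integration.

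With the projection bound in hand, the remaining statements follow by transcribing the real proofs line by line. In Corollary~\ref{l:thm:IGRAM}, the determinant ratio for $\mathsf G=\mathsf M^*\mathsf M$ is a product of inverse squared distances of successive columns to complex spans, and each factor is controlled by the complex \eqref{l:eq:G.2}. The complementary-minor expansion holds verbatim for Hermitian matrices. In Lemma~\ref{l:thm:GV}, the identity \eqref{l:eq:G.10} and the inequality $\log\det(\mathrm{Id}+D)\le\operatorname{tr}D$ hold for Hermitian $D\succeq0$, and the square-exponential moment of $|\zeta|^2$ replaces that of $\xi^2$.

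The energy bound \eqref{l:eq:G.4} needs two changes. The subgaussian linearization is applied in the real coordinates of each entry, and the net is taken on the complex Stiefel manifold, whose cardinality is $(1+2/\eta)^{2m_{\mathrm{col}}b}$. Both changes only enlarge constants. The small-singular-value count in Lemma~\ref{l:thm:SVAL}(ii) uses \eqref{l:eq:Z.3} row by row to bound the density of $\mathsf MU$ by $C^{2hb}$, and Lemma~\ref{l:thm:GE} then follows from the same SVD splitting with Bessel's inequality over $\mathbb C$.

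The step I expect to need care is the Brascamp--Lieb reduction itself: verifying that the complex structure forces the real $2\times2$ diagonal blocks of $P_{\ker L}$ to be scalar, so that the geometric (isotropic) form applies with weights $\ell_j^{\ker}$. This is where complex linearity of $L$ is used. Without it, the blocks would be general $2\times2$ contractions and the clean constant would fail. Everything after that is dimensional bookkeeping, with the exponent changing from $q$ to $2q$ real dimensions and $q$ replaced by the complex codimension, together with a check that the aspect-ratio hypotheses still imply $p<h/8$ in the complex counts.
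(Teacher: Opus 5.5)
Your proposal is correct and follows essentially the same route as the paper. The paper applies geometric Brascamp--Lieb on $\ker L$ with two-dimensional real targets and weights $c_j=(P_{\ker L})_{jj}$ to get $K^q\prod_j c_j^{-c_j}\le(eK)^q$, then uses complex-ball volume and a line-by-line transfer of the real arguments with Hermitian Grams and doubled net and small-ball exponents. Your explicit check that the real $2\times2$ diagonal blocks of $P_{\ker L}$ are scalar, by complex linearity and self-adjointness, is exactly what the paper compresses into the phrase ``geometric Brascamp--Lieb data with two-dimensional real targets.''
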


\begin{proof}
Write \(c_j=(P_{\ker L})_{jj}\). The normalized coordinate maps on
\(\ker L\), with weights \(c_j>0\), are geometric Brascamp--Lieb data
with two-dimensional real targets. The calculation in
Lemma~\ref{l:thm:GP}, using \cite[Proposition 2.8]{BCCT2008}, becomes
\[
 \int_{\ker L}\prod_{j=1}^h f_j((L^*y)_j+w_j)\,dw
 \le K^q\prod_{\substack{1\le j\le h\\c_j>0}}c_j^{-c_j}
 \le(eK)^q,
 \qquad \sum_{j=1}^h(1-c_j)=q.
\]
The volume of a complex \(q\)-dimensional ball and tail integration give
\begin{equation}
\mathbb P\{\operatorname{dist}(x,F)\le u\sqrt q\}
 \le\min\{1,(C_Ku)^{2q}\},\qquad
 \|\operatorname{dist}(x,F)^{-2}\|_{L^p}\le C'_K/q
 \quad(1\le p\le q/4),
 \label{l:eq:Z.4}
\end{equation}
for every fixed complex subspace \(F\) of codimension \(q\).
Conditioning on successive columns gives Corollary~\ref{l:thm:IGRAM}.
For Lemma~\ref{l:thm:GV}, the required complementary-minor and
positive-Gram identities also hold for Hermitian Grams.

For Lemma~\ref{l:thm:SVAL}, let \(U\) be a complex \(b\)-frame and
\(v\) a common subgaussian constant for the two column laws. The
fixed-frame estimates and an \(\eta\)-net \(\mathcal N_\eta\) satisfy
\[
 \begin{aligned}
 \mathbb E e^{s\|\mathsf M U\|_{\mathrm{HS}}^2}
 &\le(1-2vs)^{-hb},\qquad 0<s<(2v)^{-1},\\
 |\mathcal N_\eta|&\le(1+2/\eta)^{2m_{\mathrm{col}}b},\qquad
 \mathbb P\{\|\mathsf M U\|_{\mathrm{HS}}\le a\sqrt{hb}\}
 \le(Ca)^{2hb}.
 \end{aligned}
\]
Both the net exponent and the small-ball exponent double. The
maximizing-frame argument proves \eqref{l:eq:G.4} for
\(m_{\mathrm{col}}\le h\); the small-ball net argument proves
\eqref{l:eq:G.5} after reducing the aspect ratio.
Finally the singular-value decomposition and Bessel bound
\eqref{l:eq:G.7} are Hermitian identities. With
\(q_i=\sum_{j=1}^{m_{\mathrm{col}}}|\mathsf M_{ij}|^2\), the
small-singular-value count and row-energy tail integration in
Lemma~\ref{l:thm:GE} give the coordinate-weight bound.
\end{proof}

\paragraph{The two-dimensional Fourier bound.}
For comparison, normalize a source entry by
\[
 \zeta_{\mathrm{iso}}=\sqrt2\,\overline\zeta,
 \qquad f_{\mathrm{iso}}(z)=\tfrac12 f(\overline z/\sqrt2),
 \qquad \operatorname{Cov}_{\mathbb R}(\zeta_{\mathrm{iso}})=\mathrm{Id}_2.
\]
The conjugation converts the bilinear observations into Hermitian
projections below. Define the block tilt by
\[
 \Lambda(t)=\log\mathbb E e^{\langle t,\zeta_{\mathrm{iso}}\rangle_{\mathbb R}},
 \qquad f_{\mathrm{iso},t}(x)
 =e^{\langle t,x\rangle_{\mathbb R}-\Lambda(t)}f_{\mathrm{iso}}(x),
\]
Identifying \(\mathbb C\) with \(\mathbb R^2\), its characteristic
function is
\[
 \chi_t(u)=\int_{\mathbb C}e^{i\langle u,x\rangle_{\mathbb R}}
                         f_{\mathrm{iso},t}(x)\,d^2x,
 \qquad u,t\in\mathbb R^2.
\]
Thus each component \(t_j\) of a complex tilt vector is also interpreted
as a two-dimensional real vector when used in \(\Lambda(t_j)\).

\begin{lemma}[Two-dimensional tilt and projection bounds]\label{l:thm:CENV}

There are \(C>0\) and \(\delta\in(0,1/4]\) depending only on \(\mu\),
such that \begin{equation}
c\log\left[(2\pi)^{-1}\int_{\mathbb R^2}
       |\chi_t(\sqrt c\,u)|^{1/c}\,du\right]
 \le C(c^{3/2}+\sqrt c\,|t|^2),
 \qquad 0<c\le\delta,\quad t\in\mathbb R^2.
 \label{l:eq:Z.5}
\end{equation} For \(U\in\mathbb C^{h\times k}\), \(U^*U=\mathrm{Id}_k\), write
\(\ell_j=\sum_{a=1}^k|U_{ja}|^2\), and let
\(q_U(y)=\int_{\ker U^*}\prod_j f_{\mathrm{iso}}((Uy)_j+w_j)\,dw\). If
\(\max_j\ell_j\le\delta\), then for every \(y\in\mathbb C^k\) and
\(\mathbf t=(t_1,\ldots,t_h)^T\in\operatorname{Ran}U\), \begin{equation}
\log q_U(y)\le-k\log(2\pi)-\langle\mathbf t,Uy\rangle_{\mathbb R}
       +\sum_{j=1}^h\Lambda(t_j)
       +C\sum_{j=1}^h(\ell_j^{3/2}+\sqrt{\ell_j}|t_j|^2).
 \label{l:eq:Z.6}
\end{equation} \end{lemma}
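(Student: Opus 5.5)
The plan is to transcribe the real proofs of Lemmas~\ref{l:thm:FTILT} and~\ref{l:thm:ENV}, treating each complex entry as a two-dimensional real block.

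\textbf{The scalar estimate \eqref{l:eq:Z.5}.} First, Plancherel on $\mathbb R^2$ together with $f_{\mathrm{iso}}^2\le (K/2)f_{\mathrm{iso}}$ gives
\[
\|\chi_t\|_2^2\le C_\mu e^{C_\mu|t|^2},
\]
which is the analogue of \eqref{l:eq:F.2}. Next, on a small tilt ball $|t|\le t_0$:
\begin{itemize}
\item the tilted covariance is $\nabla^2\Lambda(t)=\mathrm{Id}_2+O(|t|)\succeq\tfrac34\mathrm{Id}_2$; here zero pseudovariance is used, since it makes the untilted real covariance exactly $\mathrm{Id}_2$ after the $\sqrt2$ normalization;
\item the cosine bound gives $|\chi_t(u)|^2\le\exp\{-u^T\nabla^2\Lambda(t)u+C|u|^4\}$.
\end{itemize}
On $|u|\le u_0/\sqrt c$, comparison with the Gaussian $e^{-u^T\nabla^2\Lambda(t)u/2}$ gives
\[
(2\pi)^{-1}\int|\chi_t(\sqrt c\,u)|^{1/c}\,du\le\det(\nabla^2\Lambda(t))^{-1/2}+O(c).
\]
This is $\le e^{C(c+|t|)}$, because the determinant is $1+O(|t|)$. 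Absolute continuity and Riemann--Lebesgue give $\sup_{|u|\ge u_0}|\chi_t(u)|\le\rho<1$ uniformly for small $t$. The outer region then contributes $c^{-1}\rho^{1/c-2}\|\chi_t\|_2^2=O(c)$; the factor is now $c^{-1}$ rather than $c^{-1/2}$, which is still harmless. For $|t|>t_0$, use $|\chi_t|\le1$ and the $L^2$ bound exactly as in the real case. Multiplying the logarithm by $c$ and absorbing $c|t|$ into $c^{3/2}+\sqrt c|t|^2$ proves \eqref{l:eq:Z.5}.

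\textbf{The fiber bound \eqref{l:eq:Z.6}.} Identify $\mathbb C^k$ with $\mathbb R^{2k}$. Each row $U_{j,:}$ defines a real-linear map $\mathbb C^k\to\mathbb C\simeq\mathbb R^2$, namely $z\mapsto U_{j,:}z$. After normalizing by $\sqrt{\ell_j}$, these maps $B_j$ are real coisometries onto $\mathbb R^2$, since $B_jB_j^*=1$ complexly. Moreover $\sum_j\ell_jB_j^*B_j=U^*U=\mathrm{Id}$, so they form geometric Brascamp--Lieb data with two-dimensional targets, and the weights satisfy $\sum_j\ell_j\cdot2=2k=\dim_{\mathbb R}$.

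The argument then runs in three steps.
\begin{enumerate}
\item Continuity of the tilted fiber density follows by the same telescoping Brascamp--Lieb argument, with exponents $r_j=(1-\ell_j)^{-1}$.
\item Geometric Brascamp--Lieb \cite[Proposition 2.8]{BCCT2008} applied to $H_j(s)=|\chi_{t_j}(\sqrt{\ell_j}s)|^{1/\ell_j}$ on $\mathbb R^2$, followed by \eqref{l:eq:Z.5} with $c=\ell_j$, bounds $\int\prod_j|\chi_{t_j}|$ by $(2\pi)^k e^{C\sum_j(\ell_j^{3/2}+\sqrt{\ell_j}|t_j|^2)}$. Fourier inversion on $\mathbb R^{2k}$ carries the factor $(2\pi)^{-2k}$, which yields the constant $-k\log(2\pi)$.
\item Undo the tilt. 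The term $\langle\mathbf t,w\rangle_{\mathbb R}$ vanishes because $w\in\ker U^*=(\operatorname{Ran}U)^\perp$, and this orthogonality holds also for the real inner product.
\end{enumerate}

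The main obstacle is the local Gaussian comparison in two dimensions. It needs a uniform positive-definite lower bound on the tilted covariance, and a precise determinant normalization so that the leading constant is exactly $(2\pi)^{-1}\cdot2\pi=1$ up to $e^{O(c+|t|)}$. This is exactly where $\mathbb E\zeta^2=0$ is indispensable. I would verify this point first, and regard the remaining steps as routine.
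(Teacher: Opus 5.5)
Your proposal is correct and is essentially the paper's own proof. You carry Lemmas~\ref{l:thm:FTILT} and~\ref{l:thm:ENV} over to two-dimensional blocks, with the same ingredients as the paper: the local Gaussian factor $(\det\nabla^2\Lambda(t))^{-1/2}$, the $c^{-1}$ Fourier-tail factor, geometric Brascamp--Lieb with real two-dimensional targets, inversion in real dimension $2k$ (giving the factor $(2\pi)^{-k}$), and the same tilt removal using $\mathbf t\in\operatorname{Ran}U$ and $w\in\ker U^*$. Your point that zero pseudovariance is what forces $\det\nabla^2\Lambda(0)=1$ (since a trace-$2$ covariance otherwise has determinant $<1$, and the $O(c^{3/2})$ bound at $t=0$ would fail) is exactly the role the paper assigns to it.
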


\begin{proof}
In Lemma~\ref{l:thm:FTILT}, replace the scalar tilted variance by
\(\Sigma(t)=\nabla^2\Lambda(t)=\mathrm{Id}_2+O(|t|)\).
The local Gaussian integral is \((\det\Sigma(t))^{-1/2}\), and the
Fourier-tail change of variables contributes \(c^{-1}\).
The same cosine expansion, spectral-gap argument and Plancherel bound
therefore give
\[
 (2\pi)^{-1}\int_{\mathbb R^2}|\chi_t(\sqrt c\,u)|^{1/c}\,du
 \le(\det\Sigma(t))^{-1/2}+Cc\le e^{C(c+|t|)}
\]
for small \(t\), and \(Cc^{-1}e^{C|t|^2}\) for large \(t\).
Here absolute continuity excludes \(|\chi_0(u)|=1\) at \(u\ne0\),
which would confine the law to countably many parallel lines.
The final estimates in the proof of Lemma~\ref{l:thm:FTILT} give
\eqref{l:eq:Z.5}.

For the projection bound, the maps
\(B_jy=(Uy)_j/\sqrt{\ell_j}\), for \(\ell_j>0\), have real
two-dimensional targets and satisfy
\(\sum_{j:\ell_j>0}\ell_j B_j^*B_j=\mathrm{Id}_k\).
Geometric Brascamp--Lieb and \eqref{l:eq:Z.5} give
\[
 \int_{\mathbb C^k}\prod_{j=1}^h|\chi_{t_j}((Uz)_j)|\,dz
 \le(2\pi)^k
 e^{C\sum_{j=1}^h(\ell_j^{3/2}+\sqrt{\ell_j}|t_j|^2)}.
\]
The fiber-continuity argument in Lemma~\ref{l:thm:ENV} applies to these
blocks with the same exponents \((1-\ell_j)^{-1}\).
Fourier inversion in real dimension \(2k\) contributes
\((2\pi)^{-2k}\), leaving \((2\pi)^{-k}\).
We can undo the tilt just as in that lemma: its factor is constant on the fiber
because \(\mathbf t\in\operatorname{Ran}U\) and \(w\in\ker U^*\).
This proves \eqref{l:eq:Z.6} at every point.
\end{proof}

\paragraph{The observation comparison.}
\begin{theorem}[Complex observation comparison]\label{l:thm:CAC}

Let \(X\in\mathbb C^{d\times h}\) have iid law \(\mu\), and let \(\mathsf C\) be
an independent auxiliary input taking values in a standard Borel space
\(\mathcal C\). A Borel rule makes
predictable queries \(\mathbf u_a\in\mathbb C^h\), observes \(\mathbf y_a=X\mathbf u_a\), and
satisfies \(U^*U=\mathrm{Id}_k\) on every history, where
\(U=[\mathbf u_1,\ldots,\mathbf u_k]\in\mathbb C^{h\times k}\) and
\(Y=[\mathbf y_1,\ldots,\mathbf y_k]\in\mathbb C^{d\times k}\). Define
\(P=\mathcal L(\mathsf C,Y)\),
\(G=\mathcal L(\mathsf C)\otimes\mathcal N_{\mathbb C}(0,1)^{\otimes dk}\),
\(\ell_j=\sum_{a=1}^k|U_{ja}|^2\), and
\(\Theta=\sum_{j=1}^h\ell_j^{3/2}\). There are positive law-dependent
constants \(C_{\mathrm{coarse}},C_{\mathrm{weight}},\delta_\mu\) and
\(\theta_{\mathrm{coarse}},\theta_\mu\in(0,1)\) such that, for
\(h\le d\) and every Borel observation event
\(F\subseteq\mathcal C\times\mathbb C^{d\times k}\), \begin{equation}
\begin{aligned}
 P(F)&\le e^{C_{\mathrm{coarse}}dk}G(F)^{\theta_{\mathrm{coarse}}},\\
 P(F)&\le\left[\mathbb E_G\mathbf1_F
              e^{C_{\mathrm{weight}}d\Theta}\right]^{\theta_\mu}
       &&\text{if }F\subseteq\{\max_j\ell_j\le\delta_\mu\}.
 \end{aligned}
 \label{l:eq:Z.7}
\end{equation} The reference is realized by a standard circular complex Gaussian
source, and all constants are uniform over the query rule and auxiliary
law. \end{theorem}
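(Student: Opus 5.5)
The plan is to rerun the proof of Theorem~\ref{l:thm:AC} (the coarse comparison, then the entropy-gain argument for the weighted comparison), identifying $\mathbb C$ with $\mathbb R^2$ throughout. The key point is that \eqref{l:eq:Z.1} gives $\operatorname{Cov}_{\mathbb R}(\zeta)=\tfrac12\mathrm{Id}_2$, which matches the circular reference $\mathcal N_{\mathbb C}(0,1)$. To put everything in isotropic coordinates, I first pass to $\zeta_{\mathrm{iso}}=\sqrt2\,\overline\zeta$. Then both $\mu$ and the reference become laws on $\mathbb R^2$ with identity covariance, and the complex linear observations become real projections onto $\operatorname{Ran}U$, viewed as a real $2k$-dimensional subspace of $\mathbb R^{2h}$. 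The scaling and conjugation are fixed linear bijections of the response space, applied identically under $P$ and $G$, so they change neither event probabilities nor the coordinate weights $\ell_j$.

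\textbf{Step 1 (likelihood).} I will prove the complex analogue of Lemma~\ref{l:thm:LIKE}. At each step the predictable query determines a unitary change of variables on the unrevealed coordinates of each source row. Such a map preserves planar Lebesgue measure, so Tonelli applies as in the real case, giving
\[
 L_c(\mathbf z)=\prod_{i=1}^d\frac{q_{U_c(\mathbf z)}(\mathbf z^{(i)})}{\phi^{\mathbb C}_k(\mathbf z^{(i)})},
\]
where $q_U$ is the fiber integral of Lemma~\ref{l:thm:CENV} (after the isotropic normalization) and $\phi^{\mathbb C}_k$ is the corresponding Gaussian density. Under a Gaussian source $q_U=\phi^{\mathbb C}_k$, so the reference responses are fresh circular Gaussians.

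\textbf{Step 2 (coarse comparison).} The projection bound \eqref{l:eq:Z.3} gives
\[
 L_c\le e^{C dk+\|Y\|_{\mathrm{HS}}^2}
\]
in the complex normalization. Combining this with $\|XU\|_{\mathrm{HS}}^2\le\sum_{a\le k}s_a(X)^2$ and the energy bound \eqref{l:eq:G.4}, which Lemma~\ref{l:thm:CGEO} supplies for $h\le d$, yields $\mathbb E_{G_0}L_c^{1+t}\le e^{C'dk}$ for some small fixed $t>0$. Hölder's inequality then gives the first line of \eqref{l:eq:Z.7}, uniformly in $c$, and averaging over $\mathsf C$ completes it.

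\textbf{Step 3 (weighted comparison).} I reprove Lemma~\ref{l:thm:ENT} with three substitutions.
\begin{itemize}
\item The coupling: Lemma~\ref{l:thm:SEQCOUP} applied to the $2dk$ real coordinates of the responses. The ordering should take both real coordinates of one column before moving on, so that each query is still known before its Gaussian column is revealed.
\item The third moments: Lemma~\ref{l:thm:GAUSS3} applied separately to the real and imaginary parts of $N_{ij}=\sum_a (\mathbf u_a)_j g_{ia}$. Each part is a predictable Gaussian sum with variance sum of order $\ell_j$, so $|N_{ij}|^3$ and $\sqrt{\ell_j}|N_{ij}|^2$ are controlled by $C\,\mathbb E\ell_j^{3/2}$.
\item The density bound: the envelope \eqref{l:eq:Z.6} in place of \eqref{l:eq:A.7}, with the tilt $\mathbf t_i=N_{i,:}^T+\lambda(E_{\mathrm{err}})_{i,:}^T\in\operatorname{Ran}U$.
\end{itemize}
Identity covariance of $\zeta_{\mathrm{iso}}$ gives $\Lambda(t)=|t|^2/2+\mathcal R(t)$ with $\mathcal R(t)\le C\min\{|t|^3,|t|^2\}$. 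The quadratic terms then combine exactly as before into $\frac{(1-\lambda)^2}{2}\|(E_{\mathrm{err}})_{i,:}\|^2$, and the remaining algebra leading to \eqref{l:eq:A.13} is unchanged. The truncation and conditioning argument, followed by Jensen's inequality and concavity over $\mathsf C$, then yields the second line of \eqref{l:eq:Z.7} for $\delta_\mu=\min\{1/4,\delta\}$.

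\textbf{Main obstacle.} The delicate step is the normalization bookkeeping between complex and real coordinates. The Gaussian density constants in \eqref{l:eq:Z.6}, the factor $(2\pi)^{-k}$, and the coupling-cost coefficient must cancel exactly against $\phi^{\mathbb C}_k$. If they do not, an $O(dk)$ error survives, and the weighted bound would then be no better than the coarse one.

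The way to keep this under control is to do every computation in the isotropic real coordinates, where the reference is $N(0,\mathrm{Id}_{2k})$ per row, and to translate back only at the end. The other point to check is that the quadratic parts of $\Lambda$ cancel exactly. This is precisely where zero pseudovariance is needed: a nonscalar real covariance would leave a nonvanishing quadratic term $\tfrac12\langle t,(\Sigma-\mathrm{Id})t\rangle$, costing order $dk$.
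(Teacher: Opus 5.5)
Your proposal is correct and follows essentially the same route as the paper. The paper likewise passes to $\mathbf Z=\sqrt2\,\overline Y$, so that the responses are Hermitian projections of isotropic blocks, and obtains the likelihood \eqref{l:eq:Z.8} by unitary changes of variables. It derives the coarse bound from \eqref{l:eq:Z.3} and the energy estimate of Lemma~\ref{l:thm:CGEO}; your $L_c\le e^{Cdk+\|Y\|_{\mathrm{HS}}^2}$ is its $e^{Cdk+\|\mathbf Z\|_{\mathrm{HS}}^2/2}$ after rescaling. For the weighted bound it reruns Lemma~\ref{l:thm:ENT} with the sequential coupling on real coordinates ordered by complex columns, Lemma~\ref{l:thm:GAUSS3} applied to each real component (variance sum exactly $\ell_j$), and the tilt $\mathbf n_i+\lambda\mathbf e_i$ in \eqref{l:eq:Z.6}, with zero pseudovariance used, as you say, so that the quadratic part of $\Lambda$ cancels.
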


\begin{proof}
Fix the auxiliary input \(c\). Set \(\mathbf Z=\sqrt2\,\overline Y\),
and evaluate the queries on the recovered history:
\(\widetilde U_c(\mathbf z)=U(c,\overline{\mathbf z}/\sqrt2)\).
For source rows written as columns,
\[
 \mathbf z^{(i)}=(\mathbf Z_{i,:})^T
 =U^*(\sqrt2\,\overline{X_{i,:}})^T.
\]
Here \(U=\widetilde U_c(\mathbf Z)\). Thus the responses are Hermitian projections of independent isotropic
blocks. This invertible transformation preserves likelihood ratios and
relative entropy; write \(\widetilde P_c,\widetilde G_0\) for the
transformed laws, and set \(L_c=d\widetilde P_c/d\widetilde G_0\).
The Gaussian reference \(\widetilde G_0\) has
\(2dk\) independent standard real Gaussian coordinates.
The unitary changes of variables in Lemma~\ref{l:thm:LIKE} preserve
Lebesgue measure and are determined by previous responses, so
\begin{equation}
L_c(\mathbf Z)=\prod_{i=1}^d
 \frac{q_{\widetilde U_c(\mathbf Z)}(\mathbf z^{(i)})}
 {\phi_{2k}(\mathbf z^{(i)})},
 \qquad\phi_{2k}(z)=(2\pi)^{-k}e^{-\|z\|^2/2}.
 \label{l:eq:Z.8}
\end{equation}
For Gaussian sources, \(q_U=\phi_{2k}\), giving the stated reference.

Choose \(\delta_\mu\le\min\{1/4,\delta\}\) from Lemma~\ref{l:thm:CENV}.
For the weighted bound, take a law \(M\) satisfying the entropy and
integrability hypotheses of Lemma~\ref{l:thm:ENT} on the flat-frame event.
Apply Lemma~\ref{l:thm:SEQCOUP} to the real observation coordinates,
ordered by complex columns, and write \(\Pi\) for the coupling law.
Its Gaussian innovations \(g_{ia}\) have
independent \(N(0,1)\) real and imaginary parts, and
\[
 \mathbf n_i=\sum_{a=1}^k\mathbf u_a g_{ia},\qquad
 \mathbf e_i=\sum_{a=1}^k\mathbf u_a(Z_{ia}-g_{ia}),\qquad
 U\mathbf z^{(i)}=\mathbf n_i+\mathbf e_i,
 \quad\mathbb E_\Pi\sum_{i=1}^d\|\mathbf e_i\|^2
 \le2H(M\mid\widetilde G_0).
\]
The query is known before either component of the current Gaussian
column is revealed. Multiplication by \(b+ic\) has real matrix
\(\left(\begin{smallmatrix}b&-c\\c&b\end{smallmatrix}\right)\), so
each component of \((\mathbf n_i)_j\) has predictable variance sum
\(\ell_j\). Lemma~\ref{l:thm:GAUSS3} therefore gives
\begin{equation}
\mathbb E_\Pi|(\mathbf n_i)_j|^3+
 \mathbb E_\Pi[\sqrt{\ell_j}|(\mathbf n_i)_j|^2]
 \le132\,\mathbb E_M\ell_j^{3/2}.
 \label{l:eq:Z.9}
\end{equation}
No independence of the original real and imaginary entry components is
used. Since \(\Lambda(t)-|t|^2/2\le C\min\{|t|^3,|t|^2\}\), the
tilt \(\mathbf t_i=\mathbf n_i+\lambda\mathbf e_i\) from the proof of
Lemma~\ref{l:thm:ENT} reduces the quadratic part to a term in the
coupling error, with all inner products taken over the reals. The error
coefficient is at most
\(1/2-\lambda+C\lambda^2\), as in \eqref{l:eq:A.10}.
Choose \(\lambda>0\) small; \eqref{l:eq:Z.6}, \eqref{l:eq:Z.9} and the
coupling cost give
\begin{equation}
\mathbb E_M\log L_c\le C_{\mathrm{ent}}d\mathbb E_M\Theta
                      +(1-\lambda)H(M\mid\widetilde G_0).
 \label{l:eq:Z.10}
\end{equation}
The entropy identity, Jensen inequality and likelihood truncation in the
weighted proof of Theorem~\ref{l:thm:AC} now give the second bound in
\eqref{l:eq:Z.7}.
For the coarse bound, \eqref{l:eq:Z.3} gives
\(L_c\le e^{Cdk+\|\mathbf Z\|_{\mathrm{HS}}^2/2}\), and
\(\|\mathbf Z\|_{\mathrm{HS}}^2\le2\sum_{a=1}^k s_a(X)^2\).
The energy bound of Lemma~\ref{l:thm:CGEO} lets us finish as in the
coarse proof of Theorem~\ref{l:thm:AC}. We then average over \(c\) and
return to the original responses.
\end{proof}

\subsection{Arnoldi links and stopping}
\label{l:complex-arnoldi-links-and-stopping}

Run Algorithms~\ref{l:thm:ARN} and \ref{l:thm:DEL} with Hermitian
projections and Grams:
\[
 \mathrm{Id}-\sum_{a=1}^j\mathbf v_a\mathbf v_a^*,\qquad
 \mathsf G(J)=V_{J,:}^*V_{J,:},\qquad
 \ell_j(J)=V_{j,:}\mathsf G(J)^{-1}V_{j,:}^*.
\]
The orthogonal completion rule, deletion order, weights and stopping
convention are unchanged; in particular \(W_k\) is still defined by
\eqref{l:eq:M.6}.

\begin{proposition}[Original-law complex links]\label{l:thm:CML}

For every \(0<\varepsilon\le1\), there are
\(\beta_{\mu,\varepsilon},\allowbreak c_{\mathrm{link}}(\mu,\varepsilon)>0\) such
that, for all sufficiently large \(d\), a deterministic integer
\(m\ge1\) satisfies \begin{equation}
2m<d,\qquad \beta_{\mu,\varepsilon}d\le2m,
 \qquad256m\le\varepsilon d,
 \qquad
 \mathbb P_{A,\mathbf s}\{W_{2m}\le(1-\varepsilon/4)^{m^2}\}
       \le e^{-c_{\mathrm{link}}d^2}.
 \label{l:eq:Z.11}
\end{equation} Here the complex core \(A\) and starter \(\mathbf s\) have
independent iid entries with law \(\mu\). \end{proposition}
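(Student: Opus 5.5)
The plan is to follow the real proof of Proposition~\ref{l:thm:ML} almost verbatim: combine the complex stopping estimate with the complex reference-link estimate through Theorem~\ref{l:thm:CAC}.

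The first step is the all-Gaussian complex reference. Under $\mathcal N_{\mathbb C}(0,1)$ entries, the complex analogue of Lemma~\ref{l:thm:LIKE} (contained in the proof of Theorem~\ref{l:thm:CAC}) makes the Arnoldi responses fresh circular Gaussian vectors. The residual projection at step $j$ has complex rank $d-j$, so the $R_j^2$ are independent $\mathrm{Gamma}(d-j,1)$ variables. For $Z\sim\mathrm{Gamma}(\nu,1)$ and $s<\nu$,
\[
 \E Z^{-s}=\prod_{\ell=1}^s(\nu-\ell)^{-1}.
\]
This is even better than \eqref{l:eq:L.2}, since each denominator loses $\ell$ rather than $2\ell$. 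Repeating the Markov argument with $p=\lfloor\varepsilon d/(64k)\rfloor$ then gives $\Prob_{\mathrm G}(\mathcal B_\varepsilon)\le e^{-c_{\mathrm G}(\varepsilon)dk}$, uniformly over the starter. To pass to the mixed reference $\mathsf Q_{I_0}$, I restore the columns in $I_0$ by H\"older with the planar likelihood ratio $f/\phi$, where $\phi$ is the standard complex Gaussian density. The quantity
\[
 D_q=\int_{\C} f^{1+q}\phi^{-q}\,d^2z\le (K\pi)^q\,\E e^{q|\zeta|^2}
\]
is finite for $q\le t_0$. As in Lemma~\ref{l:thm:LM}, this yields $\mathsf Q_{I_0}(\mathcal B_\varepsilon)\le 2e^{-c_{\mathrm{mix}}dk}$ whenever $|I_0|\le\eta_{\mu,\varepsilon}k$.

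The second step is the stopping estimate, i.e.\ the complex version of Proposition~\ref{l:thm:SS}. I need three ingredients, each of which transfers.
\begin{enumerate}
\item Exact reconstruction (Lemma~\ref{l:thm:RECONPROP}) is linear algebra and works over $\C$ with Hermitian orthogonalization.
\item The volume certificate \eqref{l:eq:S.2} uses the matrix determinant lemma for Hermitian Grams, and the monotonicity \eqref{l:eq:G.8} also holds.
\item The inclusion \eqref{l:eq:P.2} needs almost-sure full rank of the restricted Krylov matrices. A complex Vandermonde witness together with the planar densities supplies this.
\end{enumerate}
Lemma~\ref{l:thm:CGEO} provides the complex versions of Lemmas~\ref{l:thm:GV} and~\ref{l:thm:GE} and of the energy bound. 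Theorem~\ref{l:thm:CAC} provides both the coarse and the weighted comparison. With these in hand, the proof of Proposition~\ref{l:thm:SS} goes through line by line: fix candidate sets $I_0$, bound active outputs by coarse comparison with $L_{\mathrm{stop}}$ chosen as in \eqref{l:eq:P.4}, bound late flat outputs by \eqref{l:eq:P.6}--\eqref{l:eq:P.7}, and sum over at most $2^d$ sets.

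The final step follows Subsection~\ref{l:sec:real_link_completion}. Choose $\tau$ with $C_{\mathrm{weight}}\sqrt\tau\le c_{\mathrm{mix}}/4$, and apply the stopping proposition with $\nu=\eta_{\mu,\varepsilon}$ and $\alpha_{\max}=\varepsilon/128$. Decompose $\mathcal B_\varepsilon$ over the good deletion outputs. Bound each piece by the weighted comparison of Theorem~\ref{l:thm:CAC}, using $\Theta\le k\sqrt\sigma$, together with the mixed-reference bound. Summing gives $e^{-c_{\mathrm{link}}d^2}$, with $m=k/2$ and \eqref{l:eq:Z.11} satisfied by $\beta_{\mu,\varepsilon}=\beta_{\mathrm{stop}}$.

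The main point to check is that Theorem~\ref{l:thm:CAC} compares against a standard circular Gaussian source, $\mathcal N_{\mathbb C}(0,1)$. The reference $\mathsf Q_{I_0}$ must use exactly this law for the Gaussian columns, and Lemma~\ref{l:thm:CGEO} must cover that column type with the right normalization. Zero pseudovariance guarantees that the covariances match, so no additional cost enters the comparison.
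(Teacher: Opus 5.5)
Your proposal is correct and takes essentially the same route as the paper. It uses Gamma negative moments for the all-Gaussian complex links, the planar likelihood moment $D_q$ to restore the columns in $I_0$, Hermitian reconstruction, determinant loss and range monotonicity together with the Vandermonde full-rank witness for \eqref{l:eq:P.2}, and then reruns the real stopping and link-completion arguments with Lemma~\ref{l:thm:CGEO} and Theorem~\ref{l:thm:CAC}.
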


\begin{proof}
Under a core with iid standard complex Gaussian entries,
the residual squares are independent and
\(R_j^2\sim\operatorname{Gamma}(d-j,1)\), with shape \(d-j\) and
rate one. For integers \(0\le s<d-j\),
\begin{equation}
\mathbb E(R_j^2)^{-s}=\prod_{\ell=1}^s(d-j-\ell)^{-1}.
 \label{l:eq:Z.12}
\end{equation}
The denominators are no smaller than in \eqref{l:eq:L.2}, so the same
moment calculation gives \eqref{l:eq:L.1}.
For mixed reference columns, with \(\phi(z)=\pi^{-1}e^{-|z|^2}\),
choose \(0<q\le\min\{1,t_0\}\). Write
\(L_{I_0}=\prod_{i=1}^d\prod_{j\in I_0}f(A_{ij})/\phi(A_{ij})\);
\(\mathbb E_{\mathrm G}\) denotes expectation over this standard complex
Gaussian core and the unchanged independent starter. Then
\[
 D_q=\int_{\mathbb C}f^{1+q}\phi^{-q}
 \le(\pi K)^q\mathbb E e^{q|\zeta|^2}<\infty,
 \qquad \mathbb E_{\mathrm G}L_{I_0}^{1+q}=D_q^{d|I_0|}.
\]
This is the likelihood moment used in Lemma~\ref{l:thm:LM}.

To use the stopping argument, we also need reconstruction and the
determinant-loss identities.
Reconstruction in Lemma~\ref{l:thm:RECONPROP} uses Hermitian
Gram--Schmidt; the determinant loss \eqref{l:eq:S.2} and range
monotonicity \eqref{l:eq:G.8} hold for Hermitian Grams.
The Krylov minors have the same Vandermonde witness, and a nonzero
complex polynomial has zero set of planar product measure zero.
Thus the reference-range inclusion \eqref{l:eq:P.2} still holds.
We can now follow the proofs of Propositions~\ref{l:thm:SS} and
\ref{l:thm:ML}, using Lemma~\ref{l:thm:CGEO} for the geometric
estimates and Theorem~\ref{l:thm:CAC} for comparison. The constants
may change, but the parameter choices and union bounds do not.
Setting \(k=2m\) gives \eqref{l:eq:Z.11}.
\end{proof}

\subsection{The border estimate}
\label{l:complex-gaussian-border-and-completion}

For the border calculation, each diagonal block is now a single complex
scalar. We first record its negative-moment bound.

\begin{lemma}[Complex Gaussian inverse determinants]\label{l:thm:CDN}

Let \(Z\in\mathbb C^m\) be a complex Gaussian vector with arbitrary
mean, Hermitian covariance \(\Gamma>0\), and zero centered
pseudocovariance: with \(Z_c=Z-\mathbb E Z\), these conditions mean
\(\Gamma=\mathbb E[Z_cZ_c^*]\) and \(\mathbb E[Z_cZ_c^T]=0\).
For every deterministic \(D\in\mathbb C^{m\times m}\),
\begin{equation}
\mathbb E|\det(\operatorname{diag}Z+D)|^{-1/2}
 \le C^m(\det\Gamma)^{-1/4}.
 \label{l:eq:Z.13}
\end{equation} In particular the same bound holds for
\(\mathbb E\prod_j|Z_j|^{-1/2}\). The constant is universal.
\end{lemma}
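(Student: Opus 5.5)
The proof follows Lemma~\ref{l:thm:DN}(ii): first a one-variable estimate that is uniform in the mean and the shift, then induction on \(m\) through a Schur complement that peels off the first coordinate.

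\emph{One-dimensional base case.} Take \(m=1\). Let \(Z\sim\mathcal N_{\mathbb C}(\mu_Z,\gamma)\) with \(\gamma>0\); zero pseudocovariance makes the planar density \((\pi\gamma)^{-1}e^{-|z-\mu_Z|^2/\gamma}\), bounded by \((\pi\gamma)^{-1}\). For a fixed \(D\in\mathbb C\), the event \(\{|Z+D|<t\}\) is a disk of area \(\pi t^2\), so
\[
 \mathbb P\{|Z+D|<t\}\le\min\{1,t^2/\gamma\},
\]
uniformly in \(\mu_Z\) and \(D\). The identity \(\mathbb E X^{-1/2}=\tfrac12\int_0^\infty t^{-3/2}\mathbb P\{X<t\}\,dt\), with \(X=|Z+D|\), then gives \(\mathbb E|Z+D|^{-1/2}\le C\gamma^{-1/4}\). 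The integral splits at \(t=\sqrt\gamma\): the part below contributes \(\int_0^{\sqrt\gamma}t^{1/2}\gamma^{-1}\,dt\), and the part above contributes \(\int_{\sqrt\gamma}^\infty t^{-3/2}\,dt\); both are \(O(\gamma^{-1/4})\). This is simpler than the real pair block, with no annulus.

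\emph{Induction step.} Write \(Z=(Z_1,Z_{\rm rem})\) and partition \(\operatorname{diag}Z+D\) into the first coordinate and the rest, with \(\mathsf C=\operatorname{diag}Z_{\rm rem}+D_{22}\). The polynomial \(\det\mathsf C\) has top-degree part \(\prod_{j\ge2}Z_j\), so it is not identically zero. Since \(\Gamma>0\), \(Z_{\rm rem}\) has a density on \(\mathbb C^{m-1}\), and therefore \(\mathsf C\) is invertible almost surely. On that event the Schur complement identity gives
\[
 \det(\operatorname{diag}Z+D)=\det\mathsf C\,(Z_1+\Delta),
 \qquad \Delta=D_{11}-D_{12}\mathsf C^{-1}D_{21},
\]
and \(\Delta\) is fixed once we condition on \(Z_{\rm rem}\). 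The key point needing care is the conditional law of \(Z_1\) given \(Z_{\rm rem}\). It is again proper complex Gaussian, with variance \(S=\Gamma_{11}-\Gamma_{1,{\rm rem}}\Gamma_{\rm rem}^{-1}\Gamma_{{\rm rem},1}>0\), a mean depending on \(Z_{\rm rem}\), and zero pseudovariance. This holds because, for jointly proper Gaussians, conditioning is complex-linear regression: the residual \(Z_1-\Gamma_{1,{\rm rem}}\Gamma_{\rm rem}^{-1}Z_{\rm rem}\) is uncorrelated with \(Z_{\rm rem}\) both in the Hermitian sense and in the pseudocovariance sense, since all pseudocovariances vanish. Uncorrelated jointly Gaussian real coordinates are independent, so this residual is independent of \(Z_{\rm rem}\) and remains proper.

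Applying the base case conditionally gives \(\mathbb E[\cdots\mid Z_{\rm rem}]\le CS^{-1/4}|\det\mathsf C|^{-1/2}\). The induction hypothesis, applied to \(Z_{\rm rem}\) with covariance \(\Gamma_{\rm rem}\) and shift \(D_{22}\), bounds the remaining expectation by \(C^{m-1}(\det\Gamma_{\rm rem})^{-1/4}\). Since \(\det\Gamma=S\det\Gamma_{\rm rem}\), this closes the induction with the same universal \(C\).

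The product bound is the case \(D=0\), where \(\det\operatorname{diag}Z=\prod_j Z_j\). The main obstacle is the properness of the conditional law in the induction step, argued above. Everything else mirrors the real proof, and is easier because there are no conjugate-pair blocks.
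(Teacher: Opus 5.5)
Your proposal is correct and follows the paper's proof essentially step for step: the scalar disk bound \(\mathbb P\{|Z+a|<t\}\le\min\{1,t^2/v\}\) with tail integration, then the Schur-complement induction of Lemma~\ref{l:thm:DN}(ii), using that the conditional law of the peeled coordinate stays circular with variance equal to the covariance Schur complement \(S\), that \(\det\Gamma=S\det\Gamma_{\mathrm{rem}}\), and that the complementary determinant is almost surely nonzero. Your explicit check that the regression residual is uncorrelated with \(Z_{\mathrm{rem}}\) in both the Hermitian and pseudocovariance senses, and hence independent and proper, fills in a detail that the paper only asserts.
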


\begin{proof}
A circular Gaussian scalar of variance \(v\), with any mean, has
density at most \((\pi v)^{-1}\). Hence
\(\mathbb P\{|Z+a|<t\}\le\min\{1,t^2/v\}\), and tail integration
gives \(\mathbb E|Z+a|^{-1/2}\le Cv^{-1/4}\).
Apply the Schur-complement induction of Lemma~\ref{l:thm:DN}(ii),
integrating one complex coordinate at a time, conditional on all the
others. Its conditional
law remains circular, with variance \(v\) equal to the covariance
Schur complement, and \(\det\Gamma=v\det\Gamma_{\mathrm{rem}}\).
The complementary determinant is nonzero almost surely, since its
highest-degree term is the product of its diagonal variables.
The scalar bound proves \eqref{l:eq:Z.13}; set \(D=0\) for the product
assertion.
\end{proof}

\begin{proposition}[Complex good-core border probability]\label{l:thm:CBORDER}

Fix \(0\le r<R\le1\), \(a>0\), and an integer \(m\ge1\) with
\(2m<d=n-1\) and \((r/R)^m\le1/2\). For
every fixed complex core \(A\) and starter \(\mathbf s\) with
\(W_{2m}(A,\mathbf s)\ge a^{m^2}\), an independent border
\(\mathbf b=(\xi_{11},\mathbf x,\mathbf y)\sim\mu^{\otimes(2n-1)}\)
satisfies \begin{equation}
\mathbb P_{\mathbf b}\left\{\rho
    \begin{pmatrix}\xi_{11}&\mathbf y^T\\\mathbf x&A\end{pmatrix}\le r\sqrt n\right\}
 \le\exp\{C_{\mathrm{border}}(\mu)n+(m^2/4)\log(R/a)\}.
 \label{l:eq:Z.14}
\end{equation} \end{proposition}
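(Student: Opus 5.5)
We follow the real proof of Proposition~\ref{l:thm:MB} line by line, replacing each real ingredient by its complex counterpart. First, fix $(A,\mathbf s)$ and run the Hermitian Arnoldi recursion through step $d-1$. This gives a unitary $Q$ and a lower-Hessenberg core $B=(Q^*AQ)^T/\sqrt n$ with $B_{j,j+1}=R_j/\sqrt n$; the proof of Fact~\ref{l:thm:HESS} is unchanged. The conjugation identity \eqref{l:eq:M.5} then holds with $Q^*$ in place of $Q^T$, and $\overline Q$ or $Q^T$ acting on the border vectors as appropriate. It still preserves the characteristic polynomial.

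Next comes the Gaussian comparison on the original border coordinates. Choose $\kappa$ with $\mathbb E e^{|\zeta|^2/\kappa}<\infty$, and let $\phi_\kappa$ be the $\mathcal N_{\mathbb C}(0,\kappa)$ planar density. Then
\[
\int_{\mathbb C}f^2/\phi_\kappa\le K\pi\kappa\,\mathbb E e^{|\zeta|^2/\kappa}=:J_\mu .
\]
The product likelihood and Cauchy--Schwarz give the analogue of \eqref{l:eq:D.2}: $\mathbb P_{\mathbf b}(E)\le e^{J_\mu n}\mathbb P_\kappa(E)^{1/2}$. Unitary maps and complex conjugation preserve the circular Gaussian product law. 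Hence the transformed border is iid $\mathcal N_{\mathbb C}(0,\kappa/n)$, and it suffices to prove the complex analogue of Lemma~\ref{l:lem:gaussian-border},
\[
\mathbb P_\kappa\{\rho(T)\le r\}\le e^{C n}\bigl(R^{m^2}/W_{2m}\bigr)^{1/2}.
\]
Substituting $W_{2m}\ge a^{m^2}$ and taking the square root then produces the exponent $(m^2/4)\log(R/a)$ in \eqref{l:eq:Z.14}.

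For the Gaussian estimate we reuse the deterministic algebra verbatim. The cofactor factorization of Lemma~\ref{l:lem:hessenberg-cofactor} holds over $\mathbb C$. Lemma~\ref{l:lem:evaluation-determinant}(i) used only the symmetric-function argument and the identity $|\mathcal V|^2=m^mw^{m-1}$ for the $m$-th roots of $w>0$, so it holds with $Q_B$ now a complex monic polynomial. Part (ii) follows from the same forward-difference argument on a real grid of $w\in[R^m/2,R^m]$, since that argument only needs $Q_B$ monic. Then $Y=\prod_h|P_T(z_h)|\le(w+r^m)^n$ on the spectral event. The two border integrations go through with Lemma~\ref{l:thm:CDN} in place of Lemma~\ref{l:thm:DN}. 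In the row integration, the evaluation vector $\mathbf b_{\rm shift}-H\mathbf x_*$ is circular Gaussian with covariance $(\kappa/n)HH^*$, whose determinant is $(\kappa/n)^m|\det H|^2$. In the column integration, $H\,E_{\rm pre}^{-1}=\operatorname{diag}(E_{\rm suf}\mathbf u_*)+H_0E_{\rm pre}^{-1}$. Each application gives a factor $(n/\kappa)^{m/4}$ times $|\det|^{-1/2}$, and no pair-block bookkeeping is needed. Combining the two integrations with \eqref{l:eq:D.8} and \eqref{l:eq:D.9} yields \eqref{l:eq:D.13} and \eqref{l:eq:D.14}, and the factors other than $R^{m^2}/W_{2m}$ are absorbed into $e^{Cn}$ exactly as before.

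The step we expect to be the main obstacle is checking that circularity survives the conditioning. In the column step, $E_{\rm suf}\mathbf u_*$ must be a circular Gaussian vector with zero pseudocovariance. This holds because a complex-linear image of iid circular entries is circular, and conditioning on the other coordinates leaves a circular conditional law. The same must hold for the row step with the fixed $H$. A related subtlety is that the border appears as $\mathbf y^T$ rather than $\mathbf y^*$. This is harmless: entrywise conjugation maps circular Gaussians to circular Gaussians, and transposition only reindexes the border, so the block formula for $P_T$ and the adjugate structure are unaffected.
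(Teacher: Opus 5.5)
Your proposal is correct and follows the paper's proof essentially step for step. The shared steps are: the circular Gaussian border comparison via $\int f^2/\phi_\kappa$ and Cauchy--Schwarz; the unitary Arnoldi completion giving the lower-Hessenberg $B=(Q^*AQ)^T/\sqrt n$; the field-independent cofactor and evaluation-determinant algebra, with a complex monic $Q_B$ handled by forward differences on the same real grid; and the two conditional integrations via Lemma~\ref{l:thm:CDN}, with covariances $(\kappa/n)HH^*$ and $(\kappa/n)E_{\mathrm{suf}}E_{\mathrm{suf}}^*$. The circularity concern you flag is resolved exactly as you indicate. The integrated coordinates are independent of the fixed ones, and conditional circularity is built into Lemma~\ref{l:thm:CDN}. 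The only detail worth making explicit, as the paper does, is that the row step needs $H$ invertible; this holds almost surely because the column-step expectation of $|\det H|^{-1/2}$ is finite.
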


\begin{proof}
We begin by replacing the border with a circular Gaussian one, as in
Subsection~\ref{l:sec:real_border}. For \(\kappa\ge t_0^{-1}\) and
\(\phi_\kappa(z)=(\pi\kappa)^{-1}e^{-|z|^2/\kappa}\), write
\(\mathbb P_\kappa=\mathcal N_{\mathbb C}(0,\kappa)^{\otimes(2n-1)}\)
for the law of the unnormalized Gaussian border, with the core fixed. Then
\[
 J_f=\int_{\mathbb C}f^2/\phi_\kappa
 \le\pi\kappa K\,\mathbb E e^{|\zeta|^2/\kappa}<\infty,
 \qquad \mathbb P_{\mathbf b}(E)
 \le J_f^{(2n-1)/2}\mathbb P_\kappa(E)^{1/2}.
\]
The core stays fixed in this comparison. Complete Arnoldi to a unitary
\(Q\); the normalized transformed matrix is
\[
 T=\begin{pmatrix}g_{\mathrm b}&\mathbf x_{\mathrm b}^T\\
                  \mathbf u_{\mathrm b}&B\end{pmatrix},
 \qquad B=(Q^*AQ)^T/\sqrt n.
\]
Its core is lower Hessenberg, with \(B_{j,j+1}=R_j/\sqrt n\), and
its Gaussian border entries remain independent
\(\mathcal N_{\mathbb C}(0,\kappa/n)\) by unitary invariance.
The ordinary transpose in this characteristic-polynomial identity is
retained; projections and covariances use adjoints.

The cofactor and evaluation identities in
Lemmas~\ref{l:lem:hessenberg-cofactor} and
\ref{l:lem:evaluation-determinant}(i) are algebraic over \(\mathbb C\).
The polynomial \(Q_B\) is now complex monic of degree
\(d_{\mathrm{tail}}=d-2m\). Its highest forward difference on the
same real grid is still \(d_{\mathrm{tail}}!\) times the grid spacing
to that power. The triangle inequality therefore gives the choice of
\(w\) in Lemma~\ref{l:lem:evaluation-determinant}(ii), with the same
determinant identity and lower bound \eqref{l:eq:D.8}--\eqref{l:eq:D.9}.
No conjugate pairing of polynomial values is needed.

We can now use the two conditional integrations from
Lemma~\ref{l:lem:border-negative-moment}. Integrate
\(\mathbf x_*=(\mathbf x_{\mathrm b})_{[m]}\) first, fixing the
other border coordinates. The evaluation vector has covariance
\((\kappa/n)HH^*\). Then integrate
\(\mathbf u_*=(\mathbf u_{\mathrm b})_{[m+1,2m]}\), fixing its
complement. In
\(H=\operatorname{diag}(E_{\mathrm{suf}}\mathbf u_*)E_{\mathrm{pre}}+H_0\),
the Gaussian diagonal has covariance
\((\kappa/n)E_{\mathrm{suf}}E_{\mathrm{suf}}^*\).
Lemma~\ref{l:thm:CDN} gives
\begin{equation}
\begin{aligned}
 \mathbb E_{\mathbf x_*}\prod_{h=1}^m|P_T(z_h)|^{-1/2}
 &\le C^m(n/\kappa)^{m/4}|\det H|^{-1/2},\\
 \mathbb E_{\mathbf u_*}|\det H|^{-1/2}
 &\le C^m(n/\kappa)^{m/4}
          |\det E_{\mathrm{pre}}\det E_{\mathrm{suf}}|^{-1/2}.
 \end{aligned}
 \label{l:eq:Z.15}
\end{equation}
Both bounds are uniform in the fixed means. Finiteness of the second
expectation gives the almost-sure invertibility needed in the first.
Tonelli combines the two integrals. The calculation in
\eqref{l:eq:D.13}--\eqref{l:eq:D.14} yields
\begin{equation}
\begin{aligned}
 \mathbb P_\kappa\{\rho(T)\le r\}
 &\le C^{2m}\left[
   \frac{(n/\kappa)^m(w+r^m)^n}
        {m^mw^{m-1}W_{2m}|Q_B(w)|}\right]^{1/2}\\
 &\le e^{C_\kappa n+(m^2/2)\log(R/a)}.
 \end{aligned}
 \label{l:eq:Z.16}
\end{equation}
Returning to the original border halves the logarithmic term and adds
only a linear cost, proving \eqref{l:eq:Z.14}.
\end{proof}

\begin{proof}[\proofname\ of Theorem \ref{l:thm:MT}, complex case]Fix \(r<1\), set \(\varepsilon=1-r\),
\(a=1-\varepsilon/4\), and \(R=(r+a)/2\). Proposition \ref{l:thm:CML} provides a
deterministic \(m\ge\beta_{\mu,\varepsilon}d/2\) and a bad-core
probability at most \(e^{-c_{\mathrm{link}}d^2}\). For large \(d\),
\((r/R)^m\le1/2\). On the complementary event, Proposition \ref{l:thm:CBORDER}
bounds the conditional border probability by \[
 \exp\{C_{\mathrm{border}}(\mu)n-\delta_rm^2\},
 \qquad\delta_r=\tfrac14\log(a/R)>0.
\] The independent starter is absent from the spectral event. Averaging
over the core and starter and adding the bad-core probability proves
\eqref{l:eq:M.2}, since \(d=n-1\) and \(m\) is proportional to \(d\). \end{proof}

\raggedbottom
\bibliographystyle{amsalpha}
\bibliography{spectral_radius_references}
\end{document}